\author{Matthew D. Kvalheim}
\address[Kvalheim, Koditschek]{School of Engineering and Applied Science, University of Pennsylvania,
	Philadelphia, PA 19104, USA}
\author{Paul Gustafson}
\address[Gustafson]{Department of Electrical Engineering, Wright State University, Dayton, OH 45435, USA}
\author{Daniel E. Koditschek}
\email{kvalheim@seas.upenn.edu, paul.gustafson@wright.edu, kod@seas.upenn.edu}
\title{Conley's Fundamental Theorem for a Class of Hybrid Systems}
\newcommand{\concept}[1]{\textbf{#1}}
\DeclareMathOperator{\id}{id}
\DeclareMathOperator{\dist}{dist}
\newcommand{\hooklongrightarrow}{\lhook\joinrel\longrightarrow}
\newcommand{\twoheadlongrightarrow}{\relbar\joinrel\twoheadrightarrow}
\newcommand{\exc}{\mathcal E}
\newcommand{\conley}{\mathcal Ch}
\newcommand{\conleyet}{\conley^{\epsilon,T}}
\newcommand{\st}{\mathcal{X}} 
\newcommand{\fs}{\mathcal{F}} 
\newcommand{\gd}{\mathcal{G}} 
\newcommand{\stp}{\mathrm{stop}}
\newcommand{\N}{\mathbb{N}}
\newcommand{\R}{\mathbb{R}}
\newcommand{\cl}{\textnormal{cl}}
\newcommand{\Sus}{\Sigma}
\newcommand{\slot}{\,\cdot\,} 
\newcommand{\T}{\mathsf{T}}
\newcommand{\interior}{\textnormal{int}}
\newcommand{\dom}{\textnormal{dom}}
\newcommand{\THS}{THS}
\newcommand{\MHS}{MHS}
\theoremstyle{definition}
\newtheorem{Lem}{Lemma}
\newtheorem{Co}{Corollary}
\newtheorem{Prop}{Proposition}
\newtheorem{Assump}{Assumption}
\newtheorem*{Th-non}{Theorem}
\newcommand{\Thnon}{Th-non}
\newtheorem{Def}{Definition}
\newtheorem*{Def*}{Definition}
\newtheorem{Ex}{Example}
\newtheorem{Rem}{Remark}
\newcommand{\thistheoremname}{}
\newtheorem*{genericthm}{\thistheoremname}
{\renewcommand{\thistheoremname}{Theorem~\ref{#1}$'$}%
	\begin{genericthm}}
	{\end{genericthm}}
\begin{document}

\begin{abstract}
We establish versions of Conley's (i) fundamental theorem and (ii) decomposition theorem for a broad class of hybrid dynamical systems. 
The hybrid version of (i) asserts that a globally-defined \emph{hybrid complete Lyapunov function} exists for every hybrid system in this class. 
Motivated by mechanics and control settings where physical or engineered events cause abrupt changes in  a system's governing dynamics, our results apply to a large class of Lagrangian hybrid systems (with impacts) studied extensively in the robotics literature. 
Viewed formally,  these  results generalize those of Conley and Franks for continuous-time and discrete-time dynamical systems, respectively, on metric spaces. 
However, we furnish specific examples illustrating how our statement of sufficient conditions represents merely  an early step in the longer project of establishing what formal assumptions can and cannot endow hybrid systems models with the topologically well characterized partitions of limit behavior that make Conley's theory so valuable in those classical settings.  
\end{abstract}

\maketitle

\tableofcontents

\section{Introduction}\label{sec:intro}
In \cite{norton1995fundamental}, Norton argued that the following two theorems deserve to be called the ``Fundamental Theorem of Dynamical Systems.''

\begin{\Thnon}[\cite{conley1978isolated}]
  Any continuous flow on a compact metric space decomposes into a chain recurrent part and a gradient-like part.
  There exists a continuous Lyapunov function which strictly decreases along the flow on the gradient-like part.
\end{\Thnon}

\begin{\Thnon}[\cite{franks1988variation}]
  The iteration of a homeomorphism of a compact metric space decomposes the space into a chain recurrent part and a gradient-like part.
  There exists a continuous Lyapunov function which strictly decreases under iteration of the map on the gradient-like part.  
\end{\Thnon}
See \cite{alongi2007recurrence, norton1995fundamental,robinson1999dynamical} for tutorial accounts. 

From the view of applications, these results endow models that achieve them with two important guarantees. First, the decomposition establishes a clear, deterministic notion of steady state behavior that, no matter how complicated its temporal manifestation \cite{Lorenz_1964,May_1976,Holmes_1990}, imposes a computationally effective \cite{Kalies_Mischaikow_VanderVorst_2005} spatial partition into attractor basins  \cite{Milnor_2006} whose topology persists under small perturbations.
The passage from signal to symbol afforded by such partitions has great value for analyzing natural systems \cite{Arai_Kalies_Kokubu_Mischaikow_Oka_Pilarczyk_2009,Ghrist_Van_den_Berg_Vandervorst_2003}, and has encouraged slowly growing use in the synthesis of  engineered systems as well \cite{Acar_Choset_Rizzi_Atkar_Hull_2002,Chen_Mischaikow_Laramee_Pilarczyk_Zhang_2007, Haynes_Cohen_Koditschek_2011,Haynes_Rizzi_Koditschek_2012}. Second, interpreted as a universal converse Lyapunov theorem, global analogue to the classical counterpart addressing a specific basin \cite{kellett2015classical}, the long established value for classical \cite{Sontag_1989}, multistable \cite{Forni_Angeli_2018} and hybrid control systems theory \cite{Goebel_Sanfelice_Teel_2009} is leveraged by a steadily advancing literature on constructive methods for their eventual feedback closed loops \cite{Ban_Kalies_2006,Giesl_Hafstein_2015}. In our view, one of the most important applications for Lyapunov-expression of basin partitions is their long-proven role in  sequential composition \cite{Burridge_Rizzi_Koditschek_1995} and their promise for parallel composition \cite{Cowan_2007, Topping_Vasilopoulos_De_Koditschek_2019}, increasing the expressive richness of topologically grounded type theories \cite{Awodey_Harper_2015} emerging from hybrid dynamical categories that admit them. 

\subsection{Contributions and organization of the paper}\label{sec:contributions}

Motivated by problems of robotics and biomechanics, where the making and breaking of contacts intrinsic to most tasks necessitates the introduction of hybrid systems models \cite{Koditschek_2021}, this paper addresses the question of what hybrid systems models admit a version of Conley's fundamental theorem. 
We focus on a partial extension of a particularly simple but empirically useful class \cite{Johnson_Burden_Koditschek_2016}, relative to which a closely related extension can be shown to generate a formal category equipped with the desired compositional operators  \cite{culbertson2019formal}.  
Specifically, we introduce the class of \emph{topological hybrid systems (\THS)} and the subclass of \emph{metric hybrid systems  (\MHS)}  (Def.~\ref{def:HybridSystem}) 
that roughly generalizes the model of \cite{Johnson_Burden_Koditschek_2016} (see SM \S \ref{app:JBK-relate}). 

After imposing assumptions including the \emph{trapping guard} condition (Def.~\ref{def:trapping-guards})  we prove an appropriately generalized version of Conley's decomposition theorem (Theorem~\ref{th:conley-decomp}) as well as the existence (Theorem~\ref{th:hybrid-conley}) of a globally-defined \emph{hybrid complete Lyapunov function} (Def.~\ref{def:lyapunov}). 
These are our main results.
Because we believe the methods used to prove the main results are of independent interest, we provide a rough synopsis of the proof techniques and the underlying new ideas as follows.

Given a suitable \MHS~ $H$ with state space $\st$, we embed $H$ into an \MHS~ $H'$ (the \emph{relaxed} system) with larger state space $\st'$ (see Fig.~\ref{fig:hybrid-suspension}).
We then topologically ``glue'' $\st'$ to itself along the \emph{reset map} of $H'$ to obtain a metrizable space $\Sus_H$ (the \emph{hybrid suspension}) equipped with a continuous semiflow $\Phi_H$.
As will be discussed at greater length in the literature review (\S \ref{sec:related-work}), versions of the relaxed hybrid system and hybrid suspension have previously appeared in \cite{johansson1999regularization} and \cite{ames2005homology,burden2015metrization} (see also the discussion at the end of \S \ref{sec:related-work} and in SM \S \ref{app:relaxed} and \S \ref{app:hybrid-suspension-subsubsec} for more details). 
Our central new contribution addresses the implications of the version of this construction we have introduced  for the nature of steady state behavior in the dynamics it carries.  
This entails establishing and exploiting several topological and  dynamical  properties of the  hybrid  suspension and their relationships to corresponding properties of the original hybrid system $H$ in a manner we now outline. 

It is possible to view $\st \subseteq \Sus_H$ as embedded inside $\Sus_H$ in such a way that the image of each \emph{execution} of $H$ coincides with the intersection with $\st$ of the image of a trajectory of $\Phi_H$.
Hurley's generalization \cite{hurley1995chain,hurley1998lyapunov} to semiflows of Conley's decomposition and fundamental theorems applies to $(\Sus_H,\Phi_H)$, in particular yielding a \emph{complete Lyapunov function} $V\colon \Sus_H\to \R$ for $\Phi_H$ whose restriction $V|_{\st}$ to $\st$ yields a candidate hybrid complete Lyapunov function for $H$.
What remains is to prove that various topological-dynamical objects associated to $H$---\emph{$\omega$-limit sets}, \emph{attracting-repelling pairs}, and \emph{chain equivalence classes}---coincide with the restrictions to $\st\subseteq \Sus_H$ of corresponding dynamical objects for $\Phi_H$.
Through various technical arguments we show that this is indeed the case, thereby proving Conley's decomposition and fundamental theorems for $H$ and, in particular, proving that the restriction $V|_{\st}$ is indeed a complete Lyapunov function for $H$.
For these arguments it turns out to be crucial (for several reasons---see Remarks \ref{rem:omega-compatibility-for-hybrifold-fails} and \ref{rem:chain-compatibility-for-hybrifold-fails} and SM \S\ref{app:hybrid-suspension} for more details) that the hybrid suspension technique differs from the \emph{hybrifold} technique of \cite{simic2000towards,simic2005towards}.

Our contributions additionally include illustrations of the applicability of our main results by presenting two broad \MHS~ subclasses to which they apply:  the \emph{smooth exit-boundary guarded} \MHS~   
(Prop.~\ref{prop:trapp-guard-suff-cond})  arising, for example in problems of legged locomotion \cite{Burden_Revzen_Sastry_2015, De_Burden_Koditschek_2018}; 
and an extension (Prop.~\ref{prop:sub-level-set}) of the \emph{Lagrangian hybrid systems} (Cor.~\ref{co:hybrid-lagrangian}), a class of models (or near variations thereof) studied in the robotics literature \cite{grizzle2001asymptotically, westervelt2003hybrid, ames2006there,poulakakis2009spring,or2010stability,bloch2017quasivelocities,razavi2017symmetry}.
In contrast, a simple counterexample (Ex.~\ref{ex:counterexample}, depicted in Fig.~\ref{fig:counterexample}) demonstrates that the conclusions of our version of Conley's theorems for \MHS~ can fail without the trapping guard condition.
Finally, we use two variants of the Hamiltonian bouncing ball model to illustrate how these results apply to mechanical systems which undergo impacts (and to mechanical systems which have only Zeno maximal executions, in the case of the first variant). 
Bouncing against gravity (Ex.~\ref{ex:bouncing-ball}) generates an \MHS~ satisfying the trapping guard condition, yielding the Conley decomposition and complete Lyapunov function (Fig.~\ref{fig:example2}) guaranteed by Theorems \ref{th:conley-decomp} and \ref{th:hybrid-conley}. 
In contrast, because linear time invariant vector fields are homogenous, the \MHS~ generated by bouncing losslessly against a Hooke's law spring (Ex.~\ref{ex:spring}) fails the trapping guard condition, so this system does not satisfy the \emph{hypotheses} of our main theorems; interestingly, however, this example does still satisfy our main theorems' \emph{conclusions}. 
We end the paper with some more philosophically motivated remarks concerning the virtue of parsimony arising from these results that generalize both the discrete (Ex.~\ref{ex:special-case-discrete-time}) and the continuous (Ex.~\ref{ex:special-case-continuous-time}) classical frameworks to unify the common but heretofore distinct assertions of \cite{conley1978isolated,franks1988variation}.

The remainder of this paper is organized as follows. After discussing related work below, we introduce
the basic definitions and concepts relevant to our main results in \S \ref{sec:preliminaries}. In \S \ref{sec:main-results} we state our main results.  In \S \ref{sec:app} and \S \ref{sec:ex}, we present the applications and examples (some very specific and some quite general) as just discussed. 
As discussed above, the proofs of our main results rely on the reduction of suitably guarded \MHS~ to classical dynamical systems on spaces obtained via the \emph{hybrid suspension} technique described in \S \ref{sec:hybrid-suspension} (see Fig.~\ref{fig:hybrid-suspension}) which generalizes the classical suspension of a discrete-time dynamical system \cite[p.~797,~pp.~21--22]{smale1967differentiable,brin2002introduction}.
We conclude with brief remarks of a more speculative nature in \S \ref{sec:conclusion}.
Supplementary Materials (SM) \S \ref{app:relate} compares some of our constructions with selected prior work. SM \S \ref{app:suspension} gives a primer on the classical suspension of a discrete-time dynamical system.
SM \S \ref{app:suspension-semiflow-continuous-implies-trapping-guard-condition} makes precise and proves the statement that, for a class of deterministic \THS~ satisfying mild assumptions, the trapping guard condition holds if and only if a  continuous hybrid suspension semiflow exists.  SM \S \ref{app:technical} contains the proofs of various technical lemmas used in the construction of suspension semiflows and well-behaved $(\epsilon, T)$-chains.

\subsection{Related work}\label{sec:related-work}

As reviewed above, for flows on compact metric spaces, Conley proved the existence of a complete Lyapunov function and that the chain recurrent set is the intersection of all attracting-repelling pairs \cite{conley1978isolated}. Franks proved the corresponding results for homeomorphisms of compact metric spaces \cite{franks1988variation}.
Hurley extended the decomposition theorem to maps and semiflows on arbitrary metric spaces \cite{hurley1995chain} and proved the existence of complete Lyapunov functions for maps on separable metric spaces \cite{hurley1998lyapunov}. 
Using Hurley's result, Patr{\~a}o proved the existence of a complete Lyapunov function for any semiflow on a separable metric space \cite{patrao2011existence}.
In the nondeterministic setting, McGehee and Wiandt generalized Franks' results to the setting of iterations of closed relations \cite{mcgehee2006conley,wiandt2008liapunov}; Bron\v{s}tein and Kopanski\v{i} generalized Conley's results to a class of set-valued dynamical systems such as those arising from certain  differential inclusions \cite{bronvstein1988chain}.
In the stochastic setting, Liu generalized the decomposition and fundamental theorems to random (semi-)dynamical systems such as those arising from stochastic (partial) differential equations \cite{liu2005random,liu2007randomII,liu2007randomIII}.

Motivated largely by mathematical models occurring in science and engineering, many investigators have worked to generalize results and tools from classical dynamical systems theory to the hybrid setting.
Examples include extensions of local \cite{simic2001structural} and global \cite{broucke2001structural} structural stability results, contraction analysis \cite{burden2018generalizing,burden2018contraction}, and many theoretical tools concerning periodic orbits \cite{Burden_Sastry_Koditschek_Revzen_2016} including Floquet theory \cite{Burden_Revzen_Sastry_2015} and the Poincar\'{e}-Bendixson theorem \cite{clark2019poincare,clark2020poincare,simic2002hybrid}.

As described earlier, to prove our results we introduce what we call the \emph{hybrid suspension} of a \THS~ defined in terms of what we call the \emph{relaxed} version of a \THS.
In writing this paper we have become aware that versions of the relaxed system and hybrid suspension have previously appeared in the hybrid systems literature under different names, although (to the best of our knowledge) only for classes of hybrid systems which are formally less general (imposing more structure) than \THS~ and \MHS~ along several dimensions.
The relaxed system is essentially an example of a ``temporal relaxation'' in the sense of \cite{johansson1999regularization}.
The hybrid suspension $\Sus_H$ of a \THS~ $H$ could be called a ``1-relaxed hybrid quotient space'' in the terminology of \cite{burden2015metrization} or a ``homotopy colimit'' in the terminology of \cite{ames2005homology}.
More details are given in SM \S \ref{app:relaxed} and \S \ref{app:hybrid-suspension-subsubsec}.
The hybrid suspension coincides with (a mild generalization of) the ``hybrifold'' (introduced in \cite{simic2000towards,simic2005towards}) of the relaxed version of the original hybrid system but, crucially for us, not with the ``hybrifold'' of the original hybrid system itself; see Remarks~\ref{rem:omega-compatibility-for-hybrifold-fails} and \ref{rem:chain-compatibility-for-hybrifold-fails} and SM \S \ref{app:hybrifold} for more details.

Finally, our definitions of \THS~ and \MHS~ build on a long history of formal approaches to hybrid automata \cite{simic2005towards, haghverdi2005bisimulation, Johnson_Burden_Koditschek_2016, lerman2016category, culbertson2019formal}.
Most specifically, our definition of hybrid $(\epsilon, T)$-chains is almost identical to the definition in \cite{culbertson2019formal} for smooth hybrid systems (with one important difference; see SM \S \ref{app:CGKS-chains}).

\section{Preliminaries}\label{sec:preliminaries}
\subsection{Two classes of hybrid systems}
Most definitions of hybrid systems in the literature involve variants of smooth manifolds and vector fields.
However, in the same way that ``the'' natural setting for the classical theory of smooth dynamics is given by sufficiently smooth flows generated by vector fields (or by sufficiently smooth maps) on smooth manifolds, ``the'' natural setting for the classical theory of topological dynamics is given by continuous flows (or semiflows, or continuous maps) on topological (or metric) spaces.
Since Conley's theory is part of the theory of topological dynamics, in Def.~\ref{def:HybridSystem} we define two classes of hybrid systems which we believe provide a natural setting for a topological-dynamical theory of hybrid systems.
These definitions have enabled us to obtain in this paper more natural (and general) results than would be obtainable using definitions already existing in the literature.
Beyond such aesthetic considerations, we speculate (see \S \ref{app:relaxed}---particularly, Footnote~\ref{foot:semiflow}) that these coarser  topological methods may actually be necessary  to handle the non-smooth features intrinsic to physically important  hybrid dynamics models.
Fortunately (see Remark \ref{rem:wlog-disjoint-union}), our results  apply to many previously defined classes of hybrid systems that represent special cases of our problem setting.

Following \cite[Sec.~1.3]{hirsch2006monotone}, a \concept{local semiflow} $\varphi$ on a topological space $\fs$ is a map $\varphi\colon \dom(\varphi) \to \fs$ defined on an open neighborhood $\dom(\varphi) \subseteq [0, \infty) \times \fs$ of $\{0\}\times \fs$ satisfying the following conditions, with $\varphi^t\coloneqq \varphi(t,\slot)$ and $t,s \in [0,\infty)$: (i) $\varphi^0 = \id_\fs$, (ii) $(t+s,x)\in \dom(\varphi) \iff$ both $(s,x)\in \dom(\varphi)$ and  $(t,\varphi^s(x))\in \dom(\varphi)$, and (iii) for all $(t+s,x)\in \dom(\varphi)$, $\varphi^{t+s}(x) = \varphi^t(\varphi^s(x))$.
Given $x\in \fs$, the map $t\mapsto \varphi^t(x)$ defined on some interval is called a \concept{trajectory} of $\varphi$.
The local semiflow $\varphi$ is a \concept{semiflow} if $\dom(\varphi) = [0,\infty) \times \fs$.

The following definition follows much of the terminology of \cite{culbertson2019formal}, but uses a simpler model for the discrete-time dynamics.\footnote{More specifically, we ignore any underlying graph structure and the fact that there may be various distinct ``modes''; see Remark~\ref{rem:wlog-disjoint-union} for more details.}
Our definition of \emph{topological hybrid systems (\THS)} uses a more general model for the continuous-time dynamics, i.e., local semiflows on topological spaces instead of vector fields on manifolds. 
For this reason, our definition of \emph{metric hybrid systems (\MHS)} differs from that of \cite[Def.~2.17]{culbertson2019formal}.\footnote{Applications-oriented readers might be interested to consult footnote~\ref{foot:semiflow} for a brief discussion motivating the (essentially imperative) benefits of adopting this more general framework.
}

\begin{Def}[Topological and metric hybrid systems]
\label{def:HybridSystem}
A \concept{topological hybrid system (\THS)} $H = (\st,\fs,\gd,\varphi,r)$ consists of:
\begin{description}
\item[States]  a topological \emph{state space} $\st$ whose points are the possible states of the system.
\item[Continuous-time dynamics] a continuous local semiflow $\varphi$ defined on an open \emph{flow set} $\fs \subseteq \st$.
\item[Discrete-time dynamics] a closed \emph{guard set} $\gd \subseteq \st$ equipped with a continuous \emph{reset map} $r : \gd \to \st$.
\end{description} 
If the topology of $\st$ arises from an extended metric $\dist: \st \times \st \to [0, +\infty]$, we say that $(H,\dist)$ is a \concept{metric hybrid system (\MHS)}.
(We will usually suppress the extended metric $\dist$ from the notation.)
\end{Def}

\begin{figure}
	\centering
	\def\svgwidth{1.0\columnwidth}
	\import{figs/}{execution.pdf_tex}
	\caption{An execution in a \THS~ with initial state $x\in \fs\subseteq \st$.
	In this example, $\tau_1 = \tau_2$ and $r(\gamma_0(\tau_1)) \in \gd$, highlighting the fact that we allow $r(\gd) \cap \gd \neq \varnothing$ in Def. \ref{def:HybridSystem}. }\label{fig:execution}
\end{figure}

\begin{Rem}\label{rem:wlog-disjoint-union}
	Typical definitions of hybrid systems specify several distinct state spaces (usually smooth manifolds with corners), often called \emph{modes}, each equipped with its own local semiflow (usually generated by a locally Lipschitz vector field).
	Each mode may contain a \emph{guard set}, and discrete transitions from the guard sets to other modes are specified by \emph{reset} maps (sometimes allowed to be more general relations). 
	Examples of references containing this style of definition include \cite{simic2000towards,simic2001structural,simic2002hybrid,lygeros2003dynamical,simic2005towards,haghverdi2005bisimulation,Burden_Revzen_Sastry_2015,lerman2016category,Johnson_Burden_Koditschek_2016, burden2018contraction, burden2018generalizing,culbertson2019formal,clark2020poincare,lerman2020networks}.
	
	At first glance one might incorrectly assume that Def. \ref{def:HybridSystem} can encapsulate only hybrid systems consisting of a single mode.
	However, this is not the case: given a hybrid system as defined in one of the aforementioned references (and having closed guards and continuous reset maps), by defining $\st$ to be the disjoint union of the modes, $\fs$ to be the disjoint union of the flow sets, $\gd$ to be the disjoint union of the guard sets, $r\colon \gd\to \st$ to be the disjoint union of the reset maps, and $\varphi$ to be the disjoint union of the local semiflows, one obtains a \THS~ as in Def. \ref{def:HybridSystem}.
	If additionally each of the modes of the given hybrid system is equipped with a compatible extended metric (by the Urysohn metrization theorem \cite[Thm~34.1]{munkres2000topology}, such a metric always exists if each mode is a smooth, paracompact manifold with corners), then one further obtains an \MHS~ as in Def. \ref{def:HybridSystem} by leaving the distance between points in the same mode unchanged and defining the distance between points in distinct modes to be infinite.
	Thus, our results (including Theorems \ref{th:conley-decomp} and \ref{th:hybrid-conley}) can be applied to such hybrid systems, as long as they satisfy the relevant additional hypotheses.
	
\end{Rem}

\begin{Def}\label{def:execution}
Given a \THS~ $H=(\st,\fs,\gd,\varphi,r)$, an \concept{execution} in $H$ is a tuple $\chi = (N, \tau, \gamma)$ of 
\begin{enumerate}[label=\ref*{def:execution}.\arabic*.\hspace{0.1cm}, ref=\ref*{def:execution}.\arabic*,leftmargin=*]
\item \textbf{Jump times:} a nondecreasing sequence $\tau = (\tau_j)_{j=0}^{N+1} \subseteq \mathbb{R} \cup \{+\infty\}$ where $N\in \N_{\geq 0} \cup\{+\infty\}$, $\tau_0 = 0$, and $(\tau_j)_{j=0}^{N} \subseteq \mathbb{R}$.
\item \textbf{Flow arcs:} a sequence of continuous maps $\gamma = (\gamma_j\colon T_j\to \st)_{j = 0}^{N}$  with $[\tau_j,\tau_{j+1}) \subseteq T_j\subseteq [\tau_j,\tau_{j+1}]\cap \R$, $\gamma_j([\tau_{j}, \tau_{j+1})) \subseteq \fs$, and such that the restriction $\gamma_j|_{[\tau_{j}, \tau_{j+1})} : [\tau_{j}, \tau_{j+1}) \to \fs$ is a trajectory segment for the local semiflow $\varphi$.  
For all $0 \leq j < N$, we additionally require that $T_j = [\tau_j,\tau_{j+1}]$, $\gamma_{j}(\tau_{j+1}) \in \gd$, and $\gamma_{j+1}(\tau_{j+1})= r(\gamma_{j}(\tau_{j+1}))$.
\end{enumerate}

We call $\gamma_0(0)$ the \concept{initial state} of $\chi$. If $N < \infty$ and $\tau_{N+1} \in T_N$, we call $\gamma_N(\tau_{N+1})$ the \concept{final state} of $\chi$.
We define the \concept{stop time} of $\chi$ to be
\[
\tau^\stp :=
\begin{cases}
\tau_{N+1} & N < \infty\\
\displaystyle \lim_{j \to \infty} \tau_j & N = \infty
\end{cases}.
\]

If the stop time of $\chi$ is infinite, we say that $\chi$ is an \concept{infinite execution}.\footnote{Our definition of ``infinite execution'' follows that of \cite[Def.~2.13]{culbertson2019formal}. 
However, this definition differs from that of \cite[p.~4]{lygeros2003dynamical}, which refers to both executions having infinite stop time \emph{and} Zeno executions as ``infinite.'' 
Since our Def.~\ref{def:deterministic-nonblocking} and \cite[Def.~III.1]{lygeros2003dynamical} both define ``nonblocking'' hybrid systems to be those for which all maximal executions are ``infinite,'' our definitions of ``nonblocking'' thus also differ in meaning.
Our definition of ``nonblocking'' differs from that of \cite[Def.~4]{Johnson_Burden_Koditschek_2016} in precisely the same way, although the latter reference does not introduce the ``infinite execution'' terminology.}
If the stop time of $\chi$ is finite but $\chi$ has infinitely many jumps ($N = \infty$), we say that $\chi$ is a \concept{Zeno execution}.  
We say that $\chi=(N,\tau,\gamma)$ is a \concept{maximal execution} if, for any execution $\chi' = (N',\tau',\gamma')$ with $\gamma'_0(0) = \gamma_0(0)$ and each $\gamma_j$ equal to $\gamma'_{j'}|_{T_j}$ for some $j'\in \{0,\ldots, N'\}$, $\chi = \chi'$.

We denote the set of executions in $H$ and executions with initial state $x\in \st$ by $\exc_H$ and $\exc_H(x)$, respectively. 
For $\chi = (N, \tau, \gamma) \in \exc_H$  and any $t\in \bigcup_{j=0}^N T_j$, we write $\chi(t) = \{\gamma_j(t) \mid 0\leq j < N + 1, t\in T_j\}$.
 We emphasize that $\chi(t)$ is generally a set with multiple elements if $t \in \{\tau_1,\ldots, \tau_N\}$, but otherwise $\chi(t)$ is a singleton.    
\end{Def}

\begin{Rem}\label{rem:def-2-big-remark}
  An important point concerning Def.~\ref{def:execution} is that, if $\gamma_j(\tau_j)\in \gd$ for some $0\leq j \leq N$, then it is possible that $\tau_{j+1} = \tau_j$, i.e., an instantaneous reset might occur (and \emph{must} occur if $H$ is \emph{deterministic};  see Def.~\ref{def:deterministic-nonblocking} below).
  In this case, the condition $\gamma_j([\tau_{j}, \tau_{j+1})) = \varnothing \subseteq \fs$ is satisfied vacuously.
  A similar remark applies to Def.~\ref{def:eps-T-chains} below.

 We also note that, since the domain of the final arc of an execution is not required to be closed, Def.~\ref{def:execution} allows the final arc to be a trajectory of $\varphi$ which ``blows up'' or ``escapes'' in finite time (i.e., it cannot be extended to a $\varphi$ trajectory defined for all nonnegative time; see \cite[Sec.~1.3]{hirsch2006monotone} for the latter terminology).	
  However, our main results concerning \THS~ $H=(\st,\fs,\gd,\varphi,r)$ include the assumption that every maximal execution of $H$ is infinite or Zeno, and this assumption implies that $\varphi$ trajectories may only ``artificially'' escape $\fs$ in finite time by converging to a limit in $\gd$.

  As in \cite[Rem.~2.16]{culbertson2019formal}, our definition of execution  allows for infinitely
  many jumps in finite time (Zeno executions), but does not allow for subsequent execution after
  the stop time. 
  In particular, while we do allow Zeno executions, in this paper we do not explicitly consider continuations of Zeno executions past the stop time. (Zeno continuations are considered, e.g., in \cite{johansson1999regularization,ames2006there,or2010stability,Johnson_Burden_Koditschek_2016,goebel2016well}.)
\end{Rem}

\begin{Rem}\label{rem:FcupZ}
If $x\in \st\setminus (\fs \cup \gd)$, then Def.~\ref{def:execution} implies that the only execution $\chi = (N,\tau,\gamma) \in \exc_H(x)$ is the \concept{trivial execution}: $N = 0$, $\tau = (0,0)$, $\gamma_0\colon \{0\}\to \{x\}$.
It follows that, if every $x\in \st$ has an execution $\chi \in \exc_H(x)$ which is not trivial, then $\st = \fs \cup \gd$.
In particular, if every $x \in \st$ has an infinite or Zeno execution $\chi \in \exc_H(x)$, it follows that $\st = \fs \cup \gd$.
\end{Rem}

\begin{Def}\label{def:deterministic-nonblocking}
A \THS~ $H=(\st,\fs,\gd,\varphi,r)$ is \textbf{deterministic} if $\gd \cap \fs = \varnothing$.    
A \THS~ $H$ is \textbf{nonblocking} if for every $x \in \st$ there is an infinite execution starting at $x$.
\end{Def}
For a deterministic \THS, maximal executions are unique; cf. \cite[Prop.~2.21]{culbertson2019formal}.
This justifies the terminology.
For a deterministic \THS, we use the notation $\chi_x$ to denote the unique maximal execution in $\exc_H(x)$.

Most of our results do \emph{not} assume the nonblocking condition; rather, most of our results (including Theorems~\ref{th:conley-decomp} and \ref{th:hybrid-conley}) assume the weaker condition that all maximal executions of a given \THS~ are either infinite or Zeno.

\subsection{Hybrid chain equivalence, recurrence, and attracting-repelling pairs}

The following definition of $(\epsilon, T)$-chains is similar to \cite[Def.~ 2.18]{culbertson2019formal} (see Remark~\ref{rem:at-least-two-arcs} and SM \S\ref{app:CGKS-chains} for a comparison).  As we show below, the corresponding Conley relation generalizes the classical notions for discrete-time and continuous-time systems on compact metric spaces. We recommend the reader glance at Fig. \ref{fig:eps-T-chain} before reading the formal definition.
We remark that $(\epsilon,T)$-chains can be viewed informally as ``executions with errors,'' with the values of $\epsilon, T$ determining the admissible errors.

\begin{Def}\label{def:eps-T-chains}
Given an \MHS~ $H=(\st,\fs,\gd,\varphi,r)$ and $\epsilon, T \geq 0$, an \concept{${(\epsilon, T)}$-chain} in $H$ is a tuple $\chi = (N, \tau, \eta, \gamma)$ of 
\begin{enumerate}[label=\ref*{def:eps-T-chains}.\arabic*.\hspace{0.1cm}, ref=\ref*{def:eps-T-chains}.\arabic*,leftmargin=*]
\item \textbf{Jump times:} a nondecreasing sequence $\tau = (\tau_j)_{j=0}^{N+1} \subseteq \mathbb{R}$ where $N\in \N_{\geq 1}$ and $\tau_0 = 0$.
\item \textbf{Continuous jump times:} a subsequence $(\tau_{\eta_k})_{k=0}^{M}$ of $\tau$ such that $\eta_0 = 0$, $\eta_M\leq N$, and $\tau_{\eta_{k}} - \tau_{\eta_{k-1}} \geq T$ for all $k \geq 1$.
\item\label{enum:flow-arcs} \textbf{Flow arcs:} a sequence $(\gamma_j)_{j = 0}^{N}$  of continuous maps $\gamma_j : [\tau_j, \tau_{j+1}] \to \st$ such that $\gamma_j([\tau_{j}, \tau_{j+1})) \subseteq \fs$ and the restriction $\gamma_j|_{[\tau_{j}, \tau_{j+1})} : [\tau_{j}, \tau_{j+1}) \to \fs$ is a trajectory segment for the local semiflow $\varphi$.  In addition, we require the following:
  \begin{enumerate}[label=\ref*{enum:flow-arcs}.\arabic*.\hspace{0.1cm}, ref=\ref*{def:eps-T-chains}.\arabic*,leftmargin=*]
        \item \textbf{Continuous-time jump condition:} If $j = \eta_k$ for some $k \geq 1$, then $\gamma_{j-1}(\tau_{j}) \in \fs$ and
        $$\dist(\gamma_{j}(\tau_{j}), \gamma_{j-1}(\tau_{j})) \le \epsilon;$$
    \item \textbf{Reset jump condition:} If $1\leq j \leq  N$ and $j \neq \eta_k$ for any $k$, then  $\gamma_{j-1}(\tau_{j}) \in \gd$ and $$\dist(\gamma_{j}(\tau_{j}), r(\gamma_{j-1}(\tau_{j}))) \le \epsilon.$$
   \end{enumerate}
\end{enumerate}

As in Def. \ref{def:execution}, we call $\gamma_0(0)$ the \concept{initial state} of $\chi$ and $\gamma_N(\tau_{N+1})$ the \concept{final state} of $\chi$.
We denote the set of $(\epsilon, T)$-chains in $H$, $(\epsilon,T)$-chains with initial state $x$, and $(\epsilon,T)$-chains with initial state $x$ and final state $y$ by $\conleyet_H$, $\conleyet_H(x)$ and $\conleyet_H(x,y)$, respectively. For $\chi = (N, \tau, \eta, \gamma) \in \conleyet_H$  and $0 \le t \le \tau_{N+1}$, we write $\chi(t) = \{\gamma_j(t) \mid 0\leq j \leq N, \tau_j \le t \le \tau_{j+1} \}$.
\end{Def}

\begin{Rem}\label{rem:at-least-two-arcs}
	Note that, unlike Def.~\ref{def:execution} (and \cite[Def.~2.18]{culbertson2019formal}), Def.~\ref{def:eps-T-chains} requires that $N \geq 1$.
	I.e., we require that an $(\epsilon,T)$-chain consist of at least two arcs.
\end{Rem}

\begin{figure}
	\centering
	\def\svgwidth{1.0\columnwidth}
	\import{figs/}{epsilon-T-double-jump-NEW-DEF-V2.pdf_tex}
	\caption{An $(\epsilon,T)$-chain from $x\in \st$ to $y\in \st$ with $N = 5$. 
		In this example, both $x,y\in \fs\subseteq \st$. 
		All open balls are of radius $\epsilon$, and we have $\tau_1, (\tau_4-\tau_1), (\tau_5-\tau_4) \geq T$, but $(\tau_4 - \tau_3) < T$. 
		In this example, the subsequence $(\eta_k)_{k=0}^{M}$ is given by $\eta_0 = 0$, $\eta_1 = 1$, $\eta_2 = 4$, and $\eta_3 = 5$ so that there are $M = 3$ continuous-time jumps. 
		Note that the final arc is not required to be degenerate (a single point), in contrast with the classical definitions for continuous-time (semi-)dynamical systems \cite{conley1978isolated,hurley1995chain}; however, Ex.~\ref{ex:special-case-continuous-time} shows that this difference is immaterial in the continuous-time setting as far as the Conley relation (Def.~\ref{def:hybrid-conley-relation}) is concerned.
		Note also that the ``double jump'' $r(\gamma_2(\tau_2))\rightsquigarrow \gamma_3(\tau_3)$, $\gamma_3(\tau_4) \rightsquigarrow \gamma_4(\tau_4)$ is permitted, even though $(\tau_4 - \tau_3) < T$, because $(\tau_4 - \tau_1) = (\tau_{\eta_2} - \tau_{\eta_1}) \geq T$. 
		Later in this paper we will also consider \textbf{nice} $(\epsilon,T)$-chains in which ``double jumps'' are not allowed; see Def.~\ref{def:nice-chains-conley-rel} and Fig. \ref{fig:eps-T-chain-nice}.}\label{fig:eps-T-chain}
\end{figure}

\begin{Def}[Hybrid Conley relation]\label{def:hybrid-conley-relation}
	Let $H=(\st,\fs,\gd,\varphi,r)$ be an \MHS. The \concept{(hybrid) Conley relation} $\conley_H \subseteq \st \times \st$ is defined by
	$$(x,y)\in \conley_H \iff \text{for all } \epsilon, T > 0\colon  \conleyet_H(x,y) \neq \varnothing.$$
    As is standard with relations, we sometimes use the more intuitive notation $\conley_H(x,y)$ in place of $(x,y)\in \conley_H$.

\end{Def}

\begin{Rem}\label{rem:conley-rel-compact-metric-independent}
Let $H=(\st,\fs,\gd,\varphi,r)$ be a \THS~ such that $\st$ is metrizable and compact.
Then---since all compatible extended metrics on a compact metrizable space are uniformly equivalent---the Conley relation is independent of the choice of compatible metric on $\st$ making $H$ into an \MHS.
Since the hypotheses for our main results (Theorems~\ref{th:conley-decomp} and \ref{th:hybrid-conley}) include the  assumption that $\st$ is compact, the specific choice of extended metric is immaterial for the majority of our purposes in this paper.
\end{Rem}

The following two examples show that our hybrid Conley relation generalizes the classical notions in the discrete-time and continuous-time settings.

\begin{Ex}\label{ex:special-case-discrete-time}
  Discrete-time (semi-)dynamical systems given by iterating a continuous map are instances of \THS~ $H = (\st,\fs,\gd,\varphi,r)$ where $\st = \gd$ and $\fs = \varnothing$.
  If $H$ is also an \MHS, then for any chain $\chi = (N, \tau, \eta, \gamma) \in \conleyet_H(x,y)$, we always have $(\eta_k) = (0)$ and $\tau_j = 0$ for all $j$ (because time never elapses).  
  Each arc $\gamma_j$ is degenerate and corresponds to a single point.  
  Thus $T$ is irrelevant, and we recover the usual notion of an $\epsilon$-chain for a discrete-time system which can also be specified by the more standard notation
  $$\chi = (x_0 = x, x_1, \ldots, x_N = y),$$
where $x_i = \gamma_i(\tau_i) = \gamma_i(0)$ for all $0\leq i \leq N$.  Thus, our notion of $(\epsilon,T)$-chain restricts to the classical notion when $H$ is a discrete-time system.
\end{Ex}

\begin{Ex}\label{ex:special-case-continuous-time}
  Suppose $H = (\st,\fs,\gd,\varphi,r)$ is a continuous-time (semi-)dynamical system given by a continuous semiflow.  
  That is, $\st = \fs$ and $\gd = \varnothing$.  Then if $H$ is also an \MHS,  the classical notion of an $(\epsilon,T)$-chain for a semiflow \cite{conley1978isolated,hurley1995chain} is usually expressed (modulo indexing conventions) as a tuple
  $$\chi = (x_0 = x, x_1, \ldots, x_N = y; t_1, \ldots, t_N),$$
  where each $t_i \ge T$ and $\dist(\varphi^{t_{i}}(x_{i-1}), x_{i}) < \epsilon$ for all $1 \le i \le N$.  It is easy to see that a classical $(\epsilon,T)$-chain always corresponds to an $(\epsilon,T)$-chain as in Def.~\ref{def:eps-T-chains}; however, the converse does not hold. 
  This is because a classical $(\epsilon,T)$-chain always ends with a jump \cite{conley1978isolated,hurley1995chain}---or equivalently, using the terminology of Def.~\ref{def:eps-T-chains}, ends with a degenerate arc---but an $(\epsilon,T)$-chain in the sense of Def.~\ref{def:eps-T-chains} can end with a nondegenerate arc (see Fig.~\ref{fig:eps-T-chain}).

  However, if $\st = \fs$ is compact then the corresponding classical Conley relation is equal to the Conley relation $\conley_H$.  
  Indeed, if $\epsilon, T > 0$, we can use the uniform continuity of $\varphi$ on $[0,T]\times \st$ to pick $\delta \in (0, \epsilon)$ such that $\dist(p,q) < \delta$ implies $\dist(\varphi^t(p), \varphi^t(q) < \epsilon$ for all $t \in [0,T]$.  
  Let $\chi = (N, \tau, \eta, \gamma) \in \conley^{\delta,T}_H(x,y)$; if $\tau_{N+1}-\tau_N\geq T$, then we can produce a classical $(\epsilon,T)$-chain by simply adding the degenerate arc at $\gamma_{N}(\tau_{N+1})$ to $\chi$.
  If instead $\tau_{N+1}-\tau_N < T$, then we can produce a classical $(\epsilon,T)$-chain from $x$ to $y$ by replacing the final arc with the degenerate arc at its terminal point, and extending the penultimate arc by $\tau_{N+1} - \tau_N$ (the penultimate arc necessarily exists since $N \ge 1$ by Def.~\ref{def:eps-T-chains}).
\end{Ex}

\begin{Def}[Hybrid chain recurrent set]\label{def:hybrid-chain-recurrent}
	Let $H=(\st,\fs,\gd,\varphi,r)$ be an \MHS.  
	The \concept{(hybrid) chain recurrent set} $R(H) \subseteq \st$ is defined by
	$$R(H) = \{ x \in \st \mid \conley_H(x,x) \}$$
\end{Def}

\begin{Def}[Hybrid chain equivalence]\label{def:hybrid-chain-equivalence}
	Let $H=(\st,\fs,\gd,\varphi,r)$ be an \MHS. Two points $x,y \in \st$ are \concept{chain equivalent}  if $\conley_H(x,y)$ and $\conley_H(y,x)$.
	The \concept{chain equivalence class} of $x\in \st$ is the set $\{y\in \st\mid \conley_H(x,y) \textnormal { and } \conley_H(y,x)\}$.
	(Note that every chain equivalence class is a subset of $R(H)$.)
\end{Def}
The following definition generalizes the definition of $\omega$-limit set in \cite[Def.~II.7]{lygeros2003dynamical} (which considered $\omega$-limit sets of singletons only), in addition to generalizing two of the standard definitions for discrete-time and continuous-time dynamical systems \cite[p.~29, II.4.1.B]{brin2002introduction,conley1978isolated}.

\begin{Def}[Hybrid $\omega$-limit set]\label{def:omega-limit}
	Let $H = (\st,\fs,\gd,\varphi,r)$ be a \THS.
    If $U\subseteq \st$, we define the $\omega$-limit set $\omega(U)$ of $U$ via  $$\omega(U)\coloneqq \bigcap_{T > 0}\cl\left( \bigcup_{x\in U}\,\bigcup_{\chi = (N, \tau,\gamma) \in \exc_H(x)} \,  \{\gamma_j(t)\mid j + t \geq T\}\right).$$
    If $x\in \st$, we define $\omega(x)\coloneqq \omega(\{x\})$.
\end{Def}
\begin{Rem}
	If $\st$ is compact, $U\subseteq \st$, and there exists $x\in U$ with an infinite or Zeno execution $\chi\in \exc_H(x)$, then $\omega(U)$ is a decreasing intersection of nonempty compact sets and is therefore compact and nonempty.
\end{Rem}
The following result shows that, as in the classical setting, an infinite or Zeno execution with initial state $x$ converges to $\omega(x)$ if $\st$ is compact.

\begin{Prop}[Convergence to $\omega$-limit set]\label{prop:conv-omega}
    Let $H = (\st,\fs,\gd,\varphi,r)$ be a \THS~ with $\st$ compact. Then for any $x\in \st$ and infinite or Zeno execution $\chi = (N, \tau,\gamma) \in \exc_H(x)$, we have
	$$\gamma_j(t) \to \omega(x) \qquad \textnormal{as } j + t\to \infty.$$
	That is, for every neighborhood $U\supseteq \omega(x)$, there exists $M > 0$ such that $\gamma_j(t)\in U$ for all $j+t > M$.
\end{Prop}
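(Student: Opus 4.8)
Suppose the conclusion fails. Then there is a neighborhood $U \supseteq \omega(x)$ and a sequence of "times" $(j_n, t_n)$ along the execution $\chi$ with $j_n + t_n \to \infty$ but $\gamma_{j_n}(t_n) \notin U$ for all $n$. Here each $t_n \in T_{j_n}$, so $\tau_{j_n} \le t_n \le \tau_{j_n+1}$. Since $\st$ is compact, after passing to a subsequence we may assume $\gamma_{j_n}(t_n) \to p$ for some $p \in \st$. The point $p$ lies in the complement of $U$ if $U$ is open (and in general in $\cl(\st \setminus U)$), so in particular $p \notin \omega(x)$ as long as we take $U$ open — which we may, since the statement quantifies over all neighborhoods and it suffices to establish it for a neighborhood basis of open sets. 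Thus $p \notin \omega(x)$.

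The key step is then to show directly that $p \in \omega(x)$, yielding the contradiction. For each fixed $T > 0$, I need to show $p$ lies in $\cl\!\left(\bigcup_{\chi' \in \exc_H(x)} \{\gamma'_j(t) \mid j + t \ge T\}\right)$. But for all $n$ large enough, $j_n + t_n \ge T$ (this uses $j_n + t_n \to \infty$, which follows from $j_n + t_n \to \infty$ in the indexing — strictly I should note $t_n$ need not go to infinity, but $j_n + t_n \to \infty$ is exactly the hypothesis, so for the points $\gamma_{j_n}(t_n)$ we have $j_n + t_n \ge T$ eventually). Hence $\gamma_{j_n}(t_n)$ belongs to the set $\bigcup_{j} \{\gamma_j(t) \mid j + t \ge T\}$ for $\chi$ itself for all large $n$, and since $\gamma_{j_n}(t_n) \to p$, we get $p$ in its closure. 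As this holds for every $T > 0$, we conclude $p \in \omega(x)$, the desired contradiction.

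**The main obstacle** I anticipate is the bookkeeping around the indexing convention "$j + t \to \infty$" and the precise meaning of $\gamma_j(t)$ when $t$ ranges over the half-open or closed intervals $T_j$, especially reconciling the Zeno case ($N = \infty$, $\tau^{\stp} < \infty$) with the infinite-stop-time case. In the Zeno case, $j \to \infty$ while $t$ stays bounded; in the other case $t \to \infty$ is possible with $N$ finite or infinite. I should verify that in all these scenarios a subsequence with $j_n + t_n \to \infty$ and $\gamma_{j_n}(t_n)$ bounded away from $\omega(x)$ genuinely contradicts the definition — the only subtlety is making sure the witnessing set in Def.~\ref{def:omega-limit} for parameter $T$ really does contain the tail points $\gamma_{j_n}(t_n)$, which it does by construction since those points come from the single execution $\chi \in \exc_H(x)$. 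A minor point worth stating explicitly: it suffices to prove the neighborhood statement for open $U$, since every neighborhood contains an open one, and $\omega(x)$ is nonempty and compact by the preceding remark (using that $\chi$ itself is an infinite or Zeno execution with initial state $x$).
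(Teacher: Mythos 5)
Your proposal is correct and follows essentially the same route as the paper's proof: argue by contradiction, use compactness of $\st\setminus U$ to extract an accumulation point of the offending sequence $\gamma_{j_n}(t_n)$, and observe that Def.~\ref{def:omega-limit} forces this point into $\omega(x)$, contradicting $\omega(x)\cap(\st\setminus U)=\varnothing$. The extra care you take in unpacking why the accumulation point lies in $\omega(x)$ (checking membership in the $T$-indexed closures for every $T>0$) is a correct elaboration of the step the paper states in one line.
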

\begin{proof}
	Suppose not. 
	Then there exists an open neighborhood $U\supseteq \omega(x)$ and subsequences $(j_k)_{k\in \N}, (t_k)_{k\in \N}$ with $j_k + t_k \to \infty$ such that $\gamma_{j_k}(t_k)\in \st\setminus U$ for all $k$.
	Compactness of $\st\setminus U$ implies that the sequence $(\gamma_{j_k}(t_k))_{k\in \N}$ has a limit (accumulation) point $y\in \st\setminus U$.
	But Def. \ref{def:omega-limit} implies that $y\in \omega(x)$, and $\omega(x)$ is disjoint from $\st\setminus U$ by the definition of $U$, so we have a contradiction. 
\end{proof}

\begin{Def}[Forward invariance]\label{def:forward-invariant}
	Let $H = (\st,\fs,\gd,\varphi,r)$ be a \THS~ and $S\subseteq \st$ a subset.
	We say that	$S$ is \concept{forward invariant} if for every $x\in S$, $\chi \in \exc_H(x)$, and $t \geq 0$, $\chi(t)\subseteq S$.
\end{Def}

The following example shows that, in spite of Prop. \ref{prop:conv-omega}, hybrid $\omega$-limit sets and chain recurrent sets can display behavior very different from that of a classical (semi-)dynamical system.
In particular, the example shows that the hybrid chain recurrent set need not generally contain the ``steady state behavior.''
Such wild deviance motivates the introduction, in \S\ref{sec:main-results}, of the \emph{trapping guard condition}; a \THS~ satisfying the trapping guard condition does not suffer from such pathologies (see Cor.~\ref{co:omega-forward-invariant} and \ref{co:hybrid-chain-recurrent-set-closed-invariant}).

\begin{figure}
	\centering
	\def\svgwidth{0.4\columnwidth}
\begingroup%
  \makeatletter%
  \providecommand\color[2][]{%
    \errmessage{(Inkscape) Color is used for the text in Inkscape, but the package 'color.sty' is not loaded}%
    \renewcommand\color[2][]{}%
  }%
  \providecommand\transparent[1]{%
    \errmessage{(Inkscape) Transparency is used (non-zero) for the text in Inkscape, but the package 'transparent.sty' is not loaded}%
    \renewcommand\transparent[1]{}%
  }%
  \providecommand\rotatebox[2]{#2}%
  \newcommand*\fsize{\dimexpr\f@size pt\relax}%
  \newcommand*\lineheight[1]{\fontsize{\fsize}{#1\fsize}\selectfont}%
  \ifx\svgwidth\undefined%
    \setlength{\unitlength}{193.96692019bp}%
    \ifx\svgscale\undefined%
      \relax%
    \else%
      \setlength{\unitlength}{\unitlength * \real{\svgscale}}%
    \fi%
  \else%
    \setlength{\unitlength}{\svgwidth}%
  \fi%
  \global\let\svgwidth\undefined%
  \global\let\svgscale\undefined%
  \makeatother%
  \begin{picture}(1,0.16799845)%
    \lineheight{1}%
    \setlength\tabcolsep{0pt}%
    \put(0,0){\includegraphics[width=\unitlength,page=1]{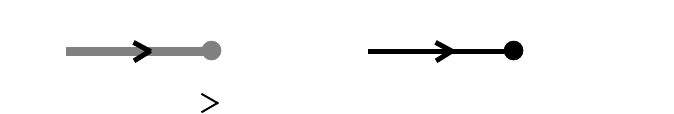}}%
    \put(0.33039117,0.13111328){\color[rgb]{0,0,0}\makebox(0,0)[lt]{\lineheight{1.25}\smash{\begin{tabular}[t]{l}$-2$\end{tabular}}}}%
    \put(0,0){\includegraphics[width=\unitlength,page=2]{omega-chain-pathology.pdf}}%
    \put(0.70932179,0.13111328){\color[rgb]{0,0,0}\makebox(0,0)[lt]{\lineheight{1.25}\smash{\begin{tabular}[t]{l}$0$\end{tabular}}}}%
    \put(0,0){\includegraphics[width=\unitlength,page=3]{omega-chain-pathology.pdf}}%
    \put(-0.00213974,0.13111328){\color[rgb]{0,0,0}\makebox(0,0)[lt]{\lineheight{1.25}\smash{\begin{tabular}[t]{l}$-3$\end{tabular}}}}%
  \end{picture}%
\endgroup%

	\caption{Depicted here is the \MHS~ $H = (\st,\fs,\gd,\varphi,r)$ discussed in Ex.~\ref{ex:omega-chain-pathologies}. 
	For this example $\st = [-3,-2] \cup [-1,0]\cup\{1\}\subseteq \R$, $\gd = \{-3,-2,0\}$, and $\fs = \st\setminus \gd$. The local semiflow $\varphi$ is generated by the vector fields $\frac{\partial}{\partial t}$ on $[-3,-2]$, $-t\frac{\partial}{\partial t}$ on $[-1,0]$, and $0 \frac{\partial}{\partial t}$ on $\{1\}$.  
	$r\colon \gd\to \st$ is defined by $r(-3) = -1$, $r(-2) = -3$, and $r(0) = 1$.
	Clearly $H$ is deterministic and nonblocking.
	For this example, $R(H) = (-3,-2]\cup \{1\}$ (shown in gray).
	Furthermore, (i,ii) $\omega(-1) = \{0\}$ is neither forward invariant nor contained in $R(H)$, (iii) $R(H)$ is not forward invariant since $r(r(-2)) = -1 \not \in R(H)$, and (iv) $R(H)$ is not closed in $\st$.  }\label{fig:omega-chain-pathology}
\end{figure}

\begin{Ex}
\label{ex:omega-chain-pathologies}
Fig.~\ref{fig:omega-chain-pathology} and its caption specify an example which shows that, for a general \MHS~ $H = (\st,\fs,\gd,\varphi,r)$, the following hold: (i) $\omega$-limit sets need not be forward invariant, (ii)  $\omega$-limit sets need not be contained in $R(H)$, (iii) $R(H)$ need not be forward invariant, and (iv) $R(H)$ need not be closed.  
Furthermore, this example shows that these pathologies can occur even if $\st$ is compact and $H$ is deterministic and nonblocking.
\end{Ex}

\begin{Def}[Hybrid trapping sets and attracting-repelling pairs] \label{def:attracting-repelling}
	Let $H = (\st,\fs,\gd,\varphi,r)$ be a \THS.
	We say a precompact set $U\subseteq \st$ is a \concept{trapping set} if the following hold.
	\begin{itemize}
		\item $U$ is forward invariant. 
		\item There exists $T > 0$ such that
		$$\cl\left( \bigcup_{x\in U}\,\bigcup_{\chi = (N,\tau,\gamma) \in \exc_H(x)} \,  \{\gamma_j(t)\mid j + t \geq T\}\right)\subseteq \interior(U).$$
	\end{itemize}
    If $U$ is a trapping set, we define the \concept{attracting set} $A$ determined by $U$ to be $A\coloneqq \omega(U).$
    We define the \concept{repelling set} $A^*$ dual to $A$ as $A^*\coloneqq \{x\in \st\mid \omega(x) \cap A = \varnothing\}$.\footnote{For a continuous-time dynamical system given by a \emph{flow}, Conley showed that $A^*$ is also an attracting set for the time-reversed flow \cite[p.~32]{conley1978isolated}, so that there is a duality between attracting and repelling sets.
    For a semiflow, this duality no longer holds as stated because the time-reversed flow is not well-defined. However, we still choose to use the terminology ``dual repelling set'' for the case of a semiflow (as is somewhat common in the literature \cite{rybakowski1983morse,rybakowski1985morse,franzosa1988connection}) and, more generally, for the case of a \THS. 
    Additionally, we have chosen to use the terminology ``attracting and repelling sets'' rather than ``attractors and repellers'' \cite[II.5.1]{conley1978isolated} because ``care is needed since the literature contains many variations on the precise definitions'' of the latter \cite{Milnor_2006} (see also \cite{milnor1985concept,milnor1985conceptII}).}
\end{Def}

\section{Main results}\label{sec:main-results}
Our main results concern deterministic \THS~having \emph{trapping guards}, which we define below using the maximum flow time $\mu\colon \st \to [0,+\infty]$ given by
\begin{equation}\label{eq:max-flow-time-all}
\mu(x)\coloneqq
\begin{cases}
\sup \{t\in [0,\infty)\colon (t,x) \in \dom(\varphi)\}, & x \in \fs \\
0, & x \not \in \fs
\end{cases}.
\end{equation}

\begin{Def}[Trapping guards]\label{def:trapping-guards}
	Let $H=(\st,\fs,\gd,\varphi,r)$ be a deterministic \THS.  	
	Let $\cl(\dom(\varphi))$ be the closure of $\dom(\varphi)$ in $[0,\infty) \times \st$.
	We say that $\gd$ is a \concept{flow-induced retract} if there exists a neighborhood $U \subseteq \st$ of $\gd$ and a continuous retraction $\rho \colon U \to \gd$ ($\rho|_\gd = \id_\gd$) such that (i) the $U$-restricted maximum flow time $\mu|_{U}\colon U\to \R$ is continuous, and (ii) such that $\varphi|_{\dom(\varphi)\cap ([0,\infty) \times U)}$ admits a unique continuous extension $\widehat{\varphi}$ to $\cl(\dom(\varphi))\cap ([0,\infty) \times U)$ given by
\begin{equation}\label{eq:semiflow-local-extension}
\widehat{\varphi}(t,x) = 
\begin{cases} 
\varphi^t(x), & (t,x)\in \dom(\varphi) \cap ([0,\infty) \times U)\\ 
\rho(x), & x\in U,\, t = \mu(x) 
\end{cases}.     
\end{equation}	
	We say that $\rho\colon U \to \gd$ is a \concept{flow-induced retraction}.
	If $\gd$ is a flow-induced retract, we also say that $H$ has a \concept{trapping guard} $\gd$ or that $H$ satisfies the \concept{trapping guard condition}.
\end{Def}

\begin{Rem}
	Note that, in particular, the trapping guard condition rules out the possibility that trajectories of a continuous extension of $\varphi$ ``graze'' $\gd$.
	However, the trapping guard condition also rules out behavior unrelated to grazing, such as the possibility that $\gd$ repel trajectories of $\varphi$ initialized near $\gd$.
\end{Rem}

We justify Def.~\ref{def:trapping-guards} in several ways.
First, Ex.~\ref{ex:omega-chain-pathologies} shows that, in the absence of the trapping guard condition, $\omega$-limit and chain recurrent sets need not satisfy many of the properties which are standard in the setting of classical dynamical systems.
Second, \THS~ satisfying this condition encompass a wide variety of physically-relevant examples, as shown in the next sections. 
Third, Ex. \ref{ex:counterexample} demonstrates that our main theorems can fail without the trapping guard condition even for very simple \MHS, hence some such condition is necessary.  
Finally, in Remark \ref{rem:suspension-semiflow-continuous-iff-trapping-guard} and SM \S \ref{app:suspension-semiflow-continuous-implies-trapping-guard-condition} we point out that Def. \ref{def:trapping-guards} arises naturally from mathematical considerations.

Our second main result involves the notion of a complete Lyapunov function introduced by Conley \cite[p.~39]{conley1978isolated}; here we generalize the definition to \MHS.

\begin{Def}[Hybrid complete Lyapunov function]\label{def:lyapunov}
	Let $H=(\st,\fs,\gd,\varphi,r)$ be an \MHS.  A \concept{(hybrid) complete Lyapunov function} for $H$ is a continuous function $L: \st \to \mathbb{R}$ satisfying the following conditions.
	\begin{enumerate}[label=\ref*{def:lyapunov}.\arabic*.\hspace{0.1cm}, ref=\ref*{def:lyapunov}.\arabic*,leftmargin=1.3cm]
		\item \label{enum:flow-dec} For every $x\in \fs\setminus R(H)$, $\chi \in \exc_H(x)$, $t > 0$, and $y \in \chi(t)$, $L(y) < L(x)$.	
		\item \label{enum:reset-dec} If $x\in \gd\setminus R(H)$, then $L(r(x))< L(x)$.
		
		\item \label{enum:lyap-chain} For all $x,y \in R(H)$:  $x$ and $y$ are chain equivalent if and only if  $L(x) = L(y)$.
		 
		\item \label{enum:nowhere-dense} $L(R(H))$ is nowhere dense in $\R$.
	\end{enumerate}
\end{Def}	

In most of the remainder of this paper, we either make or verify Assumptions \ref{assump:deterministic}, \ref{assump:inf-or-Zeno}, and \ref{assump:trapping-guard} below regarding a \THS~ $H = (\st,\fs,\gd,\varphi,r)$. 
\begin{Assump}\label{assump:deterministic}
	$H$ is deterministic.
\end{Assump}

\begin{Assump}\label{assump:inf-or-Zeno}
	For every $x \in \st$, there is an infinite or Zeno execution starting at $x$.
\end{Assump}

\begin{Assump}\label{assump:trapping-guard}
	$H$ satisfies the trapping guard condition (Def.~\ref{def:trapping-guards}).
\end{Assump}

We also make the following assumption on $H = (\st,\fs,\gd,\varphi,r)$ in Theorems \ref{th:conley-decomp} and \ref{th:hybrid-conley}.

\begin{Assump}\label{assump:compact}
	$\st$ is compact.
\end{Assump}

We now state our main results, but postpone the proofs to \S \ref{sec:proofs-main-theorems}.
\begin{restatable}[Conley's decomposition theorem for \MHS]{Th}{ThmConleyDecomp}\label{th:conley-decomp}
    Let $H=(\st,\fs,\gd,\varphi,r)$ be a metric hybrid system satisfying Assumptions \ref{assump:deterministic}, \ref{assump:inf-or-Zeno}, \ref{assump:trapping-guard}, and \ref{assump:compact}.
    Then the hybrid chain recurrent set $R(H)$ admits a \concept{Conley decomposition}:
	\begin{equation}
	R(H) = \bigcap \{A\cup A^*\mid A \textnormal{ is an attracting set for } H.\}.
	\end{equation} 
	Furthermore, $x,y\in \st$ are chain equivalent if and only if either $x,y\in A$ or $x,y\in A^*$ for every attracting-repelling pair $(A,A^*).$
\end{restatable}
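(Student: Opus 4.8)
The plan is to deduce the theorem from the classical case --- Hurley's generalization to semiflows of Conley's decomposition theorem \cite{hurley1995chain,hurley1998lyapunov} --- by passing to the \emph{hybrid suspension}. Using Assumptions~\ref{assump:deterministic}--\ref{assump:compact}, and crucially the trapping guard condition (Def.~\ref{def:trapping-guards}), which is exactly what makes the suspension semiflow continuous (cf.\ Remark~\ref{rem:suspension-semiflow-continuous-iff-trapping-guard}), one first embeds $H$ into its relaxed version $H'$ and constructs (\S\ref{sec:hybrid-suspension}) a compact metrizable space $\Sus_H$ carrying a continuous semiflow $\Phi_H$, together with an embedding $\st\hookrightarrow\Sus_H$ under which each maximal execution of $H$ is the intersection with $\st$ of a $\Phi_H$-trajectory. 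Applying Hurley's theorem to $(\Sus_H,\Phi_H)$ then yields both a Conley decomposition $R(\Sus_H,\Phi_H)=\bigcap\{\widehat A\cup\widehat A^{*}\}$ over the attracting sets $\widehat A$ of $\Phi_H$, and the description of $\Phi_H$-chain equivalence in terms of the attracting--repelling pairs of $\Phi_H$.

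The substance of the proof is then a sequence of \emph{compatibility} lemmas transporting these statements back to $H$. I would prove, in order: (i) $\omega$-limit compatibility, $\omega(U)=\omega_{\Phi_H}(U)\cap\st$ for $U\subseteq\st$, from Proposition~\ref{prop:conv-omega} and the execution--trajectory correspondence; (ii) attracting--repelling compatibility, namely a bijection between attracting sets $A$ for $H$ and attracting sets $\widehat A$ for $\Phi_H$ with $A=\widehat A\cap\st$ and $A^{*}=\widehat A^{*}\cap\st$, obtained by matching trapping sets in $\st$ with trapping sets in $\Sus_H$ using forward invariance (Def.~\ref{def:forward-invariant}) and the flow-induced retraction; and (iii) chain compatibility, that the hybrid Conley relation $\conley_H$ is the restriction to $\st\times\st$ of the Conley relation of $\Phi_H$ --- which immediately gives $R(H)=R(\Sus_H,\Phi_H)\cap\st$ and that the chain equivalence classes of $H$ are the traces on $\st$ of those of $\Phi_H$.

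Granting (i)--(iii) the theorem drops out by restricting the classical statements to $\st$. For the decomposition,
\[
R(H)=R(\Sus_H,\Phi_H)\cap\st=\Bigl(\bigcap_{\widehat A}(\widehat A\cup\widehat A^{*})\Bigr)\cap\st=\bigcap_{\widehat A}\bigl((\widehat A\cap\st)\cup(\widehat A^{*}\cap\st)\bigr)=\bigcap_{A}\,(A\cup A^{*}),
\]
where the last equality uses (ii) to identify the two indexing families. For the chain--equivalence statement, $x,y\in\st$ are chain equivalent in $H$ iff (by (iii)) they are $\Phi_H$-chain equivalent iff (by Hurley) for every attracting--repelling pair $(\widehat A,\widehat A^{*})$ of $\Phi_H$ both lie in $\widehat A$ or both lie in $\widehat A^{*}$ iff (by (ii), using $x,y\in\st$) the same dichotomy holds for every attracting--repelling pair of $H$.

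I expect step (iii), and specifically the direction that converts a classical $(\epsilon,T)$-chain for $\Phi_H$ between points of $\st$ into a genuine hybrid $(\epsilon',T')$-chain for $H$, to be the main obstacle. A $\Phi_H$-trajectory may cross the glued copies of $\st'$ in rapid succession and may shadow Zeno behaviour near $\gd$, so this conversion requires the ``nice'' $(\epsilon,T)$-chain technology that forbids double jumps (Def.~\ref{def:nice-chains-conley-rel}), uniform continuity of $\varphi$ and $r$ on compacta, and a careful use of the flow-induced retraction to absorb the errors that accumulate near $\gd$ onto admissible flow arcs; one must also check that neither this passage nor the passage between $H$ and its relaxed version $H'$ alters the Conley relation. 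Steps (i) and (ii) are comparatively routine once the basic topological and dynamical properties of $\Sus_H$ are in hand, though (ii) still requires verifying that the suspension introduces no ``spurious'' attracting sets that would spoil the intersection after restriction to $\st$.
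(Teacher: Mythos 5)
Your proposal follows essentially the same route as the paper: pass to the relaxed system and hybrid suspension, apply Hurley's decomposition theorem to the continuous semiflow $(\Sus_H,\Phi_H)$, and transport the conclusions back via exactly the three compatibility results the paper proves ($\omega$-limit sets in Prop.~\ref{prop:omega-limit-set-suspension}, attracting--repelling pairs in Prop.~\ref{prop:attracting-repelling-pairs-suspension}, and chain equivalence in Prop.~\ref{prop:hybrid-chain-equiv-vs-suspension-chain-equiv}, the last resting on Lemmas~\ref{lem:niceify} and \ref{lem:relaxed-conley-relation} as you anticipate). You correctly locate the main difficulty in the chain-compatibility step; your only mild miscalibration is calling (i) and (ii) ``comparatively routine,'' since the $\omega$-limit compatibility in particular requires a nontrivial closure-intersection argument rather than following directly from Prop.~\ref{prop:conv-omega}.
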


\begin{restatable}[Conley's fundamental theorem for \MHS]{Th}{ThmFund}\label{th:hybrid-conley}	
	Let $H=(\st,\fs,\gd,\varphi,r)$ be a metric hybrid system satisfying Assumptions \ref{assump:deterministic}, \ref{assump:inf-or-Zeno}, \ref{assump:trapping-guard}, and \ref{assump:compact}.
	Then there exists a complete Lyapunov function for $H$.
\end{restatable}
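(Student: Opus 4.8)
The plan is to reduce the statement to Hurley's and Patr\~ao's generalizations of Conley's fundamental theorem for continuous semiflows on separable metric spaces, transported through the hybrid suspension. First I would construct the relaxed system $H'$ with enlarged state space $\st'$: roughly, one adjoins a ``time-since/until reset'' coordinate so that each reset of $H$ is spread out over a genuine interval of (relaxed) flow, which regularizes the instantaneous and Zeno behavior at the level of the suspension. Using Assumption~\ref{assump:deterministic} (so that flow and reset never compete) together with the trapping guard condition (Assumption~\ref{assump:trapping-guard})---in particular the flow-induced retraction $\rho$ and the continuous extension $\widehat{\varphi}$ of $\varphi$ up to $\gd$ supplied by Def.~\ref{def:trapping-guards}---one glues $\st'$ to itself along the reset map to obtain a space $\Sus_H$ carrying a genuine \emph{continuous semiflow} $\Phi_H$. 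Compactness of $\st$ (Assumption~\ref{assump:compact}) yields compactness, hence separability and metrizability, of $\Sus_H$, while Assumption~\ref{assump:inf-or-Zeno} guarantees that $\Phi_H$ is defined for all forward time. Most of the point-set topology lives here: verifying that the glued space is Hausdorff and metrizable, that $\mu|_U$ and $\rho$ patch the flow continuously across the gluing locus, and that the resulting family satisfies the semiflow axioms.

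Next I would invoke Patr\~ao's theorem \cite{patrao2011existence} (building on \cite{hurley1998lyapunov,hurley1995chain}) to produce a complete Lyapunov function $V\colon\Sus_H\to\R$ for $\Phi_H$: $V$ strictly decreases along $\Phi_H$-orbits outside the chain recurrent set, is constant exactly on chain equivalence classes of $\Phi_H$, and has nowhere-dense image on the chain recurrent set of $\Phi_H$. Identifying $\st$ with its canonical copy inside $\Sus_H$ (via which, by construction, the image of each execution of $H$ is the intersection with $\st$ of a $\Phi_H$-trajectory image), the candidate complete Lyapunov function for $H$ is $L\coloneqq V|_\st$.

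The bulk of the work---and the principal obstacle---is the suite of \emph{compatibility lemmas} identifying the dynamical objects of $H$ with the traces on $\st$ of the corresponding objects of $\Phi_H$: (a) $\omega_H(x)=\omega_{\Phi_H}(x)\cap\st$, where Assumptions~\ref{assump:inf-or-Zeno} and \ref{assump:trapping-guard} are essential to rule out the pathologies of Ex.~\ref{ex:omega-chain-pathologies} and secure the good behavior recorded in Cor.~\ref{co:omega-forward-invariant}; (b) a set is an attracting set for $H$ precisely when it is the trace on $\st$ of an attracting set for $\Phi_H$, so that the attracting--repelling pairs correspond; and, most delicately, (c) the hybrid Conley relation $\conley_H$ equals the restriction to $\st\times\st$ of the Conley relation of $\Phi_H$. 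The hard inclusion in (c) is ``$\subseteq$'': given a classical $(\epsilon,T)$-chain of $\Phi_H$ between points of $\st$, one must push it down to a hybrid $(\epsilon,T)$-chain of $H$, which requires replacing orbit segments that traverse the gluing region by honest flow-then-reset segments while controlling the accumulated error---this is exactly where the ``well-behaved'' and ``nice'' $(\epsilon,T)$-chain constructions and the technical lemmas of SM~\S\ref{app:technical} are used. Granting (a)--(c), each condition of Def.~\ref{def:lyapunov} for $L=V|_\st$ follows from the analogous property of $V$: \ref{enum:flow-dec} and \ref{enum:reset-dec} from strict decrease of $V$ along non-chain-recurrent $\Phi_H$-orbits (a reset step of $H$ corresponds to a nondegenerate stretch of the suspension orbit through the gluing), \ref{enum:lyap-chain} from (c) together with the chain-class characterization of $V$, and \ref{enum:nowhere-dense} from $R(H)\subseteq R(\Phi_H)\cap\st$ plus nowhere-density of $V(R(\Phi_H))$. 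Theorem~\ref{th:conley-decomp} drops out of the same compatibility lemmas combined with Conley's decomposition theorem applied to $(\Sus_H,\Phi_H)$.
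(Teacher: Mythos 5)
Your proposal is correct and follows essentially the same route as the paper: relax, suspend, apply Patr\~ao's theorem to obtain $V$ on $(\Sus_H,\Phi_H)$, pull back along the embedding $\pi\circ\iota$, and justify the conditions of Def.~\ref{def:lyapunov} via the compatibility of $\omega$-limit sets, attracting--repelling pairs, and (most delicately) the Conley relation with their suspension counterparts, exactly as in Prop.s~\ref{prop:omega-limit-set-suspension}, \ref{prop:attracting-repelling-pairs-suspension}, and \ref{prop:hybrid-chain-equiv-vs-suspension-chain-equiv}. The only cosmetic difference is that you phrase the correspondences as traces on $\st$ rather than $(\pi\circ\iota)$-preimages, which is equivalent since $\pi\circ\iota$ is a topological embedding.
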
	
Simple examples illustrating these theorems---including an example which shows that the conclusions can fail without Assumption~\ref{assump:trapping-guard}---are given in \S\ref{sec:ex}.
\S\ref{sec:app} contains propositions which guarantee that Theorems~\ref{th:conley-decomp} and \ref{th:hybrid-conley} apply to various general classes of hybrid systems appearing in the literature.

\begin{Rem}
	As discussed in Ex. \ref{ex:special-case-discrete-time} and \ref{ex:special-case-continuous-time}, a discrete-time (semi-)dynamical system given by iterating a continuous map is a deterministic \THS~ with $\st = \gd$, and a continuous-time (semi-)dynamical system given by a continuous semiflow is a deterministic \THS~ with $\st = \fs$.
	Viewed as hybrid systems, a continuous-time system has only infinite maximal executions, a discrete-time system has only Zeno maximal executions, and in both cases the trapping guard condition is satisfied vacuously.
	Hence Assumptions \ref{assump:deterministic}, \ref{assump:inf-or-Zeno}, and \ref{assump:trapping-guard} are satisfied in both cases.
	Additionally, Ex.~\ref{ex:special-case-continuous-time} shows that, although Def.~\ref{def:eps-T-chains} does not specialize to the classical definition of $(\epsilon, T)$-chains in the continuous-time setting \cite{conley1978isolated,hurley1995chain}, the Conley relations  defined using both definitions coincide. 
	Hence Theorems \ref{th:conley-decomp} and \ref{th:hybrid-conley} strictly generalize the corresponding theorems of \cite{conley1978isolated,franks1988variation}.
\end{Rem}

\begin{Rem}\label{rem:noncompact-but-restrict}
	Let $H=(\st,\fs,\gd,\varphi,r)$ be an \MHS~ not satisfying the hypotheses of Theorems \ref{th:conley-decomp} and \ref{th:hybrid-conley}. 
	If there exists a forward invariant subset $K\subseteq \st$ such that Assumptions~\ref{assump:deterministic}, \ref{assump:inf-or-Zeno}, \ref{assump:trapping-guard}, and \ref{assump:compact} are satisfied by the ``restricted'' \MHS~ $H_{K} = (\st\cap K, \fs\cap K, \gd \cap K, \varphi_{K}, r|_{\gd\cap K})$ (where $\varphi_K$ is the restriction of $\varphi$ to $\dom(\varphi) \cap ([0,\infty) \times K)$), then Theorems~\ref{th:conley-decomp} and \ref{th:hybrid-conley} can still be be applied to $H_K$.
\end{Rem}

\section{Applications}  \label{sec:app}
This section assumes some basic knowledge of smooth manifold theory \cite{tu2010intro,lee2013smooth} (and, for Cor.~\ref{co:hybrid-lagrangian}, geometric mechanics \cite{marsden1994introduction}), and can safely be skipped by the reader whose background and/or motivation are lacking.
We adopt the definition of ``smooth manifold (with boundary)'' from \cite[p.~48]{tu2010intro} which allows different connected components to have different dimensions.\footnote{
This variant affords substantial economy of expression. 
For example, this convention obviates not only the need to introduce the ``smooth hybrid manifold'' terminology of \cite[Sec.~III.A,~Sec.~A.4]{Burden_Revzen_Sastry_2015,Johnson_Burden_Koditschek_2016}, but also the concomitant technical reasoning required to use it. See also \cite{tu2017math-se} for clarifications.\label{foot:tu}
 }

\subsection{General classes of \MHS~to which Theorems~\ref{th:conley-decomp} and \ref{th:hybrid-conley} apply}

The following propositions provide general classes of hybrid systems to which Theorems \ref{th:conley-decomp} and \ref{th:hybrid-conley} apply. We refer to hybrid systems satisfying the hypotheses of Prop.~\ref{prop:trapp-guard-suff-cond} as \concept{smooth exit-boundary guarded \THS}, and by \concept{smooth exit-boundary guarded \MHS} if a compatible extended metric on $\st$ is also specified.

We emphasize (as explained in Footnote~\ref{foot:tu}) that we are using the definition of ``smooth manifold (with boundary)'' from \cite[p.~48]{tu2010intro} which allows different connected components to have different dimensions.
\begin{Prop}\label{prop:trapp-guard-suff-cond}
  Let $H=(\st,\fs,\gd,\varphi,r)$ be a \THS~ for which $\st$ is a smooth manifold with boundary, $\gd$ is a clopen subset of $\partial \st$, $\fs \coloneqq \st \setminus \gd$, and the local semiflow $\varphi$ is generated by a locally Lipschitz\footnote{This is a well-defined, metric-independent notion which depends only on the smooth structure of $\st$ \cite[Rem.~1]{kvalheim2019existence}.} vector field $X$ on $\st$.

  Assume that $X$ is non-strictly inward pointing at each point of $\partial \st \setminus \gd$, and is non-strictly outward pointing at every point of $\gd$.  
  Further assume that, for each $z\in \gd$, the maximal integral curve $t\mapsto \sigma(t)$ of $X$ satisfying $\sigma(0) = z$ is not defined for any positive values of $t$.
  
  Then $H$ satisfies Assumptions~\ref{assump:deterministic} and \ref{assump:trapping-guard}.
  If $\st$ is compact, then $H$ also satisfies Assumptions \ref{assump:inf-or-Zeno} and (trivially) \ref{assump:compact}.
  If additionally $\st$ is equipped with any compatible extended metric making $H$ an \MHS, then $H$ admits a complete Lyapunov function and $R(H)$ admits a Conley decomposition.
\end{Prop}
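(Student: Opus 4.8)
The plan is to dispose of Assumptions~\ref{assump:deterministic}, \ref{assump:inf-or-Zeno}, and \ref{assump:compact} quickly and to concentrate on Assumption~\ref{assump:trapping-guard}. Assumption~\ref{assump:deterministic} is immediate, since $\fs\cap\gd=(\st\setminus\gd)\cap\gd=\varnothing$, and when $\st$ is compact Assumption~\ref{assump:compact} is trivial. For Assumption~\ref{assump:inf-or-Zeno} (again with $\st$ compact), I would show that the maximal $\varphi$-trajectory issuing from any point of $\fs$ is either defined for all future time or converges in finite time to a point of $\gd$: compactness of $\st$ precludes finite-time blow-up, and the hypothesis that $X$ is non-strictly inward pointing along $\partial\st\setminus\gd$ precludes escape from $\st$ through $\partial\st\setminus\gd$ by a Nagumo-type subtangentiality argument. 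Concatenating such flow arcs with resets at $\gd$, the unique maximal execution of $H$ from every point is then infinite or Zeno. Granting Assumptions~\ref{assump:deterministic}--\ref{assump:compact} and a compatible extended metric, the final assertion of the proposition is immediate from Theorems~\ref{th:conley-decomp} and \ref{th:hybrid-conley}. The real content is thus the verification of Assumption~\ref{assump:trapping-guard}.

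To verify the trapping guard condition I would realize the flow-induced retraction as the first-hit map of $\varphi$ onto $\gd$. Using the collar neighborhood theorem and the fact that $\gd$ is clopen in $\partial\st$, fix a smooth collar $U_0\cong\gd\times[0,\delta)$ of $\gd$ in $\st$, regard $\gd\times[0,\delta)$ as sitting inside the open manifold $\widehat W\coloneqq\gd\times(-\delta,\delta)$, so that $U_0=\widehat W\cap\st$ is relatively closed in $\widehat W$ with open complement, and extend the locally Lipschitz vector field $X|_{U_0}$ to a locally Lipschitz $\widetilde X$ on $\widehat W$ by a standard Lipschitz extension, writing $\widetilde\varphi$ for its (continuous) local flow. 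The crux is the claim: for every $z\in\gd$, $\widetilde\varphi^t(z)\notin\st$ for all $t>0$ in the domain of $t\mapsto\widetilde\varphi^t(z)$. Indeed, were this false, then either (a) $t\mapsto\widetilde\varphi^t(z)$ would remain in $\st$ on some interval $[0,\epsilon]$, or (b) it would re-enter $\st$ at some $v>0$ after having left it. Case (a) exhibits a forward integral curve of $X$ in $\st$ issuing from $z$; case (b) exhibits one issuing from $\widetilde\varphi^w(z)$, where $w\coloneqq\sup\{t\in[0,v]:\widetilde\varphi^t(z)\notin\st\}$, and $\widetilde\varphi^w(z)$ must lie in $\gd$ by an intermediate-value argument in the collar together with the clopen-ness of $\gd$ in $\partial\st$. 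Both outcomes contradict the hypothesis that no forward integral curve of $X$ issuing from a point of $\gd$ is defined for positive time.

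From the claim I would extract the required data as follows. For each $z\in\gd$ pick $\delta_z>0$ with $\widetilde\varphi^{\delta_z}(z)\in\widehat W\setminus\st$; since $\widehat W\setminus\st$ is open and $\widetilde\varphi$ is continuous there is a neighborhood $V_z\subseteq\widehat W$ of $z$ with $\widetilde\varphi^{\delta_z}(x)\notin\st$ for all $x\in V_z$. Because the $\varphi$-trajectory from $x$ coincides with $t\mapsto\widetilde\varphi^t(x)$ until the latter first meets $\gd$, and (by the claim) it cannot graze $\gd$ before then, that trajectory must reach $\gd$ at a first hitting time equal to $\mu(x)\le\delta_z$. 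Put $U\coloneqq\bigcup_{z\in\gd}(V_z\cap\st)$, a neighborhood of $\gd$, and $\rho(x)\coloneqq\widetilde\varphi^{\mu(x)}(x)\in\gd$. Then $\mu|_U$ is lower semicontinuous because $\dom(\varphi)$ is open, and upper semicontinuous because, by the claim applied at $\rho(x_0)\in\gd$, the curve $t\mapsto\widetilde\varphi^t(x_0)$ exits $\st$ immediately after time $\mu(x_0)$, so by continuity all nearby curves $t\mapsto\widetilde\varphi^t(x)$ have exited $\st$ --- hence met $\gd$ --- by a time arbitrarily close to $\mu(x_0)$; thus $\mu|_U$ is continuous, and $\rho=\widetilde\varphi\circ(\mu,\id)$ is continuous, maps $U$ into $\gd$, and restricts to $\id_\gd$ since $\mu\equiv 0$ on $\gd$. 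Finally, since $\widetilde\varphi^t(x)=\varphi^t(x)$ for $0\le t<\mu(x)$ and $\widetilde\varphi^{\mu(x)}(x)=\rho(x)$, the map $\widehat\varphi$ prescribed by \eqref{eq:semiflow-local-extension} is exactly the restriction of the continuous map $\widetilde\varphi$ to $\cl(\dom(\varphi))\cap([0,\infty)\times U)=\{(t,x):x\in U,\ 0\le t\le\mu(x)\}$, so it is continuous; its uniqueness as a continuous extension is automatic because $\dom(\varphi)\cap([0,\infty)\times U)$ is dense there and $\st$ is Hausdorff. This exhibits $\gd$ as a flow-induced retract, so Assumption~\ref{assump:trapping-guard} holds.

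I expect the main obstacle to be exactly the claim together with the continuity of $\mu$ and $\rho$. Because $X$ is only locally Lipschitz and is permitted to be merely tangent, rather than transverse, to $\partial\st$ at points of $\gd$, one cannot straighten $X$ by a flow box near $\gd$; the argument must rest entirely on the ``no forward solution from $\gd$'' hypothesis and the clopen-ness of $\gd$ in $\partial\st$ to rule out both re-entry into $\st$ and grazing of $\gd$, with the inward-pointing hypothesis on $\partial\st\setminus\gd$ serving only to confine the whole analysis to a neighborhood of $\gd$. The remaining ingredients --- determinism, the infinite/Zeno dichotomy on compact $\st$, and the concluding appeal to Theorems~\ref{th:conley-decomp} and \ref{th:hybrid-conley} --- should be routine.
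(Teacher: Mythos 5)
Your proposal is correct, and its core idea coincides with the paper's: extend $X$ (hence its flow) across $\gd$ into an ambient space, observe that the hypothesis ``no forward integral curve from $\gd$'' forces every point of $\gd$ to leave $\st$ \emph{immediately} under the extended flow, and deduce from this the finiteness and continuity of the first-exit time $\mu$ and of the exit-point retraction $\rho$ on a neighborhood of $\gd$. The differences are in packaging. Where you build the ambient extension from a collar $\gd\times(-\delta,\delta)$ of the clopen piece $\gd\subseteq\partial\st$, the paper Whitney-embeds each constant-dimension component $M_j\subseteq\R^{n_j}$ and extends $X|_{M_j}$ to a locally Lipschitz field on $\R^{n_j}$; and where you prove the continuity of $\mu$ and $\rho$ by hand (your ``key claim'' plus the semicontinuity arguments), the paper identifies the set of points eventually exiting $M_j$ as a \emph{Wazewski set} with immediate exit set $\gd\cap M_j$ and cites the proof of Wazewski's theorem from Conley's monograph, which delivers exactly the continuity of the exit time and exit point. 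Your version is more self-contained; the paper's is shorter and makes the conceptual ancestry (Wazewski retracts) explicit. The treatment of Assumptions~\ref{assump:deterministic}, \ref{assump:inf-or-Zeno}, and \ref{assump:compact} and the final appeal to Theorems~\ref{th:conley-decomp} and \ref{th:hybrid-conley} are the same in both. One small remark: your closing sentence understates the role of the inward-pointing hypothesis on $\partial\st\setminus\gd$ --- it is what guarantees Assumption~\ref{assump:inf-or-Zeno} (trajectories cannot leave $\st$ except through $\gd$), as your own first paragraph correctly uses, rather than merely localizing the analysis near $\gd$.
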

\begin{Rem}
	The conditions that (i) $X$ point non-strictly outward on $\gd$ and (ii) ``the maximal integral curve $t\mapsto \sigma(t)$ of $X$ satisfying $\sigma(0) = z$ is not defined for any positive values of $t$'' (for all $z\in \gd$)  are implied by the stronger assumption, appearing in the hybrid systems literature \cite{Burden_Revzen_Sastry_2015,De_Burden_Koditschek_2018,clark2019poincare,clark2020poincare}, that $X$ be strictly outward pointing at every point of $\gd$. 
\end{Rem}

\begin{proof}
    By definition \cite[p.~48]{tu2010intro} we have that $\st = \bigsqcup_{j\in J}M_j$ is a disjoint union of smooth manifolds with each $M_j$ of constant dimension.
	For each $j\in J$ we view $M_j\subseteq \R^{n_j}$ as a properly embedded submanifold (by Whitney's theorem), extend $X_j\coloneqq X|_{M_j}$ arbitrarily to a locally Lipschitz vector field $\widehat{X}_j$ on $\R^{n_j}$ with local flow $\widehat{\Phi}_j\colon \dom(\widehat{\Phi}_j)\subseteq \R\times \R^{n_j} \to \R^{n_j}$, and let $\pi_2^j\colon \R \times \R^{n_j} \to \R^{n_j}$ be the projection onto the second factor.
	Then $$U_j\coloneqq M_j \cap \pi_2^j\left((\widehat{\Phi}_j)^{-1}(\R^{n_j} \setminus M_j)\right) \subseteq M_j$$ is an open neighborhood\footnote{This follows since $\R^{n_j} \setminus M_j$ is open, $\widehat{\Phi}_j$ is continuous, $\dom(\widehat{\varphi}_j)$ is open, $\pi_2^j$ is an open map, and every $z\in \gd\cap M_j$ immediately flows into $\R^{n_j} \setminus M_j$; this last property follows from the assumption that the $X$-integral curve through every $z\in \gd$ is not defined for any positive times.} of $\gd\cap M_j$ in $M_j$ such that (i) for all $x\in U_j$ and $T \geq 0$, $$\left[\widehat{\Phi}_j^{[0,T]}(x) \subseteq \cl(U_j) \subseteq M_j\right] \implies \left[\widehat{\Phi}_j^{[0,T]}(x) \subseteq U_j\right],$$ and (ii) for every $x\in U_j$, there exists $t > 0$ such that $\widehat{\Phi}_j^t(x) \in \gd \cap M_j$.
	Since also $\gd\cap M_j$ is closed in $U_j$ and every point of $\gd\cap M_j$ immediately flows into $\R^{n_j}\setminus M_j$, (i) and (ii)  imply (using the terminology of \cite[p.~24]{conley1978isolated}) that $U_j$ is a \emph{Wazewski set} for $\widehat{\Phi}_j$ with \emph{eventual exit set} $U_j$ and \emph{immediate exit set} $\gd \cap M_j$.
	In this case, the proof of Wazewski's theorem \cite[p.~25]{conley1978isolated} and the definition of the disjoint union topology show that $\gd$ is a trapping guard with $\bigsqcup_{j\in J}U_j$ the domain of a flow-induced retraction, so Assumption \ref{assump:trapping-guard} is verified.
	
	Assumption \ref{assump:deterministic} is verified by definition since $\fs\cap \gd = \varnothing$.
	If $\st$ is also compact, then (since $X$ points non-strictly inward at $\partial \st \setminus \gd$) $X$ integral curves either intersect $\gd$ or exist for all forward time, so Assumption \ref{assump:inf-or-Zeno} is verified.
	Theorems~\ref{th:conley-decomp} and \ref{th:hybrid-conley} imply the final statement of the proposition. 
\end{proof}	

Prop. \ref{prop:trapp-guard-suff-cond} gives one broad class of \MHS~ which satisfy the hypotheses of Theorems \ref{th:conley-decomp} and \ref{th:hybrid-conley}.
In the following Prop.~\ref{prop:sub-level-set}, we present another broad class of \MHS~ to which these theorems also apply.
We then specialize Prop.~\ref{prop:sub-level-set} to a class of \concept{Lagrangian hybrid systems} in Cor.~\ref{co:hybrid-lagrangian}.
Instances of the latter class of systems (or slight variations thereof) have been studied, for example, in \cite{grizzle2001asymptotically, westervelt2003hybrid, ames2006there,poulakakis2009spring,or2010stability,bloch2017quasivelocities,razavi2017symmetry}.
Looking ahead to \S \ref{sec:ex}, this class of systems substantially generalizes the ``gravitational-force bouncing ball'' of Ex. \ref{ex:bouncing-ball} (but does not encompass systems like the ``spring-force bouncing ball'' of Ex. \ref{ex:spring}). 

The statement of Prop.~\ref{prop:sub-level-set} involves \emph{Lie derivatives}.
Given a smooth manifold $M$, a $C^k$ function $\psi\colon M\to \R$, and a complete $C^k$ vector field $X$ on $M$ with flow $\Phi\colon \R \times M \to M$, we let $\mathcal{L}_X \psi\coloneqq \frac{\partial}{\partial t}(\psi\circ \Phi^t)|_{t=0} = d\psi\cdot X$ denote the \concept{Lie derivative} of $\psi$ along $X$; for all $m\in \{2,\ldots, k\}$ we inductively define $\mathcal{L}_X^m \psi \coloneqq \mathcal{L}_X (\mathcal{L}_X^{m-1} \psi)$.

Again using the definition of ``smooth manifold (with boundary)'' from \cite[p.~48]{tu2010intro} (again, see Footnote~\ref{foot:tu}) we state the following result.

\begin{Prop} \label{prop:sub-level-set}
	Let $M$ be a smooth manifold, $k\in \N \cup \{+\infty\}$, $X$ be a complete $C^{k}$ vector field on $M$ with flow $\Phi\colon \R \times M \to M$, and $\psi\colon M\to \R$ be a $C^k$ function.
	We define $\st\coloneqq \psi^{-1}([0,\infty))$, $\gd\coloneqq \psi^{-1}(0) \cap (\mathcal{L}_X \psi)^{-1}((-\infty,0])$, and $\fs\coloneqq \st \setminus \gd$. We assume given a continuous map $r\colon \gd\to \st$.
	Defining the local semiflow $\varphi$ to be the restriction of the flow $\Phi$ to $(\Phi)^{-1}(\fs) \cap ([0,\infty)\times \fs)$ and equipping $M$ with any compatible extended metric
          yields an \MHS~ $H = (\st,\fs,\gd,\varphi,r)$.          
Further assume the following:
\begin{itemize}
\item There exists a compact set $K\subseteq \st$ such that $r(\gd \cap K)\subseteq K$ and, for all $x\in K$ and $T \geq 0$,\footnote{Conley called this condition \emph{positive invariance of} $K$ \emph{relative to} $\st$ \cite[p.~46]{conley1978isolated}.} 
    $$\left[\Phi^{[0,T]}(x)\subseteq \st\right] \implies \left[\Phi^{[0,T]}(x)\subseteq K\right].$$

          \item For all $x\in \gd \cap K \cap (\mathcal{L}_{X}\psi)^{-1}(0)$, there exists an integer $m\in [2,k]$ such that
	\begin{equation}\label{eq:lie-derivatives}
	\mathcal{L}_{X}^1\psi(x) = \cdots = \mathcal{L}_{X}^{m-1}\psi(x) = 0 \qquad \textnormal{and} \qquad \mathcal{L}_{X}^{m}\psi(x) < 0. 
	\end{equation}
\end{itemize}

Then the restricted system $$H_{K} = (\st_{K},\fs_{K},\gd_{K},\varphi_{K},r_{K})= (\st\cap K, \fs\cap K, \gd \cap K, \varphi_{K}, r|_{\gd\cap K})$$ (where $\varphi_K$ is the restriction of $\varphi$ to $\dom(\varphi) \cap ([0,\infty) \times K)$) is well-defined and satisfies Assumptions~\ref{assump:deterministic}, \ref{assump:inf-or-Zeno}, \ref{assump:trapping-guard}, and \ref{assump:compact}.
Hence $H_K$ admits a complete Lyapunov function and  $R(H_K)$ admits a Conley decomposition.
\end{Prop}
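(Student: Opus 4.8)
The plan is to show that the restricted system $H_K$ is a well-defined \MHS~ that satisfies Assumptions~\ref{assump:deterministic}, \ref{assump:inf-or-Zeno}, \ref{assump:trapping-guard}, and \ref{assump:compact}; the complete Lyapunov function and the Conley decomposition for $H_K$ then follow at once from Theorems~\ref{th:conley-decomp} and \ref{th:hybrid-conley}. Several ingredients are essentially immediate. Since $\gd=\psi^{-1}(0)\cap(\mathcal{L}_X\psi)^{-1}((-\infty,0])$ is closed in $M$ and $K\subseteq\st$, we get $\st_K=K$ compact (Assumption~\ref{assump:compact}), $\gd_K=\gd\cap K$ closed in $\st_K$, $\fs_K=\st_K\setminus\gd_K$ open, and $\fs_K\cap\gd_K=\varnothing$ (Assumption~\ref{assump:deterministic}). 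The one subtle point of well-definedness — that $\varphi_K$ really is a local semiflow on $\fs_K$ — together with Assumption~\ref{assump:inf-or-Zeno} rest on a single observation: a forward $\Phi$-trajectory issuing from a point of $\fs_K$ can leave $\fs_K$ only by meeting $\gd_K$. Indeed, at a first time where $\psi$ returns to $0$ along such a trajectory its derivative $\mathcal{L}_X\psi$ is $\le 0$, so that point lies in $\gd$, and in $K$ by positive invariance of $K$ relative to $\st$; since $\Phi$ is complete there is no finite-time escape. Hence continuing a flow arc until it meets $\gd$ and then resetting (using $r(\gd_K)\subseteq K$, and that an arc starting where $\psi>0$, or where $\psi=0$ and $\mathcal{L}_X\psi>0$, makes genuine progress) produces an infinite or Zeno maximal execution from every state of $\st_K$.

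The heart of the matter is Assumption~\ref{assump:trapping-guard}. For $z\in\gd_K$ set $m=1$ if $\mathcal{L}_X\psi(z)<0$, and otherwise let $m\in[2,k]$ be supplied by \eqref{eq:lie-derivatives}; in all cases $\psi(z)=0$, $\mathcal{L}_X^j\psi(z)=0$ for $1\le j\le m-1$, and $\mathcal{L}_X^m\psi(z)<0$. I would then establish the following local claim: there are an open neighborhood $W_z\subseteq M$ of $z$ and a $T_z>0$ so that every $x\in W_z$ with $\psi(x)\ge 0$ flows inside $\{\psi\ge 0\}$ until a first time $\mu_z(x)\in[0,T_z]$ at which $\Phi^{\mu_z(x)}(x)\in\gd$, with $x\mapsto\mu_z(x)$ continuous on $\{x\in W_z:\psi(x)\ge 0\}$ and $\mu_z(x)\to 0$ as $x\to z$. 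To prove it, expand $h_x(t):=\psi(\Phi^t(x))$, using $h_x^{(j)}(t)=\mathcal{L}_X^j\psi(\Phi^t(x))$ for $j\le m$ and a uniform bound $\mathcal{L}_X^m\psi\le -c<0$ near $z$, to obtain, as long as the trajectory stays near $z$,
\[
h_x(t)\ \le\ \psi(x)+\sum_{j=1}^{m-1}\frac{\mathcal{L}_X^j\psi(x)}{j!}\,t^j-\frac{c}{m!}\,t^m .
\]
Every coefficient here other than $-c/m!$ tends to $0$ as $x\to z$, so for $x$ near $z$ the right-hand side becomes negative at a time tending to $0$; shrinking $W_z$ so that $\Phi$-arcs of length $\le T_z$ stay in the region where the bound holds keeps everything consistent. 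At a first zero of $h_x$ one has $h_x'\le 0$, i.e.\ $\mathcal{L}_X\psi\le 0$ there, so the trajectory genuinely lands on $\gd$ (and on $K$, by positive invariance), which defines $\mu_z$. Continuity of $\mu_z$ is an $\epsilon$--$\delta$ argument: lower semicontinuity from compactness of the trajectory arc before $\mu_z(x)$ and its disjointness from the closed set $\gd$; upper semicontinuity from the displayed estimate together with a second application of \eqref{eq:lie-derivatives} at the (possibly degenerate) landing point, showing $h_x$ is strictly negative just after $\mu_z(x)$.

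To finish, I would put $U:=\bigl(\bigcup_{z\in\gd_K}W_z\bigr)\cap K$, an open neighborhood of $\gd_K$ in $\st_K$; the maps $\mu_z$ agree on overlaps (each is the first $\gd$-hitting time), defining a continuous $\mu\colon U\to[0,\infty)$ that coincides with the maximum flow time \eqref{eq:max-flow-time-all} of $\varphi_K$ on $U$ by the no-re-entry observation, and a continuous retraction $\rho\colon U\to\gd_K$ given by $\rho(x):=\Phi^{\mu(x)}(x)$ (well-defined into $\gd_K$ by positive invariance, and equal to $\id$ on $\gd_K$ since there $\mu=0$). The continuous extension $\widehat\varphi$ of \eqref{eq:semiflow-local-extension} then exists because, by continuity of $\mu$, the relevant closure is $\{(t,x):x\in U,\ 0\le t\le\mu(x)\}$, on which the formula $\widehat\varphi(t,x)=\Phi^t(x)$ — agreeing with both branches of \eqref{eq:semiflow-local-extension} — is continuous. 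Thus $\gd_K$ is a flow-induced retract, Assumption~\ref{assump:trapping-guard} holds, and Theorems~\ref{th:conley-decomp} and \ref{th:hybrid-conley} applied to $H_K$ conclude the proof. I expect the one real obstacle to be the local claim of the previous paragraph: marrying the iterated-integral estimate (for existence and smallness of the exit time) with the two semicontinuity arguments (for continuity of $\mu_z$, including across the degenerate stratum $\gd\cap(\mathcal{L}_X\psi)^{-1}(0)$, where \eqref{eq:lie-derivatives} must be used a second time), all while controlling that trajectories do not leave the neighborhood on which the derivative bounds are valid.
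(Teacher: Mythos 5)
Your proposal is correct and reaches the same overall structure as the paper's proof (deterministic, compact, and infinite-or-Zeno are quick; all the work is in the trapping guard condition, established by building a continuous first-hitting time $\mu$ on a neighborhood $U$ of $\gd\cap K$ in $K$ and setting $\rho(x)=\Phi^{\mu(x)}(x)$, with $\widehat\varphi$ the restriction of $\Phi$). The genuine difference is in how you show the exit time is finite near the guard. The paper argues softly and by contradiction: if no neighborhood of $\gd\cap K$ had finite exit time, a sequence $x_n\to z\in\gd\cap K$ with $\psi(\Phi^t(x_n))\ge 0$ for all $t$ would force $\psi(\Phi^t(z))\ge 0$ for all $t\ge 0$, which is incompatible with \eqref{eq:lie-derivatives} at $z$. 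You instead run a quantitative Taylor--Lagrange estimate $h_x(t)\le\psi(x)+\sum_{j<m}\tfrac{\mathcal{L}_X^j\psi(x)}{j!}t^j-\tfrac{c}{m!}t^m$ using a uniform bound $\mathcal{L}_X^m\psi\le-c$ near each $z$. This buys you slightly more: an explicit uniform bound $T_z$ on the exit time and the fact that $\mu_z(x)\to 0$ as $x\to z$, which feeds directly into continuity of $\mu$ at guard points; the cost is the bookkeeping you flag (keeping trajectories inside the region where the derivative bound holds). The two semicontinuity arguments for $\mu$ and the final assembly (agreement of the $\mu_z$ on overlaps, $\rho=\Phi^{\mu(\cdot)}$, identification of $\cl(\dom(\varphi_K))$, uniqueness of the extension from Hausdorffness) are essentially identical to the paper's.

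One small point to tighten: your local claim asserts continuity of $\mu_z$ on all of $\{x\in W_z:\psi(x)\ge 0\}$, but the upper-semicontinuity step needs \eqref{eq:lie-derivatives} (or $\mathcal{L}_X\psi<0$) \emph{at the landing point} $\Phi^{\mu_z(x)}(x)$, and that hypothesis is only available on $\gd\cap K$. For $x\notin K$ the landing point may lie outside $K$ and the trajectory may graze $\{\psi=0\}$ and re-enter, breaking upper semicontinuity. Since you ultimately take $U=(\bigcup_z W_z)\cap K$ and invoke positive invariance to place landing points in $K$, the fix is simply to intersect with $K$ from the outset of the local claim; nothing else changes.
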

\begin{proof}
The first bulleted condition above implies that $H_K$ is a well-defined \MHS.
$H$ satisfies Assumption~\ref{assump:deterministic} since $\fs\cap \gd = \varnothing$, and $H$ satisfies Assumption~\ref{assump:inf-or-Zeno} since $\st = \fs \cup \gd$ and $X$ is complete.
Hence $H_K$ also satisfies Assumptions \ref{assump:deterministic} and \ref{assump:inf-or-Zeno}, and $H_K$ trivially satisfies Assumption~\ref{assump:compact} since $K$ is compact.

We now verify Assumption~\ref{assump:trapping-guard} for $H_K$.
To do this, we prove that $\gd\cap K$ is a trapping guard for $H_K$ by constructing a neighborhood $U$ of $\gd\cap K$ in $K$ satisfying the conditions of Def. \ref{def:trapping-guards}.
We first define the impact time $\mu\colon \st\to [0,+\infty]$ via
\begin{equation}\label{eq:mu-def}
  \mu(x)\coloneqq \sup\{t\geq 0\mid \Phi^{[0,t]}(x)\subseteq \st\} = \inf\{t\geq 0\mid \Phi^t(x) \in M \setminus \st\},
  \end{equation}
  where the second equality holds because the mean value theorem, the definition of $\gd$, and continuity of $\Phi$ imply that (i) $\Phi^{[0,\epsilon]}(z)\not \subseteq \st$ for every $z\in \gd$ and $\epsilon > 0$ and (ii) $\Phi^{\mu(x)}(x)\in \gd$ for every $x\in \st$.  
  
  We now show that $\gd \cap K$ has a neighborhood $U$ in $K$ such that $\mu(x) < \infty$ for every $x \in U$.
  Suppose not. 
  Then, using \eqref{eq:mu-def} and the definition of $\st$, for some $z \in  \gd \cap K$ there exists a sequence $(x_n)_{n\in \N} \subseteq K$ with $x_n \to z$ and $\psi(\Phi^t(x_n)) \ge 0$ for all $t \ge 0$. This and the fact that $\gd\subseteq \psi^{-1}(0)$ imply that $\psi(\Phi^t(z)) = 0$ for all $t \ge 0$.  Thus, all Lie derivatives of $\psi$ at $z$ vanish, contradicting \eqref{eq:lie-derivatives}.

  Next, we show that $\mu|_U$ is continuous.  For any $x\in U$ and $\epsilon > 0$, there exists $T$ in $(\mu(x), \mu(x) + \epsilon)$ with $\Phi^{T}(x) \in M \setminus \st$ by \eqref{eq:mu-def}.  
  Since $M\setminus \st$ is open in $M$, the continuity of $\Phi^T$ yields a neighborhood $V\ni x$ with $\Phi^{T}(V)\subseteq M\setminus \st$. Hence $\mu(V) \subseteq [0, T] \subseteq [0,\mu(x)+\epsilon]$, so $\mu|_U$ is upper semicontinuous.
  To show that $\mu|_U$ is lower semicontinuous, since $(\mu|_U)^{-1}(0) = \gd\cap K$ it suffices to show that, for every $x\in U\setminus \gd$ and every $\epsilon \in (0,\mu(x))$, $x$ has a neighborhood $V$ with $\mu(V) \subseteq [\mu(x) - \epsilon, \infty)$.  
  Fix $x\in U\setminus \gd$.
  By taking $\epsilon$ smaller if necessary, we may assume that $0 < \epsilon < \mu(x)$; pick $T \in (\mu(x) - \epsilon, \mu(x))$.
  By \eqref{eq:mu-def}, we have $\Phi^{[0,T]}(x) \subseteq \st \setminus \gd$. 
  By the continuity of $\Phi$, compactness of $[0,T]$, and openness of $\st\setminus \gd$ in $\st$, there exists a neighborhood $V\ni x$ with $\Phi^{[0,T]}(V) \subseteq \st \setminus \gd$, so  $\mu(V) \subseteq [T,\infty) \subseteq [\mu(x) - \epsilon, \infty)$ as desired.
  
  It is easy to see that the definition \eqref{eq:mu-def} of $\mu$ coincides with that of \eqref{eq:max-flow-time-all}. 
  Since $\mu|_U$ is continuous and $\mu|_{\gd\cap K} \equiv 0$, the flow induced-retraction $\rho\colon U\to \gd\cap K$ defined by $\rho(x)\coloneqq \Phi^{\mu(x)}(x)$ is continuous.
  Hence  $\widehat{\varphi}_K\coloneqq  \Phi|_{\cl(\dom(\varphi_K)) \cap ([0,\infty)\times U)}$ is a continuous extension of $\varphi_K$ to $\cl(\dom(\varphi_K)) \cap ([0,\infty)\times U)$ as required in Def. \ref{def:trapping-guards}, and this extension is unique since $M$ is Hausdorff.
  Thus, $\gd\cap K$ is a trapping guard for $H_K$, so $H_K$ satisfies Assumptions~\ref{assump:deterministic}, \ref{assump:inf-or-Zeno}, \ref{assump:trapping-guard}, and \ref{assump:compact}.
  Theorems~\ref{th:conley-decomp} and \ref{th:hybrid-conley} imply the final statement of the proposition.   
\end{proof}

We now specialize Prop. \ref{prop:sub-level-set} to show that Theorems \ref{th:conley-decomp} and \ref{th:hybrid-conley} can also be applied to a broad class of mechanical systems with unilateral constraints which undergo impacts, and which generalize the bouncing ball system that we will study in Ex.~\ref{ex:bouncing-ball}.
Slightly generalizing\footnote{As opposed (presumably) to \cite[Def.~1]{ames2006there}, we do not require $Q$ to be a manifold of fixed dimension, and we do not require that $h^{-1}(0)$ be a smooth manifold. 
We could have also required less smoothness of $L$ and $h$, but we do not bother with this here; the interested reader can refer to Prop.~\ref{prop:sub-level-set} for more refined smoothness assumptions.} \cite[Def.~1]{ames2006there}, we define a \concept{hybrid Lagrangian} to be a tuple $$\mathbf{L} = (Q,L,h),$$ 
	where:
	\begin{itemize}
		\item $Q$ is a smooth manifold (the \emph{configuration space}, still assuming the terminological convention discussed in Footnote~\ref{foot:tu}) with tangent bundle $\pi\colon \T Q\to Q$,
		\item $L\colon \T Q\to \R$ is a smooth, hyperregular \emph{Lagrangian} \cite[Sec.~7.3--7.4]{marsden1994introduction} so that the associated \emph{Lagrangian vector field} $X_L\colon \T Q\to \T(\T Q)$ is well-defined and smooth (and second order: $\T \pi \circ X_L = \id_{\T Q}$), and 
		\item $h\colon Q\to \R$ is a smooth function.
	\end{itemize}
We assume that the vector field $X_L$ is complete, so that it generates a smooth flow $\Phi\colon \R \times \T Q\to \T Q$. 

\begin{Co} \label{co:hybrid-lagrangian}
  Let $E\colon \T Q\to \R$ be the (total) \emph{energy} associated to a hybrid Lagrangian $\mathbf{L} = (Q, L, h)$; i.e., $E$ is the pullback of the Hamiltonian associated to $L$ via the Legendre transform \cite[p.~183,~p.~186]{marsden1994introduction}.  Define $M\coloneqq \T Q$, $X\coloneqq X_L$, and $\psi\coloneqq h \circ \pi$; $\st$, $\fs$, $\gd$, and $\varphi$ are then specified as in Prop. \ref{prop:sub-level-set}.
Assuming $r\colon \gd \to \st$ is a given continuous map\footnote{In \cite{ames2006there}, the reset map $r$ is given a rather specific definition, which generalizes the reset map of Ex. \ref{ex:bouncing-ball}, but we do not require the use of this specific definition here.}
and endowing $\T Q$ with any compatible extended metric yields an \MHS~ $H = (\st,\fs,\gd,\varphi,r)$ associated to $\mathbf{L}$.
Further assume the following:
    \begin{itemize}
    	\item There exists $E_0 \in \R$ such that some connected component $K := S_{E_0}$ of $\st \cap E^{-1}{((-\infty,E_0])}$ is compact and satisfies $r(\gd \cap S_{E_0})\subseteq S_{E_0}$.
    	\item The set $\gd \cap S_{E_0} \cap (\mathcal{L}_{X_L}(h\circ \pi))^{-1}(0)$ satisfies the condition containing Equation \eqref{eq:lie-derivatives}.
    \end{itemize}

    Then the restricted system $$H_{E_0} = (\st_{E_0},\fs_{E_0},\gd_{E_0},\varphi_{E_0},r_{E_0})\coloneqq (\st\cap S_{E_0}, \fs\cap S_{E_0}, \gd \cap S_{E_0}, \varphi_{E_0}, r|_{\gd\cap S_{E_0}})$$ (where $\varphi_{E_0}$ is the restriction of $\varphi$ to $\dom(\varphi) \cap ([0,\infty) \times S_{E_0})$) is well-defined and satisfies Assumptions~\ref{assump:deterministic}, \ref{assump:inf-or-Zeno}, \ref{assump:trapping-guard}, and \ref{assump:compact}.
    Hence $H_{E_0}$ admits a complete Lyapunov function and $R(H_{E_0})$ admits a Conley decomposition.
\end{Co}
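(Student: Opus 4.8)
The plan is to obtain Corollary~\ref{co:hybrid-lagrangian} as a direct application of Proposition~\ref{prop:sub-level-set} with the choices $M\coloneqq\T Q$, $X\coloneqq X_L$, $\psi\coloneqq h\circ\pi$, and $K\coloneqq S_{E_0}$. Under these identifications $\st$, $\fs$, $\gd$, and $\varphi$ are by construction precisely the objects that Proposition~\ref{prop:sub-level-set} produces from this data, so $H$ is already an \MHS~once a compatible extended metric is fixed, and $S_{E_0}\subseteq\st\cap E^{-1}((-\infty,E_0])\subseteq\st$ so $K\subseteq\st$ as that proposition requires; hence it suffices to verify the two bulleted hypotheses of Proposition~\ref{prop:sub-level-set}, after which its conclusion transfers verbatim and Theorems~\ref{th:conley-decomp} and~\ref{th:hybrid-conley} yield the final statement. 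Two pieces of this are essentially free: $X_L$ is a complete $C^\infty$ vector field by assumption (so we may take $k=+\infty$), and $\psi=h\circ\pi$ is $C^\infty$ because $h$ and the bundle projection $\pi\colon\T Q\to Q$ are smooth. Moreover, after substituting $X=X_L$ and $\psi=h\circ\pi$, the second bulleted hypothesis of Proposition~\ref{prop:sub-level-set} (the Lie-derivative condition involving~\eqref{eq:lie-derivatives} on $\gd\cap K\cap(\mathcal{L}_X\psi)^{-1}(0)$) is word-for-word the corollary's second bulleted assumption, so nothing remains to check there.

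The substantive point is the first bulleted hypothesis of Proposition~\ref{prop:sub-level-set}: that $K=S_{E_0}$ is compact, that $r(\gd\cap K)\subseteq K$, and that $K$ is positively invariant relative to $\st$, i.e. $\Phi^{[0,T]}(x)\subseteq\st$ implies $\Phi^{[0,T]}(x)\subseteq K$ for all $x\in K$ and $T\geq0$. Compactness of $S_{E_0}$ and $r(\gd\cap S_{E_0})\subseteq S_{E_0}$ are exactly the corollary's hypotheses, so only the positive invariance requires argument, and for this I would invoke conservation of energy: since $L$ is hyperregular, the Legendre transform is a diffeomorphism $\T Q\to\T^*Q$ carrying $X_L$ to the Hamiltonian vector field of the associated Hamiltonian $\mathcal{H}$, and $\mathcal{H}$ is constant along its own flow, so pulling back shows $E$ is constant along the flow $\Phi$ of $X_L$ (see, e.g., \cite[Sec.~7.3--7.4]{marsden1994introduction}). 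Consequently, given $x\in S_{E_0}$ with $\Phi^{[0,T]}(x)\subseteq\st$, one has $E(\Phi^t(x))=E(x)\leq E_0$ for all $t\in[0,T]$, hence $\Phi^{[0,T]}(x)\subseteq\st\cap E^{-1}((-\infty,E_0])$; and since $\Phi^{[0,T]}(x)$ is connected and contains $x$, it lies entirely in the connected component $S_{E_0}=K$ of that set.

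With both bulleted hypotheses verified, Proposition~\ref{prop:sub-level-set} yields that $H_{E_0}=H_K$ is a well-defined \MHS~satisfying Assumptions~\ref{assump:deterministic}, \ref{assump:inf-or-Zeno}, \ref{assump:trapping-guard}, and~\ref{assump:compact}, whence Theorems~\ref{th:conley-decomp} and~\ref{th:hybrid-conley} supply a complete Lyapunov function for $H_{E_0}$ and a Conley decomposition of $R(H_{E_0})$, as claimed (this is also an instance of the mechanism of Remark~\ref{rem:noncompact-but-restrict}, restricting the noncompact total space $\T Q$ to the compact piece $K$). I do not expect a genuine obstacle: the proof is almost entirely bookkeeping matching the corollary's data to Proposition~\ref{prop:sub-level-set}, and the one step deserving real care is the energy-conservation-plus-connectedness argument above, which is what upgrades ``the trajectory stays in $\st$'' to ``the trajectory stays in the compact set $K$'' and thereby makes the restriction to $K$ legitimate.
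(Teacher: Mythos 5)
Your proposal is correct and follows essentially the same route as the paper: the paper's proof likewise reduces the corollary to Prop.~\ref{prop:sub-level-set} by using conservation of energy \cite[Prop.~7.3.1]{marsden1994introduction} to show $E^{-1}((-\infty,E_0])$ is flow-invariant and hence that $S_{E_0}$ satisfies the relative positive-invariance hypothesis. Your explicit connectedness step (upgrading invariance of the sublevel set to invariance of the chosen component $S_{E_0}$) is a detail the paper leaves implicit, but it is the same argument.
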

\begin{proof}
    Conservation of energy \cite[Prop.~7.3.1]{marsden1994introduction} implies that $E^{-1}((-\infty,E_0])$ is forward invariant under the flow of $X_L$, and so the first bulleted condition implies that $S_{E_0}$ is forward invariant for $H$; hence we may apply Prop. \ref{prop:sub-level-set} to the restricted system $H_{E_0}$.
\end{proof}

\section{Examples} \label{sec:ex}
We first show in Ex. \ref{ex:counterexample} that, even if an \MHS~ satisfies all hypotheses of Theorems \ref{th:conley-decomp} and \ref{th:hybrid-conley} except the trapping guard condition, the conclusions of both theorems can fail.
We then proceed to give two examples motivated by toy models in classical mechanics. The first (Ex. \ref{ex:bouncing-ball}, a special case of  Cor.~\ref{co:hybrid-lagrangian}) illustrates a system to which Theorems \ref{th:conley-decomp} and \ref{th:hybrid-conley} apply. The second (Ex. \ref{ex:spring}) illustrates a system to which they do not.

\begin{figure}
	\centering
	\def\svgwidth{0.4\columnwidth}
\begingroup%
  \makeatletter%
  \providecommand\color[2][]{%
    \errmessage{(Inkscape) Color is used for the text in Inkscape, but the package 'color.sty' is not loaded}%
    \renewcommand\color[2][]{}%
  }%
  \providecommand\transparent[1]{%
    \errmessage{(Inkscape) Transparency is used (non-zero) for the text in Inkscape, but the package 'transparent.sty' is not loaded}%
    \renewcommand\transparent[1]{}%
  }%
  \providecommand\rotatebox[2]{#2}%
  \newcommand*\fsize{\dimexpr\f@size pt\relax}%
  \newcommand*\lineheight[1]{\fontsize{\fsize}{#1\fsize}\selectfont}%
  \ifx\svgwidth\undefined%
    \setlength{\unitlength}{186.29077861bp}%
    \ifx\svgscale\undefined%
      \relax%
    \else%
      \setlength{\unitlength}{\unitlength * \real{\svgscale}}%
    \fi%
  \else%
    \setlength{\unitlength}{\svgwidth}%
  \fi%
  \global\let\svgwidth\undefined%
  \global\let\svgscale\undefined%
  \makeatother%
  \begin{picture}(1,0.41043888)%
    \lineheight{1}%
    \setlength\tabcolsep{0pt}%
    \put(0,0){\includegraphics[width=\unitlength,page=1]{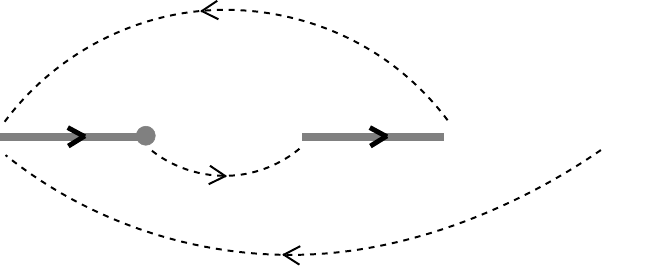}}%
    \put(0.24240326,0.24026569){\color[rgb]{0,0,0}\makebox(0,0)[lt]{\lineheight{1.25}\smash{\begin{tabular}[t]{l}$0$\end{tabular}}}}%
    \put(0.70941514,0.24026569){\color[rgb]{0,0,0}\makebox(0,0)[lt]{\lineheight{1.25}\smash{\begin{tabular}[t]{l}$2$\end{tabular}}}}%
    \put(0.95097288,0.24026569){\color[rgb]{0,0,0}\makebox(0,0)[lt]{\lineheight{1.25}\smash{\begin{tabular}[t]{l}$3$\end{tabular}}}}%
    \put(0,0){\includegraphics[width=\unitlength,page=2]{counterexample-5-V2.pdf}}%
  \end{picture}%
\endgroup%

	\caption{Depicted here is the \MHS~ $H = (\st,\fs,\gd,\varphi,r)$ of Ex. \ref{ex:counterexample}. The chain recurrent set is the union of the two thick gray line segments and gray dots. For this example $\st = [-1,0] \cup [1,3]\subseteq \R$, $\gd = \{0,2,3\}$, and $\fs = \st\setminus \gd$. The local semiflow $\varphi$ is generated by the vector field $-t\frac{\partial}{\partial t}$ on $[-1,0]$ and by the constant vector field $\frac{\partial}{\partial t}$ on $[1,3]$. 
	$r$ is defined by $r(0) = 1$ and $r(2) = r(3) = -1$. The \MHS~ $H$ satisfies all hypotheses of Theorems \ref{th:conley-decomp} and \ref{th:hybrid-conley} except for the trapping guard condition, and $H$ violates the conclusions of both of these theorems.}\label{fig:counterexample}
\end{figure}

\begin{Ex}[Failure in the absence of trapping guards]\label{ex:counterexample}
	This example shows that the conclusions of both Theorems \ref{th:conley-decomp} and \ref{th:hybrid-conley} can fail if Assumption~\ref{assump:trapping-guard} (the trapping guard hypothesis) is violated, even if Assumptions~\ref{assump:deterministic}, \ref{assump:inf-or-Zeno}, and \ref{assump:compact} are satisfied. 
	Define the \MHS~ $H = (\st,\fs,\gd,\varphi, r)$ as follows.
	Let $$\st\coloneqq [-1,0] \cup [1,3]\subseteq \R, \qquad \gd\coloneqq \{0,2,3\}, \qquad  \fs\coloneqq \st\setminus \gd.$$ 
	We let $\varphi$ be generated by the vector field $-t\frac{\partial}{\partial t}$ on $[-1,0]$ and by the constant vector field $\frac{\partial}{\partial t}$ on $[1,3]$. 
	We define $r\colon \gd\to \st$ by $r(0) = 1$ and $r(2) = r(3) = -1$.
	Clearly $H$ is deterministic and nonblocking.
	
    It is easy to check that $R(H) = [-1,0]\cup [1,2]$, that $R(H)$ consists of a single chain equivalence class, and that every subset $J\subseteq \st$ satisfies $\omega(J) = \{0\}$.
    But $\{0\}$ is not forward invariant, so there are no nontrivial attracting-repelling pairs.
    It follows that the conclusions of Theorem \ref{th:conley-decomp} are violated. 

	We further claim that no complete Lyapunov function $L\colon \st\to \R$ for $H$ exists, so that the conclusion of Theorem \ref{th:hybrid-conley} is also violated.  Indeed, suppose such an $L$ exists.  By the continuity of $L$ and Condition~\ref{enum:flow-dec} of Def.~\ref{def:lyapunov}, we have $L(2) > L(3)$.  By Condition~\ref{enum:reset-dec}, we have $L(3) > L(-1)$.  Finally, Condition~\ref{enum:lyap-chain} implies that $L(-1) = L(2)$.  This implies that $L(2) > L(3) > L(-1) = L(2)$, a contradiction.
        
	We emphasize that $H$ satisfies all hypotheses of Theorems \ref{th:conley-decomp} and \ref{th:hybrid-conley} except the trapping guard condition.
\end{Ex}

\begin{figure}
	\centering
	\includegraphics[width=0.49\linewidth]{./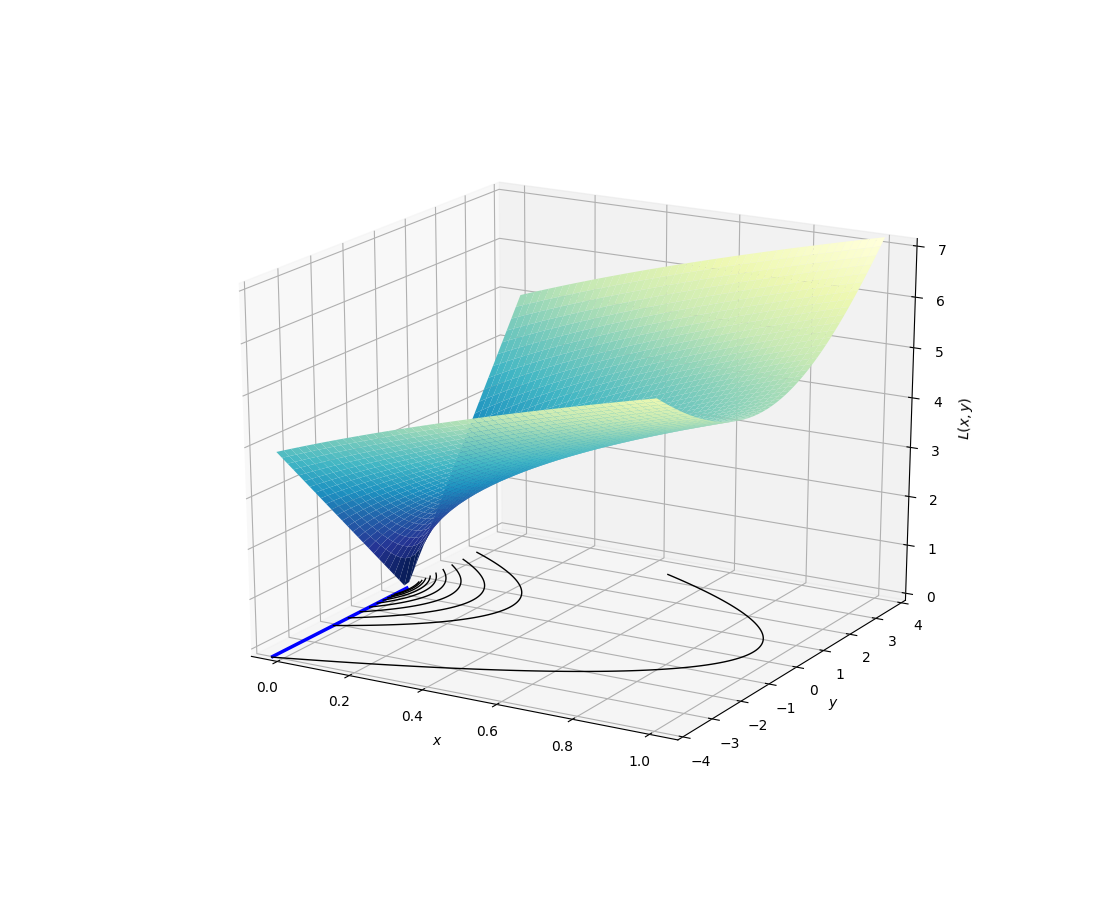}
	\includegraphics[width=0.49\linewidth]{./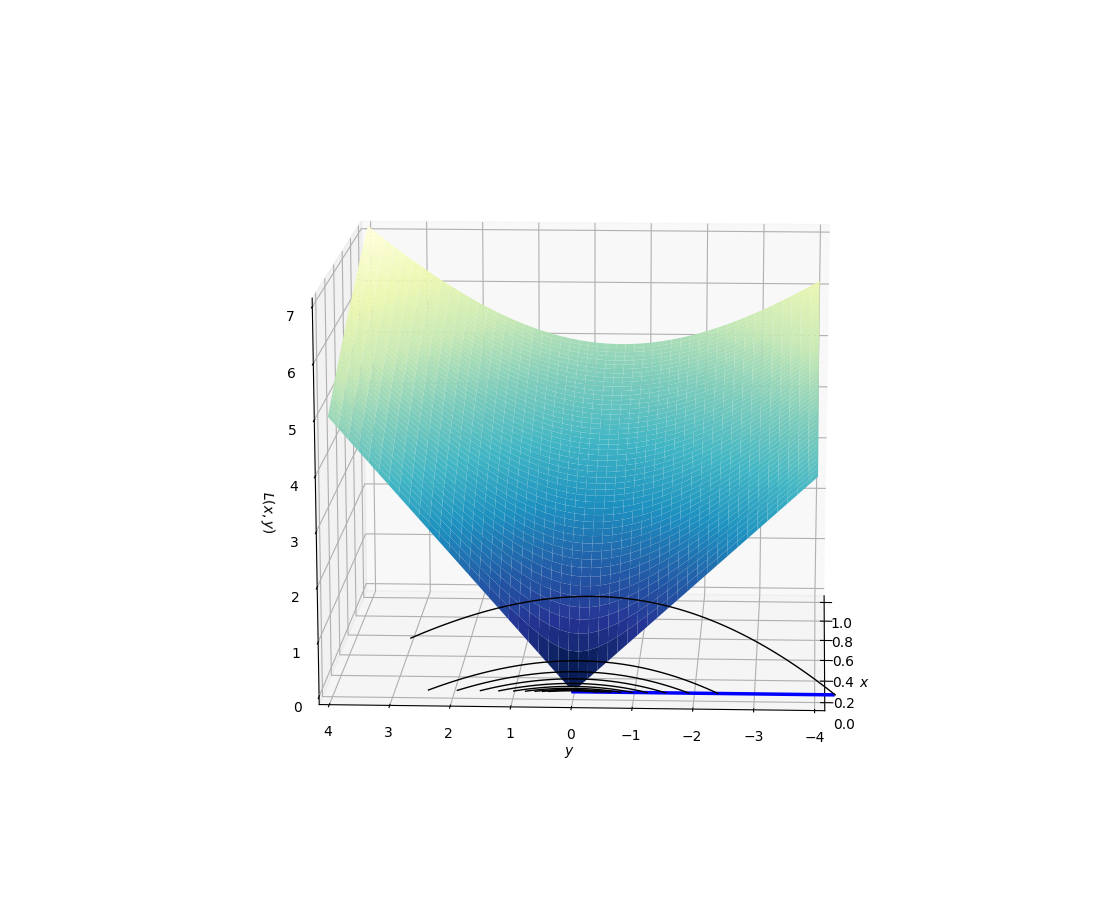}
	\caption{Depicted here are objects associated with the ``bouncing ball'' \MHS~ $H = (\st,\fs,\gd,\varphi,r)$ of Ex. \ref{ex:bouncing-ball} with coefficient of restitution $d = \frac{4}{5}$. 
		$\st$ is the closed half plane $\{x\geq 0\}$, $\fs$ is the interior $\{x > 0\}$ of $\st$, and $\gd$ is the ray $\{x = 0, y \leq 0\}$ shown in blue. 
		A portion of the unique execution in the hybrid system of Ex. \ref{ex:bouncing-ball} with initial condition $(x,y) = (\frac{1}{2},3)$ is shown in black; since $d < 1$, this execution is Zeno.
		Shown above is the graph of the complete Lyapunov function $L$ given by Equation \eqref{eq:example2-lyap-func} with choice of constants $b = \frac{7}{5}$, $a = \frac{999}{1000}\cdot \frac{(1-d)g}{2\sqrt{2} d} b$. }\label{fig:example2}
\end{figure}

\begin{Ex}[Bouncing ball]\label{ex:bouncing-ball}
	Let $g> 0$ and $X$ be the complete analytic vector field on $\R^2$ given by
	$$X(x,y) =  y \frac{\partial}{\partial x}- g \frac{\partial}{\partial y}.$$
	If we think of $x$ as the vertical position of a point particle with unit mass moving under the influence of gravity, and think of $y$ as its velocity, then the dynamics determined by $X$ are equivalent to those determined by Newton's second law of motion, $\ddot{x} = -g$. 
	Letting $\Phi\colon \R \times \R^2 \to \R^2$ be the analytic flow generated by $X$, we now construct an \MHS~ which represents a ``bouncing ball,'' and we show that this \MHS~ satisfies all hypotheses of Theorems \ref{th:conley-decomp} and \ref{th:hybrid-conley} after restriction to a compact forward invariant set.	
	
	Let $\st$ be the closed half plane $\st = \{x\geq 0\}$; we endow $\st$ with the metric induced by the Euclidean distance. 
	Let $\fs$ be the interior of $\st$, and define $\gd\coloneqq \{(0,y)\mid y \leq 0\}$.
	Let $\varphi$ be the continuous semiflow given by the restriction of $\Phi$ to $(\Phi)^{-1}(\fs) \cap (\fs \times [0,\infty))$, and let the reset $r\colon \gd\to \st$ be given by
	\begin{equation*}
	r(0,y)\coloneqq (0,- d y) ,
	\end{equation*}
	where $d\in [0,1]$ is the coefficient of restitution (which is related to the energy lost by the ball at impact). 
	Clearly the \MHS~ $H = (\st,\fs,\gd,\varphi,r)$ is deterministic (Assumption~\ref{assump:deterministic}).

	We now verify that $H$ satisfies the trapping guard condition (Assumption~\ref{assump:trapping-guard}).
	We define the neighborhood $U$ of Def. \ref{def:trapping-guards} to be $U\coloneqq \st$.
	Integrating the vector field $X$ analytically yields $$\Phi^t(x,y) = \left(x + yt - \frac{g}{2}t^2, y - gt\right).$$
	Setting the first component on the right hand side equal to $0$ and solving the resulting quadratic equation, we find that the maximum flow time $\mu\colon \st\to [0,+\infty]$ (Equation \ref{eq:max-flow-time-all}) is given by 
	$$\mu(x,y) = \frac{y+\sqrt{y^2 + 2 x g}}{g},$$
	which is clearly continuous.
	We define the continuous flow-induced retraction $\rho\colon \st \to \gd$ by $$\rho(x,y)\coloneqq \Phi^{\mu(x,y)}(x,y) = (0,-\sqrt{y^2 + 2 x g}).$$
	Hence $\widehat{\varphi}\coloneqq  \Phi|_{\cl(\dom(\varphi)) \cap [0,\infty)\times \st}$ is a continuous extension of $\varphi$ to the closure of $\dom(\varphi)$ in $[0,\infty)\times \st$, as required in Def. \ref{def:trapping-guards}, and this extension is unique since $\R^2$ is Hausdorff.
	
	The above shows that $H$ satisfies the trapping guard condition.
    To investigate Zeno executions, we compute the total time elapsed during the execution initialized at $(x,y)$:
	$$\sum_{n=0}^\infty \mu \circ (r \circ \rho)^{\circ n}(x,y) = \sum_{n=0}^\infty \mu\left(0,d^n\sqrt{y^2+2xg}\right) = \frac{2 \sqrt{y^2+2xg}}{g} \sum_{n=0}^\infty d^n.$$
	Since the last sum is finite if and only if $d\in [0,1)$, it follows that (i) every maximal execution in $H$ initialized in $\st\setminus \{\mathbf{0}\}$ is Zeno if $0\leq d < 1$, and (ii) every maximal execution in $H$ initialized in $\st\setminus \{\mathbf{0}\}$ is infinite if $d = 1$.
	(For any value of $d\in [0,1]$, the execution initialized at $\mathbf{0}$ is Zeno.)
	Hence $H$ satisfies Assumption~\ref{assump:inf-or-Zeno}.
	
	The preceding shows that $H$ satisfies Assumptions~\ref{assump:deterministic}, \ref{assump:inf-or-Zeno}, and \ref{assump:trapping-guard} but not the compactness Assumption~\ref{assump:compact}.
    However, for any finite $E_0 > 0$, the ``energy'' sublevel set $S_{E_0}\coloneqq \{(x,y)\in \st\mid \frac{1}{2}y^2 + gx \leq E_0\}$ is compact and forward invariant for $H$.
    Therefore, the restricted system $$H_{E_0} = (\st_{E_0},\fs_{E_0},\gd_{E_0},\varphi_{E_0},r_{E_0})= (\st\cap S_{E_0}, \fs\cap S_{E_0}, \gd \cap S_{E_0}, \varphi_{E_0}, r|_{\gd\cap S_{E_0}})$$ (where $\varphi_{E_0}$ is the restriction of $\varphi$ to $\dom(\varphi) \cap ([0,\infty) \times S_{E_0})$) satisfies Assumptions \ref{assump:deterministic}, \ref{assump:inf-or-Zeno}, \ref{assump:trapping-guard}, and \ref{assump:compact} and hence all hypotheses of Theorems \ref{th:conley-decomp} and \ref{th:hybrid-conley} (cf. Remark \ref{rem:noncompact-but-restrict}).
    
    When the coefficient of restitution $d = 1$, it is easy to see that the only attracting-repelling pairs for $H_{E_0}$ are the trivial pairs $(A,A^*) = (\st_{E_0},\varnothing)$ and $(A,A^*) = (\varnothing, \st_{E_0})$.
    According to Theorem \ref{th:conley-decomp}, the chain recurrent set $R(H_{E_0})$ is therefore given by $R(H_{E_0}) = \st_{E_0}$.
    According to Theorem \ref{th:hybrid-conley}, $H_{E_0}$ has a complete Lyapunov function; since $R(H_{E_0}) = \st_{E_0}$ and since $\st_{E_0}$ is connected, the complete Lyapunov functions are precisely the constant real-valued functions on $\st_{E_0}$.
    
    When the coefficient of restitution $d \in [0,1)$, it is easy to see that $H_{E_0}$ has a unique nontrivial attracting-repelling pair $(A,A^*)$: the attracting set $A$ is the origin $\mathbf{0}$, and the dual repelling set $A^*$ is the empty set. 
	According to Theorem \ref{th:conley-decomp}, the chain recurrent set is therefore given by $R(H_{E_0}) = \{\mathbf{0}\}$.
	According to Theorem \ref{th:hybrid-conley}, $H_{E_0}$ has a complete Lyapunov function.
	One family of such complete Lyapunov functions are given by a linear combination of the maximum flow time and the square root of the total energy,
	\begin{equation}\label{eq:example2-lyap-func}
	L(x,y) = a \mu(x,y) + b \sqrt{\frac{1}{2}y^2 + gx},
	\end{equation} 
	where $a, b> 0$ are constants satisfying $\frac{1-d}{\sqrt{2}}b > \frac{2 d}{g} a$.
	The first term on the right strictly decreases along the continuous-time dynamics,  while the second term is constant.
	On $\gd\setminus \{\mathbf{0}\}$, the first term on the right strictly decreases upon applying the reset map, while the second term strictly increases; however, the inequality $\frac{1-d}{\sqrt{2}}b > \frac{2 d}{g} a$ ensures that
	\begin{align*}
	L(r(0,y)) -  L(0,y) &=  a\cdot \left(\frac{2 d |y|}{g} - 0\right) + b\cdot \left(\sqrt{\frac{1}{2}d^2y^2} - \sqrt{\frac{1}{2}y^2}\right)\\
	& = \left(a \frac{2d}{g}- b\left(\frac{1-d}{\sqrt{2}} \right)\right)|y| < 0,
	\end{align*}
	as desired.
	Because $L$ does not depend on ${E_0}$, $L$ is also a complete Lyapunov function for the full system $H$.
	
	In closing, we remark that this example is a special case of the hybrid Lagrangian systems considered in Cor.~\ref{co:hybrid-lagrangian} with (in the notation of the corollary) $Q\coloneqq \R$, $L(x,y)\coloneqq \frac{1}{2}y^2 - gx$, $h(x)\coloneqq x$, and $r(0,y)\coloneqq (0,- d y)$.
	Since the restriction of $E(x,y) = \frac{1}{2}y^2 + gx$ to $\st$ is a proper function, $S_{E_0}$ is compact for every $E_0 > 0$.
	In this example, $\gd\cap S_{E_0} \cap (\mathcal{L}_X \psi)^{-1}(0) = \{\mathbf{0}\}$ is just the origin, and \eqref{eq:lie-derivatives} is satisfied since $\mathcal{L}_{X_L}^2(h\circ \pi) = \ddot{x} = -g < 0$ on all of $\st$.
	Thus, Cor.~\ref{co:hybrid-lagrangian} directly implies that  $H_{E_0}$ satisfies all hypotheses of Theorems~\ref{th:conley-decomp} and \ref{th:hybrid-conley}, although the above ``hands-on'' verification seems instructive.
\end{Ex}

\begin{Ex}\label{ex:spring}
  In this example we consider a system which, while superficially similar to the bouncing ball of Ex. \ref{ex:bouncing-ball}, does not satisfy the trapping guard condition (Assumption~\ref{assump:trapping-guard}), so the hypotheses of Theorems \ref{th:conley-decomp} and \ref{th:hybrid-conley} are not satisfied.  Nevertheless, as we show below, this system does satisfy the conclusions of these two theorems.  Thus, Theorems \ref{th:conley-decomp} and \ref{th:hybrid-conley} could potentially be sharpened in future work.
  
	We define the deterministic (Assumption~\ref{assump:deterministic}) \MHS~ $H = (\st,\fs,\gd,\varphi,r)$ to be given exactly as in Ex. \ref{ex:bouncing-ball}, except we replace the vector field $X$ of Ex. \ref{ex:bouncing-ball} with $$X(x,y) = y \frac{\partial}{\partial x} - x \frac{\partial}{\partial y}.$$ 
	Intuitively, $X$ is the vector field obtained from Newton's second law of motion where the gravitational force of Ex. \ref{ex:bouncing-ball} is replaced by a Hookean spring with unit stiffness and rest position $x = 0$ (so $\ddot x = -x$).
	
	A computation as in Ex. \ref{ex:bouncing-ball} verifies Assumption~\ref{assump:inf-or-Zeno} since, in this example, every maximal execution initialized in $\st\setminus \{\mathbf{0}\}$ is infinite, while the execution initialized at $\mathbf{0}$ is Zeno.
	On $\R^2 \setminus \{\mathbf{0}\}$, $X$ is given in polar coordinates $x = \rho \cos\theta, y = \rho \sin \theta$ as $$X(\rho,\theta) = - \frac{\partial}{\partial \theta}.$$
	Hence the maximum flow time $\mu\colon \st\to [0,+\infty]$ (Equation \ref{eq:max-flow-time-all}) is given by
	\begin{equation*}
    \begin{cases} 
	\mu(\rho,\theta) =  \theta + \frac{\pi}{2}, & \theta \in [-\frac{\pi}{2},\frac{\pi}{2}] \textnormal{ and } \rho\neq 0\\ 
	\mu(\rho,\theta) = 0, & \theta \in [-\frac{\pi}{2},\frac{\pi}{2}] \textnormal{ and } \rho= 0 \\
	\end{cases},   
	\end{equation*}  
	which is discontinuous at the origin $\mathbf{0}\in \gd\subseteq \st$, so $H$ does not satisfy the trapping guard condition (Assumption~\ref{assump:trapping-guard}).
	Since also $\mathbf{0}$ belongs to every energy sublevel set $S_{E_0} \coloneqq \{(x,y) \mid \frac{1}{2}\rho^2 \leq E_0\}$ for which $S_{E_0} \cap \st \neq \varnothing$, the restricted system $$H_{E_0} = (\st_{E_0},\fs_{E_0},\gd_{E_0},\varphi_{E_0},r_{E_0})= (\st\cap S_{E_0}, \fs\cap S_{E_0}, \gd \cap S_{E_0}, \varphi_{E_0}, r|_{\gd\cap S_{E_0}})$$ defined as in Ex. \ref{ex:bouncing-ball} does not satisfy the hypotheses of Theorems \ref{th:conley-decomp} and \ref{th:hybrid-conley} for any value of $E_0$ such that $S_{E_0}\cap \st \neq \varnothing$.
	
	However, in this example it is nevertheless easy to check that the conclusions of Theorem \ref{th:conley-decomp} and \ref{th:hybrid-conley} still hold (for $H$, not merely $H_{E_0}$).
	When the coefficient of restitution $d = 1$, $H$ has only the trivial attracting-repelling pairs, $R(H) = \st$, and any constant function on $\st$ is a complete Lyapunov function.
	 
	When $d\in [0,1)$,  the only nontrivial attracting-repelling pair for $H$ is $(\{\mathbf{0}\}, \varnothing)$, $R(H) = \{\mathbf{0}\}$, and one family of complete Lyapunov functions on $\st$ is given in polar coordinates by $$L(\rho,\theta) = a\rho\mu(\rho,\theta)  + b \rho,$$ where $a,b > 0$ satisfy $a< \frac{(1-d)b}{d \pi}$.
	To see that $L$ is indeed a complete Lyapunov function, note that $\rho$ is constant along $\varphi$ trajectories while $\mu$  strictly decreases, and $L$ decreases along resets from $\gd\setminus \{\mathbf{0}\}$ since
	$$L\left(d\rho,\pi/2\right) - L\left(\rho,-\pi/2\right) = \left(ad \pi + b(d-1) \right) \rho < 0.$$
\end{Ex}

\section{Proofs of the main results}\label{sec:proofs}
This section culminates (in \S\ref{sec:proofs-main-theorems}) with the proofs of Theorems~\ref{th:conley-decomp} and \ref{th:hybrid-conley}.
The earlier subsections develop the necessary tools.

Our development of these tools relies heavily on topologically ``gluing'' multiple topological spaces together and to themselves.
The motivation for doing so---inspired by the ``hybrifold'' technique \cite{simic2000towards,simic2005towards}---is to somehow reduce the study of a hybrid dynamical system to that of an associated ``classical'' continuous-time dynamical system for which comparatively well-developed theory already exists.
``Gluing'' is made precise using the concepts of \emph{quotient maps} and \emph{quotient spaces} from elementary point-set topology.
A \concept{quotient map} $\pi\colon X\to Y$ between topological spaces $X$ and $Y$ is a continuous surjective map having the property that $U\subseteq Y$ is open in $Y$ if and only if $\pi^{-1}(U)$ is open in $X$.
Given an equivalence relation $\sim$ on a topological space $X$, we employ the standard notation $X/\sim$ for the topological \concept{quotient space} of $X$ by $\sim$.
Viewed as a set, $X/\sim$ is the set of all equivalence classes, but $X/\sim$ is also given a topology.  
If we denote by $\pi\colon X\to X/\sim$ the map sending points to their equivalence classes, the topology on $X/\sim$ is uniquely defined by requiring that $\pi$ be a quotient map.
One thinks of $X/\sim$ as obtained from $X$ by ``gluing'' points $x,y\in X$ together if $x\sim y$.
We refer readers wanting more details to the standard introductory textbook \cite[p.~137,~p.~139]{munkres2000topology}).

\subsection{Suspension of a hybrid system}\label{sec:hybrid-suspension}

\begin{figure}
	\centering
	\def\svgwidth{1.0\columnwidth}
	\import{figs/}{hybrid-suspension-only-bw-V2.pdf_tex}
	\caption{Illustration of the constructions in \S \ref{sec:hybrid-suspension}. Left: a \THS~ $H = (\st,\fs,\gd,\varphi,r)$. 
	Middle: its relaxed version $H' = (\st',\fs',\gd',\varphi',r')$; see Def. \ref{def:relaxed}. 
	Right: the hybrid suspension $(\Sus_H, \Phi_H)$ of $H$; see Def. \ref{def:hybrid-suspension}.
	We remark that $(\Sus_H, \Phi_H)$ strictly generalizes the classical suspension of a discrete-time dynamical system \cite[p.~797,~pp.~21--22]{smale1967differentiable,brin2002introduction}; indeed, if $\st = \gd$ our construction reduces to the classical one (see Ex.~\ref{ex:special-case-discrete-time}, and see SM \S\ref{app:hybrid-suspension} and SM \S \ref{app:suspension} for more details).}\label{fig:hybrid-suspension}
\end{figure}

Given a \THS~ $H = (\st,\fs,\gd,\varphi,r)$ satisfying Assumptions~\ref{assump:deterministic}, \ref{assump:inf-or-Zeno}, and \ref{assump:trapping-guard}, in this section we will construct two new systems: (i) a \THS~ $H' = (\st',\fs',\gd',\varphi',r')$ with $\st'$ containing $\st$ as a proper subset which we term the \emph{relaxed} hybrid system, and (ii) a continuous semiflow on a quotient $\Sus_H$ of $H'$ which we term the \emph{hybrid suspension} of $H$.
The basic ideas are contained in Fig.~\ref{fig:hybrid-suspension}.
For the interested reader, SM \S \ref{app:hybrid-suspension} contains a comparison of $H'$ and $\Sus_H$ to related constructions appearing in the literature.

The relaxed system $H'$ formalizes the idea of requiring that $H$ executions ``wait'' one time unit after impacting the guard before resetting.
For us, the relaxed system is primarily a means to an end; it is an intermediate step in constructing the hybrid suspension, and it is also useful in analyzing some of the latter system's properties.
We construct the relaxed system after the following two lemmas.
The statement of Lemma ~\ref{lem:extend} involves the maximum flow time $\mu\colon \st\to [0,+\infty]$ defined in \eqref{eq:max-flow-time-all}; we defer the proof of Lemma~\ref{lem:extend} to SM \S \ref{app:technical}.

\begin{restatable}{Lem}{Extend} \label{lem:extend}
	Let $H = (\st, \fs, \gd, \varphi, r)$ be a \THS~ satisfying Assumptions~\ref{assump:deterministic}, \ref{assump:inf-or-Zeno}, and \ref{assump:trapping-guard}.
	Then $\mu\colon \st \to [0,+\infty]$ is continuous, the closure $\cl(\dom(\varphi))$ of $\dom(\varphi)$ in $[0, \infty) \times \st$ satisfies 
	\begin{equation}\label{eq:lem-ext-closure-expression}
	\cl(\dom(\varphi)) = \{(t,x)\in [0,\infty)\times \cl(\fs) \mid t \leq \mu(x)\},
	\end{equation}
	and $\varphi$ has a unique continuous extension $\widetilde{\varphi}$ defined on $\cl(\dom(\varphi))$ satisfying $\widetilde{\varphi}^{\mu(x)}(x) \in \gd$ for all $x\in \cl(\fs) \cap \mu^{-1}([0,\infty))$.
	
	Furthermore, $\widetilde{\varphi}$ satisfies the following conditions, with $t,s \in [0,\infty)$: (i) $\widetilde{\varphi}^0 = \id_{\cl(\fs)}$, (ii) $(t+s,x)\in \cl(\dom(\varphi)) \iff$ both $(s,x)\in \cl(\dom(\varphi))$ and  $(t,\widetilde{\varphi}^s(x))\in \cl(\dom(\varphi))$, and (iii) for all $(t+s,x)\in \cl(\dom(\varphi))$, $\widetilde{\varphi}^{t+s}(x) = \widetilde{\varphi}^t(\widetilde{\varphi}^s(x))$.
\end{restatable}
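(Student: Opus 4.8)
The plan is to prove the three displayed conclusions in the order stated, and then the cocycle-type identities (i)--(iii) for $\widetilde\varphi$. Write $U$ for a neighborhood of $\gd$ as in Def.~\ref{def:trapping-guards} (shrinking it, we may take $U$ open), $\rho\colon U\to\gd$ for the flow-induced retraction, and $\widehat\varphi$ for the local continuous extension of $\varphi$ furnished there. The one auxiliary fact I will lean on repeatedly is that if $x\in\fs$ and $\mu(x)<\infty$ then $\lim_{s\to\mu(x)^-}\varphi^s(x)$ exists and lies in $\gd$: by Assumption~\ref{assump:inf-or-Zeno} the unique maximal execution $\chi_x=(N,\tau,\gamma)$ must have $N\geq1$ (a jumpless execution would be either a genuine infinite flow, forcing $\mu(x)=\infty$, or Zeno, which requires infinitely many jumps), so $\gamma_0$ is continuous on the closed interval $[0,\tau_1]$ with $\gamma_0(\tau_1)\in\gd$, and $\tau_1=\mu(x)$ because $\tau_1<\mu(x)$ would make $\gamma_0(\tau_1)=\varphi^{\tau_1}(x)$ an element of $\fs\cap\gd=\varnothing$. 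Hence $\varphi^s(x)\to\gamma_0(\mu(x))\in\gd\subseteq U$ as $s\to\mu(x)^-$, so the trajectory through such an $x$ enters $U$ before its flow time expires.

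First I would prove continuity of $\mu$ on $\st$. By Assumption~\ref{assump:inf-or-Zeno} and Remark~\ref{rem:FcupZ}, $\st=\fs\sqcup\gd$. Near a point of $\gd$, all points lie in $U$ and $\mu|_U$ is continuous by Def.~\ref{def:trapping-guards}. At $x\in\fs$ with $\mu(x)=\infty$, openness of $\dom(\varphi)$ gives $\mu>M$ near $x$ for every $M$. At $x\in\fs$ with $\mu(x)<\infty$, I would pick $s_0$ close to $\mu(x)$ with $\varphi^{s_0}(x)\in U$; by openness of $\dom(\varphi)$ and continuity of $\varphi^{s_0}$ there is a neighborhood $V\ni x$ with $\varphi^{s_0}(V)\subseteq U$ on which the cocycle identity $\mu(y)=s_0+\mu(\varphi^{s_0}(y))$ holds, and continuity of $\mu|_U$ then transfers to $x$. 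With continuity of $\mu$ in hand, the closure formula follows: ``$\subseteq$'' from $\dom(\varphi)\subseteq[0,\infty)\times\fs$ together with continuity of $\mu$ (if $t>\mu(x)$, continuity yields $c\in(\mu(x),t)$ and a neighborhood $V\ni x$ with $\mu<c$ on $V$, so $(c,\infty)\times V$ is a neighborhood of $(t,x)$ missing $\dom(\varphi)$); ``$\supseteq$'' immediately, approximating $(\mu(x),x)$ by $(s,x)$ with $s\nearrow\mu(x)$ when $x\in\fs$, and noting that for $x\in\cl(\fs)\setminus\fs\subseteq\gd$ only $(0,x)$ arises, a limit of $(0,x_n)$ with $x_n\in\fs$.

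Next I would build $\widetilde\varphi$: equal to $\varphi$ on $\dom(\varphi)$; equal to $\lim_{s\to\mu(x)^-}\varphi^s(x)$ at the impact points $(\mu(x),x)$ with $x\in\fs$, $\mu(x)<\infty$; and sending $(0,x)\mapsto x$ for $x\in\cl(\fs)\setminus\fs$. This map agrees with $\widehat\varphi$ on the open set $W\coloneqq\cl(\dom(\varphi))\cap([0,\infty)\times U)$ --- on $\dom(\varphi)\cap W$ both equal $\varphi$, and at an impact point in $W$ continuity of $\widehat\varphi$ forces $\lim_{s\to\mu(x)^-}\varphi^s(x)=\widehat\varphi(\mu(x),x)=\rho(x)\in\gd$ --- so $\widetilde\varphi$ is continuous on $W$ and on $\dom(\varphi)$, and $\widetilde\varphi^{\mu(x)}(x)\in\gd$ for every $x\in\cl(\fs)$ with $\mu(x)<\infty$. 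The remaining, trickiest case is continuity at an impact point $(\mu(x_*),x_*)$ with $x_*\in\fs\setminus U$: choosing $s_0\in(0,\mu(x_*))$ with $\varphi^{s_0}(x_*)\in U$ and letting $V$ be the open set of $x$ with $\mu(x)>s_0$ and $\varphi^{s_0}(x)\in U$, a short case analysis (the $\varphi$-cocycle when $(t,x)\in\dom(\varphi)$; otherwise $\widetilde\varphi^{\mu(x)}(x)=\widetilde\varphi^{\mu(\varphi^{s_0}(x))}(\varphi^{s_0}(x))$, which is evaluated inside $W$) shows $\widetilde\varphi(t,x)=\widehat\varphi(t-s_0,\varphi^{s_0}(x))$ on the neighborhood $\cl(\dom(\varphi))\cap((s_0,\infty)\times V)$ of $(\mu(x_*),x_*)$, a continuous function there. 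Since $\dom(\varphi)$, $W$, and these neighborhoods form an open cover of $\cl(\dom(\varphi))$, the pasting lemma \cite{munkres2000topology} gives continuity of $\widetilde\varphi$ globally; uniqueness follows from density of $\dom(\varphi)$ in its closure together with the Hausdorff property (automatic for \MHS). Finally, (i)--(iii) reduce, once one records the identity $\mu(\widetilde\varphi^s(x))=\mu(x)-s$ for $x\in\cl(\fs)$, $0\leq s\leq\mu(x)$ (via the $\varphi$-cocycle when $s<\mu(x)$, and via $\mu|_\gd\equiv0$ when $s=\mu(x)$), to routine verifications: (ii) is the closure formula rewritten with this identity (also using that $\widetilde\varphi^s(x)\in\cl(\fs)$ always, since impact points are limits of points of $\fs$), and (iii) is a case check whose one nontrivial case, $t+s=\mu(x)<\infty$ with $s<\mu(x)$, is again $\lim_{u\to(\mu(x)-s)^-}\varphi^{u}(\varphi^s(x))=\lim_{v\to\mu(x)^-}\varphi^v(x)$.

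The step I expect to be hardest is the continuity of $\widetilde\varphi$ at impact points lying outside the trapping-guard neighborhood $U$: there Def.~\ref{def:trapping-guards} supplies no extension directly, so one must transport the situation into $U$ along the flow, which requires both the already-established continuity of $\mu$ (to make the transport neighborhood open) and Assumption~\ref{assump:inf-or-Zeno} (so that an escaping trajectory truly limits onto $\gd$, rather than running off to infinity, and hence enters $U$), and then one must check that the transported formula is at once continuous and genuinely equal to $\widetilde\varphi$.
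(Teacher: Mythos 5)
Your proof follows essentially the same route as the paper's: continuity of $\mu$ is obtained by transporting the continuity of $\mu|_U$ along the flow (the paper writes $\mu^{-1}([0,\infty)) = U\cup\bigcup_{t\ge 0}(\varphi^t)^{-1}(U)$ and argues on each open piece, which is your local argument made global), the closure formula is proved the same way, and your ``transport into $U$'' formula $\widehat\varphi(t-s_0,\varphi^{s_0}(x))$ is exactly the middle case $\widehat{\varphi}^{s}(\varphi^{t-s}(x))$ of the paper's piecewise definition of $\widetilde\varphi$, pasted over an open cover of $\cl(\dom(\varphi))$; properties (i)--(iii) are likewise reduced in both proofs to the identity $\mu(\widetilde\varphi^s(x))=\mu(x)-s$.

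The one step to repair is uniqueness. A \THS~ state space is an arbitrary topological space---Hausdorffness is not part of Def.~\ref{def:HybridSystem}, and the lemma is invoked for \THS, not only \MHS~(e.g.\ in Def.~\ref{def:relaxed} and Prop.~\ref{prop:relaxed})---so ``agreement on a dense set plus Hausdorff'' is not available; for the same reason, ``the limit $\lim_{s\to\mu(x)^-}\varphi^s(x)$'' you use to define $\widetilde\varphi$ at impact points is not a priori single-valued, and should instead be defined outright as $\widehat\varphi^{\mu(y)}(y)$ with $y=\varphi^{s_0}(x)\in U$, which is what your transported formula computes anyway. The paper obtains uniqueness from the uniqueness of $\widehat\varphi$ that is built into Def.~\ref{def:trapping-guards} together with the local-semiflow identities; routing your uniqueness claim through that assumption rather than through Hausdorffness closes the gap.
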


\begin{Lem}\label{lem:closed-quotient-metrizable}
	Let $X$ be a metrizable space and $A\subseteq X$ a closed subset.
	Let $f\colon A\to X$ be a continuous and closed map satisfying $f(A) \cap A = \varnothing$.
	Define an equivalence relation $\sim$ on $X$ by identifying each $a\in A$ with $f(a)\in X$.
	Then the quotient map $\pi\colon X\to X/\sim$ is closed.
	If additionally $f$ has compact fibers (i.e., $f^{-1}(x)$ is compact for all $x\in X$), then the quotient space $X/\sim$ is metrizable.
\end{Lem}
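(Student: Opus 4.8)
The plan is to identify the equivalence classes of $\sim$ explicitly, then prove $\pi$ is closed by a direct computation of saturated sets, and finally deduce metrizability by recognizing $\pi$ as a perfect map. The key structural observation is that, because $f(A)\cap A=\varnothing$, the identifications cannot chain: for $a\in A$ we have $a\sim f(a)$, but $f(a)\notin A$, so $f(a)$ is identified only with the remaining points of $f^{-1}(f(a))\subseteq A$ (here and below $f^{-1}(\slot)$ denotes preimage along $f\colon A\to X$, a subset of $A$). Hence every equivalence class is either a singleton $\{x\}$ with $x\in X\setminus(A\cup f(A))$, or a set of the form $\{y\}\cup f^{-1}(y)$ with $y\in f(A)$ and $f^{-1}(y)\neq\varnothing$.

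Since $\pi$ is a quotient map, to prove $\pi$ closed it suffices to show that $\pi^{-1}(\pi(C))$ is closed in $X$ for every closed $C\subseteq X$. Using the description of the classes one verifies that
$$\pi^{-1}(\pi(C)) = C \cup f(C\cap A) \cup f^{-1}\bigl(f(C\cap A)\bigr) \cup f^{-1}(C).$$
Now $C\cap A$ is closed in $A$, so $f(C\cap A)$ is closed in $X$ because $f$ is a closed map; then $f^{-1}\bigl(f(C\cap A)\bigr)$ and $f^{-1}(C)$ are closed in $A$ by continuity of $f$, hence closed in $X$ since $A$ is closed in $X$. Thus $\pi^{-1}(\pi(C))$ is a finite union of closed sets and therefore closed, so $\pi(C)$ is closed in $X/\sim$; i.e., $\pi$ is closed.

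Assume now in addition that $f$ has compact fibers. Then each ``large'' equivalence class $\{y\}\cup f^{-1}(y)$ is a compact set together with one extra point, hence compact, and the singleton classes are trivially compact; so every fiber of $\pi$ is compact. Combined with the previous paragraph and the fact that $\pi$ is a continuous surjection, this exhibits $\pi$ as a \emph{perfect map}, and the metrizability of $X/\sim$ then follows from the classical theorem---a form of the Hanai--Morita--Stone theorem---that a perfect image of a metrizable space is metrizable. If one prefers to avoid citing that result, one can instead verify by hand that $X/\sim$ is Hausdorff---separate the two disjoint compact saturated preimages of a pair of distinct points within the metric space $X$, then push the separating open sets down to $X/\sim$ using closedness of $\pi$---and apply the Bing--Nagata--Smirnov metrization theorem after constructing a $\sigma$-locally finite base.

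The bookkeeping in the first two paragraphs is routine, and is exactly what the hypothesis $f(A)\cap A=\varnothing$ buys, since it keeps the equivalence classes ``short.'' The substantive point is the metrization step: constructing a $\sigma$-locally finite base for $X/\sim$ by hand is the only genuinely hard part, so I would sidestep it by invoking the known perfect-map metrization theorem rather than reproving it.
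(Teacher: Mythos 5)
Your proof is correct and follows essentially the same route as the paper: show $\pi$ is closed by computing the saturation of a closed set as a finite union of closed sets, then deduce metrizability from a Stone-type theorem for closed maps with compact fibers (your Hanai--Morita--Stone citation and the paper's appeal to Stone's compact-boundary criterion are interchangeable here, since compact fibers trivially have compact boundary). In fact your saturation formula $\pi^{-1}(\pi(C)) = C \cup f(C\cap A)\cup f^{-1}\bigl(f(C\cap A)\bigr)\cup f^{-1}(C)$ is the complete one---the paper's version omits the term $f^{-1}\bigl(f(C\cap A)\bigr)$, which can be nonempty and not contained in the other pieces when $f$ is non-injective, though the omission is harmless for the argument because that set is closed as well.
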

\begin{Rem}\label{rem:compact-guard-automatically-closed-proper}
	If $A$ is compact and $f$ is continuous, then $f$ is automatically closed and proper; hence also $f^{-1}(x)$ is compact for all $x\in X$.
\end{Rem}
\begin{proof}    	
    We first show that $\pi$ is closed. 
    Letting $C\subseteq X$ be any closed subset, we compute 
    \begin{equation}\label{eq:quotient-metrizable}
    \pi^{-1}(\pi(C)) = C \cup f^{-1}(C) \cup f(C\cap A)
    \end{equation}
    since $f(A) \cap A = \varnothing$.
    The first set on the right is closed by definition, and the second term is closed since $A$ is closed and $f$ is continuous.
    The third term is closed since $f$ is a closed map. 
    Hence $\pi^{-1}(\pi(C))$ is closed, and this in turn implies that $\pi(C)$ is closed by the definition of the quotient topology.    
    Hence $\pi$ is closed.
    
    We now prove that $X/\sim$ is metrizable under the additional assumption that $f$ has compact fibers.
    A theorem of Stone \cite[Thm~1]{stone1956metrizability} implies that, if $\pi$ is closed, then $X/\sim$ is metrizable if and only if $\pi^{-1}(\pi(x))$ has compact boundary for all $x\in X$. 
    Therefore, it suffices to show that $\pi^{-1}(\pi(x))$ has compact boundary for all $x$.

    Fix $x\in X$ and substitute $C = \{x\}$ in \eqref{eq:quotient-metrizable} to obtain
    $$\pi^{-1}(\pi(x)) = \{x\} \cup f^{-1}(x) \cup f(\{x\}\cap A).$$
    The first and third terms on the right are compact because they are either singletons or empty.
    The second term on the right is compact by our assumption that $f$ has compact fibers.
    Hence $\pi^{-1}(\pi(x))$ is compact, and this in turn implies that $\pi^{-1}(\pi(x))$ has compact boundary. 
    This completes the proof. 
\end{proof}

The following definition uses the maximum flow time $\mu\colon \st\to [0,+\infty]$ defined in \eqref{eq:max-flow-time-all}.
As mentioned in \S\ref{sec:contributions} and \ref{sec:related-work}, a version of the following definition appears in \cite{johansson1999regularization}; see \S \ref{app:relaxed} for more details.
\begin{Def}[Relaxed hybrid system]\label{def:relaxed}
	Let $H=(\st,\fs,\gd,\varphi,r)$ be a \THS~ satisfying Assumptions~\ref{assump:deterministic}, \ref{assump:inf-or-Zeno}, and \ref{assump:trapping-guard}.
	We define the \concept{relaxed hybrid system} $H'=(\st',\fs',\gd',\varphi',r')$ associated to $H$ as follows.
	Let $\asymp$ be the equivalence relation on $Y\coloneqq \st\sqcup (\gd \times [0,1])$ which identifies $\gd$ with $\gd\times \{0\}$, $\pi_0\colon Y \to Y/\asymp$ be the corresponding quotient map, and $\pi_1\colon [0,\infty) \times Y  \to [0,\infty) \times (Y/\asymp)$ be the quotient map $\pi_1 \coloneqq \id_{[0,\infty)} \times \pi_0$.\footnote{That $\pi_1$ is a quotient map (where $[0,\infty)\times Y$ has the product topology) follows since (i) $\pi_0$ is a quotient map, (ii) $[0,\infty)$ is a locally compact Hausdorff space, and (iii) the product of a quotient map and the identity map of a locally compact Hausdorff space is always a quotient map \cite[Lem~4.72]{lee2010introduction}.\label{foot:quotient-product}}
	Let $U\supseteq \gd$ be the domain of a flow-induced retraction as in Def. \ref{def:trapping-guards}, and  let $\widetilde{\varphi}$ be the unique continuous extension of $\varphi$ to the closure $\cl(\dom(\varphi))$ of $\dom(\varphi)$ in $\st$ ensured by Lemma \ref{lem:extend}.
	We define the spaces
	\begin{align*}
	\st' &\coloneqq \pi_0\left(\st \sqcup (\gd\times [0,1])\right) \qquad \qquad 
	\gd'\coloneqq \pi_0(\gd\times \{1\}) \qquad \qquad
	\fs'\coloneqq \pi_0\left(\st \sqcup (\gd\times [0,1))\right)\\
	\dom(\varphi')&\coloneqq \pi_1\left( \Big\{(t,x)\mid 0\leq t < \mu(x) + 1 \Big\} \sqcup \bigcup_{t\in [0,1)} \{t\}\times \Big(\gd\times [0,1-t)\Big) \right),
	\end{align*}
	where $\st'$ has the quotient topology, and maps $r'\colon \gd'\to \st'$, $\varphi'\colon \dom(\varphi')\to \fs'$ via
	\begin{align*}
	r'(\pi_0(z,1))&\coloneqq \pi_0(r(z)), \qquad z\in \gd\\	
	\varphi'(t,\pi_0(x)) &= 
	\begin{cases} 
	  \pi_0(\widetilde{\varphi}^t(x)), & (t,x)\in \cl(\dom(\varphi)) \\
           \pi_0(\widetilde{\varphi}^{\mu(x)}(x), t - \mu(x)), & x\in \cl(\fs), t\in [\mu(x), \mu(x) + 1) \\
\pi_0(z,s+t), & x = (z,s) \in \gd \times [0,1-t) 
	\end{cases}.      
	\end{align*}
\end{Def}

\begin{Prop}\label{prop:relaxed}
Let $H=(\st,\fs,\gd,\varphi,r)$ be a \THS~ satisfying Assumptions~\ref{assump:deterministic}, \ref{assump:inf-or-Zeno}, and \ref{assump:trapping-guard}.
Then the relaxed hybrid system $H'=(\st',\fs',\gd',\varphi',r')$ associated to $H$ is also a \THS~ satisfying Assumptions~\ref{assump:deterministic}, \ref{assump:inf-or-Zeno}, and \ref{assump:trapping-guard}.
Furthermore, $H'$ is nonblocking.
	
If $H$ satisfies Assumption~\ref{assump:compact}, then so does $H'$.
If $\st$ is metrizable and $r$ is a closed map with compact fibers, then $\st'$ is metrizable.
In particular, if $\st$ is metrizable and compact, then so is $\st'$.
\end{Prop}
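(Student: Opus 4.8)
The plan is to verify in order that $H'$ is a \THS{} (mostly point-set bookkeeping, except for the continuity of $\varphi'$, which I treat last), that it is deterministic and nonblocking, that it has trapping guard $\gd'$, and that compactness and metrizability pass to $\st'$. The only facts about $H$ that enter are: $\mu\colon \st\to[0,+\infty]$ is continuous and $\varphi$ extends continuously to $\widetilde\varphi$ on $\cl(\dom(\varphi))$ with $\widetilde\varphi^{\mu(x)}(x)\in\gd$ (Lemma~\ref{lem:extend}); $\st=\fs\cup\gd$ (Remark~\ref{rem:FcupZ}); and, by determinism of $H$, $\mu\equiv 0$ on $\gd$. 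Write $Y=\st\sqcup(\gd\times[0,1])$, so $\st'=Y/\asymp$ with $\asymp$ gluing $z\in\gd$ to $(z,0)$. Applying the argument in the proof of Lemma~\ref{lem:closed-quotient-metrizable} with $X=Y$, $A=\gd\times\{0\}$ (closed in $Y$), and $f\colon A\to Y$, $f(z,0)=z$ (continuous, closed because $\gd$ is closed in $\st$, with singleton fibers, and with image $\gd$ disjoint from $A$) shows that $\pi_0$ is a closed map, hence restricts to closed embeddings $\st\hookrightarrow\st'$ and $\gd\times[0,1]\hookrightarrow\st'$. From these, and from the fact that $\st\sqcup(\gd\times[0,1))$, $\gd\times\{1\}$, $\gd\times(0,1]$ are the $\pi_0$-saturations of $\fs'$, $\gd'$, and the set $U':=\pi_0(\gd\times(0,1])$ used below, one reads off: $\fs'$ is open, $\gd'$ is closed, $\fs'\cap\gd'=\varnothing$, $\fs'\cup\gd'=\st'$, and $r'$ is continuous ($r'=\pi_0\circ r\circ(\gd'\xrightarrow{\sim}\gd)$). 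Also $\dom(\varphi')=\pi_1(S)$ with $S$ the set in Def.~\ref{def:relaxed}; $S$ is $\pi_1$-saturated (because $\mu\equiv0$ on $\gd$ forces $(t,z)\in S\iff t<1\iff (t,(z,0))\in S$) and open in $[0,\infty)\times Y$ (the first part by continuity of $\mu$, the second because it equals $\{(t,(z,s)):s+t<1\}$), so $\dom(\varphi')$ is open in $[0,\infty)\times\st'$ and, since $S\supseteq\{0\}\times Y$, contains $\{0\}\times\fs'$. The identity $\fs'\cap\gd'=\varnothing$ is exactly Assumption~\ref{assump:deterministic} for $H'$.

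For nonblocking (hence Assumption~\ref{assump:inf-or-Zeno}): in $H'$ a reset can occur only after fully traversing an appended arc $\{z\}\times[0,1]$, which costs one time unit. So, from any point, the execution that flows as long as possible and resets the moment it hits $\gd'$ never gets stuck (as $\st'=\fs'\cup\gd'$ and $r'(\gd')\subseteq\pi_0(\st)\subseteq\fs'$), and each of its flow arcs other than possibly the first has duration $\geq 1$ — or is infinite, which happens exactly when the underlying flow never leaves $\fs$, i.e.\ $\mu=\infty$. Either way the stop time is infinite, so the execution is infinite. For the trapping guard (Assumption~\ref{assump:trapping-guard}): take the open neighborhood $U'=\pi_0(\gd\times(0,1])\cong\gd\times(0,1]$ of $\gd'$, on which the relaxed maximum flow time is $\mu'(\pi_0(z,s))=1-s$ (continuous) and $\rho'(\pi_0(z,s)):=\pi_0(z,1)$ is a continuous retraction onto $\gd'$; a direct computation identifies $\cl(\dom\varphi')\cap([0,\infty)\times U')$ with $\{(t,\pi_0(z,s)):s\in(0,1],\ 0\le t\le 1-s\}$, on which $\widehat{\varphi'}(t,\pi_0(z,s)):=\pi_0(z,s+t)$ is continuous, extends $\varphi'$, and agrees with $\rho'$ at $t=\mu'(\pi_0(z,s))=1-s$, which is the content of~\eqref{eq:semiflow-local-extension}; this extension is unique because $\st'$ is Hausdorff whenever $\st$ is (the image of a Hausdorff space under a closed map with compact point-preimages is Hausdorff). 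Finally, $\st$ compact $\Rightarrow Y$ compact $\Rightarrow \st'=\pi_0(Y)$ compact; and $\st$ metrizable $\Rightarrow Y$ metrizable $\Rightarrow\st'$ metrizable by Lemma~\ref{lem:closed-quotient-metrizable} applied to the same $f$ (which has singleton, hence compact, fibers) — so the stated hypothesis that $r$ be closed with compact fibers is not actually needed for this part. The ``in particular'' clause is then immediate.

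The main obstacle is the continuity of $\varphi'$, which I would prove by realizing $\varphi'$ as the map induced on the quotient $\dom(\varphi')=\pi_1(S)$ by a $\pi_1$-invariant map $\Theta\colon S\to\st'$. Since $S=S_\st\sqcup S_\gd$ splits as a disjoint union of its clopen parts lying over $\st$ and over $\gd\times[0,1]$, define $\Theta$ on each: on $S_\gd$, $\Theta(t,(z,s)):=\pi_0(z,s+t)$ (continuous, as $s+t<1$ there); on $S_\st$, set $\Theta(t,z):=\pi_0(z,t)$ for $z\in\gd$ and, for $x\in\fs$, $\Theta(t,x):=\pi_0(\widetilde\varphi^t(x))$ when $t\le\mu(x)$ and $\Theta(t,x):=\pi_0(\widetilde\varphi^{\mu(x)}(x),\,t-\mu(x))$ when $\mu(x)\le t$ (the two formulas agreeing at $t=\mu(x)$ because $\widetilde\varphi^{\mu(x)}(x)\in\gd$ and $z\asymp(z,0)$). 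This $\Theta$ is $\pi_1$-invariant ($\Theta(t,z)=\pi_0(z,t)=\Theta(t,(z,0))$ for $z\in\gd$), is continuous on $S_\gd$, and is continuous on $S_\st$ by the pasting lemma together with continuity of $\widetilde\varphi$ on $\cl(\dom\varphi)$ and of $\mu$ (Lemma~\ref{lem:extend}); the one point needing a short argument is continuity across the seam $[0,\infty)\times\gd\subseteq S_\st$, where one uses $\mu\equiv0$ on $\gd$, continuity of $\mu$, and $\widetilde\varphi^{\mu(x)}(x)\in\gd$ to see that the $\fs$-side formula limits onto $\pi_0(z,t)$. Because $\pi_1|_S$ is a quotient map (restriction of the quotient map $\pi_1$ to the saturated open set $S$), $\Theta$ descends to a continuous $\varphi'\colon\dom(\varphi')\to\fs'$. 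The semiflow axioms for $\varphi'$ ($\varphi'^0=\id_{\fs'}$, the domain identity, and $\varphi'^{t+s}=\varphi'^t\circ\varphi'^s$) then follow by a short case analysis over the three regimes ``flowing inside $\fs$'', ``entering the appended arc'', ``sliding along the arc'' from the corresponding identities for $\widetilde\varphi$ in Lemma~\ref{lem:extend} and additivity of the arc parameter. With $\varphi'$ thus a continuous local semiflow on the open set $\fs'$ whose domain is an open neighborhood of $\{0\}\times\fs'$, $H'$ is a \THS, completing the proof.
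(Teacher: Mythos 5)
Your proof is correct and follows essentially the same route as the paper's: establish the point-set properties of $\st'$ via closedness of $\pi_0$, obtain continuity of $\varphi'$ from Lemma~\ref{lem:extend} by a pasting argument, note that every reset of an $H'$ execution after the first is preceded by an arc of duration at least one (so maximal executions are infinite, never Zeno), exhibit the collar $\pi_0(\gd\times(0,1])$ as the trapping-guard neighborhood, and push compactness and metrizability through the quotient. The only organizational difference in the continuity step is that you work upstairs with a $\pi_1$-invariant map $\Theta$ on the saturated open set $S$ and descend by the universal property of the quotient, whereas the paper applies the pasting lemma directly to the three closed pieces of $\dom(\varphi')$; these are interchangeable, and your version makes the quotient bookkeeping more explicit. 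One genuine point in your favor: your observation that metrizability of $\st'$ does not require $r$ to be closed with compact fibers is correct. Indeed $\st'$ is the adjunction of $\gd\times[0,1]$ to $\st$ along the closed embedding $\gd\cong\gd\times\{0\}$, so Lemma~\ref{lem:closed-quotient-metrizable} applies with singleton fibers and $r$ plays no role at this stage; the paper's proof invokes the corresponding properties of $r'$ here, but those are genuinely needed only for the metrizability of $\Sus_H$ in Prop.~\ref{prop:hybrid-suspension}. Two trivial caveats: the containment $S\supseteq\{0\}\times Y$ you assert fails at the points $(0,(z,1))$, though the needed conclusion $\{0\}\times\fs'\subseteq\pi_1(S)$ still holds because $\fs'$ excludes $\pi_0(\gd\times\{1\})$; and your uniqueness argument for $\widehat{\varphi'}$ invokes Hausdorffness of $\st$, which is not among the hypotheses of the proposition (a \THS~ state space is a bare topological space) --- this is harmless since the formula in Def.~\ref{def:trapping-guards} pins down the values of the extension in any case, and the paper itself asserts the trapping-guard property for $H'$ without proof.
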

\begin{Rem}[Relaxation converts Zeno executions to infinite executions]\label{rem:relaxed-sys-deterministic-nonblocking}
The nonblocking statement of Prop.~\ref{prop:relaxed} shows that the original motivation for relaxation---elimination of Zeno behavior  \cite{johansson1999regularization}---still holds in our setting.
\end{Rem}

\begin{proof}
By the definition of the disjoint union and quotient topologies, $\gd'$ is closed in $\st'$, $\fs'$ is open in $\st'$, and $\dom(\varphi')$ is open in $[0,\infty)\times \st'$.
Since $r'$ can be written as a composition of continuous functions, $r'$ is continuous.
Lemma~\ref{lem:extend} implies that the three functions defining $\varphi'$ agree on the intersections of their domains; since each such domain is closed in $\dom(\varphi')$, the pasting lemma of point-set topology \cite[Thm~18.3]{munkres2000topology} implies that $\varphi'$ is continuous.
From the properties of $\widetilde{\varphi}$ established in Lemma~\ref{lem:extend}, it is clear that $\varphi'$ is also a local semiflow.
Hence $H'$ is a \THS, and $H'$ is deterministic (Assumption~\ref{assump:deterministic}) since $\gd' \cap \fs' = \varnothing$.
Furthermore, $\gd'$ is clearly a trapping guard (Assumption~\ref{assump:trapping-guard}).
Since $H$ satisfies Assumption~\ref{assump:inf-or-Zeno} and since every maximal $H'$ execution contains every arc of some maximal $H$ execution, $H'$ also satisfies Assumption~\ref{assump:inf-or-Zeno}.
However, $H'$ has no Zeno executions because all but possibly one resets of a maximal $H'$ execution are preceded by an arc defined on an interval of length greater than one.

If $\st$ is compact (Assumption~\ref{assump:compact}), then so is $\gd$ and hence also $\gd\times [0,1]$, so $\st \sqcup (\gd\times [0,1])$ and (by continuity of $\pi_0$) $\st'$ are therefore compact.		
If $r$ is a closed map with compact fibers, then the same is true of $r'$, and $r'(\gd') \cap \gd' = \varnothing$ by construction; thus if $\st$ is also metrizable, Lemma \ref{lem:closed-quotient-metrizable} implies that $\st'$ is metrizable.
By Remark \ref{rem:compact-guard-automatically-closed-proper}, these conditions on $r$ are automatically satisfied if $\st$ is compact.
This completes the proof.
\end{proof}

For later use we record the following  result which implies \cite[Ex.~2.29]{lee2010introduction} that the ``obvious embedding'' $\iota\colon \st\to \st'$ defined by $\iota\coloneqq \pi_0|_\st$ is indeed a topological embedding (a homeomorphism onto its image). 
\begin{Lem}\label{lem:pi_0-closed}
The quotient map $\pi_0\colon \st\sqcup (\gd\times [0,1])\to \st'$ is closed.
\end{Lem}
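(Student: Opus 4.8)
The plan is to identify $\pi_0$ with the quotient map treated in Lemma~\ref{lem:closed-quotient-metrizable} and then to re-run the part of that lemma's proof establishing that the quotient map is closed --- the part which is purely point-set-topological and in particular does not invoke the metrizability hypothesis (metrizability is used only in the second half, via Stone's theorem, and $Y$ need not be metrizable here). Concretely I would set $X\coloneqq Y = \st\sqcup(\gd\times[0,1])$, $A\coloneqq \gd\times\{0\}$, and let $f\colon A\to X$ be the map $f(z,0)\coloneqq z$, where $\gd\subseteq \st\subseteq X$ is viewed in the usual way. By construction the equivalence relation $\asymp$ of Def.~\ref{def:relaxed} is exactly the one obtained by identifying each $a\in A$ with $f(a)\in X$, so that $\st' = X/\asymp$ and $\pi_0$ is the associated quotient map.

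The next step is to verify the hypotheses needed for the closedness conclusion. Since $\gd$ is closed in $\st$ (Def.~\ref{def:HybridSystem}) and $\st$ is a clopen summand of $X$, the subset $\gd$ is closed in $X$; likewise $\gd\times[0,1]$ is a clopen summand of $X$ and $\{0\}$ is closed in $[0,1]$, so $A = \gd\times\{0\}$ is closed in $X$. The map $f$ is continuous, being the canonical homeomorphism $\gd\times\{0\}\to\gd$ followed by the summand inclusion $\gd\hookrightarrow\st\hookrightarrow X$; in fact $f$ is a homeomorphism of $A$ onto the closed subset $\gd$ of $X$, hence a closed map, and moreover injective. Finally $f(A) = \gd\subseteq\st$ is disjoint from $A\subseteq\gd\times[0,1]$ because $\st$ and $\gd\times[0,1]$ are disjoint summands, so $f(A)\cap A = \varnothing$.

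With these in hand I would conclude exactly as in the proof of Lemma~\ref{lem:closed-quotient-metrizable}: for any closed $C\subseteq X$ one has $\pi_0^{-1}(\pi_0(C)) = C\cup f^{-1}(C)\cup f(C\cap A)$ --- an identity which is valid here because $f$ is injective, so that the $\asymp$-class of a point of $A$ has only two elements --- and each of the three sets on the right is closed in $X$ (the first by hypothesis, the second since $A$ is closed and $f$ is continuous, the third since $f$ is a closed map); hence $\pi_0^{-1}(\pi_0(C))$ is closed, so $\pi_0(C)$ is closed by the definition of the quotient topology. I do not anticipate any serious obstacle: the argument is essentially bookkeeping, and the only points requiring care are that $f$ is genuinely a closed map in the (possibly non-metrizable) topological category --- which follows from $\gd$ being closed in $\st$ --- and that no appeal to metrizability creeps into the closedness half of Lemma~\ref{lem:closed-quotient-metrizable}. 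If one prefers not to lean on that lemma's proof, the same conclusion follows from a direct computation: decompose $C$ into its $\st$-part and its $(\gd\times[0,1])$-part and use that the only nontrivial $\asymp$-classes are the two-point sets $\{z,(z,0)\}$ for $z\in\gd$, which shows directly that $\pi_0^{-1}(\pi_0(C))$ is again the disjoint union of a closed subset of $\st$ and a closed subset of $\gd\times[0,1]$.
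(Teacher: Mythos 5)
Your proof is correct and essentially identical to the paper's: the paper computes $\pi_0^{-1}(\pi_0(C)) = C \cup j^{-1}(C) \cup j(C\cap \gd)$ with $j\colon \gd\to\gd\times\{0\}$ the identification (your $f = j^{-1}$), and concludes closedness of the three pieces from the disjoint union topology and the definition of the quotient topology. Your care in checking that the metrizability hypothesis of Lemma~\ref{lem:closed-quotient-metrizable} is not needed for the closedness half, and that injectivity of $f$ validates the three-term identity, is well placed but does not change the argument.
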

\begin{proof}
Let $j\colon \gd\to \gd\times \{0\}$ be the obvious identification and
$C\subseteq \st\sqcup (\gd\times [0,1])$ be closed.
Then, since $\gd$ is closed in $\st$ and $j$ is a homeomorphism, $$\pi_0^{-1}(\pi_0(C)) = C \cup j^{-1}(C) \cup j(C\cap \gd)$$
is closed by the definition of the disjoint union topology.
By the definition of the quotient topology, $\pi_0(C)$ is therefore closed.
\end{proof}

Using the relaxed hybrid system associated to $H=(\st,\fs,\gd,\varphi,r)$, we now define the \emph{hybrid suspension} $(\Sus_H,\Phi_H)$ of $H$.  
As mentioned in \S\ref{sec:contributions} and \ref{sec:related-work}, versions of the hybrid suspension have appeared in \cite{ames2005homology,burden2015metrization} under different names; see \S \ref{app:hybrid-suspension-subsubsec} for more details.
We choose to call this space a ``suspension'' because it generalizes the classical suspension of a discrete-time dynamical system \cite[p.~797,~pp.~21--22]{smale1967differentiable,brin2002introduction}; indeed, if $\st = \gd$ our construction reduces to the classical one (see SM \S \ref{app:suspension} for details).
We will later see that the dynamics of $(\Sus_H,\Phi_H)$ are particularly compatible with those of $H$ (Prop.~\ref{prop:omega-limit-set-suspension}, \ref{prop:attracting-repelling-pairs-suspension}, \ref{prop:hybrid-chain-equiv-vs-suspension-chain-equiv}).
(Contrast this with the (generalized) \emph{hybrifold} semiflow appearing in the literature and discussed in detail in SM \S \ref{app:hybrifold}; see also Remarks~\ref{rem:omega-compatibility-for-hybrifold-fails} and \ref{rem:chain-compatibility-for-hybrifold-fails}.)

\begin{Def}[Hybrid suspension]\label{def:hybrid-suspension}
	Let $H=(\st,\fs,\gd,\varphi,r)$ be a \THS~ satisfying Assumptions~\ref{assump:deterministic}, \ref{assump:inf-or-Zeno}, and \ref{assump:trapping-guard}.
	Let $H' = (\st',\fs',\gd',\varphi',r')$ be the relaxed hybrid system of Def. \ref{def:relaxed}.
    Let $\sim$ be the equivalence relation on $\st'$ which identifies each $z'\in \gd'\subseteq \st'$ with $r'(z')\in \st'$.
    We define the topological space $\Sus_H\coloneqq \st'/\sim$ and let $\pi\colon \st'\to \Sus_H$ be the quotient map.
    Since $H'$ is nonblocking, we may define $\Phi_H\colon [0,\infty)\times \Sus_H\to \Sus_H$ by declaring $\Phi_H^t(\pi(x'))$ to be the unique element of the singleton $\pi(\chi_{x'}(t))$ for $x'\in \st'$ and $t\geq 0$.
    We define $(\Sus_H,\Phi_H)$ to be the \concept{hybrid suspension} and \concept{suspension semiflow} of $H$.
    For brevity, we sometimes simply refer to the pair $(\Sus_H,\Phi_H)$ as the hybrid suspension.	
\end{Def}
The following proposition justifies the \emph{suspension semiflow} nomenclature.
\begin{Prop}\label{prop:hybrid-suspension}
Let $H=(\st,\fs,\gd,\varphi,r)$ be a \THS~ satisfying Assumptions~\ref{assump:deterministic}, \ref{assump:inf-or-Zeno}, and \ref{assump:trapping-guard}.
Let $(\Sus_H,\Phi_H)$ be the hybrid suspension.
Then $\Phi_H\colon [0,\infty)\times \Sus_H \to \Sus_H$ is a continuous semiflow.

If $H$ satisfies Assumption~\ref{assump:compact}, then $\Sus_H$ is compact.
If $\st$ is metrizable and $r$ is a closed map with compact fibers, then $\Sus_H$ is metrizable.
In particular, if $\st$ is metrizable and compact, then so is $\Sus_H$.
\end{Prop}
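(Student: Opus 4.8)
The plan is to treat the three assertions in turn: the algebraic semiflow properties come cheaply, continuity of $\Phi_H$ is the real work and gets reduced to a statement about an explicitly describable map on $[0,\infty)\times\st'$, and compactness and metrizability of $\Sus_H$ follow formally from Proposition~\ref{prop:relaxed} and Lemma~\ref{lem:closed-quotient-metrizable}.

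By Proposition~\ref{prop:relaxed} the relaxed system $H'$ is deterministic, nonblocking, and has no Zeno executions, so each $x'\in\st'$ has a unique maximal execution $\chi_{x'}$, which is infinite; hence $\chi_{x'}(t)$ is a nonempty subset of $\st'$ for every $t\ge 0$ and $\pi(\chi_{x'}(t))$ is a singleton (since $\sim$ collapses each $z'\in\gd'$ onto $r'(z')$), so $\Phi_H$ is well defined on all of $[0,\infty)\times\Sus_H$. The identity $\Phi_H^0=\id_{\Sus_H}$ is immediate, and $\Phi_H^{t+s}=\Phi_H^t\circ\Phi_H^s$ follows because, by uniqueness of maximal executions of the deterministic $H'$, the time-$s$ tail of $\chi_{x'}$ is $\chi_{y'}$ for any $y'\in\chi_{x'}(s)$.

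The substantial part is continuity of $\Phi_H$. Since $\pi\colon\st'\to\Sus_H$ is a quotient map and $[0,\infty)$ is locally compact Hausdorff, $\id_{[0,\infty)}\times\pi$ is again a quotient map (cf.\ Footnote~\ref{foot:quotient-product}), so it suffices to prove that $\Psi\colon(t,x')\mapsto\pi(\chi_{x'}(t))$ is continuous on $[0,\infty)\times\st'$, where one may now argue with genuine representatives. I would invoke Lemma~\ref{lem:extend} for $H'$ (legitimate, since $H'$ satisfies the hypotheses of Lemma~\ref{lem:extend} by Proposition~\ref{prop:relaxed}) to obtain a continuous maximum flow time $\mu'\colon\st'\to[0,+\infty]$ and a continuous extension $\widetilde{\varphi'}$ of $\varphi'$ with $\widetilde{\varphi'}^{\,\mu'(x')}(x')\in\gd'$ whenever $\mu'(x')<\infty$; from these one builds, on their open domains of definition, continuous functions $\sigma_0\equiv 0$, $p_0=\id_{\st'}$, $\sigma_{j+1}(x')=\sigma_j(x')+\mu'(p_j(x'))$, $p_{j+1}(x')=r'\big(\widetilde{\varphi'}^{\,\mu'(p_j(x'))}(p_j(x'))\big)$ --- the time of the $j$-th jump of $\chi_{x'}$ and the starting point of its $j$-th flow arc --- so that $\chi_{x'}(t)\ni\widetilde{\varphi'}^{\,t-\sigma_j(x')}(p_j(x'))$ for $t\in[\sigma_j(x'),\sigma_{j+1}(x')]$. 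Fixing $(t_0,x'_0)$ and picking $j$ with $\sigma_j(x'_0)\le t_0\le\sigma_{j+1}(x'_0)$: if $t_0$ is not a jump time, continuity of $\Psi$ at $(t_0,x'_0)$ is immediate from continuity of $\sigma_j$, $p_j$, $\widetilde{\varphi'}$; and if $t_0=\sigma_{j+1}(x'_0)$ is a jump time, a nearby $(t,x')$ lies either in the regime $t\le\sigma_{j+1}(x')$, where $\Psi(t,x')$ is close to $\pi(g)$ with $g\coloneqq\widetilde{\varphi'}^{\,\mu'(p_j(x'_0))}(p_j(x'_0))\in\gd'$, or in the regime $t\ge\sigma_{j+1}(x')$, where $\Psi(t,x')$ is close to $\pi(p_{j+1}(x'_0))=\pi(r'(g))$; these two one-sided limits coincide in $\Sus_H$ precisely because $\sim$ identifies $g$ with $r'(g)$. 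I expect this case analysis at jump times to be the main obstacle; making it rigorous also requires checking that the $\sigma_j(x'_0)$ tend to $\infty$ (so every finite $t_0$ is reached by some arc, the alternative being a terminal guard-avoiding flow arc) and that consecutive jump times cannot be arbitrarily close, both of which hold because $r'(\gd')\subseteq\pi_0(\st)\subseteq\fs'$ is disjoint from $\gd'$, so every reset is followed by a flow arc of duration at least $1$; the degenerate case $x'_0\in\gd'$ (instantaneous first jump) is handled the same way.

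Finally, $\Sus_H=\pi(\st')$ is a continuous image of $\st'$, so if $\st$ is compact then $\st'$ is compact by Proposition~\ref{prop:relaxed} and hence $\Sus_H$ is compact. For metrizability I would apply Lemma~\ref{lem:closed-quotient-metrizable} with $X=\st'$, $A=\gd'$ (closed in $\st'$), and $f=r'\colon\gd'\to\st'$: when $\st$ is metrizable and $r$ is closed with compact fibers, Proposition~\ref{prop:relaxed} and its proof show that $\st'$ is metrizable and that $r'$ is closed with compact fibers, while $r'(\gd')\cap\gd'=\varnothing$ by construction, so Lemma~\ref{lem:closed-quotient-metrizable} gives that $\Sus_H=\st'/\sim$ is metrizable. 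The concluding ``in particular'' is the instance in which $\st$ metrizable and compact forces $r$ to be closed with compact fibers (Remark~\ref{rem:compact-guard-automatically-closed-proper}), and then combines the previous two sentences.
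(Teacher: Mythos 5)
Your proposal is correct, and its skeleton matches the paper's: well-definedness and the semigroup law from determinism/nonblocking of $H'$ (Prop.~\ref{prop:relaxed}), the reduction of continuity of $\Phi_H$ to continuity of $\pi\circ\chi$ on $[0,\infty)\times\st'$ via the quotient map $\id_{[0,\infty)}\times\pi$, and the compactness/metrizability claims via Prop.~\ref{prop:relaxed} and Lemma~\ref{lem:closed-quotient-metrizable} (the paper applies the latter twice, to get $\st'$ and then $\Sus_H$, exactly as you do). Where you genuinely diverge is in how you establish continuity of $\pi\circ\chi$. The paper observes that Lemma~\ref{lem:extend} (applied to $H'$) plus continuity of $r'$ make the set-valued execution map $\chi\colon[0,\infty)\times\st'\to 2^{\st'}$ upper semicontinuous, that compositions of USC set-valued maps are USC \cite[p.~41,~Prop.~1]{aubin1984differential}, and that a singleton-valued USC map is a continuous map; the identification $z'\sim r'(z')$ is precisely what makes $\pi\circ\chi$ singleton-valued, so continuity falls out with no case analysis. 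You instead build the jump times $\sigma_j$ and arc initial points $p_j$ explicitly from $\mu'$, $\widetilde{\varphi}'$, and $r'$, and match the two one-sided limits at a jump time by hand, noting that they differ by exactly the identification $g\sim r'(g)$ and that resets in $H'$ are separated by at least one time unit so the induction is well founded. Your route is more elementary and makes visible exactly where the gluing and the relaxation enter, at the cost of the bookkeeping at jump times (which you handle correctly, including the degenerate instantaneous first jump and the terminal non-impacting arc); the paper's route buys brevity by outsourcing that case analysis to the general theory of upper semicontinuous set-valued maps. Either argument is acceptable.
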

\begin{proof}	
Letting $\chi_x$ denote the unique $H'$ execution through $x\in I'$ and using the notation $\chi_x(t)$ of Def.~\ref{def:execution}, we may view $(t,x)\mapsto \chi_x(t)$  as a set-valued map (SVM) $\chi$.
Since Prop.~\ref{prop:relaxed} implies that $H'$ is nonblocking, the SVM $\chi\colon [0,\infty)\times \st'\to 2^{\st'}$ has only nonempty values.
Prop.~\ref{prop:relaxed} also implies that $H'=(\st',\fs',\gd',\varphi',r')$ satisfies Assumptions~\ref{assump:deterministic}, \ref{assump:inf-or-Zeno}, and \ref{assump:trapping-guard}, so Lemma~\ref{lem:extend} implies that $\varphi'$ has a unique continuous extension $\widetilde{\varphi}'$ defined on $\cl(\dom(\varphi'))$.
This and continuity of $r'$ in turn imply that $\chi$ is upper semicontinuous (USC) \cite[p.~40,~Def.~1]{aubin1984differential}.
Since (i) $\pi$ is continuous, (ii) the composition of USC SVMs is USC \cite[p.~41,~Prop.~1]{aubin1984differential}, and (iii) singleton-valued USC SVMs are equivalent to continuous (single-valued) maps, $\pi\circ \chi$ may be identified with a continuous map $[0,\infty)\times \st' \to \Sus_H$.
Since $\id_{[0,\infty)}\times \pi$ is a quotient map (cf. Footnote~\ref{foot:quotient-product}) and $\Phi_H\circ (\id_{[0,\infty)}\times \pi) = \pi \circ \chi $, the universal property of the quotient topology \cite[Thm~3.70]{lee2010introduction} implies that $\Phi_H$ is continuous. 
Since $\varphi'$ is a local semiflow, it is clear from the definition of $\chi$ that $\chi_x(t+s) = \chi_{y}(t)$ for any $y\in \chi_x(s)$, so $\Phi_H^{t+s}(\pi(x)) = \pi(\chi_y(t)) = \Phi_H^t(\pi(y)) = \Phi_H^t(\Phi_H^s(\pi(x)))$ and hence $\Phi_H$ is a semiflow. 

If $\st$ is compact ($H$ satisfies Assumption~\ref{assump:compact}), then so is $\st'$ by Prop.~\ref{prop:relaxed}, and therefore $\Sus_H = \pi(\st')$ is compact. 
if $r$ is a closed map with compact fibers, then the same is true of $r'$, and since also $r'(\gd')\cap \gd' = \varnothing$, two applications of Lemma~\ref{lem:closed-quotient-metrizable} imply that $\st'$ and $\Sus_H$ are metrizable.
By Remark \ref{rem:compact-guard-automatically-closed-proper}, these conditions on $r$ are automatically satisfied if the guard if $\st$ is compact.
This completes the proof.
\end{proof}

\begin{Rem}[Further motivation for the trapping guard condition.]\label{rem:suspension-semiflow-continuous-iff-trapping-guard}
Let $H = (\st,\fs,\gd,\varphi,r)$ be a \THS~  satisfying Assumptions~\ref{assump:deterministic} and \ref{assump:inf-or-Zeno}. 
Under the additional assumption that $H$ satisfies the trapping guard condition (Assumption~\ref{assump:trapping-guard}), in Def.~\ref{def:relaxed} and \ref{def:hybrid-suspension} we defined the spaces $\st'$, $\Sus_H$ and maps $\pi_0$, $\iota$, and $\pi$.
We also constructed the suspension semiflow $\Phi_H\colon [0,\infty)\times \Sus_H \to \Sus_H$ and showed that $\Phi_H$ is continuous.
It is immediate from the definitions that $\Phi_H$ satisfies the following two properties.
\begin{enumerate}[label=\arabic*.\hspace{0.1cm}, ref=\arabic*,leftmargin=1.3cm]
	\item\label{enum:rem-Phi_H-cond-1} $\Phi_H^t(\pi\circ \pi_0(z,s)) = \pi\circ\pi_0(z,t+s)$ for all $z\in \gd$, $s\in [0,1]$, and $t\in [0,1-s]$.
	\item\label{enum:rem-Phi_H-cond-2} For all $(t,x)\in \dom(\varphi)$, $\pi\circ\iota(\varphi^t(x)) = \Phi_H^t(\pi\circ\iota(x))$.
\end{enumerate}

While for convenience of exposition we only defined the quantities $\st'$, $\Sus_H$, $\pi_0$, $\iota$, and $\pi$ under Assumptions~\ref{assump:deterministic}, \ref{assump:inf-or-Zeno}, and \ref{assump:trapping-guard} (in particular, under Assumption~\ref{assump:trapping-guard}), their definitions make sense (verbatim) for \emph{any} \THS.
Thus, for an arbitrary \THS~ $H$, it makes sense to ask the following question: under what circumstances does there exist a well-defined ``suspension semiflow'' $\Phi$ on $\Sus_H$ for $H$ in the sense that $\Phi$ satisfies Conditions \ref{enum:rem-Phi_H-cond-1} and \ref{enum:rem-Phi_H-cond-2} (stated above for $\Phi_H$)?

In SM \S \ref{app:suspension-semiflow-continuous-implies-trapping-guard-condition} we prove (Cor.~\ref{co:suspension-semiflow-continuous-implies-trapping-guard-condition}) that, if $H = (\st,\fs,\gd,\varphi,r)$ is a \THS~ satisfying Assumptions~\ref{assump:deterministic} and \ref{assump:inf-or-Zeno} with Hausdorff state space $\st$, then there exists a continuous suspension semiflow $\Phi\colon [0,\infty)\times \Sus_H\to \Sus_H$ in the above sense \emph{if and only if} $H$ satisfies the trapping guard condition (Assumption~\ref{assump:trapping-guard}).
This further motivates the trapping guard condition.
\end{Rem}

Prop.~\ref{prop:omega-limit-set-suspension} below relates hybrid $\omega$-limit sets (Def. \ref{def:omega-limit}) to those of the hybrid suspension semiflow, and can be viewed as motivation for the definition of hybrid $\omega$-limit sets. 
\begin{Prop}\label{prop:omega-limit-set-suspension}
	Let $H=(\st,\fs,\gd,\varphi,r)$ be a \THS~ satisfying Assumptions~\ref{assump:deterministic}, \ref{assump:inf-or-Zeno}, and \ref{assump:trapping-guard} and with $r\colon \gd\to \st$ a closed map.  
	Let $H' = (\st',\fs',\gd',\varphi',r')$ be the relaxed hybrid system of Def. \ref{def:relaxed}, $\pi_0\colon \st \sqcup(\gd\times [0,1])\to \st'$ be the quotient of Def.~\ref{def:relaxed}, $\iota\colon \st\to \st'$ be the embedding $\pi_0|_\st$, and $(\Sus_H,\Phi_H)$ be the hybrid suspension of Def. \ref{def:hybrid-suspension} with quotient $\pi\colon \st'\to \Sus_H$.
	Let $\rho_0\colon \pi_0(\gd\times [0,1]) \to \gd$ be the composition of the straight-line retraction $\pi_0(\gd\times [0,1])\to \iota(\gd)$ with the identification $\iota(\gd) \approx \gd$.  
    For all $B\subseteq \Sus_H$, define $$\gd_B\coloneqq \rho_0\left(\pi^{-1}(B) \cap \pi_0(\gd\times [0,1])\right).$$	
	
	Then for all $B\subseteq \Sus_H$:
	\begin{equation}\label{eq:prop-omega-1}
	 (\pi\circ\iota)^{-1}\left(\omega(B)\right) = \omega((\pi\circ\iota)^{-1}(B)) \cup \omega(\gd_B).
	\end{equation}
	In particular, for all $A\subseteq \st$:
	\begin{equation}\label{eq:prop-omega-2}
	\omega(A) = (\pi\circ\iota)^{-1}(\omega(\pi\circ\iota(A))).
	\end{equation}
\end{Prop}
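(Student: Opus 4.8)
The plan is to establish the general identity \eqref{eq:prop-omega-1} and then read off \eqref{eq:prop-omega-2} as the case $B=\pi\circ\iota(A)$. The workhorse is a precise \emph{dynamical dictionary} between executions of $H$ and trajectories of the suspension semiflow $\Phi_H$. For $x\in\st$ with unique maximal execution $\chi_x=(N,\tau,\gamma)$, I would prove by induction on $j$ --- using Conditions \ref{enum:rem-Phi_H-cond-1}--\ref{enum:rem-Phi_H-cond-2} of Remark~\ref{rem:suspension-semiflow-continuous-iff-trapping-guard}, the semiflow property of $\Phi_H$, and uniqueness of maximal executions (Assumption~\ref{assump:deterministic}) --- that $\Phi_H^{t+j}(\pi\circ\iota(x))=\pi\circ\iota(\gamma_j(t))$ for all $0\le j\le N$ and $t\in T_j$, while on each interval $[\tau_{j+1}+j,\tau_{j+1}+j+1]$ the trajectory traverses the ``collar detour'' $s\mapsto\pi\circ\pi_0(\gamma_j(\tau_{j+1}),s)$; and, symmetrically, that the trajectory through a collar point $\pi\circ\pi_0(z,s)$ (with $z\in\gd$, $s\in[0,1]$) reaches $\pi\circ\iota(r(z))$ after time $1-s$ and thereafter coincides with the trajectory through $\pi\circ\iota(r(z))$. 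I would also record the point-set facts $\st'=\iota(\st)\sqcup\pi_0(\gd\times(0,1])$ and $\Sus_H\setminus\pi\circ\iota(\st)=\pi\circ\pi_0(\gd\times(0,1))$, and that $\pi\circ\iota$ is a closed topological embedding: $\iota=\pi_0|_\st$ is closed by Lemma~\ref{lem:pi_0-closed}, $\pi$ is closed by Lemma~\ref{lem:closed-quotient-metrizable} (since $r$ closed forces $r'$ closed, and $r'(\gd')\cap\gd'=\varnothing$ by Prop.~\ref{prop:relaxed}), and $\pi\circ\iota$ is injective because $\sim$ identifies no two points of $\iota(\st)$; the same reasoning shows $\pi\circ\pi_0|_{\gd\times[0,1]}$ is closed.

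Next I would prove \eqref{eq:prop-omega-1} by two inclusions. For ``$\supseteq$'': if $x\in\omega((\pi\circ\iota)^{-1}(B))$, then $x$ is approximated along flow arcs $\gamma^{(n)}_{j_n}$ of executions through points $a_n\in(\pi\circ\iota)^{-1}(B)$ with $j_n+t_n\to\infty$, and the dictionary turns this into $\pi\circ\iota(x)\in\cl(\Phi_H^{[T,\infty)}(B))$ for every $T>0$, so $\pi\circ\iota(x)\in\omega(B)$; if instead $x\in\omega(\gd_B)$, each $w_n\in\gd_B$ witnessing $x$ carries a point $\pi_0(w_n,\sigma_n)\in\pi^{-1}(B)$, and since the $\Phi_H$-trajectory through $\pi(\pi_0(w_n,\sigma_n))$ reaches $\pi\circ\iota(r(w_n))$ in time $\le1$ and then tracks the execution through $r(w_n)$, while $\chi_{w_n}$ consists of the arcs of $\chi_{r(w_n)}$ shifted up by one, the dictionary again exhibits $\pi\circ\iota(x)\in\omega(B)$. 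For ``$\subseteq$'': given $\pi\circ\iota(x)\in\omega(B)$, realize it as a limit of points $\Phi_H^{t_n}(y_n)$ with $y_n\in B$ and $t_n\to\infty$, lift $y_n$ to $x'_n\in\pi^{-1}(B)$, and split on whether $x'_n\in\iota(\st)$ or $x'_n\in\pi_0(\gd\times(0,1])$; in either case the dictionary presents $\Phi_H^{t_n}(y_n)$ either as a flow point $\pi\circ\iota(\cdot)$ or as a collar-detour point $\pi\circ\pi_0(\gamma_{j_n}(\tau_{j_n+1}),s_n)$. Since $\pi\circ\iota(\st)$ is closed in $\Sus_H$ and $\pi\circ\pi_0(\gd\times(0,1))$ is disjoint from it, a subsequence of collar-detour points converging to $\pi\circ\iota(x)$ must have $s_n\to0$ or $s_n\to1$, whereupon continuity of $r$ lets one replace each such point by the nearby flow point $\pi\circ\iota(\gamma_{j_n}(\tau_{j_n+1}))$ or $\pi\circ\iota(r(\gamma_{j_n}(\tau_{j_n+1})))$, reducing to the flow-point case; tracing flow points back through the dictionary places $x$ in $\omega((\pi\circ\iota)^{-1}(B))$ when $x'_n\in\iota(\st)$, and in $\omega(r(\gd_B))\subseteq\omega(\gd_B)$ when $x'_n$ is a collar point (the last inclusion being the arc-shift estimate below).

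Finally I would deduce \eqref{eq:prop-omega-2} by applying \eqref{eq:prop-omega-1} with $B=\pi\circ\iota(A)$. Injectivity of $\pi\circ\iota$ gives $(\pi\circ\iota)^{-1}(\pi\circ\iota(A))=A$, and unwinding the definitions of $\rho_0$ and $\gd_B$, using $\pi\circ\pi_0(z,s)\notin\pi\circ\iota(\st)$ for $s\in(0,1)$ and $\pi\circ\pi_0(z,1)=\pi\circ\iota(r(z))$, gives $\gd_{\pi\circ\iota(A)}=(A\cap\gd)\cup r^{-1}(A)$. It then remains to check $\omega((A\cap\gd)\cup r^{-1}(A))\subseteq\omega(A)$, which I would obtain from the elementary identity $S^T_z=S^{T-1}_{r(z)}$ for every $z\in\gd$ --- where $S^T_z$ denotes the set of flow-arc points $\gamma_j(t)$ of $\chi_z$ with $j+t\ge T$, the identity holding because $\chi_z$ is the degenerate arc at $z$ followed by $\chi_{r(z)}$ with all arc indices raised by one. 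Indeed this yields $\bigcup_{z\in(A\cap\gd)\cup r^{-1}(A)}S^T_z\subseteq\bigcup_{a\in A}S^{T-1}_a$, and intersecting closures over $T>0$ gives the desired inclusion, whereupon the right-hand side of \eqref{eq:prop-omega-1} collapses to $\omega(A)$.

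The hard part will be the ``$\subseteq$'' half of \eqref{eq:prop-omega-1}: the bookkeeping needed to pin down which of the two phases --- flowing inside $\iota(\st)$, or traversing a collar detour --- each approximating point $\Phi_H^{t_n}(y_n)$ occupies, and the boundary case in which the limit lands in the overlap $\pi\circ\iota(\gd\cup r(\gd))$ along which the collar is glued to $\iota(\st)$ (so that $\pi\circ\pi_0|_{\gd\times[0,1]}$ is not injective there). Closedness of $\pi\circ\iota(\st)$ together with continuity of $r$ is what forces the collar parameter to an endpoint and makes the case analysis terminate; care is also needed to phrase the approximation arguments in terms of the defining intersection-of-closures formula (equivalently, nets) rather than sequences, unless one additionally assumes $\st$ compact so that $\Sus_H$ is metrizable.
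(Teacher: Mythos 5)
Your plan is, in substance, the same proof the paper gives, reorganized from a computation with set identities into a pointwise net-chasing argument. The paper factors the work into two facts: the decomposition $\omega(B)=\omega(B\cap f(\st))\cup\omega(f(\gd_B))$ for $f\coloneqq\pi\circ\iota$ (your initial-point dichotomy together with the index shift between $\chi_z$ and $\chi_{r(z)}$, packaged via the finite-union property of $\omega$-limit sets), and the commutation $f(\st)\cap\bigcap_{T>0}\cl\bigl(\Phi_H^{[T,\infty)}(B)\bigr)=\bigcap_{T>0}\cl\bigl(f(\st)\cap\Phi_H^{[T,\infty)}(B)\bigr)$ (your terminal-point dichotomy); \eqref{eq:prop-omega-1} then follows by a formal calculation using that $f$ is an injective closed map. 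Your derivation of \eqref{eq:prop-omega-2}, including the identity $\gd_{\pi\circ\iota(A)}=(A\cap\gd)\cup r^{-1}(A)$ and the arc-shift estimate, matches the paper's (which records only the inclusion $\gd_{\pi\circ\iota(A)}\subseteq A\cup r^{-1}(A)$ but uses it identically). Your insistence on nets rather than sequences is correct and necessary, since the proposition assumes neither compactness nor metrizability of $\st$.

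The one step you leave thinner than it can afford to be is exactly the one you flag as hard: replacing a collar point $\pi\circ\pi_0(z_\alpha,s_\alpha)$ converging to a point $p\in\pi\circ\iota(\st)$ by the ``nearby'' flow point $\pi\circ\iota(z_\alpha)$ or $\pi\circ\iota(r(z_\alpha))$. Continuity of $r$ is not the right ingredient and does not suffice: from $\pi\circ\pi_0(z_\alpha,s_\alpha)\to p$ one gets no convergence of $z_\alpha$ in $\gd$, so there is nothing to apply $r$ to. What is actually needed is that the straight-line retraction $(z,s)\mapsto(z,0)$ (resp.\ $(z,1)$) descends to a \emph{continuous} retraction $\nu$ of $(\pi\circ\pi_0)\bigl(\gd\times([0,\tfrac14)\cup(\tfrac34,1])\bigr)$ onto $(\pi\circ\pi_0)(\gd\times\{0,1\})$; then $\pi\circ\iota(z_\alpha)=\nu(\pi\circ\pi_0(z_\alpha,s_\alpha))\to\nu(p)=p$. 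Establishing this continuity is the technical heart of the paper's proof of \eqref{eq:prop-omega-1}: one must check that $\gd\times([0,\tfrac14)\cup(\tfrac34,1])$ is saturated for $g\coloneqq(\pi\circ\pi_0)|_{\gd\times[0,1]}$ (the two ends must be taken together, since $r^{-1}(\gd)\times\{1\}$ and $(r(\gd)\cap\gd)\times\{0\}$ must both lie in the domain; $\gd\times[0,\tfrac14)$ alone is not saturated when $r(\gd)\cap\gd\neq\varnothing$), that $g$ is a closed map and hence a quotient map onto its image (this is where closedness of $r$ enters), and that the retraction respects the identification $g(z',1)=g(r(z'),0)$ so that it descends by the universal property of the quotient. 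With that lemma supplied, your case analysis closes and the rest of the plan goes through.
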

\begin{Rem}\label{rem:compact-guard-hausdorff-automatically-closed-map}
	If $\gd$ is compact and $\st$ is Hausdorff, then $r$ is automatically a closed map.
	In particular, the proposition holds if $H$ satisfies the hypotheses of Theorems~\ref{th:conley-decomp} and \ref{th:hybrid-conley} (i.e., if $H$ is an \MHS~ satisfying Assumptions~\ref{assump:deterministic}, \ref{assump:inf-or-Zeno}, \ref{assump:trapping-guard}, and \ref{assump:compact}.)
\end{Rem}

\begin{Rem}\label{rem:omega-compatibility-for-hybrifold-fails}
	 The statement analogous to Prop.~\ref{prop:omega-limit-set-suspension} obtained by replacing the hybrid suspension with the semiflow on the (generalized) hybrifold $M_H$ (defined in SM \S \ref{app:hybrifold}) is false.
     For example, consider a \THS~ $H$ with $\st = \gd = \{0,1\}$, $r(\gd) = \{0\}$, and $\fs = \varnothing$ (cf. Ex.~\ref{ex:special-case-discrete-time}).
     Then $M_{H}$ is a singleton $\{*\}$. 
     Letting $\pi_{M_H}\colon \st\to M_H$ be the quotient defined in SM \S \ref{app:hybrifold} and $j\in\{0,1\}$, we compute $$\omega(j)=\{0\}\neq \{0,1\} = \pi_{M_H}^{-1}(\{*\}) = \pi_{M_H}^{-1}(\omega(\pi_{M_H}(j))),$$
	 so the analogue of \eqref{eq:prop-omega-2} is false for the (generalized) hybrifold semiflow.
	 The same example shows that the statement analogous to Prop.~\ref{prop:attracting-repelling-pairs-suspension} below for the (generalized) hybrifold semiflow is also false.
\end{Rem}

\begin{proof}[Proof of Prop.~\ref{prop:omega-limit-set-suspension}]	
	For purposes of readability, for this proof we define $f\coloneqq \pi\circ\iota $ and $g\coloneqq (\pi\circ \pi_0)|_{\gd\times [0,1]}$, and we introduce the following notation for $T> 0$ and subsets $A\subseteq \st$, $B\subseteq \Sus_H$:
	\begin{equation*}
	R_{A,T}\coloneqq \bigcup_{x\in A}\,\bigcup_{(N, \tau,\gamma) \in \exc_H(x)} \,  \{\gamma_j(t)\mid j + t \geq T\}, \qquad S_{B,T} \coloneqq \Phi_H^{[T,\infty)}(B).
	\end{equation*}
	In the definition of $R_{A,T}$, note that each $\exc_H(x)$ contains only a single maximal execution since we assume that $H$ is deterministic.
	For later use we note that $f$ is a closed map since it is the composition of  (i) $\pi$, which is a closed map by Lemma~\ref{lem:closed-quotient-metrizable}, and (ii) $\iota$, which is a closed map since, for any closed $C\subseteq \st$, $\pi_0^{-1}(\iota(C)) = C \sqcup ((C\cap \gd)\times \{0\})$ is closed in $(\st\sqcup \gd\times [0,1])$.
	Furthermore, $g$ is a closed map by composition and restriction since (i) $\pi$ is closed, (ii) $\pi_0$ is closed by Lemma~\ref{lem:pi_0-closed}, and (iii) $\gd\times [0,1]$ is closed in $\st\sqcup (\gd\times [0,1])$. 
	
	In order to prove the proposition, we first need to establish two facts: for any $B\subseteq  \Sus_H$, 
	\begin{equation}\label{eq:prop:omega_sus_decomp}
	\omega(B) = \left(\omega(B \cap f(\st)) \right) \cup \omega(f(\gd_B)),
	\end{equation}
	and
	\begin{equation}\label{eq:prop-closure-intersect}
	f(\st) \cap \bigcap_{T>0}\cl\left(S_{B,T}\right) = 	
	\bigcap_{T>0}\cl\left(f(\st) \cap S_{B,T}\right).
	\end{equation}

	We begin by establishing \eqref{eq:prop:omega_sus_decomp}.
	First note that, for all $T \geq 0$, the definitions of $\gd_B$ and $\Phi_H$  immediately imply that
	\begin{equation}\label{eq:prop:omega-sandwich-1}
	\Phi_H^{[1+T,\infty)}(f(\gd_B))\subseteq \Phi_H^{[T,\infty)}(B\setminus f(\st)) \subseteq \Phi_H^{[T,\infty)}(f(\gd_B)).
	\end{equation}
	Equation \eqref{eq:prop:omega-sandwich-1} and the definition of $\omega$-limit sets imply that $\omega(f(\gd_B)) = \omega(B\setminus f(\st))$.
	Since the $\omega$-limit set of a finite union is equal to the union of the $\omega$-limit sets\footnote{Proof: since the closure of a finite union is the union of the closures, we compute $\omega(U\cup V) = \bigcap_{T>0}\cl(\Phi_{H}^{[T,\infty)}(U \cup V)) = \bigcap_{T>0}\cl(\Phi_{H}^{[T,\infty)}(U)) \cup \cl(\Phi_{H}^{[T,\infty)}(V)) = \omega(U) \cup \omega(V)$ for any sets $U,V$. 
    (This result is stated in \cite[II.4.1.C]{conley1978isolated} for the special case of a \emph{flow}.) Repeating the same proof mutatis mutandis shows that \emph{hybrid} $\omega$-limit sets also possess this \concept{finite-union property}.\label{foot:omega-union-property}}, $\omega(B) = \omega(B\cap f(\st)) \cup  \omega(B\setminus f(\st))$, from which \eqref{eq:prop:omega_sus_decomp} now follows.
	
	We now establish \eqref{eq:prop-closure-intersect} for fixed $B\subseteq \Sus_H$; for readability, we define $C_B$ to be the set on the left side of \eqref{eq:prop-closure-intersect} and $D_B$ to be the set on the right. 
	Since $f$ is a closed map, $f(\st)$ is closed in $\Sus_H$, so it is immediate from general topology that $D_B\subseteq C_B$.
	Hence we need only prove that $C_B\subseteq D_B$. 
	To obtain a contradiction, suppose that this is not the case, so that there exists $x\in C_B \setminus D_B$.   
	Clearly we must have $x\not \in \interior(f(\st))$, so $x$ belongs to the boundary $g(\gd\times \{0,1\})$ of $f(\st)$.  
	Since $x \not\in D_B$, there exists $T_0 > 0$ and a neighborhood $V_x$ of $x$ such that $V_x \cap f(\st) \cap S_{B,T_0} = \varnothing$.   
	Since the family $(S_{B,T})_{T>0}$ decreases in $T$,  this implies that
	\begin{equation}\label{eq:Vx-intersect-cond}
	V_x \cap f(\st) \cap S_{B, T} = \varnothing \textnormal{ for all } T \ge T_0.
	\end{equation}
	
    Let $Y\coloneqq \gd\times ([0,1/4) \cup (3/4,1])$ and $\tilde{\nu}\colon Y \to \gd \times \{0,1\}$ be the straight-line retraction.   
    Since $g$ is a closed map, $g$ is a quotient map onto its image.
    Furthermore, since $Y$ is an open $g$-saturated\footnote{Proof: $((r(\gd)\cap \gd)\times \{0\}) \cup (r^{-1}(\gd)\times \{1\}) \subseteq \gd\times \{0,1\} \subseteq Y$, so $g^{-1}(g(Y)) = Y \cup ((r(\gd)\cap \gd)\times \{0\}) \cup (r^{-1}(\gd)\times \{1\})= Y$.} subset of $\gd\times [0,1]$, it follows that (i) $g(Y)$ is open relative to $g(\gd\times[0,1])$ and (ii) $g|_Y\colon Y\to g(Y)$ is also a quotient map \cite[Prop.~3.62.d]{lee2010introduction}. 
    We also note that, if $g(z,t) = g(z',t')$ for some distinct points $(z,t), (z',t') \in Y$, then necessarily $(z,t),(z',t')\in \gd\times \{0,1\}$ and hence $(g \circ \tilde{\nu})(z,t) = g(z, t) = g(z',t') = (g \circ \tilde{\nu})(z', t')$ since $\tilde{\nu}|_{\gd\times \{0,1\}}=\id_{\gd\times\{0,1\}}$.
    Thus, by the universal property of the quotient topology \cite[Thm~3.70]{lee2010introduction}, the map $g \circ \tilde{\nu}$ descends to a continuous trajectory-preserving retraction $\nu\colon g(U) \to g(\gd\times \{0,1\})$.
    We now define a new set $U_x\coloneqq V_x \cap \nu^{-1}(V_x \cap g(\gd\times \{0,1\}))$. 
    Since $U_x$ is open relative to $g(Y)$ which is in turn open relative to $g(\gd\times [0,1])$, $U_x$ is also open relative to $g(\gd\times [0,1])$ and hence the set $U_x \cup f(\st)$ is a neighborhood of $x$.  
    Thus, since $x \in C_B$, for every $T \ge T_0$ there exists $y_{T} \in S_{B,T} \cap (U_x \cup f(\st)) \cap V_x =  S_{B,T} \cap U_x$.  Thus, by the definitions of the suspension semiflow, $\nu$, and $U_x$, we have $\nu(y_{T}) \in V_x \cap f(\st) \cap S_{B,T - \frac{1}{4}}$ for any $T \ge T_0 + \frac{1}{4}$, contradicting \eqref{eq:Vx-intersect-cond}.
    This establishes \eqref{eq:prop-closure-intersect}.

	Armed with \eqref{eq:prop:omega_sus_decomp} and \eqref{eq:prop-closure-intersect}, we now proceed to prove the proposition.
    Since $B\cap f(\st)$ and $f(\gd_B)$ are subsets of $f(\st)$, it is immediate from the definitions that, for all $T\geq 0$,
    \begin{align}\label{eq:prop:omega-sandwich-2}
    f(\st) \cap S_{B\cap f(\st),T} = f(R_{f^{-1}(B),T}), \qquad 
   f(\st) \cap S_{f(\gd_B),T} =  f(R_{\gd_B,T}).
    \end{align}
 
    The following computation, to be justified after, proves \eqref{eq:prop-omega-1}.
    \begin{align*}
    f^{-1}(\omega(B)) &= f^{-1}\left(\omega(B\cap f(\st)) \cup \omega(f(\gd_B))\right)\\
    &= f^{-1}\left(\bigcap_{T>0} \cl\left(S_{B\cap f(\st),T}\right)   \cup \bigcap_{T>0} \cl\left(S_{f(\gd_B),T}\right)\right) \\
    &= f^{-1}\left(\bigcap_{T>0} f(\st) \cap \cl\left(S_{B\cap f(\st),T}\right)   \cup \bigcap_{T>0} f(\st) \cap \cl\left(S_{f(\gd_B),T}\right)\right)\\
    &= f^{-1}\left(\bigcap_{T>0} \cl\left(f(\st) \cap S_{B\cap f(\st),T}\right)   \cup \bigcap_{T>0} \cl\left( f(\st) \cap S_{f(\gd_B),T}\right)\right)\\
    &= \bigcap_{T>0} f^{-1}\left(\cl\left(f(R_{f^{-1}(B),T})\right)\right)   \cup \bigcap_{T>0} f^{-1}\left(\cl\left(f(R_{\gd_B,T})\right)\right)\\
    &= \bigcap_{T>0} \cl\left(R_{f^{-1}(B),T}\right)   \cup \bigcap_{T>0} \cl\left(R_{\gd_B,T}\right)\\
    &\eqqcolon \omega(f^{-1}(B)) \cup \omega(\gd_B),
    \end{align*}
    The first equality follows from taking the preimage of both sides of \eqref{eq:prop:omega_sus_decomp}.
    The second equality follows from the definition of $\omega$-limit set.
    The third equality follows since intersecting a set with $f(\st)$ does not change its $f$-preimage.
    The fourth equality follows from \eqref{eq:prop-closure-intersect}. 
    The fifth equality follows from \eqref{eq:prop:omega-sandwich-2} and the distributivity of preimages over intersections and unions.
    The sixth equality is justified by the facts that (i) $f$ is a continuous and closed map, so taking $f$-images commutes with taking closures \cite[Prop.~2.30]{lee2010introduction}, and (ii) $f$ is injective, so $f^{-1}(f(X)) = X$ for any $X \subseteq \Sus_H$.

	
	To complete the proof, it remains only to verify \eqref{eq:prop-omega-2}.
	Fix $A\subseteq \st$.
	We have $\gd_{f(A)} \subseteq A \cup r^{-1}(A)$ by the definition of $\gd_{f(A)}$.
	Since $\omega(r^{-1}(A))\subseteq \omega(A)$ by the definition of $\omega$-limit sets, the finite-union property of $\omega$-limit sets (footnote~\ref{foot:omega-union-property}) implies that $\omega(\gd_{f(A)})\subseteq \omega(A) \cup \omega(r^{-1}(A)) \subseteq \omega(A)$.
	Taking $B = f(A)$ in \eqref{eq:prop-omega-1}, we thus obtain $$f^{-1}(\omega(f(A))) = \omega(f^{-1}(f(A))) \cup \omega(\gd_{f(A)}).$$
	Since $f$ is injective, $A = f^{-1}(f(A))$; substituting this into the first term on the right above and using $\omega(\gd_{f(A)})\subseteq \omega(A)$ yields  \eqref{eq:prop-omega-2}.
	This completes the proof.
\end{proof}

\begin{Co}\label{co:omega-forward-invariant}
	Let $H=(\st,\fs,\gd,\varphi,r)$ be a \THS~ satisfying Assumptions~\ref{assump:deterministic}, \ref{assump:inf-or-Zeno}, and \ref{assump:trapping-guard} with $r\colon \gd\to \st$ a closed map.  
	Then for any $U\subseteq \st$, $\omega(U)$ is forward invariant.
\end{Co}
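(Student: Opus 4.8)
The plan is to transport the claim to the hybrid suspension, where $\omega$-limit sets of the honest semiflow $\Phi_H$ are automatically forward invariant. Write $f \coloneqq \pi\circ\iota\colon \st\to\Sus_H$ for the embedding of $\st$ into the hybrid suspension of Def.~\ref{def:hybrid-suspension}. Since $H$ satisfies Assumptions~\ref{assump:deterministic}, \ref{assump:inf-or-Zeno}, \ref{assump:trapping-guard} and $r$ is a closed map, Prop.~\ref{prop:omega-limit-set-suspension} applies; in particular \eqref{eq:prop-omega-2} gives $\omega(U) = f^{-1}(B)$, where $B\coloneqq\omega(f(U))\subseteq\Sus_H$ is, after regarding $(\Sus_H,\Phi_H)$ as a \THS~ with empty guard, the classical semiflow $\omega$-limit set $B = \bigcap_{T>0}\cl\big(\Phi_H^{[T,\infty)}(f(U))\big)$ (with $\Phi_H$ continuous by Prop.~\ref{prop:hybrid-suspension}).

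First I would record that $B$ is forward invariant under $\Phi_H$: if $p\in B$ and $s\geq 0$, then for every $T>0$ we have $p\in\cl\big(\Phi_H^{[T+s,\infty)}(f(U))\big)$, so continuity of $\Phi_H^s$ yields $\Phi_H^s(p)\in\cl\big(\Phi_H^{[T,\infty)}(f(U))\big)$; since $T$ is arbitrary, $\Phi_H^s(p)\in B$. Next I would show that $f$ carries the image of every execution of $H$ into a single $\Phi_H$-trajectory. Here I would use Conditions~\ref{enum:rem-Phi_H-cond-1} and \ref{enum:rem-Phi_H-cond-2} of Remark~\ref{rem:suspension-semiflow-continuous-iff-trapping-guard}: Condition~\ref{enum:rem-Phi_H-cond-1} with $s=0$, $t=1$ --- together with $\pi\circ\pi_0(z,0)=f(z)$ and $\pi\circ\pi_0(z,1)=f(r(z))$ (the latter because $\pi$ identifies $\gd'$ with $r'(\gd')$ and $r'(\pi_0(z,1))=\pi_0(r(z))$) --- unwinds to $\Phi_H^1(f(z))=f(r(z))$ for all $z\in\gd$, while Condition~\ref{enum:rem-Phi_H-cond-2} handles the flow arcs. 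A straightforward induction on the arc index then shows that for any $x\in\st$, any execution $\chi=(N,\tau,\gamma)\in\exc_H(x)$, any $j$, and any $t\in T_j$, one has $f(\gamma_j(t)) = \Phi_H^{t+j}(f(x))$; each reset is absorbed as one unit of $\Phi_H$-time, and continuity of $f$, $\gamma_j$, $\Phi_H$ lets one pass to the closed right endpoint of each $T_j$.

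Combining these finishes the proof: given $x\in\omega(U)=f^{-1}(B)$, an execution $\chi=(N,\tau,\gamma)\in\exc_H(x)$, and $y\in\chi(t)$, write $y=\gamma_j(t)$ with $t\in T_j$; then $f(y)=\Phi_H^{t+j}(f(x))$ with $t+j\geq 0$, so $f(y)\in B$ by forward invariance of $B$, i.e., $y\in f^{-1}(B)=\omega(U)$. Hence $\omega(U)$ is forward invariant.

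I expect the only genuinely delicate point to be the bookkeeping in the execution-to-trajectory induction --- in particular, keeping track of the index offset $t+j$ (one unit of flow time per reset, reflecting the relaxation's one-unit wait) and handling the multiplicity of $\chi(\tau_j)$ at a jump time, where both $\gamma_{j-1}(\tau_j)\in\gd$ and $\gamma_j(\tau_j)=r(\gamma_{j-1}(\tau_j))$ lie in $\chi(\tau_j)$ but both map onto the same $\Phi_H$-trajectory. Everything else is point-set topology already packaged into Prop.~\ref{prop:omega-limit-set-suspension} and Prop.~\ref{prop:hybrid-suspension}.
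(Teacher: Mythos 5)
Your proposal is correct and follows essentially the same route as the paper: both reduce via Equation \eqref{eq:prop-omega-2} to the forward invariance of the classical $\omega$-limit set $\omega(\pi\circ\iota(U))$ for the continuous semiflow $\Phi_H$, and then pull back using the correspondence between $H$-executions and $\Phi_H$-trajectories. The only difference is that you spell out the execution-to-trajectory bookkeeping ($f(\gamma_j(t))=\Phi_H^{t+j}(f(x))$, correctly) which the paper simply asserts in one sentence.
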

\begin{proof}
	Let all notation be as in Prop.~\ref{prop:omega-limit-set-suspension} above, let $U\subseteq \st$ be an arbitrary subset, and define $X\coloneqq \pi\circ \iota(U)$.
	Since $\pi\circ \iota$ is injective, it follows that $U = (\pi\circ \iota)^{-1}(X)$.
	Hence Equation~\ref{eq:prop-omega-2} implies that
	\begin{equation}\label{eq:omega-cor-eq}
	\omega(U) =  (\pi\circ \iota)^{-1}(\omega(X)).
	\end{equation}
	It is well-known from classical dynamical systems theory that $\omega$-limit sets of continuous semiflows are forward invariant (this is also easy to prove directly), so $\omega(X)$ is forward invariant for $\Phi_H$.
    Since the collection of maximal $H$ executions is precisely the collection of $(\pi\circ\iota)$-preimages of $\Phi_H$ trajectories, Equation \eqref{eq:omega-cor-eq} implies that $\omega(U)$ is forward invariant.	
\end{proof}

\begin{Prop}\label{prop:attracting-repelling-pairs-suspension}
	Let $H=(\st,\fs,\gd,\varphi,r)$ be a \THS~ satisfying satisfying Assumptions~\ref{assump:deterministic}, \ref{assump:inf-or-Zeno}, \ref{assump:trapping-guard}, and \ref{assump:compact} with $r\colon \gd\to \st$ a closed map.  
	Let $H' = (\st',\fs',\gd',\varphi',r')$ be the relaxed hybrid system of Def. \ref{def:relaxed}, $\iota\colon \st\to \st'$ be the obvious embedding, and let $(\Sus_H,\Phi_H)$ be the hybrid suspension of Def. \ref{def:hybrid-suspension} with quotient $\pi\colon \st'\to \Sus_H$.
	
	Then $(A,A^*)$ is an attracting-repelling pair for $H$ if and only if  $(A,A^*) = ((\pi\circ \iota)^{-1}(B), (\pi\circ \iota)^{-1}(B^*))$ for some attracting-repelling pair $(B,B^*)$ for $\Phi_H$.	
\end{Prop}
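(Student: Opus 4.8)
The plan is to reduce the equivalence to a correspondence between trapping sets for $H$ (Def.~\ref{def:attracting-repelling}) and trapping sets for the continuous semiflow $\Phi_H$ on the compact metrizable space $\Sus_H$ (recall that $\Sus_H$, viewed as a \THS~ with empty guard, is precisely the instance of Def.~\ref{def:attracting-repelling} defining attracting--repelling pairs for $\Phi_H$). The central bridge is Eq.~\eqref{eq:prop-omega-2} of Prop.~\ref{prop:omega-limit-set-suspension}: writing $f\coloneqq\pi\circ\iota$, one has $\omega(A)=f^{-1}(\omega_{\Phi_H}(f(A)))$ for every $A\subseteq\st$. I will freely use the following structural facts: $f$ is a continuous, closed, injective map and $f(\st)$ is closed in $\Sus_H$, with open complement the collar $g(\gd\times(0,1))$, where $g\coloneqq(\pi\circ\pi_0)|_{\gd\times[0,1]}$ is as in the proof of Prop.~\ref{prop:omega-limit-set-suspension}; the image of each maximal $H$-execution from $x$ is the $f$-preimage of the image of the $\Phi_H$-trajectory through $f(x)$ (from conditions \ref{enum:rem-Phi_H-cond-1}--\ref{enum:rem-Phi_H-cond-2} and Def.~\ref{def:hybrid-suspension}); every point of $\Sus_H$ reaches $f(\st)$ within one time unit, while a collar point $g(z,s)$ equals $\Phi_H^{s}(f(z))$ and has $f(z)$ as its unique $\Phi_H^{s}$-preimage; and $\omega$-limit sets and attracting sets of $\Phi_H$ are compact and invariant. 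Two consequences I will cite: (a) if $C\subseteq\Sus_H$ is forward invariant then $\omega_{\Phi_H}(C)=\omega_{\Phi_H}(C\cap f(\st))$; (b) if $S\subseteq\Sus_H$ is invariant and $g(z,s)\in S$, then $f(z)\in S$ — so a nonempty invariant set always meets $f(\st)$, and for invariant $S$, $f^{-1}(S)=\varnothing$ forces $S=\varnothing$.

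For the ``if'' direction, let $(B,B^{*})$ be an attracting--repelling pair for $\Phi_H$, determined by a trapping region $V$ (so $B=\omega_{\Phi_H}(V)$ and $B^{*}=\{y:\omega_{\Phi_H}(y)\cap B=\varnothing\}$), and set $W\coloneqq f^{-1}(V)$. Forward invariance of $W$ for $H$ is immediate from the execution-lifting property and forward invariance of $V$. For the trapping inequality, combine the identity $f(R_{W,T})=f(\st)\cap\Phi_H^{[T,\infty)}(V\cap f(\st))$ (Eq.~\eqref{eq:prop:omega-sandwich-2}) with $\cl(\Phi_H^{[T_V,\infty)}(V))\subseteq\interior V$, closedness and injectivity of $f$, to get $\cl(R_{W,T})\subseteq f^{-1}(\interior V)\subseteq\interior_{\st}W$ for $T\ge T_V$; hence $W$ is a trapping set for $H$. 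Then $A\coloneqq\omega(W)$ is the attracting set it determines, $A^{*}$ its dual repeller, and Eq.~\eqref{eq:prop-omega-2} with consequence (a) yields $A=f^{-1}(\omega_{\Phi_H}(V\cap f(\st)))=f^{-1}(\omega_{\Phi_H}(V))=f^{-1}(B)$. Finally, for $x\in\st$, Eq.~\eqref{eq:prop-omega-2} gives $\omega(x)\cap A=f^{-1}\bigl(\omega_{\Phi_H}(f(x))\cap B\bigr)$, and consequence (b) applied to the invariant sets $\omega_{\Phi_H}(f(x))$ and $B$ shows this is empty iff $\omega_{\Phi_H}(f(x))\cap B=\varnothing$; hence $A^{*}=f^{-1}(B^{*})$, proving $(A,A^{*})=(f^{-1}(B),f^{-1}(B^{*}))$ is an attracting--repelling pair for $H$.

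For the ``only if'' direction, let $(A,A^{*})$ be an attracting--repelling pair for $H$ with trapping set $U$. First replace $U$ by $f^{-1}\bigl(\cl(\Phi_H^{[0,\infty)}(f(U)))\bigr)$, which one checks to be a compact forward-invariant trapping set with the same attracting set $A$; thus WLOG $U$ is compact, so $\Phi_H^{[0,\infty)}(f(U))=f(U)\cup g((\gd\cap U)\times[0,1])$ is compact and forward invariant. Enlarge it to
\[
V\coloneqq f(U)\cup g\bigl((\gd\cap U)\times[0,1]\bigr)\cup g\bigl(r^{-1}(\interior_{\st}U)\times(0,1]\bigr),
\]
a precompact forward-invariant set with $V\cap f(\st)=f(U)$. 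Put $B\coloneqq\omega_{\Phi_H}(f(U))$, so $f^{-1}(B)=\omega(U)=A$ by Eq.~\eqref{eq:prop-omega-2}. Using the decomposition \eqref{eq:prop:omega_sus_decomp}, the inclusion $\gd_V\subseteq U\cup r^{-1}(U)$ (whence $\omega(\gd_V)\subseteq A$), and consequence (b), one gets $\omega_{\Phi_H}(V)=B$; a short case analysis on whether a point of $B$ lies in $f(\st)$ — over $\gd$, over $r(\gd)$, or elsewhere — or in the collar, using $A=f^{-1}(B)\subseteq\interior_{\st}U$, shows $B\subseteq\interior_{\Sus_H}V$. Since $\cl(\Phi_H^{[T,\infty)}(V))\downarrow\omega_{\Phi_H}(V)=B$, compactness gives $\cl(\Phi_H^{[T,\infty)}(V))\subseteq\interior V$ for large $T$, so $V$ is a trapping region and $B$ an attracting set for $\Phi_H$. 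Its dual repeller $B^{*}$ then satisfies $f^{-1}(B^{*})=A^{*}$ by the computation of the previous paragraph, which used only invariance and Eq.~\eqref{eq:prop-omega-2}.

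I expect the main obstacle to be precisely the construction in the ``only if'' direction: verifying that the explicitly assembled $V$ is genuinely a neighborhood of $B$ inside the non-manifold quotient $\Sus_H$ — this is where the collar directions attached at points of $f(\st)$ lying over $\gd$ and over $r(\gd)$ must be supplied by hand — and transporting the trapping estimate for $U$ across the one-time-unit delay built into $\Sus_H$ so as to obtain $\omega_{\Phi_H}(V)=B$ together with the $\gd_V$ term; the same off-by-one bookkeeping that appears in the proof of Prop.~\ref{prop:omega-limit-set-suspension} recurs here. Everything else is routine manipulation of Eq.~\eqref{eq:prop-omega-2} and of the continuous closed injection $f$.
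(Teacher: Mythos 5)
Your proposal is correct and follows essentially the same route as the paper's proof: the ``if'' direction pulls a trapping region back through $f=\pi\circ\iota$ and identifies the dual repellers via Eq.~\eqref{eq:prop-omega-2} together with the observation that no nonempty forward invariant subset of $\Sus_H$ avoids $f(\st)$, while the ``only if'' direction adjoins exactly the same two collar pieces (over $\gd\cap U$ and over $r^{-1}(U)$) that the paper uses to build its saturated set $U'$. The only real divergence is that the paper assembles an \emph{open} saturated set in $\st'$ and pushes it through the quotient, getting openness of the trapping region for free, whereas you work with a compact $V$ directly in $\Sus_H$ and must verify $B\subseteq\interior V$ by the case analysis you flag — a bookkeeping difference rather than a different argument.
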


\begin{proof}
	For purposes of readability, for this proof we define $f\coloneqq \pi \circ \iota$.	
	We begin by noting that, if $W\subseteq \Sus_H$ is any forward invariant set, then it follows from the definition of $\Phi_H$ that $$\Phi^{[1+T,\infty)}(W\cap f(\st)) \subseteq \Phi^{[1+T,\infty)}(W)\subseteq \Phi^{[T,\infty)}(W\cap f(\st))$$ for any $T \geq 0$. 
	By the definition of $\omega$-limit set, this implies that
	\begin{equation}\label{eq:prop4-omega-forward-inv}
	\omega(W) = \omega(W\cap f(\st)).
	\end{equation}
	Now let $U \coloneqq f^{-1}(W)$.
	Since $r$ is a closed map, \eqref{eq:prop4-omega-forward-inv} together with Equation \eqref{eq:prop-omega-2} of Prop. \ref{prop:omega-limit-set-suspension} yield 
	\begin{equation}\label{eq:prop4-preimage-omega-forward-inv}
	f^{-1}(\omega(W)) = f^{-1}(\omega(W\cap f(\st))) = f^{-1}(\omega(f(U))) = \omega(U).
	\end{equation}	
	
	Now let $(B,B^*)$ be an attracting-repelling pair for $\Phi_H$ and let $W$ be an open trapping neighborhood for $B$.
	Then $U\coloneqq f^{-1}(W)$ is open by continuity of $f$, and $\cl(U) \subseteq f^{-1}(\cl(W))$ by continuity of $f$.
	Hence clearly $U$ satisfies the conditions of Def. \ref{def:attracting-repelling} defining a trapping neighborhood, so $U$ determines an attracting set $A \coloneqq \omega(U)$.
	Since $W$ is forward invariant, \eqref{eq:prop4-preimage-omega-forward-inv} implies that $A = f^{-1}(\omega(W)) = f^{-1}(B)$. 
	
	We now show that $A^* = f^{-1}(B^*)$.
	Equation \eqref{eq:prop-omega-2} of Prop. \ref{prop:omega-limit-set-suspension} implies that $\omega(x) = f^{-1}(\omega(f(x)))$ for all $x\in \st$.
	Since also $A = f^{-1}(B)$, it follows that, for all $x\in \st$,
	$$\left[\omega(x) \cap A = \varnothing \right] \iff \left[ f^{-1}(\omega(f(x))) \cap f^{-1}(B) = \varnothing \right] \iff \left[ \omega(f(x))\cap B \cap f(\st) = \varnothing \right].$$
    The latter in turn holds if and only if $\omega(f(x)) \cap B = \varnothing$,  since  $\omega(f(x)) \cap B$ is forward invariant and $\Sus_H\setminus f(\st)$ contains no nonempty forward invariant subset.
	Thus, the repelling set $A^*$ dual to $A$ is given by $A^* = f^{-1}(B^*)$.
	We have now shown that, for every attracting-repelling pair $(B,B^*)$ for $\Phi_H$, $(f^{-1}(B), f^{-1}(B^*))$ is an attracting-repelling pair for $H$.

	To prove the converse, let $(A,A^*)$ be an attracting-repelling pair for $H$ and let $U\supseteq A$ be an open trapping neighborhood as in Def. \ref{def:attracting-repelling}.
	Then by the definition of the disjoint union and quotient topologies, $U'_0\coloneqq \pi_0\left(U \sqcup ((U\cap \gd)\times [0,1])\right)$ is an open subset of $\st'$, where $\pi_0\colon \st \sqcup(\gd\times [0,1])\to \st'$ is the quotient of Def.~\ref{def:relaxed}.
	Now define the set $U'\coloneqq U'_0 \cup \pi_0(r^{-1}(U)\times (0,1])\subseteq \st'$, which is also open (for similar reasons).
	$U'$ is saturated with respect to $\pi$ since 
	$$\pi^{-1}(\pi(U')) = r'(U'\cap \gd') \cup U' \cup (r')^{-1}(U') = U' \cup (r')^{-1}(U') = U'.$$ 
	The first equality follows from the definition of $\pi$ and the fact that $r'(\gd')\cap \gd' = \varnothing$.
	The second equality follows using the definition of $U'$ and the fact that $U$ is forward invariant: $r'(U'\cap \gd') = \iota\left(r(U \cap \gd)\cup r(r^{-1}(U))\right)\subseteq \iota(U)\subseteq U'$, so $r'(U'\cap \gd') \cup U' = U'$.
	The third equality follows since $r'(\gd')\subseteq \iota(\st)$ and since $U' \cap \iota(\st) = \iota(U)$, so $(r')^{-1}(U') = (r')^{-1}(\iota(U)) = \pi_0\left(r^{-1}(U) \times \{1\}\right) \subseteq U'$.

	Since $U'$ is open and saturated, $W\coloneqq \pi(U')\subseteq \Sus_H$ is open, and $\cl(W) = \pi(\cl(U'))$ since, by Lemma~\ref{lem:closed-quotient-metrizable},  $\pi$ is a closed map \cite[Prop.~2.30]{lee2010introduction}.
	These facts, together with the definitions of $\Phi_H$ and $U'$ and the fact that $U'$ is a trapping neighborhood, imply that $W$ is a trapping neighborhood for some attracting set $B\coloneqq \omega(W)$.
	Using the definition of $U'$ and the fact that $U'$ is saturated, it follows that $f^{-1}(W) = U$.
	Since $W$ is forward invariant, \eqref{eq:prop4-preimage-omega-forward-inv} therefore implies that $A = \omega(U) = f^{-1}(\omega(W)) =  f^{-1}(B)$.
	Finally, repeating the argument from the first part of the proof verbatim shows that $A^* = f^{-1}(B^*)$.
	Hence every attracting repelling-pair $(A,A^*)$ for $H$ is of the form $(f^{-1}(B), f^{-1}(B^*))$ for some attracting-repelling pair $(B,B^*)$ for $\Phi_H$.
	This completes the proof.	
\end{proof}

\subsection{Chain equivalence in the hybrid suspension}\label{sec:chain-equiv-susp}
To prove our main results, we will relate the chain recurrent set of an \MHS~ with the chain recurrent set of its hybrid suspension semiflow.
Prop.~\ref{prop:hybrid-chain-equiv-vs-suspension-chain-equiv} below establishes this relationship.
We prove Prop.~\ref{prop:hybrid-chain-equiv-vs-suspension-chain-equiv} using the following  three technical lemmas, the proofs of which are deferred to SM \S \ref{app:technical}.

The first technical lemma (Lemma~\ref{lem:niceify}) shows that---under Assumptions~\ref{assump:deterministic}, \ref{assump:inf-or-Zeno}, \ref{assump:trapping-guard}, and \ref{assump:compact}---the Conley relation is unaffected by restricting the set of hybrid $(\epsilon,T)$-chains to a ``nice'' subset for which there must be time-separation between any reset jump and a subsequent continuous-time jump.
In other words, such \emph{nice} chains are not allowed to have ``double jumps'' like the one shown in Fig. \ref{fig:eps-T-chain}.
We make this notion precise in Def.~\ref{def:nice-chains-conley-rel}; a nice chain is shown in Fig. \ref{fig:eps-T-chain-nice}.

\begin{figure}
	\centering
	\def\svgwidth{1.0\columnwidth}
	\import{figs/}{epsilon-T-nice-NEW-DEF-V2.pdf_tex}
	\caption{A \textbf{nice} $(\epsilon, T)$-chain from $x\in \st$ to $y\in \st$. Notice that, unlike in the $(\epsilon,T)$-chain of Fig.~\ref{fig:eps-T-chain}, no ``double jumps'' are allowed.}\label{fig:eps-T-chain-nice}
\end{figure}

\begin{Def}\label{def:nice-chains-conley-rel}
	Let $H$ be an \MHS.
	We say that a chain $\chi = (N, \tau, \eta, \gamma) \in \conleyet_H$ is \textbf{nice} if $\tau_{\eta_k} - \tau_{(\eta_k-1)} \ge T$ for all $k \ge 1$. 
	We denote the set of nice $(\epsilon, T)$-chains in $H$ by $\widehat{\conleyet_H}$.
	We let $\widehat{\conley_H}$ denote the Conley relation with respect to the set of nice chains, i.e.,
	$$(x,y)\in \widehat{\conley_H} \iff \text{for all } \epsilon, T > 0\colon  \widehat{\conleyet_H}(x,y) \neq \varnothing.$$
	As in Def. \ref{def:hybrid-conley-relation}, we sometimes use the more intuitive notation $\widehat{\conley_H}(x,y)$ in place of $(x,y)\in \widehat{\conley_H}$.	
\end{Def}

\begin{restatable}{Lem}{Nicefy} \label{lem:niceify}
	Let $H=(\st,\fs,\gd,\varphi,r)$ be a \MHS~ satisfying Assumptions~\ref{assump:deterministic}, \ref{assump:inf-or-Zeno}, \ref{assump:trapping-guard}, and \ref{assump:compact}.			
	Then
	$$\conley_H = \widehat{\conley_H}.$$
	In particular, two points of $\st$ are chain equivalent if and only if they are chain equivalent with respect to nice chains only.
\end{restatable}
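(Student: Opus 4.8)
The inclusion $\widehat{\conley_H}\subseteq\conley_H$ is immediate, since every nice $(\epsilon,T)$-chain is an $(\epsilon,T)$-chain, whence $\widehat{\conleyet_H}(x,y)\subseteq\conleyet_H(x,y)$ for all $\epsilon,T>0$. The content of the lemma is the reverse inclusion $\conley_H\subseteq\widehat{\conley_H}$. To prove it, I would fix $(x,y)\in\conley_H$ and $\epsilon,T>0$ and manufacture a nice $(\epsilon,T)$-chain from $x$ to $y$ out of an ordinary $(\delta,T)$-chain $\chi$ from $x$ to $y$ (which exists since $(x,y)\in\conley_H$) for a suitably small $\delta\in(0,\epsilon)$. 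Since $\st$ is compact (Assumption~\ref{assump:compact}) and $H$ satisfies Assumptions~\ref{assump:deterministic}, \ref{assump:inf-or-Zeno}, and \ref{assump:trapping-guard}, Lemma~\ref{lem:extend} applies: $\mu$ is continuous, $\varphi$ extends continuously to $\widetilde\varphi$ on $\cl(\dom(\varphi))$ with $\widetilde\varphi^{\mu(z)}(z)\in\gd$ whenever $\mu(z)<\infty$, and $\cl(\dom(\varphi))\cap([0,2T]\times\st)$ is compact, so that $\widetilde\varphi$, $\mu$, and $r$ are uniformly continuous on the relevant compact sets; I would fix moduli of continuity for them and only afterwards extract from these how small $\delta$ must be.

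The first step is to \emph{normalize} $\chi$: by inserting continuous-time jumps of zero error, subdivide every flow arc of $\chi$ of length $\ge T$ into consecutive sub-arcs each of length in $[T,2T)$. The result is again a $(\delta,T)$-chain from $x$ to $y$ in which every \emph{inserted} continuous-time jump is preceded by an arc of length $\ge T$, so the only remaining violations of niceness are continuous-time jumps preceded by a flow arc of length $s<T$ already present in $\chi$. By the subsequence condition in Def.~\ref{def:eps-T-chains}, such an arc cannot be preceded by the start of the chain or by a continuous-time jump, hence must be preceded by a reset jump; so each violation of niceness sits inside a \emph{double jump}: a flow arc ending at some $p\in\gd$, a reset jump to a point $q$ with $\dist(q,r(p))\le\delta$, the short arc $t\mapsto\varphi^t(q)$ on $[0,s]$ with $s<T$ and $\varphi^s(q)\in\fs$, and then a continuous-time jump to a point $q''$ with $\dist(q'',\varphi^s(q))\le\delta$.

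The second step removes double jumps one at a time, relocating the offending continuous-time jump downstream to a point preceded by an arc of length $\ge T$. The key move uses the trapping-guard structure. In the simplest situation, $\mu(q)>T$, one replaces the short arc by $t\mapsto\varphi^t(q)$ on $[0,T]$ (which stays in $\fs$); the continuation arc of $\chi$ emanates from $q''$, which is $\delta$-close to $\varphi^s(q)$, so by uniform continuity of $\widetilde\varphi$ over flow times in $[0,T]$ the point of that arc reached after time $T-s$ is close to $\varphi^T(q)$ (how close is controlled by the modulus applied to $\delta$); hence a continuous-time jump from $\varphi^T(q)$ onto that point has error $\le\epsilon$, is preceded by an arc of length $T$, and $\chi$ can be resumed from there. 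In general one cannot flow freely from $q$ for a full time $T$, and must instead follow the forced maximal (partial) execution issuing from $q$ — which is infinite or Zeno, hence well-behaved, by Assumption~\ref{assump:inf-or-Zeno} — across several of its reset jumps, tracking $\chi$ (which follows the same dynamics with $\delta$-small deviations) against it, and reconnecting by a small-error continuous-time jump only once an arc of length $\ge T$ has been accumulated. Iterating over the finitely many double jumps of the normalized chain yields a nice $(\epsilon,T)$-chain from $x$ to $y$.

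I expect the main obstacle to be exactly this general case of the second step, and in particular the degenerate sub-case in which $\mu$ stays below $T$ along the entire forced execution issuing from $q$, so that no single arc along the reroute can be lengthened to $\ge T$ before any reconnection point is reached. Handling it requires combining the trapping-guard extension with the convergence-to-$\omega$-limit-set behavior of Prop.~\ref{prop:conv-omega}: one drives the truncated pseudo-execution into a neighborhood of $\omega(q)$ and reconnects there, while verifying that every continuous-time jump introduced is preceded by an arc of length $\ge T$ and that the total accumulated error stays below $\epsilon$ — which is what forces $\delta$ to be chosen small in terms of the moduli and the number of surgeries. The boundary cases (for example $\mu(q)=T$, where one cannot remain in $\fs$ for a full time $T$ and must perturb slightly) require additional bookkeeping. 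These are the delicate points that the detailed proof must address.
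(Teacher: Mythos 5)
Your setup (the trivial inclusion, the reduction to ``double jumps'' via the observation that a continuous-time jump preceded by a short arc must be preceded by a reset jump, the use of Lemma~\ref{lem:extend} and uniform continuity of $\widetilde{\varphi}$ on $\cl(\dom(\varphi))\cap([0,2T]\times\st)$ to fix $\delta$) matches the paper, and your ``simplest situation'' is essentially the paper's easy case. But the resolution you propose for the hard case has a genuine gap, and you have correctly located it yourself: if the forced execution issuing from $q$ never produces an arc of length $\geq T$, your plan of ``reconnecting by a small-error continuous-time jump once an arc of length $\ge T$ has been accumulated'' never becomes executable, and the $\omega$-limit-set fix does not repair it --- near $\omega(q)$ the arcs may still all be shorter than $T$, so a continuous-time reconnection there is still not nice, and the continuation of $\chi$ need not pass anywhere near $\omega(q)$, so no small-error reconnection is available there in the first place. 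The idea you are missing is that the offending continuous-time jump should not be relocated to another continuous-time jump at all: it should be \emph{absorbed into the next reset jump}, which carries no time-separation requirement. Concretely, with $u=\gamma_{\eta_k-1}(\tau_{\eta_k})$ and $v=\gamma_{\eta_k}(\tau_{\eta_k})$, one deletes the jump $u\rightsquigarrow v$ and instead extends the arc ending at $u$ by the flow up to the chain's next reset time $t=\tau_{\eta_k+1}-\tau_{\eta_k}$; uniform continuity gives $\dist(\widetilde{\varphi}^t(u),\widetilde{\varphi}^t(v))<\delta$ with $\widetilde{\varphi}^t(v)\in\gd$, so $\widetilde{\varphi}^t(u)$ lies in the retraction domain $U$ of Def.~\ref{def:trapping-guards}, and uniform continuity of $r\circ\rho$ puts $r\circ\rho(\widetilde{\varphi}^t(u))$ within $\epsilon$ of $\gamma_{\eta_k+1}(\tau_{\eta_k+1})$. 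This is exactly where the trapping guard condition earns its keep, and it is why no tracking across multiple resets (with its attendant unbounded error accumulation, since the number of resets of $\chi$ between consecutive continuous-time jumps is not controlled by $\delta$) is ever needed: a single reset absorbs the error. A symmetric sub-case, where $\widetilde{\varphi}^t(u)$ is undefined because the extended arc from $u$ hits $\gd$ at some earlier time $t_0$, is handled the same way with the roles of $u$ and $v$ in the retraction argument exchanged.

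There is a second omission: the terminal case. If the offending continuous-time jump is the \emph{last} jump of the chain ($\eta_M=N$), there is nothing downstream to relocate it to or absorb it into without moving the endpoint $y$, so the surgery above only produces what the paper calls an \emph{almost-nice} chain. The paper needs a separate compactness argument here: assuming no nice $(\epsilon,T)$-chain exists, take almost-nice $(1/n,T)$-chains, pass to convergent subsequences of the last jump points and flow times, use that $\cl(\dom(\varphi))$ is closed and $\widetilde{\varphi}$ is continuous to conclude that the limit point $v$ flows genuinely to $y$ in finite time, and then for large $n$ replace the last two arcs by a single flow arc from $v$ (reached by a reset jump of error $<\epsilon$ from $u_n$) terminating exactly at $y$ --- a contradiction. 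Your proposal does not address this endpoint issue, and it cannot be dismissed as bookkeeping: it is the one place where the local surgery genuinely fails and a limiting argument is required.
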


The following lemma shows that---under Assumptions~\ref{assump:deterministic}, \ref{assump:inf-or-Zeno}, \ref{assump:trapping-guard}, and \ref{assump:compact}---the Conley relation for $H$ is unaffected by allowing additional jumps to occur within the relaxed hybrid system $H'$.
\begin{restatable}{Lem}{RelaxedConley}\label{lem:relaxed-conley-relation}
	Let $H=(\st,\fs,\gd,\varphi,r)$ be an \MHS~ satisfying Assumptions~\ref{assump:deterministic}, \ref{assump:inf-or-Zeno}, \ref{assump:trapping-guard}, and \ref{assump:compact}.			
	Let $H' = (\st', \fs', \gd', \varphi', r')$ be the associated relaxed hybrid system equipped with any compatible extended metric making $H'$ an \MHS.
	Let $\iota : \st \to \st'$ be the obvious embedding.  Then for all $x,y \in \st$, we have
	$$\conley_H(x,y) \iff \conley_{H'}(\iota(x), \iota(y))$$
\end{restatable}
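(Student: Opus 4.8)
The plan is to prove the two implications separately, in each case using the embedding $\iota=\pi_0|_{\st}$ together with a continuous retraction $P\colon\st'\to\st$: set $P=\iota^{-1}$ on $\iota(\st)$ and $P=\rho_0$ (the straight-line collapse of the relaxation tube $\pi_0(\gd\times[0,1])$ onto $\iota(\gd)\approx\gd$) on $\pi_0(\gd\times[0,1])$; these agree on $\iota(\gd)$, so $P$ is well defined and continuous by the pasting lemma. Since $\st$ and $\st'$ are compact metrizable (Prop.~\ref{prop:relaxed}), Remark~\ref{rem:conley-rel-compact-metric-independent} lets me work with any convenient compatible metrics, and on compact spaces $\iota$ and $P$ are uniformly continuous, so every distance estimate of the form $\dist\le\epsilon$ reduces to a single change of modulus at the outset of each direction. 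I would also use the structural features built into Def.~\ref{def:relaxed}: $\varphi'(t,\iota(x))=\iota(\varphi^t(x))$ on $\iota(\fs)$, and from any $\iota(z)$ with $z\in\gd$ the flow $\varphi'$ is forced to run up the tube, $t\mapsto\pi_0(z,t)$, for one unit of time before reaching $\gd'$ and then resetting via $r'(\pi_0(z,1))=\iota(r(z))$.

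For the forward implication: given $\epsilon',T'>0$, take $T=T'$ and $\epsilon$ small (uniform continuity of $\iota$), and pick an $(\epsilon,T)$-chain $\chi$ in $H$ from $x$ to $y$. I would build an $(\epsilon',T')$-chain $\chi'$ in $H'$ from $\iota(x)$ to $\iota(y)$ by pushing every flow arc forward through $\iota$ and, at each reset of $\chi$ (landing at some $z\in\gd$), appending the mandatory unit-time dwell arc $t\mapsto\pi_0(z,t)$ of $\varphi'$ before applying $r'$ and $\epsilon'$-jumping on; continuous-time jumps transport verbatim, the $\epsilon$-estimate for $r$ turning into the $\epsilon'$-estimate for $r'$ by the choice of $\epsilon$. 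Shifting later jump-times by $+1$ for each intervening reset only increases the separations between consecutive continuous-time jumps, so the $T'$-condition survives, and the number of arcs (hence $N'=N\ge1$) is unchanged. Thus $\conley^{\epsilon',T'}_{H'}(\iota x,\iota y)\neq\varnothing$ for all $\epsilon',T'$, i.e.\ $\conley_{H'}(\iota x,\iota y)$.

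For the backward implication I would project along $P$. Given $\epsilon,T>0$, set $T'=T+1$, choose $\epsilon'$ small (uniform continuity of $P$), and---using $\conley_{H'}(\iota x,\iota y)$ and Lemma~\ref{lem:niceify} applied to $H'$, which is legitimate since $H'$ is an \MHS~(by hypothesis) satisfying Assumptions~\ref{assump:deterministic}--\ref{assump:compact} (by Prop.~\ref{prop:relaxed})---pick a \emph{nice} $(\epsilon',T')$-chain $\chi'=(N,\tau,\eta,\gamma)$ in $H'$ from $\iota(x)$ to $\iota(y)$. Because $T'>1$, the flow arc immediately preceding any continuous-time jump of $\chi'$ must start in $\iota(\fs)$: otherwise it would be trapped entirely inside the tube and hence have length $<1<T'$, violating niceness. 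I would then define the projected $H$-chain by replacing each flow arc of $\chi'$ by its $\iota(\fs)$-portion carried down via $\iota^{-1}$ (a genuine $\varphi$-trajectory ending in $\gd$, degenerate when the $H'$-arc lay wholly in the tube), discarding the collapsed dwell portions, turning each $r'$-reset $\iota(r(z))$ back into the $r$-reset $r(z)$, and carrying each continuous-time jump down by $P$ (using $P\circ r'=r\circ P$ on $\gd'$ and the chosen modulus). The $\epsilon$- and reset-conditions of Def.~\ref{def:eps-T-chains} then hold; and the $T$-condition holds because, in each block between consecutive continuous-time jumps, the terminal arc has length $\ge T'$ (niceness) and loses at most its own tube dwell ($\le1$), while the discarded dwell of every interior reset arc is no larger than that arc's length---so the separation drops only from $\ge T'$ to $\ge T'-1=T$.

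The genuine obstacle is that a continuous-time jump of $\chi'$ may issue from a point $p$ of the open tube $\pi_0(\gd\times(0,1))$ (or from $\iota(\gd)$), where $P(p)\in\gd\setminus\fs$, so the naive $P$-image of that jump is not a legal continuous-time jump of $H$. The remedy will be to normalize $\chi'$ beforehand so that every continuous-time jump issues from $\iota(\fs)$: when the jump issues near the base of the tube I would slide it back along the incoming flow arc---which, by the structural observation above, really does pass through $\iota(\fs)$, and the tube is thin near its base for any compatible metric (uniform continuity of $\pi_0$ on the compact set $\gd\times[0,1]$)---and when it issues deep in the tube, hence near $\gd'$, I would instead slide it forward to the next reset and absorb the discrepancy via uniform continuity of $r'$, iterating if that reset again lands on $\iota(\gd)$ (the iteration terminating unless $r$ has a periodic orbit inside $\gd$, a case in which the relevant recurrence is realized in $H$ by cycling through that orbit using reset jumps alone, with no continuous-time jump). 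Making this two-sided normalization precise---tracking its interaction with the block structure and the nice-chain time separations---is the technical heart of the lemma; the remaining projection/insertion bookkeeping is essentially formal.
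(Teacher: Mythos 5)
Your forward direction is exactly the paper's argument (push each arc through $\iota$ and splice in the mandatory unit dwell $t\mapsto\pi_0(z,t)$ at each reset, noting that this only increases the separations between continuous-time jumps), and your overall strategy for the converse---take a \emph{nice} $(\epsilon',T+1)$-chain in $H'$ via Lemma~\ref{lem:niceify} and Prop.~\ref{prop:relaxed}, normalize it so that no jump endpoint lies in the open tube, then delete the tube portions and pull back along $\iota^{-1}$---is also the paper's. The gap sits precisely in the step you yourself flag as the technical heart: the normalization, as sketched, does not go through. The dichotomy ``near the base of the tube'' versus ``deep in the tube, hence near $\gd'$'' leaves the middle of the tube uncovered: a continuous-time jump issuing from $\pi_0(z,\tfrac12)$ is close to neither $\iota(\gd)$ nor $\gd'$, so the backward slide displaces the source by a non-small amount while the forward slide is not justified by proximity to the guard. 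The backward-slide option is also intrinsically problematic (you must land strictly in $\iota(\fs)$, you must simultaneously treat the jump's \emph{target}, which may itself lie in the tube, and the incurred error is the diameter of a tube segment rather than something controlled by $\epsilon'$), and the worry about ``iterating if that reset again lands on $\iota(\gd)$'' and about periodic orbits of $r$ indicates the remedy is not actually pinned down.

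The missing idea is a single, uniform, \emph{forward} push. Fix a compact neighborhood $U'=\iota(U)\cup\pi_0(\gd\times[0,1])$ of the tube on which the time-to-guard $\mu'$ is bounded above by $T+1$ and on which $r'\circ\rho'$ (with $\rho'$ the flow-induced retraction onto $\gd'$) is uniformly continuous, and take the chain's error $\zeta$ smaller than $\dist\bigl(\pi_0(\gd\times[0,1]),\,\st'\setminus\interior(U')\bigr)$, so that any jump with one endpoint touching the tube automatically has \emph{both} endpoints in $U'$. For every such jump, extend the incoming arc along $\widetilde{\varphi}$ all the way to $\gd'$ (a delay of at most $\mu'<T+1$) and convert the jump into a reset jump followed by a jump whose size is controlled by the uniform continuity of $r'\circ\rho'$ applied to both endpoints; an induction over the arcs then produces a chain whose continuous-time jumps issue only from $\iota(\fs)$ and land only in $\iota(\st)$, after which your projection and deletion bookkeeping (with the loss of one time unit per tube traversal absorbed by $T+1\to T$, exactly as you budgeted) goes through. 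In short: right strategy and right accounting, but the two-sided slide must be replaced by this one-directional argument, since as written your normalization fails for jumps issuing from the interior of the tube.
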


The following lemma is a minor adaptation of  \cite[Prop.~B.2.19]{colonius2000dynamics}.
\begin{restatable}{Lem}{Colonius}\label{lem:colonius-result-generalized}
Let $X$ be a compact metric space and $\Phi\colon [0,\infty)\times X \to X$ be a continuous semiflow. 
Consider $x,y \in X$ and fix $T > 0$.
If for every $\epsilon > 0$ there exist $(\epsilon,T)$-chains from (i) $x$ to $y$ and (ii) $y$ to $x$, then $x$ and $y$ are chain equivalent.
\end{restatable}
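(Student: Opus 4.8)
The plan is to bootstrap from the single fixed time $T$ to all positive times. Recall that ``$x$ and $y$ are chain equivalent'' means that for every $\epsilon>0$ and every $T'>0$ there are $(\epsilon,T')$-chains from $x$ to $y$ and from $y$ to $x$. Since the hypothesis is symmetric in $x$ and $y$, it is enough to produce an $(\epsilon,T')$-chain from $x$ to $y$ for arbitrary $\epsilon>0$ and $T'>0$; and since a classical $(\epsilon,T)$-chain has every jump time $\ge T$ and is therefore already an $(\epsilon,T')$-chain whenever $T'\le T$, we may assume $T'>T$. Fix a small auxiliary parameter $\delta\in(0,\epsilon)$, to be pinned down only at the very end.

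First I would assemble arbitrarily long $(\delta,T)$-chains from $x$ to $y$ whose jump times are \emph{uniformly bounded}. Choose, by hypothesis with $\epsilon$ replaced by $\delta$, a $(\delta,T)$-chain $c_1$ from $x$ to $y$ and a $(\delta,T)$-chain $c_2$ from $y$ to $x$. Subdividing each jump of length $t\ge 2T$ into $\lfloor t/T\rfloor$ equal subjumps, each of length in $[T,2T)$ and passing through genuine orbit points (so no new error is introduced), we may assume every jump time of $c_1$ and $c_2$ lies in $[T,2T)$. Then the concatenation $c_1c_2c_1c_2\cdots c_1$ of $2M-1$ alternating pieces ending in $c_1$ is again a $(\delta,T)$-chain from $x$ to $y$ --- at each junction the closing estimate of one piece is exactly the estimate required at the starting node of the next --- all of whose jump times still lie in $[T,2T)$, and whose total elapsed time can be made as large as we like by taking $M$ large.

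Next I would regroup and merge. Write this long chain as $(z_0=x,z_1,\dots,z_n=y;\,s_1,\dots,s_n)$ and partition $\{1,\dots,n\}$ greedily into consecutive blocks of total $s$-time $\ge T'$. Since each $s_i\ge T$, every block has at most $\lceil T'/T\rceil$ jumps and total time less than $T'+2T$; so, having taken $M$ large enough that there are at least two blocks, we may absorb the possibly-short last block into its predecessor, leaving every block with total time $\ge T'$ and at most $m:=2\lceil T'/T\rceil$ jumps. Replacing each block by a single jump whose duration equals the block's total time produces a new chain whose nodes are the block endpoints $z_0=x,a_1,\dots,a_L$ --- all original nodes, so in particular $a_L=y$ --- and whose jump times are all $\ge T'$. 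What remains is to verify that this is an $(\epsilon,T')$-chain, i.e.\ that $d\big(\Phi^{\sigma}(a),\,a'\big)<\epsilon$ for each merged jump $a\rightsquigarrow a'$ of duration $\sigma<2mT$.

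This estimate is the crux, and I expect it to be the main obstacle: merging many $\delta$-close jumps amplifies the individual errors through the flow. Let $\alpha\colon[0,\infty)\to[0,\infty)$ be a non-decreasing modulus of continuity for $\Phi$ on the compact set $[0,2mT]\times X$ --- this is the one place compactness of $X$ is used --- so that $d(p,q)\le r$ implies $d(\Phi^tp,\Phi^tq)\le\alpha(r)$ for all $t\le 2mT$, with $\alpha(r)\to0$ as $r\to0$. Inside a block with nodes $w_0,\dots,w_k$ ($k\le m$) and subjump durations $u_1,\dots,u_k$, an induction with the triangle inequality gives $d\big(\Phi^{u_1+\cdots+u_i}(w_0),\,w_i\big)\le\beta_i$, where $\beta_0=0$ and $\beta_i=\alpha(\beta_{i-1})+\delta$; hence the error of the merged jump is at most $\beta_m$, the $m$-fold iterate of $r\mapsto\alpha(r)+\delta$ applied to $0$. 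Since $m$ depends only on $T'/T$ and $\beta_m$ is a continuous function of $\delta$ with $\beta_m\to0$ as $\delta\to0$, one finally chooses $\delta$ so small that $\beta_m<\epsilon$, which makes the constructed object a genuine $(\epsilon,T')$-chain from $x$ to $y$; interchanging $x$ and $y$ gives the chains in the other direction, establishing chain equivalence. The argument closes only because the number of jumps merged in any block is bounded a priori by a quantity depending solely on $T'/T$, so that merely finitely many iterations of $\alpha$ occur and a single $\delta$ suffices --- provided one also carries out the two pieces of bookkeeping (subdividing over-long jumps to bound the durations, and absorbing the short final block) that keep everything uniformly under control.
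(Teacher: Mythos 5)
Your proof is correct, and it rests on the same engine as the paper's: subdivide over-long jumps so all jump durations lie in $[T,2T)$, concatenate chains through $x$ and $y$ to get enough jumps, and then merge consecutive jumps, controlling the accumulated error via uniform continuity of $\Phi$ on a compact time slab. Where you differ is in the organization of the merge. The paper proves only the doubling step: it merges jumps \emph{in pairs} to pass from $(\delta,T)$-chains to $(\epsilon,2T)$-chains, which needs just one application of uniform continuity and one triangle inequality per merged pair ($\varepsilon/2+\delta$), and then declares that producing $(\epsilon,2T)$-chains for all $\epsilon$ ``suffices'' --- the passage to arbitrary $T'$ being an implicit iteration of the lemma itself (each doubling preserves the ``for all $\epsilon$'' quantifier, and an $(\epsilon,2^nT)$-chain is an $(\epsilon,T')$-chain once $2^nT\ge T'$). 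You instead go from $T$ to an arbitrary $T'$ in a single pass, merging blocks of up to $m=2\lceil T'/T\rceil$ jumps at once, which forces the more delicate error bookkeeping via the $m$-fold iterate $\beta_m$ of $r\mapsto\alpha(r)+\delta$; the argument closes precisely because $m$ depends only on $T'/T$, so one $\delta$ works uniformly. Each version buys something: the paper's keeps every estimate trivial at the cost of leaving the quantifier bootstrap unstated, while yours makes that bootstrap fully explicit at the cost of an iterated-modulus estimate. One presentational remark: you work with classical chains (degenerate final arc), whereas the lemma is formally stated for the paper's Def.~5 chains, which may end in a nondegenerate arc; the paper's own remark following the lemma, together with its Ex.~10, confirms the two notions yield the same Conley relation on a compact space, so this costs you nothing.
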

\begin{Rem}
Formally speaking, Lemma~\ref{lem:colonius-result-generalized} is discussing $(\epsilon,T)$-chains and chain equivalence as defined in Def.~\ref{def:eps-T-chains} and \ref{def:hybrid-chain-equivalence}, as opposed to the standard definitions for semiflows (see Ex.~\ref{ex:special-case-continuous-time}) to which \cite[Prop.~B.2.19]{colonius2000dynamics} applies. 
However, the proofs are similar for either definition.
Furthermore, our proof actually directly shows that $x$ and $y$ are also chain equivalent according to the standard definition, but this also follows from the stated conclusion and Ex.~\ref{ex:special-case-continuous-time}.
\end{Rem}
We now come to the main result of this section.

\begin{Prop}[The chain equivalence classes of a compact hybrid system and its suspension]\label{prop:hybrid-chain-equiv-vs-suspension-chain-equiv}
Let $H=(\st,\fs,\gd,\varphi,r)$ be an \MHS~ satisfying Assumptions \ref{assump:deterministic}, \ref{assump:inf-or-Zeno}, \ref{assump:trapping-guard}, and \ref{assump:compact}.			
Let $H' = (\st',\fs',\gd',\varphi',r')$ be the relaxed hybrid system of Def. \ref{def:relaxed}, let $\iota\colon \st\to \st'$ be the obvious embedding, and let $(\Sus_H,\Phi_H)$ be the hybrid suspension of Def. \ref{def:hybrid-suspension} with quotient $\pi\colon \st'\to \Sus_H$.

Then for any choice of compatible extended metric on $\Sus_H$, $x,y\in \st$ are chain equivalent for $H$ if and only if $\pi\circ \iota (x),\pi\circ \iota (y)\in \Sus_H$ are chain equivalent for $\Phi_H$. 
In particular, it follows that $R(H) = (\pi \circ \iota)^{-1}(R(\Phi_H))$, where $R(\Phi_H)$ is the chain recurrent set for $\Phi_H$.
\end{Prop}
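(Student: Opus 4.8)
The plan is to transport everything to the relaxed system $H'$ and then to its suspension semiflow, exploiting the three deferred lemmas. First I would apply Lemma~\ref{lem:relaxed-conley-relation}: under Assumptions~\ref{assump:deterministic}, \ref{assump:inf-or-Zeno}, \ref{assump:trapping-guard}, and \ref{assump:compact}, the relaxed system $H'$ is again an \MHS~ satisfying the same assumptions (Prop.~\ref{prop:relaxed}), $\st'$ is compact and metrizable, and $\conley_H(x,y)\iff\conley_{H'}(\iota x,\iota y)$. So it suffices to show that $\iota x,\iota y$ are chain equivalent for $H'$ if and only if $\pi\iota x,\pi\iota y$ are chain equivalent for the continuous semiflow $\Phi_H$ on the compact metric space $\Sus_H$ (Prop.~\ref{prop:hybrid-suspension}); the displayed identity $R(H)=(\pi\circ\iota)^{-1}(R(\Phi_H))$ then follows by taking $y=x$, since $x$ is chain equivalent to itself for $H$ exactly when $x\in R(H)$. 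Note that since $\Sus_H$ is compact metric and $\Phi_H$ a continuous semiflow, Ex.~\ref{ex:special-case-continuous-time} and Lemma~\ref{lem:colonius-result-generalized} apply to it.

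For the forward implication, suppose $\iota x,\iota y$ are chain equivalent for $H'$. By Lemma~\ref{lem:niceify} (applied to $H'$), for every $\delta>0$ there are \emph{nice} $(\delta,1)$-chains in $H'$ from $\iota x$ to $\iota y$ and from $\iota y$ to $\iota x$. I would push such a chain through the quotient $\pi$: its flow arcs map to flow arcs of $\Phi_H$ while its reset jumps are \emph{absorbed} into the $\Phi_H$-flow (because $\pi$ identifies $z'\in\gd'$ with $r'(z')$; this is essentially the content of Prop.~\ref{prop:hybrid-suspension}), and its continuous-time jumps---together with the $\delta$-error left at each reset jump---become jumps of $\Sus_H$ of size at most $\delta'$, where $\delta'\to 0$ as $\delta\to 0$ by uniform continuity of $\pi$ on the compact space $\st'$. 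Consecutive jumps of the projected chain need not be one time unit apart (a continuous-time jump followed by a short arc and a reset jump projects to two nearby jumps), so I would \emph{merge} each cluster of jumps that are less than one apart into a single jump, absorbing the resulting error using uniform continuity of $\Phi_H$ on compact time-intervals; since every arc of a nice $(\delta,1)$-chain in $H'$ preceding any jump has length $\geq 1$ (the relaxation inserts a unit delay before every reset, and niceness forces length $\geq 1$ before every continuous-time jump), the first surviving jump still occurs at flow time $\geq 1$. This yields, for every $\epsilon>0$, $(\epsilon,1)$-chains for $\Phi_H$ from $\pi\iota x$ to $\pi\iota y$ and back, and Lemma~\ref{lem:colonius-result-generalized} upgrades this to chain equivalence of $\pi\iota x,\pi\iota y$ for $\Phi_H$.

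For the reverse implication, suppose $\pi\iota x,\pi\iota y$ are chain equivalent for $\Phi_H$; fix $\epsilon,T>0$ and take $(\delta,T)$-chains for $\Phi_H$ both ways with $\delta$ small. I would first \emph{perturb} such a chain, lengthening each flow arc by a small amount $s<\min(1,T)$ chosen to avoid the finitely many times at which a relevant point's forward orbit meets the ``seam'' $\pi(\gd')=\pi(r'(\gd'))$, so that every jump leaves from and lands at points off the seam---this enlarges jump sizes only by a controlled amount (uniform continuity of $\Phi_H$) and preserves the $\geq T$ separation and the endpoints up to a harmless adjustment of the final arc. Then I would \emph{lift} the perturbed chain to $\st'$: each $\Phi_H$-flow arc lifts to a $\varphi'$-trajectory segment, crossing the seam finitely often, and each seam-crossing is lifted as ``flow into $\gd'$, an (error-free) reset jump $r'$, flow out of $r'(\gd')$''---legitimate inside an $H'$-chain; away from the closed seam, $\pi$ is a local homeomorphism with locally uniformly controlled inverse, so each off-seam $\Phi_H$-jump of size $\le\delta$ lifts to a continuous-time jump of $H'$ of size $\le\epsilon$ (for $\delta$ small), emanating from a point of $\fs'$. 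The lifted continuous-time jumps inherit the $\geq T$ separation, since lifting preserves flow time and the interspersed reset jumps are instantaneous. Thus for every $\epsilon,T$ we get $H'$-$(\epsilon,T)$-chains from $\iota x$ to $\iota y$ and back, so $\iota x,\iota y$ are chain equivalent for $H'$, and Lemma~\ref{lem:relaxed-conley-relation} completes the proof.

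The main obstacle is the interaction of the metric with the seam $\pi(\gd')$: in the reverse direction one must lift small $\Phi_H$-jumps to small $H'$-jumps even though the two sheets glued along the seam in $\Sus_H$ are far apart in $\st'$ (one reached along the flow from the waiting region, the other only through a reset), which forces the preliminary perturbation pushing jumps off the seam; in the forward direction one must keep the errors under control while merging the close jumps produced by projecting the reset errors. Both difficulties are resolved by the same structural feature introduced by relaxation---the unit-time delay, which gives every $H'$-arc length $\geq 1$ and makes the $\Phi_H$-flow cross the seam cleanly---together with uniform continuity on compact sets.
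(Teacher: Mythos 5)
Your overall architecture matches the paper's: reduce to the relaxed system via Lemma~\ref{lem:relaxed-conley-relation}, use nice chains plus Lemma~\ref{lem:colonius-result-generalized} for one implication, and lift $\Phi_H$-chains for the other. The forward direction is essentially sound (the paper avoids your merging step by mapping nice $(\delta,T+1)$-chains of $H$ directly under $\pi\circ\iota$, so that each reset jump automatically acquires a unit-length cylinder traversal and the required time separation; your route through nice chains of $H'$ plus merging also works, with care). The reverse direction, however, has a genuine gap.

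The difficulty you correctly identify---that the two sheets glued along the seam are far apart in $\st'$---is not resolved by your proposed fix of flowing each jump endpoint forward by a small generic $s$ so as to miss the seam. The obstruction is not the seam itself but an open, one-sided neighborhood of it: a point $\pi\circ\pi_0(z,1-\eta)$ near the top of the relaxation cylinder and a point $\pi\circ\iota(u)$ with $u$ near $r(z)$ are both off the seam, can be $\delta$-close in $\Sus_H$ for every $\delta>0$ (as $\eta\to 0$), and yet have unique $\pi$-preimages that are far apart in $\st'$. Thus ``$\pi$ is a local homeomorphism away from the seam'' does not give a \emph{uniformly} controlled inverse on the set of jump endpoints you produce, and no single small $s$ can clear the entire top half of the cylinder (flowing by $s$ maps height $h$ to height $h+s$ when $h+s<1$, so points with $1-h>s$ remain in the bad region, while $s\geq\tfrac12$ pushes bottom-half points into it). What is actually needed---and what the paper's Step~1 does---is a jump-dependent modification: whenever a jump endpoint lies in $[\gd\times(\tfrac12,1)]$, extend the adjacent arcs forward \emph{to the seam} using the time-to-impact map $\alpha\leq\tfrac34$ and the trajectory-preserving retraction $\nu$, with the compactness estimates \eqref{eq:nu-continuity}--\eqref{eq:near-top-of-cylinder} controlling the new jump sizes. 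The resulting ``special chains'' have all jump endpoints in $[W']$ with $W'=\iota(\st)\cup\pi_0(\gd\times[0,\tfrac12])$, a compact set on which $\pi$ restricts to a \emph{global} homeomorphism (note $[W']$ still contains the seam, approached only from the good side), and only then does the piecewise lift---with error-free reset jumps inserted at each seam crossing, and with ``double jumps'' permitted by Def.~\ref{def:eps-T-chains}---go through. Your sketch would need to be replaced by this (or an equivalent) construction; as written, the perturbation step does not deliver the uniform lifting estimate your argument relies on.
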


\begin{Rem}\label{rem:chain-compatibility-for-hybrifold-fails}
	Prop.~\ref{prop:hybrid-chain-equiv-vs-suspension-chain-equiv} would become false if the hybrid suspension $\Sus_H$ was replaced by the (generalized) hybrifold $M_H$ of $H$ defined in SM \S \ref{app:hybrifold}.
	For example, consider a discrete-time dynamical system, i.e., a hybrid system $H$ with $\st = \gd$ and $\fs=\varnothing$ (cf. Ex.~\ref{ex:special-case-discrete-time}).
	In general $M_H$ may not be metrizable (even if $\st$ is a compact metric space), as illustrated by the example of a discrete-time dynamical system given by iterating an irrational rotation of the circle.
	However, even if $M_H$ happens to be metrizable, every point of $M_H$ is a stationary point for the (generalized) hybrifold semiflow, and therefore every point of $M_H$ is chain recurrent.
	On the other hand, the chain recurrent set of $H$ is arbitrary.
\end{Rem}

The following immediate corollary of Prop. \ref{prop:hybrid-chain-equiv-vs-suspension-chain-equiv} concerns the classical suspension of a discrete-time dynamical system and is probably well-known, although we could not find a reference in the literature.
See SM \S \ref{app:suspension} or \cite[p.~797,~pp.~21--22]{smale1967differentiable,brin2002introduction} for a primer on the classical suspension semiflow.
\begin{Co}\label{co:chain-equiv-vs-classical-suspension-chain-equiv}
	Consider the discrete-time dynamical system defined by a continuous map $f:X\to X$ of a compact metric space.
	Let $\Sigma_f \coloneqq \frac{X\times [0,1]}{(x,1)\sim (0,f(x))}$ be the classical suspension (mapping torus) of $f$, $\Phi\colon [0,\infty) \times \Sigma_f\to \Sigma_f$ be the suspension semiflow,  $\iota\colon X \to  X\times [0,1]$ be the embedding $X\hookrightarrow X\times \{0\}$, and $\pi\colon X\times [0,1]\to \Sigma_f$ be the quotient map.
	
	Then for any choice of compatible extended metric on $\Sus_f$, $x,y\in X$ are chain equivalent for $f$ if and only if $\pi\circ\iota(x),\pi\circ \iota(y)$ are chain equivalent for $\Phi$.
	In particular, $R(f) = (\pi\circ\iota)^{-1}(R(\Phi))$.
\end{Co}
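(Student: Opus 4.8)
The plan is to view the discrete-time system $f\colon X\to X$ as a metric hybrid system and then quote Prop.~\ref{prop:hybrid-chain-equiv-vs-suspension-chain-equiv}. Following Ex.~\ref{ex:special-case-discrete-time}, I would set $H = (\st,\fs,\gd,\varphi,r)$ with $\st = \gd = X$, $\fs = \varnothing$, $\varphi$ the (empty) local semiflow on $\fs$, and $r = f$; since $X$ is a compact metric space, this is an \MHS. The first step is to record that $H$ satisfies Assumptions~\ref{assump:deterministic}, \ref{assump:inf-or-Zeno}, \ref{assump:trapping-guard}, and \ref{assump:compact}: the first three hold by the Remark immediately following Assumption~\ref{assump:compact} (a discrete-time system, viewed as a hybrid system, is deterministic, has only Zeno maximal executions, and satisfies the trapping guard condition vacuously), and the last holds because $X$ is compact.

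The second step is to identify the hybrid suspension of this particular $H$ with the classical suspension. Here I would unwind Def.~\ref{def:relaxed} and Def.~\ref{def:hybrid-suspension} in the special case $\st = \gd$, $\fs = \varnothing$, where $\mu \equiv 0$ and $\cl(\fs) = \varnothing$: the relaxed system $H'$ has state space $\st' \cong X\times[0,1]$, its local semiflow $\varphi'$ translates the $[0,1]$-coordinate at unit speed, and its reset is $r'\colon (z,1)\mapsto (f(z),0)$; consequently $\Sus_H = \st'/{\sim}$, where $\sim$ glues $(z,1)$ to $(f(z),0)$, is exactly the mapping torus $\Sigma_f$, and the suspension semiflow $\Phi_H$ is exactly the classical suspension semiflow $\Phi$. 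Under this identification, the map $\pi_0|_\st$ of Def.~\ref{def:relaxed} is the embedding $\iota\colon X\hookrightarrow X\times\{0\}$ of the corollary, and the quotient $\st'\to\Sus_H$ of Def.~\ref{def:hybrid-suspension} is the quotient $\pi\colon X\times[0,1]\to\Sigma_f$. (This is the reduction already asserted without proof below Def.~\ref{def:hybrid-suspension}; see also SM \S\ref{app:suspension}.)

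The final step is a translation of vocabulary. By Ex.~\ref{ex:special-case-discrete-time}, the hybrid Conley relation $\conley_H$ for this $H$ coincides with the classical Conley relation of $f$ (built from $\epsilon$-chains), so ``$x,y$ chain equivalent for $H$'' means ``$x,y$ chain equivalent for $f$'' and $R(H) = R(f)$; and ``chain equivalent for $\Phi_H$'' means ``chain equivalent for $\Phi$'' since $\Phi_H = \Phi$ under the above identification. Applying Prop.~\ref{prop:hybrid-chain-equiv-vs-suspension-chain-equiv} to $H$ then yields the first assertion verbatim (for any compatible extended metric on $\Sigma_f$), and specializing its ``in particular'' clause, using $R(H) = R(f)$, gives $R(f) = (\pi\circ\iota)^{-1}(R(\Phi))$.

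I expect the only genuine work to lie in the second step: verifying that the general relaxed-system and hybrid-suspension constructions really do collapse to the classical mapping torus and suspension flow when $\fs = \varnothing$ and $\st = \gd$ --- in particular tracking the identification $\asymp$ that glues the copy of $\st$ onto $\gd\times\{0\}$, so that the three-case formula for $\varphi'$ degenerates to plain coordinate translation and the maps $\pi_0|_\st$, $\pi$ match the $\iota$, $\pi$ of the corollary statement. The rest is bookkeeping or a direct appeal to Ex.~\ref{ex:special-case-discrete-time} and Prop.~\ref{prop:hybrid-chain-equiv-vs-suspension-chain-equiv}.
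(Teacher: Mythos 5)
Your proposal is correct and is exactly the route the paper intends: the corollary is presented as an immediate consequence of Prop.~\ref{prop:hybrid-chain-equiv-vs-suspension-chain-equiv} applied to the \MHS~ $H$ with $\st=\gd=X$, $\fs=\varnothing$, $r=f$ (which satisfies Assumptions~\ref{assump:deterministic}--\ref{assump:compact}, the trapping guard condition holding vacuously), together with the identification of $(\Sus_H,\Phi_H)$ with the classical mapping torus and suspension semiflow carried out in SM~\S\ref{app:suspension} and the translation of the Conley relation via Ex.~\ref{ex:special-case-discrete-time}. The step you flag as the "only genuine work" (collapsing Def.~\ref{def:relaxed} and \ref{def:hybrid-suspension} when $\fs=\varnothing$) is indeed where the content lies, and your unwinding of it is accurate.
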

The classical analogue of the following corollary of Prop.~\ref{prop:hybrid-chain-equiv-vs-suspension-chain-equiv} is well-known.
\begin{Co}\label{co:hybrid-chain-recurrent-set-closed-invariant}
	Let $H = (\st,\fs,\gd,\varphi,r)$ be an \MHS~ satisfying Assumptions \ref{assump:deterministic}, \ref{assump:inf-or-Zeno}, \ref{assump:trapping-guard}, and \ref{assump:compact}.
	Then the chain equivalence classes of $H$ are closed and forward invariant.
	Similarly, $R(H)$ is closed and forward invariant.
	Additionally, $\omega(x)\subseteq R(H)$ for any $x\in \st$.
\end{Co}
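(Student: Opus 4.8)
The plan is to deduce every assertion from the corresponding classical facts about the continuous semiflow $\Phi_H$ on the compact metrizable space $\Sus_H$ (Prop.~\ref{prop:hybrid-suspension}), pulled back to $\st$ along the map $f\coloneqq \pi\circ\iota\colon \st\to \Sus_H$. Since $\gd$ is compact and $\st$ is Hausdorff, $r$ is automatically a closed map (Remark~\ref{rem:compact-guard-hausdorff-automatically-closed-map}), so Propositions~\ref{prop:omega-limit-set-suspension} and \ref{prop:hybrid-chain-equiv-vs-suspension-chain-equiv} and Cor.~\ref{co:omega-forward-invariant} all apply. Note that $\iota$ is a topological embedding and $\pi$ is a closed map (Lemma~\ref{lem:closed-quotient-metrizable}), so $f$ is continuous and injective; in particular $f^{-1}$ carries closed sets to closed sets. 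We will also use the observation already recorded in the proof of Cor.~\ref{co:omega-forward-invariant} that the family of maximal $H$-executions is exactly the family of $f$-preimages of $\Phi_H$-trajectories; consequently $f^{-1}(S)$ is forward invariant for $H$ whenever $S\subseteq \Sus_H$ is forward invariant for $\Phi_H$.

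Granting this, each claim is immediate. By Prop.~\ref{prop:hybrid-chain-equiv-vs-suspension-chain-equiv}, $R(H)=f^{-1}(R(\Phi_H))$; since $R(\Phi_H)$ is closed and forward invariant for the continuous semiflow $\Phi_H$ on the compact metric space $\Sus_H$ (classical; see \cite{hurley1995chain}), $R(H)$ is closed (continuity of $f$) and forward invariant (by the preimage observation). For a chain equivalence class $C$ of $H$: by Def.~\ref{def:hybrid-chain-equivalence} we have $C\subseteq R(H)$, so we may assume $C\neq \varnothing$ and fix $x\in C$, whence $f(x)\in R(\Phi_H)$; the biconditional of Prop.~\ref{prop:hybrid-chain-equiv-vs-suspension-chain-equiv} then gives $C=f^{-1}(D)$, where $D\subseteq \Sus_H$ is the chain equivalence class of $f(x)$ for $\Phi_H$. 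Since $D$ is closed and forward invariant (again classical), so is $C=f^{-1}(D)$. Finally, for $x\in \st$, Equation~\eqref{eq:prop-omega-2} of Prop.~\ref{prop:omega-limit-set-suspension} gives $\omega(x)=f^{-1}(\omega(f(x)))$; since $\omega$-limit sets of continuous semiflows on compact metric spaces lie in the chain recurrent set, $\omega(f(x))\subseteq R(\Phi_H)$, so $\omega(x)\subseteq f^{-1}(R(\Phi_H))=R(H)$.

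There is no substantial obstacle here: the real content is already contained in Propositions~\ref{prop:omega-limit-set-suspension}, \ref{prop:attracting-repelling-pairs-suspension}, and \ref{prop:hybrid-chain-equiv-vs-suspension-chain-equiv}. The only points needing a word of justification are (i) that the invoked classical statements---$R$ closed and forward invariant, chain equivalence classes closed and forward invariant, and $\omega$-limit sets contained in $R$---hold for \emph{semiflows} on compact metric spaces, not merely for flows (this is standard; see \cite{hurley1995chain,patrao2011existence}), and (ii) that the correspondence between maximal $H$-executions and $f$-preimages of $\Phi_H$-trajectories, used to transfer forward invariance, is valid, which follows directly from the construction of $\Phi_H$ in Def.~\ref{def:hybrid-suspension} and the determinism of $H$.
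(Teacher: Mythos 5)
Your proposal is correct and follows essentially the same route as the paper's proof: pull everything back along $\pi\circ\iota$ using Prop.~\ref{prop:hybrid-chain-equiv-vs-suspension-chain-equiv} and Equation~\eqref{eq:prop-omega-2} of Prop.~\ref{prop:omega-limit-set-suspension}, invoke the classical facts for the semiflow $\Phi_H$, and transfer forward invariance via the correspondence between maximal executions and preimages of $\Phi_H$-trajectories. The only cosmetic difference is that the paper explicitly routes the "classical facts for $\Phi_H$" step through Ex.~\ref{ex:special-case-continuous-time} to reconcile Def.~\ref{def:hybrid-conley-relation} with the standard Conley relation for semiflows, a point you gesture at but do not spell out.
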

\begin{proof}[Proof of Cor.~\ref{co:hybrid-chain-recurrent-set-closed-invariant}]
	For the standard, classical definition of the Conley relation for the semiflow $\Phi_H$ (and for any choice of compatible metric on $\Sus_H$), it is well known that all chain equivalence classes for $\Phi_H$, as well as the chain recurrent set for $\Phi_H$, are closed and forward invariant (this is also easy to prove directly).
	By Ex.~\ref{ex:special-case-continuous-time}, the same is true if our definition of the Conley relation (Def.~\ref{def:hybrid-conley-relation}) for $\Phi_H$ is used instead.
	It follows that the same is true for $H$ since (i) Prop.~\ref{prop:hybrid-chain-equiv-vs-suspension-chain-equiv} implies that the chain equivalence classes for $H$ (and hence also $R(H)$) are $(\pi\circ\iota)$-preimages of the chain equivalence classes for $\Phi_H$, (ii) $\pi\circ \iota$ is continuous, and (iii)  the collection of maximal $H$ executions is precisely the collection of $(\pi\circ\iota)$-preimages of $\Phi_H$ trajectories.
	
	Fix $x\in \st$ and define $[x]\coloneqq \pi\circ \iota(x)$. 
	The final assertion follows from Prop.~\ref{prop:hybrid-chain-equiv-vs-suspension-chain-equiv} and Equation \eqref{eq:prop-omega-2} of Prop.~\ref{prop:omega-limit-set-suspension} (which applies by Remark~\ref{rem:compact-guard-hausdorff-automatically-closed-map}): $$\omega(x) = (\pi\circ \iota)^{-1}(\omega([x])) \subseteq (\pi\circ \iota)^{-1}(R(\Phi_H)) = R(H).$$
	The set inclusion follows from the well-known fact that, for a semiflow, the chain recurrent set contains the $\omega$-limit set of any point (cf. \cite[II.6.3.C]{conley1978isolated}).
\end{proof}

\begin{proof}[Proof of Prop.~\ref{prop:hybrid-chain-equiv-vs-suspension-chain-equiv}]
In the following, we let $d_{\st}$ be the given extended metric on $\st$ and $d_{\st'}, d_{\Sus_H}$ be any metrics on $\st', \Sus_H$ which are compatible with their respective topologies, and we use the notation $[x] \coloneqq \pi(x)$ for $x\in \st'$.
Through a mild abuse of notation, we also use the notation $[x]\coloneqq \pi\circ \iota(x)$ for $x\in \st$ and $[(z,t)]\coloneqq \pi \circ \pi_0(z,t)$, where $\pi_0\colon \st \sqcup (\gd\times [0,1]) \to \st'$ is the quotient of Def.~\ref{def:relaxed}.

We first show that, if $x,y \in \st$ are chain equivalent, then $[x]$ and $[y]$ are chain equivalent for $\Phi_H$.\footnote{In this proof we are using Def.~\ref{def:eps-T-chains} and \ref{def:hybrid-conley-relation} for the definition of the Conley relation for $\Phi_H$, although Ex.~\ref{ex:special-case-continuous-time} shows that this is equivalent to the classical definition.}
Since $\st$ is compact it follows that the map $\pi \circ \iota \colon \st\to \Sus_H$ is uniformly continuous with respect to $d_{\st}, d_{\Sus_H}$.
This implies that, for any $\epsilon > 0$, there exists $\delta > 0$ such that every \emph{nice} $(\delta, T+1)$-chain for $H$ maps to an $(\epsilon, 1)$-chain for $\Phi_H$ under $\pi\circ \iota$.
Lemma \ref{lem:niceify} implies that, for every $\delta, T > 0$, there are nice $(\delta, T+1)$-chains from (i) $x$ to $y$ and (ii) $y$ to $x$, so it follows that there are $(\epsilon, 1)$-chains from (i) $[x]$ to $[y]$ and (ii) $[y]$ to $[x]$ for every $\epsilon > 0$.
Hence Lemma \ref{lem:colonius-result-generalized} implies that $[x]$ and $[y]$ are chain equivalent for $\Phi_H$.

To complete the proof we now show that, if $x,y\in R(H)$ are such that $[x]$ and $[y]$ are chain equivalent for $\Phi_H$, then $x$ and $y$ are chain equivalent.
It suffices to prove the stronger claim that $[x]$ being Conley related to $[y]$ for $\Phi_H$ implies $\conley_H(x,y)$ for arbitrary $x,y \in \st$. 
And in order to prove this, by Lemma \ref{lem:relaxed-conley-relation} it suffices to prove that $[x]$ being Conley related to $[y]$ for $\Phi_H$ implies $\conley_{H'}(\iota(x),\iota(y))$.
We prove the latter claim below in two steps which we first briefly describe in the following paragraph.
To improve readability in the remainder of the proof, we henceforth use the notation $[S]\coloneqq \pi\circ \iota(S)\subseteq \Sus_H$ if $S\subseteq \st$, $[S]\coloneqq \pi(S)\subseteq \Sus_H$ if $S\subseteq \st'$, and $[S]\coloneqq \pi\circ\pi_0(S)$ if $S\subseteq \st \sqcup (\gd\times [0,1])$.  

In Step 1 below, we will show that $(\epsilon,T)$-chains for arbitrary $\epsilon, T > 0$ can be constructed between $[x],[y]\in \pi\circ \iota(\st)$ with the property that no jump points in the chain belong to $[\gd\times (\frac{1}{2},1)]$.
We refer to such chains as $(\epsilon, T)$-\concept{special chains}.
In Step 2 we will use Step 1 as a tool to prove that, for any $\epsilon, T> 0$, there exists an $(\epsilon, T)$-chain from $x$ to $y$ for the relaxed system $H'$ if $[x],[y]$ are Conley related for $\Phi_H$.
This will show that $\conley_{H'}(x,y)$, and hence $\conley_{H}(x,y)$ by Lemma~\ref{lem:relaxed-conley-relation}, as desired.

\textbf{Step 1:} 
Define $U' \coloneqq \pi_0(\gd\times [\frac{1}{4},1])\subseteq \st'$, $V'\coloneqq \pi_0(\gd\times (\frac{1}{2},1))\subseteq U'$, and the continuous maps $\rho'\colon U' \to \gd'$ via $\rho'(\pi_0(z,t))\coloneqq \pi_0(z,1)$ and $\mu'\colon U' \to [0,\frac{3}{4}]$ via $\mu'(\pi_0(z,t))\coloneqq 1-t$.
Since $U'$ and $[U']$ are compact metrizable spaces, $\pi|_{U'}\colon U'\to [U']$ is a quotient map since it is a continuous, closed, and surjective map.
The maps  $\pi|_{U'} \circ \rho'$ and $\mu'$ are both constant on fibers of $\pi|_{U'}$ since the restriction of $\pi$ to $\pi_0(\gd\times [\frac{1}{4},1))$ is injective, $\mu'|_{\gd'} \equiv 0$, and $\left(\pi|_{U'} \circ \rho'\right)|_{\gd'} = \pi|_{\gd'}$ since $\rho'|_{\gd'}=\id_{\gd'}$.
Thus, by the universal property of the quotient topology \cite[Thm~3.70]{lee2010introduction}, $\pi|_{U'} \circ \rho'$ and $\mu'$ descend to a continuous retraction $\nu\colon [U'] \to [\gd']$ and a continuous map $\alpha \colon [U'] \to [0,\frac{3}{4}]$, respectively.
Note that $\nu$ preserves trajectories, and that $\alpha$ is the ``time-to-impact-$[\gd']$ map'' for points in $[U']$.

Now fix $\epsilon, T > 0$ and let $[x],[y]\in [\st]$ be Conley related for $\Phi_H$.
Since $[U']$, $\Sus_H$, and $\cl([V'])$ are compact, there exists $\delta\in (0,\epsilon)$ such that
\begin{align}
[x],[y] \in [U'], \, d_{\Sus_H}([x],[y]) < \delta &\implies d_{\Sus_H}(\nu([x]),\nu([y])) < \epsilon\label{eq:nu-continuity}\\
[x]\in [V'],\, [y]\in \Sus_H\setminus \interior([U']), \, d_{\Sigma_H}([x],[y]) < \delta  &\implies d_{\Sus_H}(\nu([x]), [y]) < \epsilon.\label{eq:near-top-of-cylinder}
\end{align}
Indeed, suppose there did not exist $\delta > 0$ such that (\ref{eq:near-top-of-cylinder}) held.
Then there exist sequences $(v_n)_{n\in \N} \subseteq [V']$ and $(w_n)_{n\in \N} \subseteq \Sus_H \setminus \interior([U'])$ with $d_{\Sus_H}(v_n, w_n) \to 0$ and $d_{\Sus_H}(\nu(v_n), w_n) \ge \epsilon$ for all $n$.  
By passing to subsequences, we may assume $v_n \to v \in \cl([V'])$ and $w_n \to w \in \Sus_H\setminus \interior([U'])$ with $d_{\Sus_H}(\nu(v),w) \ge \epsilon$.  
Since $d_{\Sus_H}(v_n, w_n) \to 0$, we have $v = w$.  
Thus, $v \in \cl([V']) \cap (\Sus_H \setminus \interior([U'])) = [\gd']$ and $d_{\Sus_H}(\nu(v), v) \ge \epsilon$, a contradiction since $\nu|_{[\gd']}=\id_{[\gd']}$.

Let\footnote{The semiflow $\Phi_H$ defines an \MHS~ $\widetilde{H}$ with guard $\varnothing$ (cf. Ex.~\ref{ex:special-case-continuous-time}). To avoid introducing extra notation, here we conflate $\Phi_H$ with $\widetilde{H}$ by writing, e.g., ${\conley^{\delta, T+3/4}_{\Phi_H}([x], [y])}$ instead of ${\conley^{\delta, T+3/4}_{\widetilde{H}}([x], [y])}$. 
We additionally remark that, since the guard for $\widetilde{H}$ is $\varnothing$, chains can only have continuous-time jumps, so every chain $\chi = (N,\tau,\eta,\gamma)$ for $\Phi_H$ satisfies $\eta = (0,1,2,\ldots, N)$.}  $\chi^{(0)} = (N, \tau^{(0)},\eta^{(0)},\gamma^{(0)}) \in {\conley^{\delta, T+3/4}_{\Phi_H}([x], [y])}$; recall from Def.~\ref{def:eps-T-chains} that $N\geq 1$.
We will now modify the chain $\chi^{(0)}$ inductively.
Fix $i\in \{0,\ldots, N-1\}$ and assume that, if $i \geq 1$,\footnote{If $i = 0$ we assume nothing, so that the base case of the induction argument is included in this one.}  we have modified the first $(i+1)$ arcs ($\gamma_0^{(0)},\ldots, \gamma_i^{(0)}$) to obtain a chain $\chi^{(i)} = (N,\tau^{(i)},\eta^{(i)},\gamma^{(i)})\in {\conley^{\epsilon, T}_{\Phi_H}([x], [y])}$ such that (a) the sub-chain obtained by throwing away the first $(i+1)$ arcs of $\chi^{(i)}$ is either a single arc (if $i = N-1$) or a $(\delta, T+3/4)$-chain; and (b) for all $j\in \{1,\ldots, i\}$: $\gamma_{j-1}^{(i)}(\tau_{j}^{(i)}),\gamma_{j}^{(i)}(\tau_{j}^{(i)}) \not \in [V']$.
\begin{itemize}
	\item If both $\gamma_i^{(i)}(\tau_{i+1}^{(i)}),\gamma_{i+1}^{(i)}(\tau_{i+1}^{(i)})\in [U']$, then we replace $\gamma_i^{(i)}$ with the arc
	\begin{equation}\label{eq:thm3-gamma-i-replace}
	[\tau_i^{(i)}, \tau_{i+1}^{(i)} + \alpha(\gamma_i^{(i)}(\tau_{i+1}^{(i)}))]\ni t\mapsto \Phi_H^{t - \tau_i^{(i)}}\left(\gamma_i^{(i)}(\tau_i^{(i)})\right)
	\end{equation}
	 and $\gamma_{i+1}^{(i)}$ with the arc 
	\begin{equation}\label{eq:thm3-gamma-i-plus1-replace}
	[\tau_{i+1}^{(i)} + \alpha(\gamma_{i}^{(i)}(\tau_{i+1}^{(i)})),\tau_{i+2}^{(i)}] \ni t  \mapsto \Phi_H^{t + \alpha(\gamma_{i+1}^{(i)}(\tau_{i+1}^{(i)})) - \tau_{i+1}^{(i)} - \alpha(\gamma_{i}^{(i)}(\tau_{i+1}^{(i)}))}\left(\gamma_{i+1}^{(i)}(\tau_{i+1}^{(i)})\right).
	\end{equation} 
	\item If $\gamma_i^{(i)}(\tau_{i+1}^{(i)})\in [V']$ and $\gamma_{i+1}^{(i)}(\tau_{i+1}^{(i)}) \in \Sus_H\setminus \interior([U'])$, then we replace $\gamma_i^{(i)}$ with the arc defined by \eqref{eq:thm3-gamma-i-replace}, but we do not modify $\gamma_{i+1}^{(i)}$.
	\item If $\gamma_{i+1}^{(i)}(\tau_{i+1}^{(i)})\in [V']$ and $\gamma_i^{(i)}(\tau_{i+1}^{(i)}) \in \Sus_H\setminus \interior([U'])$, then we replace $\gamma_{i+1}^{(i)}$ with the arc defined by \eqref{eq:thm3-gamma-i-plus1-replace}, but we do not modify $\gamma_i^{(i)}$.
\end{itemize}
The upper bound $\alpha(\slot) \leq \frac{3}{4}$ and Equations \eqref{eq:nu-continuity}, \eqref{eq:near-top-of-cylinder} can be used to show that, after redefining the sequences $\eta^{(i)}$ and $\tau^{(i)}$ accordingly, the result is a chain 
$\chi^{(i+1)} = (N,\tau^{(i+1)},\eta^{(i+1)},\gamma^{(i+1)})\in {\conley^{\epsilon, T}_{\Phi_H}([x], [y])}$ such that (a) the sub-chain obtained by throwing away the first $(i+2)$ arcs of $\chi^{(i+1)}$ is either empty (if $i = N -1)$, a single arc (if $i = N-2$), or a $(\delta, T+ 3/4)$-chain; and (b) for all $j\in \{1,\ldots, i+1\}$: $\gamma_{j-1}^{(i+1)}(\tau_{j}^{(i+1)}),\gamma_{j}^{(i+1)}(\tau_{j}^{(i+1)}) \not \in [V']$.

Hence by induction we obtain a chain $\chi \in {\conley^{\epsilon, T}_{\Phi_H}([x], [y])}$ satisfying $\gamma_i(\tau_i), \gamma_i(\tau_{i+1}) \not \in [V']$ for every arc of $\chi$.  
This shows that there exists an $(\epsilon, T)$-special chain from $[x]$ to $[y]$ and completes the proof of Step 1.

\textbf{Step 2:} 
Fix $\epsilon > 0$ and define $W'\coloneqq \iota(\st) \cup \pi_0(\gd \times [0,\frac{1}{2}])\subseteq \st'$.
Since $W'$ is compact and $\pi|_{W'}$ is injective, $\pi|_{W'}\colon W' \to [W'] = \Sus_H\setminus [V']$ is a homeomorphism of compact metric spaces \cite[Lem~4.50.d]{lee2010introduction}.
It follows that the inverse homeomorphism $(\pi|_{W'})^{-1}\colon \Sus_H \setminus [V']\to W'$ is uniformly continuous, so there exists $\delta > 0$ such that $d_{\Sus_H}([z],[w]) < \delta$ implies	 that $d_{\st'}(z,w) < \epsilon$ for all $z,w\in W'$.      

Now fix $\epsilon, T > 0$, let $\delta > 0$ as in the above paragraph, and let $x,y\in \st$ be such that $[x],[y]\in [\st]\subseteq [W']$ are Conley related for $\Phi_H$.
By Step 1, there exists a $(\delta,T)$-special chain $\chi = (N,\tau,\eta,\gamma)$ from $[x]$ to $[y]$.
Since $\pi|_{W'}$ is a homeomorphism onto its image, we can define a sequence of ``lifted and reset-subdivided'' continuous arcs $(\tilde{\gamma}_j)$ in $\st'$ by first lifting each component of $\gamma_i^{-1}([W'])$ via $(\pi|_{W'})^{-1}$, then extending each lifted component terminating at a point $\pi_0(z,\frac{1}{2})$ in the boundary of $W'$ via concatenation with $\left(t\in [0,\frac{1}{2}] \mapsto \pi_0(z, t+\frac{1}{2})\right)$.
Since $d_{\Sigma_H}(\gamma_i(\tau_{i+1}), \gamma_{i+1}(\tau_{i+1})) < \delta$ for each $i$, it follows from our choice of $\delta$ that the resulting family $(\tilde{\gamma}_j)$ of arcs yields an $(\epsilon, T)$-chain for $H'$.\footnote{For this step of the proof it is crucial that our definition of $(\epsilon,T)$-chains (Def.~\ref{def:eps-T-chains}) allows for ``double jumps,'' as illustrated in Fig.~\ref{fig:eps-T-chain}.} 
This shows that $x,y\in \st$ are Conley related for $H'$.
By Lemma \ref{lem:relaxed-conley-relation}, this shows that $x,y\in \st$ are also Conley related for $H$ and completes the proof.
\end{proof}

\subsection{Proofs of Theorems \ref{th:conley-decomp} and \ref{th:hybrid-conley}}\label{sec:proofs-main-theorems}
We are now in a position to prove our main theorems, which we restate for convenience.

\ThmConleyDecomp*

\begin{proof}
	Let $H' = (\st',\fs',\gd',\varphi',r')$ and $(\Sus_H, \Phi_H)$ be the relaxed hybrid system and hybrid suspension of Def. \ref{def:relaxed} and \ref{def:hybrid-suspension} (equipped with any compatible extended metrics), $\iota\colon \st\to \st'$ be the obvious embedding, and $\pi\colon \st' \to \Sus_H$ be the quotient map of Def.~\ref{def:hybrid-suspension}.
	Letting $R(\Phi_H)$ denote the chain recurrent set for $\Phi_H$, the Conley decomposition theorem for semiflows \cite[Thm~2]{hurley1995chain} and Ex.~\ref{ex:special-case-continuous-time} imply that
	\begin{equation}\label{eq:conley-decomp-semiflow}
	R(\Phi_H) = \bigcap \{B\cup B^*\mid B \textnormal{ is an attracting set for } \Phi_H.\},
	\end{equation} 
	and that $\pi\circ\iota(x)$ is chain equivalent to $\pi\circ\iota(y)$ if and only if either $\pi\circ\iota(x), \pi\circ\iota(y)\in B$ or $\pi\circ\iota(x), \pi\circ\iota(y)\in B^*$ for every attracting-repelling pair $(B,B^*)$ for $\Phi_H$.
	Prop.~\ref{prop:hybrid-chain-equiv-vs-suspension-chain-equiv} implies that $R(H) = (\pi \circ \iota)^{-1}(R(\Phi_H))$ and, furthermore, that the chain equivalence classes of $H$ are precisely the $(\pi\circ \iota)$-preimages of chain equivalence classes for $\Phi_H$.
	Hence to complete the proof it would suffice to show that $(A,A^*)$ is an attracting-repelling pair for $H$ if and only if  $(A,A^*) = ((\pi\circ \iota)^{-1}(B), (\pi\circ \iota)^{-1}(B^*))$ for some attracting-repelling pair $(B,B^*)$ for $\Phi_H$, but this is the content of Prop. \ref{prop:attracting-repelling-pairs-suspension}.
	This completes the proof.
\end{proof}

\ThmFund*

\begin{proof}
Let $H' = (\st',\fs',\gd',\varphi',r')$ be the relaxed hybrid system of Def. \ref{def:relaxed} and $\iota\colon \st\to \st'$ the obvious embedding, and let $(\Sus_H,\Phi_H)$ be the hybrid suspension of Def. \ref{def:hybrid-suspension} with quotient $\pi\colon \st'\to \Sus_H$.

By Prop.~\ref{prop:hybrid-suspension}, $\Sus_H$ is compact and metrizable.
Since $\Sus_H$ is compact, the Conley relation is independent of the choice of compatible extended metric on $\Sus_H$ (see Remark~\ref{rem:conley-rel-compact-metric-independent}).
Hence (after equipping $\Sus_H$ with any compatible extended metric and appealing to Ex.~\ref{ex:special-case-continuous-time}) we may apply the fundamental theorem of dynamical systems for semiflows \cite[Thm~1.1]{patrao2011existence} to conclude that there exists a complete Lyapunov function $V\colon \Sus_H\to \R$ for $\Phi_H$. 
Letting $R(\Phi_H)$ denote the chain recurrent set for $\Phi_H$, this means that $V$ is a continuous function such that (i) $t\mapsto V(\Phi_H^t(\pi(x)))$ is strictly decreasing for all $\pi(x)\not \in R(\Phi_H)$, (ii) $V(R(\Phi_H))$ is nowhere dense in $\R$, and (iii) for all $\pi(x),\pi(y) \in R(\Phi_H)$:  $\pi(x)$ and $\pi(y)$ are chain equivalent if and only if  $V(\pi(x)) = V(\pi(y))$.

Define  $L\colon \st\to \R$ via $L\coloneqq V\circ \pi \circ \iota$; $L$ is continuous since $L$ is a composition of continuous functions.
By construction, we have that (i) for every $x\in \fs\setminus (\pi\circ \iota)^{-1}(R(\Phi_H))$, $t > 0$, and $y \in \chi_x(t)$, $L(y) < L(x)$; and (ii) if $x\in \gd\setminus (\pi\circ \iota)^{-1}(R(\Phi_H))$, then $L(r(x)) < L(x)$. 
Prop.~\ref{prop:hybrid-chain-equiv-vs-suspension-chain-equiv} implies that $\pi\circ\iota(R(H))\subseteq R(\Phi_H)$, so $L(R(H)) = V\circ \pi\circ \iota(R(H))\subseteq V(R(\Phi_H))$; therefore, $L(R(H))$ is nowhere dense in $\R$. 
It remains only to show that $x,y \in R(H)$ are chain equivalent if and only if  $L(x) = L(y)$.
Prop.~\ref{prop:hybrid-chain-equiv-vs-suspension-chain-equiv} implies that $x,y\in R(H)$ if and only if $\pi\circ\iota(x),\pi\circ\iota(y)\in R(\Phi_H)$, and that $x,y\in R(H)$ are chain equivalent if and only if $\pi\circ\iota(x),\pi\circ\iota(y)\in R(\Phi_H)$ are chain equivalent.
By the previous paragraph, $\pi\circ\iota(x),\pi\circ\iota(y)\in R(\Phi_H)$  are chain equivalent if and only if $V(\pi\circ\iota(x)) = V(\pi\circ\iota(y))$.
Since $L = V\circ\pi\circ\iota$, we have shown that $x,y \in R(H)$ are chain equivalent if and only if  $L(x) = L(y)$.
\end{proof}

\section{Conclusion}\label{sec:conclusion}
Using the language of hybrid systems, we have obtained a simultaneous generalization (Theorem \ref{th:hybrid-conley}) of both the continuous-time and discrete-time versions of Conley's fundamental theorem \cite{conley1978isolated,franks1988variation}.  As in the classical setting, our theorem asserts the existence of a globally-defined \emph{complete Lyapunov function} (Def. \ref{def:lyapunov}).
We have also proved a result (Theorem \ref{th:conley-decomp}) generalizing  Conley's decomposition theorem, which asserts that the chain recurrent set (Def.~\ref{def:hybrid-chain-recurrent}) is the intersection of all attracting-repelling pairs (Def.~\ref{def:attracting-repelling}). 

While this unification of the continuous and discrete is pleasingly parsimonious, our motivation is not
merely parsimony for its own sake. Our long-term aim, motivated particularly by applications in robotics and biomechanics---bearing  not simply  on legged locomotion \cite{Holmes_Full_Koditschek_Guckenheimer_2006,revzen2015data,seipel2017conceptual}, but, indeed, central to the larger prospects for a physically grounded, sensorimotor-coherent, and theoretically sound disciplinary foundation \cite[Sec.~4.1.2]{Koditschek_2021}---is to continue advancing the program\footnote{This program has contributions from many investigators. We only mention a few: \cite{Back_Guckenheimer_Myers_1993,Guckenheimer_1995,ye1998stability,alur2000discrete,simic2000towards,simic2001structural,simic2002hybrid,lygeros2003dynamical,simic2005towards,haghverdi2005bisimulation,Burden_Revzen_Sastry_2015,Goebel_Sanfelice_Teel_2009,lerman2016category,Johnson_Burden_Koditschek_2016,Burden_Sastry_Koditschek_Revzen_2016,burden2018contraction,burden2018generalizing,clark2019poincare,clark2020poincare,lerman2020networks}.} of developing hybrid dynamical systems theory to the same footing
as its more mathematically mature parents, the theories of continous-time and discrete-time dynamical
systems.  
For example, beyond the constructive applications of Conley's theorems discussed in the introduction, practitioners might well choose to use their appearance as a kind of litmus test against which to judge the relative merits of the many different hybrid systems models that have appeared in the literature. 
Models that do not possess such a decomposition into chain-recurrent and gradient-like parts might be subject to greater scrutiny---their questionably disorderly behavior only tolerated in consequence of expressing some physical property essential to the phenomena of interest.\footnote{See the second paragraph of SM \S \ref{app:JBK-relate} for a relevant discussion.
}
As a case in point, the absence of any viable notion of steady state behavior occasioned  by the departure from the trapping guard condition (Def.~\ref{def:trapping-guards}) of Ex.~\ref{ex:omega-chain-pathologies} (see Fig.~\ref{fig:omega-chain-pathology}) begins to suggest the potentially wild incoherence of otherwise seemingly well-formulated \THS.
Thus, by the same token, we hope that our presentation of sufficient conditions for a Conley theory of hybrid systems may encourage  more theorists to help determine  which properties are necessary. Indeed, despite the ubiquity of hybrid systems in engineering, mathematicians have mostly avoided them, perhaps due to the lack of a single concise definition. 
In this light, one contribution of the current paper is a parsimonious definition partially generalizing a physically important \cite{Johnson_Burden_Koditschek_2016} class of hybrid systems (Def.~\ref{def:HybridSystem}), which we hope may be more inviting to the mathematically inclined reader.

Norton \cite{norton1995fundamental} emphasized that the Fundamental Theorem of Dynamical Systems is not the end of the theory but the beginning.  Just as the Fundamental Theorems of Arithmetic, Algebra, and Calculus provide the most basic tools of their respective fields, the Fundamental Theorem of Dynamical Systems indicates that the coarsest building blocks of dynamical systems are the countable components of the steady state (the ``chain-recurrent'') set and their basins (adding in the components of the ``gradient-like''  sets that lead to them).
The primary results of this paper show that these same building blocks fit together in the same way to describe a broad, physically important class of hybrid dynamical systems.

\subsection*{Acknowledgments}
This work is supported in part by the Army Research Office (ARO) under the SLICE Multidisciplinary University Research Initiatives (MURI) Program, award W911NF1810327; in part by UATL 10601110D8Z, a LUCI Fellowship granted by the Basic Research Office of the US Undersecretary of Defense for Research and Engineering; and in part by ONR grant N00014-16-1-2817, a Vannevar Bush Fellowship held by the last author, sponsored by the Basic Research Office of the Assistant Secretary of Defense for Research and Engineering.  
The authors also gratefully acknowledge helpful conversations with Y. Baryshnikov, S. A. Burden, Z. Cooperband, J. Culbertson, D. Guralnik, A. M. Johnson, E. Lerman, and P. F. Stiller.
We owe special gratitude to Culbertson for his careful reading of the manuscript; his generosity has spared the reader several ambiguities and typographical errors.
We thank the two anonymous referees for useful suggestions.

\bibliographystyle{amsalpha}
\bibliography{ref}

\clearpage
\appendix
\section*{Supplementary Materials (SM)}

\section{Relationship with selected prior work} \label{app:relate}

    \subsection{Relationship of Definition~\ref{def:HybridSystem} to \cite{Johnson_Burden_Koditschek_2016}}\label{app:JBK-relate}

	Our definition of \THS~ strictly generalizes \cite[Def.~2]{Johnson_Burden_Koditschek_2016}, modulo our added regularizing assumption requiring that the union of guard sets be closed.
	Since any disjoint union of smooth (paracompact) manifolds with corners is metrizable,
         our definition of \MHS~ similarly strictly generalizes  \cite[Def.~2]{Johnson_Burden_Koditschek_2016}, modulo the closed guard assumption and the choice of a compatible extended metric on state space.
	However, we note that our Theorems~\ref{th:conley-decomp} and \ref{th:hybrid-conley} impose two additional conditions on \MHS~ that are not assumed in \cite{Johnson_Burden_Koditschek_2016}: our theorems require that (i) state space is compact (Assumption~\ref{assump:compact}), and (ii) the trapping guard condition is satisfied (Assumption~\ref{assump:trapping-guard}).
	(We also add that \cite{Johnson_Burden_Koditschek_2016} consider continuations of Zeno executions past the stop time, while we do not; cf. Remark~\ref{rem:def-2-big-remark}.)
	Regarding (i) we note that, as discussed in Remark~\ref{rem:conley-rel-compact-metric-independent}, the specific choice of compatible extended metric is immaterial for the majority of our purposes since Theorems~\ref{th:conley-decomp} and \ref{th:hybrid-conley} require that state space is compact.\footnote{For the interested reader, we briefly mention that specific metrizations of hybrid systems are discussed in \cite{burden2015metrization}.
    However, we caution that, e.g., the pseudometric defined in \cite[Sec.~III.A]{burden2015metrization} is not generally an extended metric compatible with the topology on state space (under certain assumptions it defines a metric on a certain quotient of state space, the \emph{hybrifold} discussed in \S \ref{app:hybrifold}), so it is not generally an admissible extended metric making a metrizable \THS~ into an \MHS.}
	
	Regarding (ii), the hybrid systems model of \cite{Johnson_Burden_Koditschek_2016} allows for the possibility that no hybrid suspension semiflow (Def.~\ref{def:hybrid-suspension}) exists which is continuous-in-state, thereby precluding the trapping guard condition as shown in Appendix~\ref{app:suspension-semiflow-continuous-implies-trapping-guard-condition} which, in turn, may compromise the necessity of a Conley decomposition and Lyapunov function (e.g., see Ex. \ref{ex:omega-chain-pathologies} and \ref{ex:counterexample} for one view of the gap between the sufficiency and the necessity of this condition).
	For other classes of physical models, continuity can fail for different reasons.
	While our Def.~\ref{def:HybridSystem} and \cite[Def.~2]{Johnson_Burden_Koditschek_2016} require reset maps to be continuous, parsimonious hybrid models of certain physical systems may fail even to have this property (though in many applications it might be acceptable to insure continuity---e.g., one might smooth down the model of an exterior wall's outer corner so as to insure that balls bounce off it in a continuous manner). 
	However, the discontinuities of behavior allowed by the \cite{Johnson_Burden_Koditschek_2016} and other hybrid systems models may play a key role in other problem settings, such as legged leaping as explored in \cite[Fig.~7, Sec.~III.b]{brill_etal_2015}, \cite{johnson_kod_2013}. Clearly, more theoretical work is needed to understand the prospect for achieving Conley-style results in these settings, while, at the same time, more empirical work is needed to understand how the phenomena of interest should be formally represented and intuitively understood in their absence.
    
\subsection{Relationship of Definition~\ref{def:HybridSystem} to \cite{ames2005homology}}    
Our definition of \THS~ is particularly similar to the definition of ``classical hybrid system'' in \cite[p.~92]{ames2005homology}.
However, there are some differences.
First, we ignore any underlying graph structure of the hybrid system, although Remark~\ref{rem:wlog-disjoint-union} explains that this is immaterial.
Second, the definition in \cite[p.~92]{ames2005homology} amounts, using our notation, to requiring a \emph{flow} $\Phi$ be defined on $\st$; in contrast, we only require a \emph{semi}flow be defined on $\fs\subseteq \st$.
Finally, we impose the regularizing requirement that the guard $\gd\subseteq \st$ be closed; this requirement is not made in \cite[p.~92]{ames2005homology}.

\subsection{Relationship of Definition~\ref{def:eps-T-chains} to \cite{culbertson2019formal}}\label{app:CGKS-chains}
Our definition of $(\epsilon,T)$-chains (Def.~\ref{def:eps-T-chains}) is closest to that of \cite[Def.~2.18]{culbertson2019formal}.  
While our presentations differ, the only mathematical difference is our requirement that an $(\epsilon,T)$-chain contain at least two arcs.
If this were not the case, then the Conley relation (as defined in Def.~\ref{def:hybrid-conley-relation}) would be reflexive, which would imply that every point is chain recurrent.
It is clear that every $(\epsilon,T)$-chain in our sense is also an $(\epsilon,T)$-chain in the sense of \cite[Def.~2.18]{culbertson2019formal}, but not vice versa.

\subsection{Relationship of the relaxed hybrid system and hybrid suspension to prior work}\label{app:hybrid-suspension}

\subsubsection{Generalized hybrifolds}\label{app:hybrifold}
The \concept{hybrifold} of a hybrid system was introduced in \cite{simic2000towards, simic2005towards} for a class of hybrid systems satisfying various smoothness assumptions: e.g., state space is required to be a disjoint union of manifolds with ``piecewise-smooth boundary,'' and reset maps are required to be diffeomorphisms onto their images.
Our classes \THS~ and \MHS~ of hybrid systems do not assume any such smoothness nor injectivity properties, but we can still give a definition analogous to that of the hybrifold in our setting.
We will refer to this analogous, but (formally) more general, construction as the \emph{generalized hybrifold}.\footnote{The terminology hybri\emph{fold} is unfortunately no longer appropriate since no  manifolds are involved. As one possible alternative, this generalization has also been referred to as a ``colimit'' in \cite[p.~94]{ames2005homology}.} 

Let $H\coloneqq (\st,\fs,\gd,\varphi,r)$ be a \THS.
Using the notation ``$M_H$'' of \cite{simic2000towards,simic2005towards}, we define the \concept{generalized hybrifold} $M_H$ of $H$ to be the topological space obtained by gluing points $z\in \gd\subseteq \st$ to $\st$ along the reset $r$:
$$M_H\coloneqq \st/(z\sim r(z)).$$
\emph{Assuming} there exists a unique semiflow $\Psi_{M_H}$ on $M_H$ such that the quotient $\pi_{M_H}\colon \st\to M_H$ sends $H$ executions into $\Psi_{M_H}$ trajectories while preserving time, we refer to $\Psi_{M_H}$ as the \concept{generalized hybrifold semiflow}.
Using Lemma~\ref{lem:extend} and the universal property of the quotient topology \cite[Thm~3.70]{lee2010introduction}, it can be shown (by mimicking the proof of Prop.~\ref{prop:hybrid-suspension}) that $\Psi_{M_H}$ is a well-defined and continuous semiflow if, e.g., Assumptions \ref{assump:deterministic}, \ref{assump:inf-or-Zeno}, and \ref{assump:trapping-guard} are satisfied and if there are no Zeno executions.\footnote{We emphasize that these conditions---in particular, the trapping guard condition---are only \emph{sufficient} to ensure that a well-defined and continuous generalized hybrifold semiflow exists.
The simple example of a \THS~ $H = (\st,\fs,\gd,\varphi,r)$ with $\st = [0,1]$, $\fs = (0,1]$, $\gd = \{0\}$, $r(0) = 1$, and $\varphi$ generated by the vector field $-x(1-x)\frac{\partial}{\partial x}$ shows that the trapping guard is \emph{not necessary} for the generalized hybrifold semiflow $\Psi_{M_H}$ to be well-defined and continuous.
The reader may wish to contrast this with the converse statement of Cor.~\ref{co:suspension-semiflow-continuous-implies-trapping-guard-condition} in Appendix~\ref{app:suspension-semiflow-continuous-implies-trapping-guard-condition} for the hybrid suspension semiflow $\Phi_H$.}  
A cartoon depiction of a generalized hybrifold (including a trajectory of $\Psi_{M_H}$) is shown in the bottom-left panel of Fig.~\ref{fig:hybrid-suspension-appendix}.

In certain situations we can prove that $M_H$ (with the quotient topology) is metrizable.
In particular, Lemma~\ref{lem:closed-quotient-metrizable} implies that, if $\st$ is metrizable and the reset $r$ is a closed map with compact fibers satisfying $r(\gd) \cap \gd = \varnothing$, then $M_H$ is metrizable.
In particular, if $\st$ is compact and metrizable and $r(\gd)\cap \gd = \varnothing$, then $M_H$ is metrizable.

However, at least if $r(\gd) \cap \gd \neq \varnothing$, the generalized hybrifold $M_H$ of a compact \MHS~ cannot be used to prove Theorems~\ref{th:conley-decomp} and \ref{th:hybrid-conley} for multiple reasons even if $M_H$ happened to be metrizable.
For example (as pointed out in Remarks~\ref{rem:omega-compatibility-for-hybrifold-fails} and \ref{rem:chain-compatibility-for-hybrifold-fails}), $\omega$-limit sets, attracting-repelling pairs, and chain recurrence for $(M_H,\Psi_H)$ are not generally compatible with the corresponding notions for $H$ if $r(\gd) \cap \gd \neq \varnothing$.

Furthermore, even \emph{if} these compatibility issues were not present for a \emph{specific} \MHS~ $H$, a complete Lyapunov function for $\Psi_{M_H}$ will not generally pull back to a complete Lyapunov function for $H$.
More explicitly, if $V\colon M_H\to \R$ is a complete Lyapunov function for $\Psi_{M_H}$, then the function $L\coloneqq V\circ \pi_{M_H}$ will not generally be a complete Lyapunov function for $H$, because it will not satisfy the second condition of Def.~\ref{def:lyapunov} ($L$ will not decrease across resets).
Thus, the technique used in the proof of Theorem~\ref{th:hybrid-conley} would still fail if the hybrifold was used instead of the hybrid suspension.

\begin{figure}
	\centering
	\def\svgwidth{1.0\columnwidth}
	\import{figs/}{hybrid-suspension-hybrifold-bw-V4.pdf_tex}
	\caption{Comparison of the constructions from \S \ref{sec:hybrid-suspension} depicted in Fig.~\ref{fig:hybrid-suspension} with the generalized hybrifold $M_H$ of $H$ discussed in \S \ref{app:hybrifold}. 
		Top left: a \THS~ $H = (\st,\fs,\gd,\varphi,r)$. Top right: its relaxed version $H' = (\st',\fs',\gd',\varphi',r')$; see Def. \ref{def:relaxed}. 
		Bottom right: hybrid suspension $(\Sus_H, \Phi_H)$ of $H$; see Def. \ref{def:hybrid-suspension}.
		Bottom left: the generalized hybrifold $M_H$ of $H$; $M_H$ is formed by gluing $\gd$ directly to $r(\gd)$ along $r$, without first embedding $\st$ in a larger space (unlike $\Sus_H = M_{H'}$).
		We mention $M_H$ only for purposes of comparison, i.e., \emph{we do not use $M_H$ in this paper}.  
		We remark that $\Sus_H$ coincides with the generalized hybrifold $M_{H'}$ of $H'$ (but \emph{not} with $M_H$).
		Additionally, we emphasize that $(\Sus_H, \Phi_H)$ strictly generalizes the classical suspension of a discrete-time dynamical system discussed in Appendix~\ref{app:suspension}; indeed, if $\st = \gd$ (cf. Ex.~\ref{ex:special-case-discrete-time}) our construction reduces to the classical one.}\label{fig:hybrid-suspension-appendix}
\end{figure}

\subsubsection{Relaxed hybrid system}\label{app:relaxed}
As mentioned in \S \ref{sec:hybrid-suspension}, the relaxed hybrid system $H'$ (Def.~\ref{def:relaxed}) formalizes the idea of requiring that executions of the hybrid system $H$ ``wait'' one time unit after impacting the guard before resetting.
A cartoon depicting the relaxed system is shown in the top-right panel of Fig.~\ref{fig:hybrid-suspension-appendix}.
The relaxed system is essentially an example of a ``temporal relaxation'' in the sense of \cite{johansson1999regularization}, where it was used to regularize Zeno executions, although we give the definition for \THS~ which are (formally speaking) more general\footnote{\label{foot:semiflow}Engineers might be unimpressed by the apparently slight formal gain of generality. 
Applications generally present models with smooth manifolds carrying (at least piecewise) smooth vector fields. 
In contrast, classical Conley theory is rooted in the tools of topological dynamics whose framework we have thus found it natural to adopt here. 
Furthermore, we hope that the imperative to eliminate smoothness assumptions from the spaces carrying these dynamics may be intuitively apparent when considering the (pinched and creased non-manifold) \emph{topological spaces} that inevitably arise as depicted, for example, in the hybrid suspension $\Sus_H$ of Fig.~\ref{fig:hybrid-suspension}.
}
than the specific examples considered in \cite[Sec.~3--4]{johansson1999regularization} (e.g., the local semiflows for \THS~ are not assumed to be generated by vector fields and, furthermore, the state space of a \THS~ is a general topological space rather than any sort of manifold).  
While we recover this Zeno regularization in our setting (Remark~\ref{rem:relaxed-sys-deterministic-nonblocking}), our primary motivation for the relaxed system is to use it as an intermediate step in constructing the hybrid suspension (Def.~\ref{def:hybrid-suspension}), which has better properties than those of the generalized hybrifold discussed in \S \ref{app:hybrifold}.

\subsubsection{Prior work related to the hybrid suspension}\label{app:hybrid-suspension-subsubsec}
The technique we used to prove Theorems~\ref{th:conley-decomp} and \ref{th:hybrid-conley} involves showing that a \THS~ satisfying the trapping guard condition and certain other assumptions is, in a certain sense (Prop.s~\ref{prop:omega-limit-set-suspension}, \ref{prop:attracting-repelling-pairs-suspension}, and \ref{prop:hybrid-chain-equiv-vs-suspension-chain-equiv}), no different from a certain continuous-time (semi-)dynamical system.
As discussed in \S \ref{app:hybrifold} and Remarks~\ref{rem:omega-compatibility-for-hybrifold-fails} and \ref{rem:chain-compatibility-for-hybrifold-fails}, this continuous-time system is \emph{not} the generalized hybrifold (local) semiflow; it is the \emph{hybrid suspension} semiflow constructed in Def.~\ref{def:hybrid-suspension}.
We choose to use the terminology ``suspension'' because the hybrid suspension strictly generalizes the classical suspension \cite[p.~797,~pp.~21--22]{smale1967differentiable,brin2002introduction} of a discrete-time dynamical system; indeed, if $\st = \gd$ our construction reduces to the classical one (as explained in Appendix~\ref{app:suspension}). 
We give a brief primer on the classical suspension in Appendix~\ref{app:suspension}.

In writing this paper we have become aware that versions of the hybrid suspension have previously appeared in the literature under different names, although (to the best of our knowledge) only for classes of hybrid systems which are not as general as \THS.
As mentioned in \S \ref{sec:related-work}, the hybrid suspension $\Sus_H$ of a \THS~ $H$ could be called a ``1-relaxed hybrid quotient space'' in the terminology of \cite{burden2015metrization} or a ``homotopy colimit'' in the terminology of \cite{ames2005homology}.  

We finally note that, in the terminology introduced in \S \ref{app:hybrifold}, the hybrid suspension $\Sus_H$ of a \THS~ $H$ coincides with the generalized hybrifold $M_{H'}$ (where $H'$ is the relaxed hybrid system) but \emph{not} with the hybrifold $M_H$.  
See Fig.~\ref{fig:hybrid-suspension-appendix}.

\section{Classical suspension of a discrete-time dynamical system}\label{app:suspension}
The purpose of this appendix is to explain, in a self-contained way, (i) the classical suspension of a discrete-time dynamical system and (ii) how the hybrid suspension (Def.~\ref{def:hybrid-suspension}) strictly generalizes the classical notion.

The classical suspension is often considered  in the context of a $C^{r\geq 1}$ diffeomorphism of a $C^r$ manifold \cite[p.~797,~pp.~343--345,~p.~111,~p.~173]{smale1967differentiable,palis1977topological,palis1982geometric,robinson1999dynamical}.
However, relevant for us is the more general context of a discrete-time (semi-)dynamical system defined by a continuous map of a topological space; we now describe the classical suspension in this context, following roughly \cite[pp.~21--22]{brin2002introduction}. 

Let $X$ be a topological space, $f\colon X\to X$ be a continuous map defining a discrete-time (semi-)dynamical system $(n,x)\mapsto f^{\circ n}(x)$, and $c\colon X\to (0,\infty)$ be a continuous function bounded away from zero.
Consider the quotient space 
$$X_c\coloneqq \{(x,t)\in X\times [0,\infty)\colon 0\leq t \leq c(x)\}/\sim,$$
where $\sim$ is the equivalence relation $(x,c(x)) \sim (f(x),0)$.
$X_c$ is called the \concept{suspension with ceiling (or roof) function $c$}.
Letting $[(x,s)]\in X_c$ denote the equivalence class of $(x,s)$, the \concept{suspension semiflow (with ceiling function $c$)} is the semiflow $\phi_c\colon [0,\infty)\times X_c \to X_c$ given by $\phi^t([x,s]) = [(f^{\circ n}(x), s')]$, where $n\in \N$ and $s' \geq 0$ satisfy
\begin{equation*}
\sum_{i=0}^{n-1}c(f^{\circ i}(x)) + s' = t + s, \qquad 0\leq s' \leq c(f^{\circ n}(x)).
\end{equation*} 

It is common to simply take the ceiling function $c$ to be $c(x)\equiv 1$ \cite[p.~797,~pp.~343--345,~p.~111,~p.~173]{smale1967differentiable,robinson1999dynamical,palis1977topological,palis1982geometric,robinson1999dynamical}, and in this case it is common to simply refer to $(X_1, \phi_1)$ as ``\emph{the}'' suspension of (the discrete-time semi-dynamical system defined by) $f$.\footnote{We note that $X_1$ coincides with what topologists call the \emph{mapping torus} of $f$ \cite[p.~53,~p.~151]{hatcher2001algebraic} (but, confusingly, \emph{not} with what topologists call the \emph{suspension of a topological space} \cite[p.~8]{hatcher2001algebraic}).}
Our \emph{hybrid suspension} defined in Def.~\ref{def:hybrid-suspension} strictly generalizes ``the'' suspension of $f$, and this can be seen as follows.
Define a \THS~ $H = (\st,\fs,\gd,\varphi,r)$ by setting $\st = \gd = X$, $\fs = \varnothing$, $r = f$, and (viewed set-theoretically) $\varphi = \varnothing$ (cf. Ex.~\ref{ex:special-case-discrete-time}). 
Then the hybrid suspension $(\Sus_H, \Phi_H)$ coincides precisely with $(X_1,\phi_1)$.

\section{Continuous hybrid suspension semiflow implies the trapping guard condition}\label{app:suspension-semiflow-continuous-implies-trapping-guard-condition}
Let $H = (\st,\fs,\gd,\varphi,r)$ be a \THS~ satisfying Assumptions \ref{assump:deterministic},  \ref{assump:inf-or-Zeno}, and \ref{assump:trapping-guard}.
In \S\ref{sec:hybrid-suspension} (Def.~\ref{def:relaxed} and \ref{def:hybrid-suspension}) we defined
\begin{equation}\label{eq:recall-sus-relaxed-defs}
\begin{gathered}
\st' \coloneqq \frac{\st\sqcup (\gd\times [0,1])}{z\sim(z,0)} \qquad \underbrace{\pi_0\colon \st\sqcup(\gd\times [0,1])\to \st'}_{\textnormal{quotient map}} \qquad \underbrace{\iota\colon \st\to \st'}_{\pi_0|_\st}\\
\Sus_H \coloneqq \frac{\st'}{\pi_0(z,1)\sim \pi_0(r(z))} \qquad \underbrace{\pi\colon \st' \to \Sus_H}_{\textnormal{quotient map}}
\end{gathered}
\end{equation}
and the suspension semiflow $\Phi_H\colon [0,\infty)\times \Sus_H \to \Sus_H$, and we showed that $\Phi_H$ is continuous (Prop.~\ref{prop:hybrid-suspension}).
It is immediate from the definitions that $\Phi_H$ satisfies the following two properties.
\begin{enumerate}[label=\arabic*.\hspace{0.1cm}, ref=\arabic*,leftmargin=1.3cm]
			\item\label{enum:Phi_H-cond-1} $\Phi_H^t(\pi\circ \pi_0(z,s)) = \pi\circ\pi_0(z,t+s)$ for all $z\in \gd$, $s\in [0,1]$, and $t\in [0,1-s]$.
	\item\label{enum:Phi_H-cond-2} For all $(t,x)\in \dom(\varphi)$, $\pi\circ\iota(\varphi^t(x)) = \Phi_H^t(\pi\circ\iota(x))$.
\end{enumerate}

While for convenience of exposition we only defined the quantities in \eqref{eq:recall-sus-relaxed-defs} under Assumptions \ref{assump:deterministic},  \ref{assump:inf-or-Zeno}, and \ref{assump:trapping-guard} (in particular, under Assumption \ref{assump:trapping-guard}), the definitions in \eqref{eq:recall-sus-relaxed-defs} make sense verbatim for \emph{any} \THS.
Thus, for an arbitrary \THS~ $H$, it makes sense to ask the following question: under what circumstances does there exist a well-defined ``suspension semiflow'' $\Phi$ on $\Sus_H$ for $H$ in the sense that $\Phi$ satisfies Conditions \ref{enum:Phi_H-cond-1} and \ref{enum:Phi_H-cond-2} (stated above for $\Phi_H$)?

In this appendix we prove a result (Prop.~\ref{prop:suspension-semiflow-continuous-implies-trapping-guard-condition}) which implies (Cor.~\ref{co:suspension-semiflow-continuous-implies-trapping-guard-condition}) that, if $H = (\st,\fs,\gd,\varphi,r)$ is a \THS~ satisfying Assumptions \ref{assump:deterministic} and \ref{assump:inf-or-Zeno} with Hausdorff $\st$, then there exists a continuous suspension semiflow $\Phi\colon [0,\infty)\times \Sus_H\to \Sus_H$ in the above sense \emph{if and only if} $H$ satisfies the trapping guard condition (Assumption~\ref{assump:trapping-guard}).

We state Prop.~\ref{prop:suspension-semiflow-continuous-implies-trapping-guard-condition} after first establishing the following preliminary result.

\begin{Lem}\label{lem:pi-iota-homeo}
	Let $H=(\st,\fs,\gd,\varphi,r)$ be a \THS.
	Define $\st'$, $\Sus_H$, $\pi_0$, $\iota$, and $\pi$ as in \eqref{eq:recall-sus-relaxed-defs}.
	Then 
	$$\pi\circ \pi_0|_{\gd\times [0,\frac{1}{2}]}\colon \gd\times \Big[0,\frac{1}{2}\Big]\to \Sus_H \qquad \textnormal{ and } \qquad  \pi\circ \iota\colon \st \to \Sus_H$$ are homeomorphisms onto their images.
\end{Lem}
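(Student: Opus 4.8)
The plan is to prove Lemma~\ref{lem:pi-iota-homeo} by showing that each of the two maps is a continuous injection and then invoking the fact that a continuous closed injection is a topological embedding (equivalently, a homeomorphism onto its image with the subspace topology). Continuity is free since each map is a composition of quotient maps (and the inclusion $\iota = \pi_0|_\st$, which is continuous because $\pi_0$ is). So the two substantive points are \emph{injectivity} and \emph{closedness} of each map.

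First I would handle injectivity. For $\pi\circ\iota\colon\st\to\Sus_H$: tracing through Def.~\ref{def:relaxed} and Def.~\ref{def:hybrid-suspension}, the relation $\asymp$ on $\st\sqcup(\gd\times[0,1])$ only identifies points of $\gd\subseteq\st$ with points of $\gd\times\{0\}$ (never two distinct points of $\st$), and the relation $\sim$ on $\st'$ identifies $\pi_0(z,1)$ with $\pi_0(r(z))=\iota(r(z))$ for $z\in\gd$ --- again, this never forces two distinct points of $\iota(\st)$ together, because $\pi_0(z,1)\notin\iota(\st)$ (it lies in the ``cylinder part'' $\pi_0(\gd\times(0,1])$, which is disjoint from $\iota(\st)$). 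Hence $\pi\circ\iota$ is injective. For $\pi\circ\pi_0|_{\gd\times[0,1/2]}$: the only identifications made by $\asymp$ on $\gd\times[0,1]$ are of points $(z,0)$ with points of $\st$, and $\sim$ only involves the slice $\gd\times\{1\}$, which is disjoint from $\gd\times[0,1/2]$; moreover $\pi_0|_{\gd\times[0,1)}$ is injective and $\pi_0(z,0)=\pi_0(z',0)$ forces $z=z'$. So the composite restricted to $\gd\times[0,1/2]$ is injective as well.

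Next I would establish closedness. The cleanest route is to use the two closedness lemmas already available: Lemma~\ref{lem:pi_0-closed} states that $\pi_0\colon\st\sqcup(\gd\times[0,1])\to\st'$ is closed, and Lemma~\ref{lem:closed-quotient-metrizable} gives that $\pi\colon\st'\to\Sus_H$ is a closed map (applied with the closed subset $\gd'\subseteq\st'$ and the map $r'\colon\gd'\to\st'$, which satisfies $r'(\gd')\cap\gd'=\varnothing$ --- note that for closedness of $\pi$ we only need $r'$ closed, which in turn reduces to $r$ being closed; if $r$ is not assumed closed in general, one instead observes directly that $\pi^{-1}(\pi(C)) = C\cup(r')^{-1}(C)\cup r'(C\cap\gd')$ is closed whenever $C$ is, using that $\gd'$ is closed and $r'$ is continuous with the image-term handled as in Lemma~\ref{lem:closed-quotient-metrizable}). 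Then $\iota = \pi_0|_\st$ is closed because for $C\subseteq\st$ closed, $\pi_0^{-1}(\iota(C)) = C\sqcup((C\cap\gd)\times\{0\})$ is closed in $\st\sqcup(\gd\times[0,1])$ (this computation already appears inside the proof of Prop.~\ref{prop:omega-limit-set-suspension}), so $\iota(C)$ is closed in $\st'$ by the quotient topology; hence $\pi\circ\iota$ is a composition of closed maps and is closed. Similarly $\pi_0|_{\gd\times[0,1/2]}$ is closed because $\gd\times[0,1/2]$ is closed in $\gd\times[0,1]$, which is closed in the disjoint union, and $\pi_0$ is closed (restriction of a closed map to a closed saturated-enough subset, or more simply: the image of a closed subset of $\gd\times[0,1]$ is closed in $\st'$ since $\pi_0$ is closed), so $\pi\circ\pi_0|_{\gd\times[0,1/2]}$ is again a composition of closed maps.

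Having shown each map is a continuous, closed injection, I conclude each is a homeomorphism onto its image (a continuous closed bijection onto its image is a homeomorphism onto its image). I expect the main obstacle to be bookkeeping the equivalence relations carefully enough to be certain no unwanted identifications occur --- in particular verifying that $\iota(\st)$ and $\gd\times[0,1/2]$ each meet the ``seam'' slices $\gd\times\{0\}$ and $\gd\times\{1\}$ in a controlled way, so that passing to both quotients $\pi_0$ and then $\pi$ does not collapse distinct points. This is routine once one unwinds Def.~\ref{def:relaxed} and Def.~\ref{def:hybrid-suspension}, but it requires care. A secondary point to watch is whether $r$ needs to be assumed closed for $\pi$ to be closed; since the present lemma is stated for an arbitrary \THS, I would give the direct computation of $\pi^{-1}(\pi(C))$ rather than citing Lemma~\ref{lem:closed-quotient-metrizable} verbatim, so that no extra hypothesis on $r$ is needed.
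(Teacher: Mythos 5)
Your overall strategy---continuous closed injection implies homeomorphism onto the image---is exactly the paper's, and your injectivity bookkeeping and the closedness of $\pi_0$ and $\iota$ are handled correctly. But one step fails as written: the claim that the direct computation of $\pi^{-1}(\pi(C))$ lets you conclude that $\pi\colon\st'\to\Sus_H$ is a closed map on all of $\st'$ ``so that no extra hypothesis on $r$ is needed.'' For an arbitrary closed $C\subseteq\st'$ the identity $\pi^{-1}(\pi(C)) = C\cup(r')^{-1}(C)\cup r'(C\cap\gd')$ still contains the image term $r'(C\cap\gd')$, and continuity of $r'$ does not make that term closed; that is precisely the point where Lemma~\ref{lem:closed-quotient-metrizable} invokes the hypothesis that $f$ is a \emph{closed} map, which here would force $r$ to be closed---not assumed for a general \THS. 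And $\pi$ genuinely need not be closed in this generality: take $C=\gd'$, so that $\pi^{-1}(\pi(\gd'))=\gd'\cup\iota(r(\gd))$, which is closed in $\st'$ only if $r(\gd)$ is closed in $\st$.

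The repair is small and is exactly the paper's move: you do not need $\pi$ to be closed globally, only the restriction $\pi|_A$ for $A\coloneqq\iota(\st)\cup\pi_0(\gd\times[0,\frac{1}{2}])$, since both composites in the lemma factor through $\pi|_A$. Any closed $C\subseteq A$ satisfies $C\cap\gd'=\varnothing$ (because $A$ is disjoint from the slice $\pi_0(\gd\times\{1\})$), so the problematic image term vanishes and $\pi^{-1}(\pi(C))=C\cup(r')^{-1}(C)$ is closed using only continuity of $r'$ and closedness of $\gd'$. You already observed the relevant disjointness from $\gd\times\{1\}$ in your injectivity discussion; you just need to import it into the closedness argument. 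With that change your proof coincides with the paper's.
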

\begin{proof}
	We first show that $\pi|_{\iota(\st)\cup \pi_0(\gd\times [0,\frac{1}{2}])}$ is a closed map.
	Define $\gd'\coloneqq \pi_0(\gd\times \{1\})$ and $r'\colon \gd'\to \st'$ via $r'(\pi_0(z,1))\coloneqq \pi_0(r(z))$, and let $C\subseteq \iota(\st)\cup \pi_0(\gd\times [0,\frac{1}{2}])$ be an arbitrary closed set.
	We compute
	\begin{equation}\label{eq:lem-pi-iota-homeo-1}
	\pi^{-1}(\pi(C)) = C \cup (r')^{-1}(C) \cup r'(\underbrace{C \cap \gd'}_{=\varnothing})
	\end{equation}
	since $r'(\gd')\cap \gd' = \varnothing$.
	Since $r'$ is continuous, the right side of \eqref{eq:lem-pi-iota-homeo-1} is the union of three closed sets (the third is empty since $\iota(\st)\cup \pi_0(\gd\times [0,\frac{1}{2}])$ is disjoint from $\gd'$).
	By the definition of the quotient topology, it follows that $\pi(C)$ is closed in $\Sus_H$, so $\pi|_{\iota(\st)\cup \pi_0(\gd\times [0,\frac{1}{2}])}$ is indeed a closed map.
	
	Since $\pi|_{\iota(\st)\cup \pi_0(\gd\times [0,\frac{1}{2}])}$ is a closed map, $$\pi\circ \pi_0|_{\gd\times [0,\frac{1}{2}]} = \pi|_{\iota(\st)\cup \pi_0(\gd\times [0,\frac{1}{2}])}\circ \pi_0|_{\gd\times [0,\frac{1}{2}]} \qquad \textnormal{ and } \qquad  \pi\circ \iota = \pi|_{\iota(\st)\cup \pi_0(\gd\times [0,\frac{1}{2}])} \circ \iota$$  are closed maps by composition, since $\pi_0$ and  $\iota$ are closed maps.
	That $\pi_0$ is closed follows by repeating the proof of Lemma~\ref{lem:pi_0-closed} verbatim, and $\iota$ is closed since $\pi_0^{-1}(\iota(D)) = D \sqcup ((D\cap \gd)\times \{0\})$ is closed in $\st\sqcup (\gd\times [0,1])$ for any closed set $D\subseteq \st$.
	
	It is immediate from the definitions that both maps in the statement of the lemma are continuous and injective.
	Since we have shown that they are also closed, it follows that they are homeomorphisms onto their images  \cite[Ex.~2.29]{lee2010introduction}. 	
\end{proof}

\begin{Prop}\label{prop:suspension-semiflow-continuous-implies-trapping-guard-condition}
	Let $H=(\st,\fs,\gd,\varphi,r)$ be a \THS~ satisfying Assumption~\ref{assump:deterministic} with Hausdorff $\st$ and with $\st = \fs\cup \gd$.  		
	Define $\st'$, $\Sus_H$, $\pi_0$, $\iota$, and $\pi$ as in \eqref{eq:recall-sus-relaxed-defs}, and suppose there exists a continuous semiflow $\Phi\colon [0,\infty)\times \Sus_H\to \Sus_H$ satisfying the following conditions.
	\begin{enumerate}[label=\ref*{prop:suspension-semiflow-continuous-implies-trapping-guard-condition}.\arabic*.\hspace{0.1cm}, ref=\ref*{prop:suspension-semiflow-continuous-implies-trapping-guard-condition}.\arabic*,leftmargin=1.3cm]
		\item\label{enum:sus-cylinder} $\Phi^t(\pi\circ\pi_0(z,s)) = \pi\circ\pi_0(z,t+s)$ for all $z\in \gd$, $s\in [0,1]$, and $t\in [0,1-s]$.
		\item\label{enum:sus-conjugacy} For all $(t,x)\in \dom(\varphi)$, $\pi\circ\iota(\varphi^t(x)) = \Phi^t(\pi\circ\iota(x))$.
	\end{enumerate}
	Then $H$ satisfies the trapping guard condition (Assumption \ref{assump:trapping-guard}).
\end{Prop}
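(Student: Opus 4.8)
The plan is to show that the existence of a continuous semiflow $\Phi$ on $\Sus_H$ satisfying Conditions~\ref{enum:sus-cylinder} and \ref{enum:sus-conjugacy} forces the trapping guard condition, by using $\Phi$ to manufacture the required flow-induced retraction $\rho$ on a neighborhood of $\gd$. First I would identify the natural candidates for the neighborhood $U$ and the retraction $\rho$. By Lemma~\ref{lem:pi-iota-homeo}, both $\pi\circ\iota\colon \st\to \Sus_H$ and $\pi\circ\pi_0|_{\gd\times [0,\frac12]}$ are homeomorphisms onto their images, and these two images overlap exactly along $\pi\circ\iota(\gd) = \pi\circ\pi_0(\gd\times\{0\})$. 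The image $C\coloneqq \pi\circ\pi_0(\gd\times[0,\frac12])$ is a ``collar'' sitting in $\Sus_H$ just ``above'' $\gd$ (in forward time), and Condition~\ref{enum:sus-cylinder} says $\Phi$ moves a point $\pi\circ\pi_0(z,s)$ straight up the cylinder coordinate at unit speed until it reaches the top. The idea is: $U$ should be the $(\pi\circ\iota)$-preimage of a neighborhood of $\pi\circ\iota(\gd)$ in $\Sus_H$ consisting of points that flow into $C$ within a uniformly bounded time and then hit the ``floor'' $\pi\circ\iota(\gd)$; the retraction $\rho$ should send such a point $x$ to the state in $\gd$ at which its $\Phi$-trajectory (pushed down from the collar) reaches $\pi\circ\iota(\gd)$, and the maximum flow time $\mu(x)$ should be exactly that hitting time.

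Concretely, I would proceed as follows. Define, for $x\in\fs$ near $\gd$, the quantity $\tau(x)\coloneqq\inf\{t\ge 0 : \Phi^t(\pi\circ\iota(x))\in \pi\circ\iota(\gd)\}$ — the first time the suspension trajectory returns to the floor. Using continuity of $\Phi$, the fact that $\pi\circ\iota(\gd)$ is closed (by Lemma~\ref{lem:pi-iota-homeo}, or directly since $\iota,\pi$ are closed maps as established in the proof of that lemma), and Condition~\ref{enum:sus-cylinder} together with the collar homeomorphism, one shows that on a suitable open neighborhood $V$ of $\pi\circ\iota(\gd)$ in $\Sus_H$ the function $\tau$ is finite and continuous: the key point is that a trajectory starting near $\pi\circ\iota(\gd)$ must, by Condition~\ref{enum:sus-conjugacy} and the structure of $\varphi$ on $\fs$, reach the collar $C$ (since $x = \fs\cup\gd$ and there is an infinite or Zeno execution, i.e.\ Assumption~\ref{assump:inf-or-Zeno}, the $\varphi$-trajectory through $x$ must accumulate on $\gd$), and once in $C\setminus\pi\circ\iota(\gd)$ it descends to the floor in a time that depends continuously on the entry point via the collar coordinate. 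Set $U\coloneqq(\pi\circ\iota)^{-1}(V)$, $\mu|_U \coloneqq \tau\circ(\pi\circ\iota)|_U$, and $\rho(x)\coloneqq(\pi\circ\iota)^{-1}\bigl(\Phi^{\tau(\pi\circ\iota(x))}(\pi\circ\iota(x))\bigr)$, which lands in $\gd$ by definition of $\tau$ and is continuous by composition. One then checks $\rho|_\gd=\id_\gd$ (immediate, since $\tau\equiv 0$ on $\gd$) and that $\widehat{\varphi}$ defined by Equation~\eqref{eq:semiflow-local-extension} is the unique continuous extension of $\varphi|_{\dom(\varphi)\cap([0,\infty)\times U)}$ to $\cl(\dom(\varphi))\cap([0,\infty)\times U)$ — uniqueness being automatic since $\st$ is Hausdorff, and existence/continuity following because $\widehat\varphi$ is exactly $(\pi\circ\iota)^{-1}\circ\Phi\circ(\id\times\pi\circ\iota)$ read off on the relevant closed domain, using Condition~\ref{enum:sus-conjugacy} on the interior and the collar description at the boundary points $t=\mu(x)$.

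I expect the main obstacle to be showing that $\mu|_U$ (equivalently $\tau$) is \emph{continuous}, and in particular \emph{finite}, on a genuine neighborhood of $\gd$ — this is the crux, since the trapping guard condition is precisely a statement ruling out grazing and repulsion, and the continuity of $\mu$ near $\gd$ is exactly the feature that fails in the counterexamples (cf.\ Ex.~\ref{ex:spring}, where $\mu$ is discontinuous at the origin). The delicate point is that Conditions~\ref{enum:sus-cylinder} and \ref{enum:sus-conjugacy} only directly constrain $\Phi$ on the collar and on $\pi\circ\iota(\fs)$, so one must argue that the open set $V\subseteq\Sus_H$ on which trajectories reach the collar within bounded time and descend continuously can be taken to contain all of $\pi\circ\iota(\gd)$; here I would exploit that $\Phi$ is a semiflow (so $\Phi^{1-s}$ of a collar point at height $s$ is a floor point, giving an open ``tube'' around each floor trajectory), combined with continuity of $\Phi$ at points of the closed set $\pi\circ\iota(\gd)$ and Assumption~\ref{assump:inf-or-Zeno} to rule out trajectories that linger in $\fs$ forever without returning. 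Lower semicontinuity of $\tau$ away from the floor and the identification $\tau^{-1}(0)=\gd$ would be handled as in the proof of Prop.~\ref{prop:sub-level-set}. Once $\mu|_U$ is continuous, the rest is bookkeeping.
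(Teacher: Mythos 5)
Your overall strategy --- use $\Phi$ and the collar $\pi\circ\pi_0(\gd\times[0,\tfrac12])$ to manufacture the retraction $\rho$, the flow time $\mu|_U$, and the extension $\widehat{\varphi}=(\pi\circ\iota)^{-1}\circ\Phi\circ(\id\times(\pi\circ\iota))$ --- is the same as the paper's, and you correctly identify the crux: finiteness and continuity of the return-to-floor time on a genuine neighborhood of $\gd$. But your resolution of that crux has a real gap. You define $\tau(x)$ as the \emph{infimum} of times at which $\Phi^t(\pi\circ\iota(x))$ meets the closed set $\pi\circ\iota(\gd)$ and propose to prove continuity by semicontinuity arguments as in Prop.~\ref{prop:sub-level-set}. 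Lower semicontinuity is fine (it only uses that $\pi\circ\iota(\gd)$ is closed), but upper semicontinuity of a first hitting time of a \emph{closed} set is exactly what fails under grazing: a priori a trajectory starting at $\pi\circ\iota(y)$ with $y$ arbitrarily close to $\gd$ could remain in $\pi\circ\iota(\fs)$ forever and never meet $\pi\circ\iota(\gd)$, so $\tau$ could jump to $+\infty$. You try to exclude this with Assumption~\ref{assump:inf-or-Zeno}, but that assumption is not among the hypotheses of the Proposition (only $\st=\fs\cup\gd$ is assumed; Assumption~\ref{assump:inf-or-Zeno} enters only in Cor.~\ref{co:suspension-semiflow-continuous-implies-trapping-guard-condition}), and it would not give what you need anyway: an infinite execution may have no resets at all, so $\mu(y)=+\infty$ is perfectly consistent with it.

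The paper closes this gap with a device you almost have in hand but do not deploy. Since $\pi\circ\pi_0(\gd\times(0,1))$ is open in $\Sus_H$ and Condition~\ref{enum:sus-cylinder} places $\Phi^{1/2}(\pi\circ\iota(z))=\pi\circ\pi_0(z,\tfrac12)$ inside it for every $z\in\gd$, the set $U\coloneqq(\pi\circ\iota)^{-1}\bigl(\Phi^{-1/2}(\pi\circ\pi_0(\gd\times[0,\tfrac12]))\bigr)$ is a neighborhood of $\gd$, and on it the return time can be \emph{read off} as $\nu\coloneqq\tfrac12-h\circ\Phi^{1/2}\circ(\pi\circ\iota)$, where $h$ is the collar-height coordinate (continuous by Lemma~\ref{lem:pi-iota-homeo}). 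Finiteness and continuity of $\nu$ are then automatic, and the remaining work is to prove $\nu=\mu|_U$; that step uses Condition~\ref{enum:sus-cylinder} (a trajectory leaving the floor cannot return to it before time $1$) together with the preliminary fact that $\lim_{t\to\mu(x)^-}\varphi^t(x)$ exists and lies in $\gd$ --- this is where the hypothesis $\st=\fs\cup\gd$ is actually used, via the standard finite-escape-time argument, and your sketch skips it. Without some such device replacing the infimum definition, the continuity of your $\tau$ is not established and the proof as written does not go through.
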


\begin{Co}\label{co:suspension-semiflow-continuous-implies-trapping-guard-condition}
Let $H=(\st,\fs,\gd,\varphi,r)$ be a \THS~ satisfying Assumptions \ref{assump:deterministic} and \ref{assump:inf-or-Zeno} with Hausdorff $\st$.
Define $\st'$, $\Sus_H$, $\pi_0$, $\iota$, and $\pi$ as in \eqref{eq:recall-sus-relaxed-defs}.
Then there exists a continuous ``suspension semiflow'' $\Phi\colon [0,\infty)\times \Sus_H\to \Sus_H$ for $H$---in the sense that $\Phi$ satisfies conditions \ref{enum:sus-cylinder} and \ref{enum:sus-conjugacy} of Prop.~\ref{prop:suspension-semiflow-continuous-implies-trapping-guard-condition}---if and only if $H$ satisfies the trapping guard condition (Assumption \ref{assump:trapping-guard}).	
\end{Co}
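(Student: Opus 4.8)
The plan is to obtain Corollary~\ref{co:suspension-semiflow-continuous-implies-trapping-guard-condition} by assembling the two preceding propositions, treating the two directions of the biconditional separately. The one preliminary observation common to both directions is that Assumption~\ref{assump:inf-or-Zeno} forces $\st=\fs\cup\gd$: every point of $\st$ admits an infinite or Zeno execution, hence a nontrivial execution, so Remark~\ref{rem:FcupZ} applies. For the direction ``trapping guard $\Rightarrow$ continuous suspension semiflow exists'', suppose $H$ satisfies Assumption~\ref{assump:trapping-guard}; then $H$ satisfies all of Assumptions~\ref{assump:deterministic}, \ref{assump:inf-or-Zeno}, and \ref{assump:trapping-guard}, so the relaxed system $H'$ (Def.~\ref{def:relaxed}) and the hybrid suspension $(\Sus_H,\Phi_H)$ (Def.~\ref{def:hybrid-suspension}) are defined, and Proposition~\ref{prop:hybrid-suspension} gives that $\Phi_H\colon[0,\infty)\times\Sus_H\to\Sus_H$ is a continuous semiflow. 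It then remains only to note that $\Phi_H$ satisfies Conditions~\ref{enum:sus-cylinder} and \ref{enum:sus-conjugacy} of Proposition~\ref{prop:suspension-semiflow-continuous-implies-trapping-guard-condition}; these coincide verbatim with Conditions~\ref{enum:Phi_H-cond-1} and \ref{enum:Phi_H-cond-2} recorded at the start of this appendix, which are immediate from the defining formulas for $\varphi'$ in Def.~\ref{def:relaxed} and for $\Phi_H$ in Def.~\ref{def:hybrid-suspension}.

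For the converse, suppose there is a continuous semiflow $\Phi\colon[0,\infty)\times\Sus_H\to\Sus_H$ satisfying Conditions~\ref{enum:sus-cylinder} and \ref{enum:sus-conjugacy}. We invoke Proposition~\ref{prop:suspension-semiflow-continuous-implies-trapping-guard-condition}, whose hypotheses are that $H$ is a \THS~ satisfying Assumption~\ref{assump:deterministic}, that $\st$ is Hausdorff, and that $\st=\fs\cup\gd$. The first two are part of the standing hypotheses of the corollary, and the third holds by the preliminary observation above; hence the proposition applies and yields that $H$ satisfies the trapping guard condition. Combining the two directions proves the corollary.

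The corollary is thus a bookkeeping argument once Propositions~\ref{prop:hybrid-suspension} and \ref{prop:suspension-semiflow-continuous-implies-trapping-guard-condition} are granted, the only point requiring any care being the reduction of Assumption~\ref{assump:inf-or-Zeno} to ``$\st=\fs\cup\gd$'' via Remark~\ref{rem:FcupZ}. The real obstacle lies one level down, in Proposition~\ref{prop:suspension-semiflow-continuous-implies-trapping-guard-condition}: there the strategy is to use the homeomorphisms of Lemma~\ref{lem:pi-iota-homeo} to transport $\Phi$ back through $\pi\circ\iota$ and $\pi\circ\pi_0|_{\gd\times[0,\frac{1}{2}]}$, and to extract from Conditions~\ref{enum:sus-cylinder} and \ref{enum:sus-conjugacy} a neighborhood of $\gd$, a continuous maximum flow time on it, and a continuous flow-induced retraction onto $\gd$ --- exactly the data required by Def.~\ref{def:trapping-guards}. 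The delicate step in that argument is establishing continuity (and, with Hausdorffness of $\st$, uniqueness) of the resulting extension of $\varphi$ precisely at the points of $\gd$ where integral curves of $\varphi$ arrive.
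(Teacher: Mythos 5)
Your proposal is correct and follows essentially the same route as the paper: the forward direction is Prop.~\ref{prop:hybrid-suspension} plus the observation (already recorded in the appendix preamble) that $\Phi_H$ satisfies Conditions~\ref{enum:sus-cylinder} and \ref{enum:sus-conjugacy}, and the converse is exactly the reduction of Assumption~\ref{assump:inf-or-Zeno} to $\st=\fs\cup\gd$ via Remark~\ref{rem:FcupZ} followed by an appeal to Prop.~\ref{prop:suspension-semiflow-continuous-implies-trapping-guard-condition}. No gaps.
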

\begin{proof}[Proof of Cor.~\ref{co:suspension-semiflow-continuous-implies-trapping-guard-condition}]
If $H$ satisfies the trapping guard condition, then by Prop.~\ref{prop:hybrid-suspension} the map $\Phi_H$ of Def.~\ref{def:hybrid-suspension} is such a continuous semiflow.

Conversely, assume that a continuous semiflow $\Phi$ satisfying Conditions \ref{enum:sus-cylinder} and \ref{enum:sus-conjugacy} exists.
The assumption that all maximal executions are infinite or Zeno (Assumption \ref{assump:inf-or-Zeno}) implies that $\st = \fs\cup \gd$ (by Remark~\ref{rem:FcupZ}), so the hypotheses of  Prop.~\ref{prop:suspension-semiflow-continuous-implies-trapping-guard-condition} are satisfied.	
By the conclusion of Prop.~\ref{prop:suspension-semiflow-continuous-implies-trapping-guard-condition}, $H$ satisfies the trapping guard condition.
\end{proof}

\begin{proof}[Proof of Prop.~\ref{prop:suspension-semiflow-continuous-implies-trapping-guard-condition}]
		For purposes of readability, we define $f\coloneqq \pi\circ\iota\colon \st\to \pi\circ \iota(\st)$, $B\coloneqq \gd\times [0,\frac{1}{2}]$,  and $g\coloneqq (\pi\circ \pi_0)|_{B}\colon B\to \pi\circ \pi_0(B)$.
		By Lemma~\ref{lem:pi-iota-homeo}, $f$ and $g$ are homeomorphisms.
		
		Letting $\mu\colon \st\to [0,+\infty]$ be the maximum flow time defined in \eqref{eq:max-flow-time-all}, we first show that, for any $x\in \fs \cap \mu^{-1}([0,\infty))$,
		\begin{equation}\label{eq:prop-app-lim}
		\ell(x) \coloneqq \lim_{t\to \mu(x)^-}\varphi^t(x) \in \gd.
		\end{equation}
		That the limit $\ell(x)$ exists follows from continuity of $f$, $f^{-1}$, and $\Phi$ since Condition \ref{enum:sus-conjugacy} implies that $\varphi^t(x) = f^{-1}(\Phi^t(f(x)))$ for all $t\in \{t\mid (t,x)\in \dom(\varphi)\}$, and the properties of a local semiflow imply that $\{t\mid (t,x)\in \dom(\varphi)\} = [0,\mu(x))$ for any $x\in \fs$ \cite[Sec.~1.3]{hirsch2006monotone}.
		Furthermore, it cannot be the case that $\ell(x)\in \fs$, because the trajectory image $\varphi^{[0,\mu(x))}(x)$ would then have compact closure in $\fs$, and this in turn would imply that $\mu(x)$ is infinite \cite[Sec.~1.3]{hirsch2006monotone}, a contradiction. Since we have also assumed that $\st = \fs \cup \gd$, it follows that $\ell(x) \in \gd$.
		
		Next, define $\widetilde{U}\coloneqq \Phi^{-\frac{1}{2}}(g(B))$, $U\coloneqq f^{-1}(\widetilde{U})$, and the continuous maps $h\colon g(B)\to [0,\frac{1}{2}]$ and $\nu\colon U\to [0,\frac{1}{2}]$ via $h(g(z,t))\coloneqq t$ and $\nu \coloneqq \frac{1}{2} - h\circ \Phi^{\frac{1}{2}}\circ f|_U$.
		By the definition of $\nu$ and Condition~\ref{enum:sus-cylinder} it follows that $\nu^{-1}(0)=\gd$ and $\Phi^{\nu(x)}(f(x))\in f(\gd)$ for all $x\in U$.
		We will now show that $\mu|_U = \nu$.
		Since $\varphi$ is $\fs$-valued but $\Phi^{\nu(x)}(f(x)) \in f(\gd)$, it follows from Condition \ref{enum:sus-conjugacy} and the fact that $\fs\cap \gd = \varnothing$ (since $H$ is deterministic by Assumption \ref{assump:deterministic}) that $(\nu(x), x) \not \in \dom(\varphi)$ for any $x\in U$.
		Since $\{t\mid (t,x)\in \dom(\varphi)\} = [0,\mu(x))$ for any $x\in \fs$ \cite[Sec.~1.3]{hirsch2006monotone}, it follows that $\mu|_{U\cap \fs} \leq \nu|_{U\cap \fs}$, and therefore $\mu|_U \leq \nu$ since $\mu|_\gd = \nu|_\gd = 0$.
		We now show the reverse inequality. 
		It follows from \ref{enum:sus-cylinder} that, if $q\in f(\gd)$, then $\Phi^t(q)\not \in f(\gd)$ for all $t\in (0,1)$.
		Since $\nu\leq \frac{1}{2}$ and $\Phi^{\nu(x)}(f(x))\in \gd$ for all $x\in U$, it therefore follows that $\Phi^t(f(x)) \not \in f(\gd)$ for all $t\in [0,\nu(x))$. 
		Therefore, \ref{enum:sus-conjugacy} implies that $\lim_{s\to t^-}\varphi^s(x) = f^{-1}(\Phi^{t}(f(x)))\not \in \gd$ for any $x\in U$ and $t\in [0,\nu(x))$.
		From this and \eqref{eq:prop-app-lim} it follows that $\mu|_U \geq \nu$.
		Since we have already shown that $\mu|_U \leq \nu$, this establishes that $\mu|_U = \nu$. 
		
		Since $\widetilde{U} \cap f(\st) = \Phi^{-\frac{1}{2}}(g(B))\cap f(\st) = \Phi^{-\frac{1}{2}}(g(\gd\times [0,1])) \cap f(\st)$ is a neighborhood of $f(\gd)$ in $f(\st)$, $U= f^{-1}(\widetilde{U})$ is a neighborhood of $\gd$ in $\st$.
		We now define $\widehat{\varphi}\colon \cl(\dom(\varphi))\cap ([0,\infty) \times U)\to \st$ and $\rho\colon U\to \gd$ via
\begin{equation}\label{eq:prop-app-hat-varphi-rho-def}
\begin{split}
\widehat{\varphi}^t(x)\coloneqq f^{-1}\circ \Phi^t\circ f(x),\qquad \qquad
\rho(x)\coloneqq \widehat{\varphi}^{\nu(x)}(x)
\end{split}.
\end{equation}
Condition~\ref{enum:sus-conjugacy} implies that $\widehat{\varphi}$ is a continuous extension of $\varphi|_{\dom(\varphi)\cap ([0,\infty) \times U)}$ which satisfies Equation~\eqref{eq:semiflow-local-extension} of Def.~\ref{def:trapping-guards} since $\nu = \mu|_U$, and this extension is automatically unique since $\st$ is Hausdorff.
The map $\rho$ is a continuous retraction by \eqref{eq:prop-app-hat-varphi-rho-def} and the fact that $\Phi^{\nu(x)}(f(x)) \in f(\gd)$ for all $x\in U$ (as noted in the previous paragraph).
Since $\mu|_U = \nu$ is continuous, it follows that all conditions of Def.~\ref{def:trapping-guards} are satisfied.
Hence $H$ satisfies the trapping guard condition. 	    
\end{proof}

\section{Proofs of Lemmas \ref{lem:extend}, \ref{lem:niceify}, \ref{lem:relaxed-conley-relation}, and \ref{lem:colonius-result-generalized}}\label{app:technical}
In this appendix we prove Lemmas \ref{lem:extend}, \ref{lem:niceify}, \ref{lem:relaxed-conley-relation}, and \ref{lem:colonius-result-generalized}; we also restate these lemmas for convenience.

\Extend*
\begin{proof}
	We first show that $\mu$ is continuous.
	Letting $U\supseteq \gd$ be the domain of a flow-induced retraction, Assumption \ref{assump:trapping-guard} and the definition of the trapping guard condition (Def. \ref{def:trapping-guards}) imply that $\mu|_U$ is continuous.
	Since there is an infinite or Zeno execution starting at every $x\in \st$ (by Assumption \ref{assump:inf-or-Zeno}), $\mu^{-1}([0,\infty)) =  U\cup \bigcup_{t\geq 0}(\varphi^t)^{-1}(U)$ is a union of open subsets of $\st$.\footnote{This follows since the domain of $\varphi^t$ is open in $\fs$ (since $\dom(\varphi)$ is open in $[0,\infty) \times \fs$, by the definition of local semiflow), and $\fs$ is open in $\st$ by Def.~\ref{def:HybridSystem}.}
	Since the restrictions $\mu|_U$ and $\mu|_{(\varphi^t)^{-1}(U)} = t + \mu|_U\circ \varphi^t|_{(\varphi^t)^{-1}(U)}$ are all continuous, $\mu$ is continuous on $\mu^{-1}([0,\infty))$.
	Since $\dom(\varphi)$ is open in\footnote{This follows since $\dom(\varphi)$ is open in $[0,\infty) \times \fs$, and $[0,\infty) \times \fs$ is open in $[0,\infty) \times \st$ (since $\fs$ is open in $\st$, by Def.~\ref{def:HybridSystem}).} $[0,\infty)\times \st$ it follows that, for every $x\in \mu^{-1}(+\infty)$ and $T > 0$, there exists a neighborhood $V\ni x$ with $\mu(V)\subseteq [T,+\infty]$.
	Hence $\mu$ is also continuous at every point of $\mu^{-1}(+\infty)$, so $\mu\colon \st\to [0,+\infty]$ is continuous. 
	
	We now show that $\cl(\dom(\varphi))$ is given by \eqref{eq:lem-ext-closure-expression}.
	Clearly $\dom(\varphi)$ is contained in the sets on both sides of \eqref{eq:lem-ext-closure-expression}.
	If $(t,x)\not \in \dom(\varphi)$ belongs to the set on the right of \eqref{eq:lem-ext-closure-expression}, then $t = \mu(x)$ since $\mu|_{\st\setminus \fs}\equiv 0$ and the properties of a local semiflow imply that $\{t\mid (t,x)\in \dom(\varphi)\} = [0,\mu(x))$ for all $x\in \fs$ \cite[Sec.~1.3]{hirsch2006monotone}.
	If $\mu(x) = 0$, then $(t,x)\in \{0\}\times \cl(\fs) \subseteq \cl(\dom(\varphi))$ since $\{0\}\times \fs \subseteq \dom(\varphi)$.
	If $\mu(x) > 0$, then $x\in \fs$ and $(\mu(x),x) \in \cl([0,\mu(x))) \times \{x\} \subseteq \cl(\dom(\varphi))$ since $[0,\mu(x))\times \{x\} \subseteq \dom(\varphi)$.
	Hence the set on the right of \eqref{eq:lem-ext-closure-expression} is contained in the set on the left.
	On the other hand, if $(t,x)$ does not belong to the set on the right of \eqref{eq:lem-ext-closure-expression}, then either (i) $t > \mu(x)$ or (ii) $x\not \in \cl(\fs)$.
	In case (i), continuity of $\mu$ implies that there are neighborhoods $V\ni x$ and $J \ni t$ such that $s > \mu(y)$ for all $(s,y)\in J\times V$, so $(J\times V)\cap \dom(\varphi) = \varnothing$, and therefore $(t,x)\not \in \cl(\dom(\varphi))$.
	In case (ii), $[0,\infty)\times (\st\setminus \cl(\fs))$ is a neighborhood of $(t,x)$ disjoint from $\dom(\varphi)$, so again $(t,x)\not \in \cl(\dom(\varphi))$.
	Hence the set on the left of \eqref{eq:lem-ext-closure-expression} is also contained in the set on the right.

	From Remark~\ref{rem:FcupZ} we have $\st = \fs \cup \gd$, and this implies that $\mu^{-1}(0) = \gd$.
	Let $\widehat{\varphi}$ be the unique continuous extension of $\varphi|_{\dom(\varphi)\cap ([0,\infty) \times U)}$ to $\cl(\dom(\varphi))\cap ([0,\infty) \times U)$ ensured by Def.~\ref{def:trapping-guards}.	
	We now define $\widetilde{\varphi}\colon \cl(\dom(\varphi))\to \st$ via
	\begin{equation}\label{eq:tilde-varphi-def}
	\widetilde{\varphi}^t(x) = 
	\begin{cases} 
	\varphi^t(x), & (t,x)\in \dom(\varphi),  \\ 
	\widehat{\varphi}^{s}(\varphi^{t-s}(x)), & (t-s,x)\in \varphi^{-1}(U)\\
	\widehat{\varphi}^{t}(x), & x\in U, \, (t,x)\in \cl(\dom(\varphi))
	\end{cases},
	\end{equation}
	where $s \in [0, t]$ ranges over all admissible values.
	It is clear that $\widetilde{\varphi}$ is well-defined by the definition of $\widehat{\varphi}$ and the fact that $\varphi$ satisfies the properties of a local semiflow.
	Since $\widetilde{\varphi}$ is defined by a family of continuous functions defined on open subsets of $\cl(\dom(\varphi))$, it follows that $\widetilde{\varphi}$ is continuous, so $\widetilde{\varphi}$ is indeed a continuous extension of $\varphi$ to $\cl(\dom(\varphi))$.
	Uniqueness of $\widetilde{\varphi}$ follows from uniqueness of $\widehat{\varphi}$ and the local semiflow properties of $\varphi$.
	
	If $x\in \cl(\fs) \cap \gd$, then $\widetilde{\varphi}^{\mu(x)}(x) = x \in \gd$ since $\mu^{-1}(0) = \gd$.
	If instead $x\in \fs\cap \mu^{-1}([0,\infty))$, then $y \coloneqq \varphi^{\mu(x)-s}(x) \in U$ for some $s \in [0,\mu(x))$, so \eqref{eq:tilde-varphi-def} and Def.~\ref{def:trapping-guards} imply that  $\widetilde{\varphi}^{\mu(x)}(x) =  \widehat{\varphi}^{s}(\varphi^{\mu(x)-s}(x)) = \widehat{\varphi}^{\mu(y)}(y) \in \gd$  since $s = \mu(y)$.
	
	It remains only to verify the claimed properties (i--iii). 
	(i) is immediate from \eqref{eq:tilde-varphi-def}, the definition of $\widehat{\varphi}$, and the fact that $\varphi^0 = \id_\fs$.
	To prove (ii) first notice that, since $\widetilde{\varphi}|_{\dom(\varphi)} = \varphi$, the analogous property satisfied by $\dom(\varphi)$ is equivalent to 
	\begin{equation}\label{eq:mu-expr-1}
	\mu(x) = s + \mu(\widetilde{\varphi}^s(x))
	\end{equation}
	for all $x\in \fs$ and $s\in [0,\mu(x))$.
	Taking the limit as $s\to \mu(x)$ and using continuity of $\mu$ implies that \eqref{eq:mu-expr-1} also holds for $x\in \fs$ and $s \in [0,\mu(x)]$. 
	On the other hand, \eqref{eq:mu-expr-1} trivially holds for all $x\in \cl(\fs) \setminus \fs$ and $s\in [0,\mu(x)]$ since then $\mu(x) = 0$ and $\widetilde{\varphi}^0(x) = x$.
	Hence \eqref{eq:mu-expr-1} holds for all $x\in \cl(\fs)$ and $s\in [0,\mu(x)]$, and this is equivalent to the claimed property (ii).
	Finally, the property (iii) is trivially verified for $x\in \cl(\fs)\setminus \fs$, and is  verified for $x\in \fs$ by taking sequences $t_n\nearrow t, s_n \nearrow s$, using continuity of $\widetilde{\varphi}$, and using the analogous property satisfied by $\varphi$. 
\end{proof}

\Nicefy*
\begin{proof}
	It follows immediately from the definitions that $\widehat{\conley_H} \subseteq \conley_H$. 
	
	For the reverse inclusion, suppose that $(x,y) \in \conley_H$.
	Fix $\epsilon, T > 0$, and let $U$ be a retraction domain (by Assumption~\ref{assump:trapping-guard} and Def. \ref{def:trapping-guards}) for $\gd$ with flow-induced retraction $\rho : U \to \gd$.
	Shrinking $U$ if necessary, by Assumption~\ref{assump:compact} we may assume that $U$ is compact and that $\mu|_U$ is strictly bounded above by $T$, where the maximum flow time $\mu\colon \st\to [0,+\infty]$ is defined in \eqref{eq:max-flow-time-all}.
	By the uniform continuity of $r \circ \rho$, there exists $0 < \delta < \epsilon$ such  that $\dist(r\circ \rho(p), r\circ \rho(p')) < \epsilon/2$ whenever $\dist(p,p') < \delta$.  If $\st \setminus U$ is nonempty, we may further assume that $\delta < \dist(\gd, \st \setminus U)$.
	By Assumptions \ref{assump:deterministic}, \ref{assump:inf-or-Zeno}, and \ref{assump:trapping-guard}, Lemma \ref{lem:extend} yields a continuous extension $\widetilde{\varphi}$ of $\varphi$ defined on the closure $\cl(\dom(\varphi))$ of $\dom(\varphi)$ in $[0, \infty) \times \st$. By compactness of $\st$ (Assumption~\ref{assump:compact}), it follows that the restriction of $\widetilde{\varphi}$ to $\cl(\dom(\varphi)) \cap \left([0,2T]\times \st\right)$ is uniformly continuous.  
	Pick $0< \beta < \epsilon/2$  such that $\dist(\widetilde{\varphi}^{t}(p), \widetilde{\varphi}^{t'}(p')) < \delta$ whenever $\dist(p,p') < \beta$,  $|t-t'| < \beta$, and $t, t' \in [0, 2T]$.    Let $\chi = (N, \tau, \eta, \gamma) \in \conley_H^{\beta,2T}(x,y)$ be a $(\beta, 2T)$-chain from $x$ to $y$.  Without loss of generality, we may assume that $\tau_{N+1} - \tau_N \le 2T$ by adding a trivial continuous-time jump at $\max(\tau_N + 2T, \tau_{N+1} - 2T)$ if necessary.
	
	We would like to modify $\chi$ to get a nice $(\epsilon,T)$-chain.   For each $k$ such that $0 < \eta_k < N $ and $\tau_{\eta_k} - \tau_{(\eta_k - 1)} < T$, we will remove the continuous-time jump at $\tau_{\eta_k}$, continue on the execution prior to that jump, and return to a later point on the image of $\chi$ via a new jump.  Let $u = \gamma_{(\eta_k - 1)}(\tau_{\eta_k})$ and $v = \gamma_{\eta_{k}}(\tau_{\eta_k})$. Let $t = \tau_{(\eta_k + 1)} - \tau_{\eta_k}$. One of two cases occurs: either (i)  $t < 2T$ and $\tau_{(\eta_{k} + 1)}$ is a reset jump, or (ii) $t \ge 2T$.   In the former case, we have two subcases based on whether $\widetilde{\varphi}^t(u)$ is either (a) defined or (b) undefined.  
	
	\textbf{Case (i)(a):}  By the uniform continuity of the restriction of $\widetilde{\varphi}$ discussed above, we have $\dist(\widetilde{\varphi}^t(u), \widetilde{\varphi}^t(v)) < \delta$ since $\dist(u,v) < \beta$. 
	Since $\tau_{(\eta_k + 1)}$ is a reset jump, it follows that $\widetilde{\varphi}^t(v) \in \gd$.
	Since $\widetilde{\varphi}^t(u)$ is defined, it follows that $\widetilde{\varphi}^t(u) \in U$ since $\delta < \dist(\gd,\st\setminus U)$.
	Thus, $\dist(r\circ \rho(\widetilde{\varphi}^t(u)), r(\widetilde{\varphi}^t(v))) < \epsilon/2$.  By the definition of $\chi$, we have $\dist(r(\widetilde{\varphi}^t(v)), \gamma_{(\eta_k + 1)}(\tau_{\eta_k +1})) < \beta < \epsilon/2$.   Thus, by the triangle inequality we have
	$$\dist(r\circ \rho(\widetilde{\varphi}^t(u)),\gamma_{(\eta_k + 1)}(\tau_{\eta_k +1})) < \epsilon,$$
	so we can replace the jump at $\tau_{\eta_k}$ by a modified jump at $\tau_{(\eta_k+1)}$ by extending the domain of $\gamma_{(\eta_{k}-1)}$ to $[\tau_{(\eta_{k}-1)},\tau_{(\eta_{k}+1)}]$, i.e., by replacing $\gamma_{(\eta_{k}-1)}$ with $\left( [\tau_{(\eta_{k}-1)},\tau_{(\eta_{k}+1)}]\ni s \mapsto \widetilde{\varphi}^{s-\tau_{(\eta_{k}-1)}}(\gamma_{(\eta_{k}-1)}(\tau_{(\eta_{k}-1)}))\right)$.
	We obtain a modified chain after deleting $\tau_{\eta_k}$ from the sequence $(\tau_j)_{j=0}^N$, deleting $\eta_k$ from the sequence $(\eta_j)_{j=0}^M$, and reindexing the sequences accordingly. 
	
	\textbf{Case (i)(b):} Since there exists a Zeno or infinite execution starting at $u$ and since $\gd$ is closed, there exists a unique ``first impact time'' $t_0 < t < 2T$ such that $\widetilde{\varphi}^{t_0}(u) \in \gd$.
	By our uniform continuity considerations, we have $\dist(\widetilde{\varphi}^{t_0}(u), \widetilde{\varphi}^{t_0}(v)) < \delta$.
	Since $\delta < \dist(\gd,\st\setminus U)$, we have $\widetilde{\varphi}^{t_0}(v) \in U$. Thus, $$\dist(r(\widetilde{\varphi}^{t_0}(u)), r(\gamma_{\eta_k}(\tau_{(\eta_k+1)}))) = \dist(r(\widetilde{\varphi}^{t_0}(u)), r\circ\rho(\widetilde{\varphi}^{t_0}(v))) < \epsilon/2.$$  Moreover, $\dist(r(\gamma_{\eta_k}(\tau_{(\eta_k+1)})), \gamma_{(\eta_k + 1)}(\tau_{\eta_k +1})) < \beta < \epsilon/2$.   Thus, by the triangle inequality we have
	
	$$\dist(r(\widetilde{\varphi}^{t_0}(u)), \gamma_{(\eta_k + 1)}(\tau_{\eta_k +1})) < \epsilon,$$
	so we can replace the jump at $\tau_{\eta_k}$ by a jump at $t_0 + \tau_{\eta_k}$ by extending the domain of $\gamma_{(\eta_{k}-1)}$ to $[\tau_{(\eta_{k}-1)},t_0+\tau_{\eta_{k}}]$, i.e., by replacing $\gamma_{(\eta_{k}-1)}$ with $\left( [\tau_{(\eta_{k}-1)},t_0+\tau_{\eta_{k}}]\ni s \mapsto \widetilde{\varphi}^{s-\tau_{(\eta_{k}-1)}}(\gamma_{(\eta_{k}-1)}(\tau_{(\eta_{k}-1)}))\right)$.
	We obtain a modified chain after replacing $\tau_{(\eta_k+1)}$ with $(t_0 + \tau_{\eta_k})$, deleting $\tau_{\eta_k}$ from the sequence $(\tau_j)_{j=0}^N$, deleting $\eta_k$ from the sequence $(\eta_j)_{j=0}^M$, and reindexing the sequences accordingly. 
	
	\textbf{Case (ii):} We want to replace the jump at $\tau_{\eta_k}$ with a jump at $\tau_{\eta_k} + T$.  
	Since $\dist(u, v) < \beta$, our uniform continuity considerations imply that $\dist(\widetilde{\varphi}^s(u), \widetilde{\varphi}^s(v)) < \delta$ for all $s\in [0,2T]$ such that the expression on the left is defined.
	Since $\widetilde{\varphi}^s(v)$ is defined for all $s\in [0,2T]$ and since $\mu|_U < T$ by our choice of $U$, it follows that $\widetilde{\varphi}^s(v) \in \st\setminus U$ for all $s\in [0,T]$.  
	Since  $\delta < \dist(\gd, \st \setminus U)$, it follows that $\widetilde{\varphi}^s(u)$ is also defined for all $s\in [0,T]$.
	Hence $\dist(\widetilde{\varphi}^T(u), \widetilde{\varphi}^T(v)) < \delta < \epsilon$,  so we can replace the jump at $\tau_{\eta_k}$ with a jump at $T + \tau_{\eta_k}$ from  $\widetilde{\varphi}^T(u)$ to $\gamma_{\eta_{k}}(T +\tau_{\eta_k})$.
	We obtain a modified chain after extending the domain of $\gamma_{(\eta_{k}-1)}$ to $[\tau_{(\eta_{k}-1)},T+\tau_{\eta_{k}}]$, replacing $\tau_{\eta_k}$ with $T+\tau_{\eta_k}$, and replacing $\gamma_{\eta_{k}}$ with its restriction $\gamma_{\eta_{k}}|_{[T+\tau_{\eta_k},\tau_{(\eta_k+1)}]}$.  
	
	After applying the procedure described above in Cases (i)(a-b) and (ii), we obtain an $(\epsilon, T)$-chain for which $\tau_{\eta_k} - \tau_{(\eta_k - 1)} \ge T$ for all $k$ such that $0 < \eta_k < N $.  
	The resulting chain will be nice unless $\eta_M = N$ and $\tau_{N} - \tau_{N-1} < T$;  
	if this is the case, we end up with an $(\epsilon,T)$-chain $\chi = (N, \tau, \eta, \gamma) \in \conleyet_H$ satisfying (I) $\tau_{\eta_k} - \tau_{(\eta_k - 1)} \ge T$ for all $0 < \eta_k < N $, (II) $\eta_M = N$, (III) $\tau_N - \tau_{N-1} < T$, and (IV) $\tau_{N+1} - \tau_N \le 2T$ (from the second paragraph of the proof).   We call such chains \concept{almost-nice}; note that $N\geq 2$ for an almost-nice chain.  By the above argument, for any $\epsilon, T > 0$ we can construct an $(\epsilon, T)$-chain from $x$ to $y$ which is either nice or almost-nice.

	We now claim that, from the above, it follows that a nice $(\epsilon, T)$-chain between $x$ and $y$ exists.
	Suppose (to obtain a contradiction) that this is not the case.
	Then for each $n \in \mathbb{N}$, there exists  an almost-nice $(1/n,T)$-chain $\chi^{(n)} = (N^{(n)}, \tau^{(n)}, \eta^{(n)}, \gamma^{(n)})$ from $x$ to $y$.
	For each $n$, define $t_n = \tau_{N^{(n)}}^{(n)} - \tau_{N^{(n)}-1}^{(n)} < T$ and $t'_n = \tau_{N^{(n)}+1}^{(n)} - \tau_{N^{(n)}}^{(n)} \leq 2T$.
	Similarly, let $u_n = r(\gamma_{N^{(n)}-2}^{(n)}(\tau_{N^{(n)}-1}^{(n)}))$, $v_n = \gamma_{N^{(n)}-1}^{(n)}(\tau_{N^{(n)}-1}^{(n)})$, and $w_n = \gamma_{N^{(n)}}^{(n)}(\tau_{N^{(n)}}^{(n)})$. 
	Since $\st$ is compact, after passing to a subsequence we may assume that $t_n, t'_n \to t,t' \in [0, 2T]$ and $u_n,v_n,w_n \to u,v,w \in \st$.   
	Furthermore, each $(t_n,v_n)$ and $(t'_n,w_n)$ belong to  $\dom(\widetilde{\varphi}) = \cl(\dom(\varphi))$, which is closed in $[0,\infty) \times \st$; hence $(t,v),(t',w)\in \dom(\widetilde{\varphi})$.
	Since also $\widetilde{\varphi}^{t_n}(v_n) \to w$ and $\widetilde{\varphi}^{t'_n}(w_n) \to y$, we have $\widetilde{\varphi}^t(v) = w$ and $\widetilde{\varphi}^{t'}(w) = y$  by continuity of $\widetilde{\varphi}$.
	Hence Lemma~\ref{lem:extend} implies that $(t+t',v)\in \dom(\widetilde{\varphi})$ and $\widetilde{\varphi}^{t+t'}(v) = y$.
	Pick $n$ large enough so that $1/n < \epsilon/2$ and $\dist(v_n, v) < \epsilon/2$.  
	Since $\dist(u_n, v_n) < 1/n <   \epsilon/2$, the triangle inequality implies that $\dist(u_n, v) < \epsilon$.  
	Thus we can modify the chain $\chi^{(n)}$ so that the jump occurring at $\tau_{N^{(n)}-1}^{(n)}$ is from $u_n$ to $v$, so that $\gamma_{N^{(n)}-1}^{(n)}$ is replaced with the arc $$[\tau_{N^{(n)}-1}^{(n)},t+t'+\tau_{N^{(n)}-1}^{(n)}]\ni  s\mapsto \widetilde{\varphi}^{s-\tau_{N^{(n)}-1}^{(n)}}(v)$$ terminating at $y$ and $\tau_{N^{(n)}}^{(n)}$ is replaced with $(t+t'+\tau_{N^{(n)}-1}^{(n)})$, and so that the final arc $\gamma_{N^{(n)}}^{(n)}$ and time $\tau_{N^{(n)}+1}^{(n)}$ are deleted from the sequences $\gamma^{(n)}$, $\tau^{(n)}$.
	Since $N \geq 2$ for an almost-nice chain, the resulting chain is a nice $(\epsilon, T)$-chain from $x$ to $y$ (consisting of $N-1$ arcs), which contradicts our assumption that a nice $(\epsilon, T)$-chain from $x$ to $y$ does not exist.
	This shows that $\widehat{\conley_H} \supseteq \conley_H$ and completes the proof. 
\end{proof}

\RelaxedConley*
\begin{proof}
	Throughout the proof, let $d_\st$ be the extended metric for $\st$ and $d_{\st'}$ be the extended metric for $\st'$.	
	
	Let $\epsilon, T>0$ and $(x,y)\in \conley_H$. 
	Since $\st$ is compact (Assumption~\ref{assump:compact}), $\iota\colon \st \to \st'$ is uniformly continuous, and hence there exists $\delta > 0$ such that $u,v \in \st, \, d_\st(u,v)< \delta$ implies that $d_{\st'}(\iota(u),\iota(v)) < \epsilon$.
	Let $\chi=(N,\tau,\eta,\gamma)\in {\conley^{\delta, T}_H(x,y)}$. 
	For each arc $\gamma_j$ of $\chi$, we define $\widetilde{\gamma}_j$ to be $\iota \circ \gamma_j$ if $\gamma_j$ ends in a continuous-time jump or is the final arc of $\chi$.  
	Otherwise $\gamma_j$ ends in a reset jump, in which case we define $\widetilde{\gamma}_j$ to be the concatenation of $\iota \circ \gamma_j$ with the path $[0,1] \to \st'$ defined by $t \mapsto \pi_0(t, z)$, where $z \in \gd$ is the endpoint of $\gamma_j$ and $\pi_0$ is the quotient map of Def.~\ref{def:relaxed}.
	Then the collection of arcs $\widetilde \gamma_j$ defines a chain $\chi'\in {\conley^{\epsilon,T}_{H'}(\iota(x), \iota(y))}$.
	
	For the converse, let $\epsilon, T > 0$ and $(\iota(x),\iota(y))\in \conley_{H'}$. 
	Let $\rho\colon U\subseteq \st\to \gd$ be a flow-induced retraction. 
	Shrinking $U$ if necessary, we may assume that $U$ is compact and that $\mu|_U$ is strictly bounded above by $T\in (0,\infty)$, where the maximum flow time $\mu\colon \st\to [0,+\infty]$ is defined in Equation~\eqref{eq:max-flow-time-all}.
	Define $U'\coloneqq \iota(U)\cup \pi_0(\gd\times [0,1])\subseteq \st'$, define the flow-induced retraction $\rho'\colon U'\to \gd'$ by $\rho'(\iota(x))\coloneqq \pi_0(\rho(x),1)$ and $\rho'(\pi_0(z,t)) \coloneqq \pi_0(z,1)$, and define $\mu'\colon U' \to [0,1+T)$ by $\mu'(\iota(u)) \coloneqq 1 + \mu(u)$ and $\mu'(\pi_0(z,t))\coloneqq 1-t$. 
	Let $\widetilde{\varphi}$ be the continuous extension of $\varphi'$ to the closure $\cl(\dom(\varphi'))$ of $\dom(\varphi')$ in $[0,\infty)\times \st'$ ensured by Assumptions \ref{assump:deterministic}, \ref{assump:inf-or-Zeno}, and \ref{assump:trapping-guard} and Lemma \ref{lem:extend}.
	Since $\st$, $\gd$, and $U'$ are compact and $\iota\colon \st \to \iota(\st)\subseteq \st'$ is a homeomorphism, there exists $\delta >  \zeta > 0$ such that
	\begin{equation}\label{eq:distance-Z-01-I-minus-U'}
	d_{\st'}(\pi_0(\gd\times [0,1]), \st\setminus \interior(U')) > \zeta
	\end{equation}
	and
	\begin{align}
	u,v\in \st,\, d_{\st'}(\iota(u),\iota(v))< \delta & \implies d_\st(u,v) < \epsilon\label{eq:H-H'-metric-compare}\\
	p,q \in U', \, d_{\st'}(p,q) < \zeta &\implies d_{\st'}(r'\circ \rho'(p),r'\circ \rho'(q)) < \delta.\label{eq:H'-retract-reset-continuity}
	\end{align}
	
	By assumption there exists a chain $\chi^{(0)} = (N,\tau^{(0)},\eta^{(0)},\gamma^{(0)})\in {\widehat{\conley}^{\zeta, T+1}_{H'}(\iota(x), \iota(y))}$; recall from Def.~\ref{def:eps-T-chains} that $N\geq 1$.
	We will now modify the chain $\chi^{(0)}$ inductively.
	Fix $i\in \{0,\ldots, N-1\}$ and assume that, if $i \geq 1$,\footnote{If $i = 0$ we assume nothing, so that the base case of the induction argument is included in this one.}  we have modified the first $(i+1)$ arcs ($\gamma_0^{(0)},\ldots, \gamma_i^{(0)}$) to obtain a chain $\chi^{(i)} = (N,\tau^{(i)},\eta^{(i)},\gamma^{(i)})\in {\widehat{\conley}^{\delta, T+1}_{H'}(\iota(x), \iota(y))}$ such that (a) the sub-chain obtained by throwing away the first $(i+1)$ arcs of $\chi^{(i)}$ is either a single arc (if $i = N-1$) or a $(\zeta, T+1)$-chain, and (b) for all $j\in \{1,\ldots, i\}$: $\gamma_{j-1}^{(i)}(\tau_{j}^{(i)}) \not \in \pi_0(\gd\times [0,1))$ and  $\gamma_{j}^{(i)}(\tau_{j}^{(i)}) \not \in \pi_0(\gd\times (0,1])$. 
	If \emph{both} $\gamma_{i}^{(i)}(\tau_{i+1}^{(i)}),\gamma_{i+1}^{(i)}(\tau_{i+1}^{(i)})\in U'$,\footnote{Note that, if \emph{only one} of $\gamma_{i}^{(i)}(\tau_{i+1}^{(i)}),\gamma_{i+1}^{(i)}(\tau_{i+1}^{(i)})$ is in $U'$, then \eqref{eq:distance-Z-01-I-minus-U'} implies that it cannot belong to $\pi_0(\gd\times [0,1])$.} then we replace $\gamma_{i}^{(i)}$ with the arc $$[\tau_{i}^{(i)}, \tau_{i+1}^{(i)} + \mu'(\gamma_{i}^{(i)}(\tau_{i+1}^{(i)}))]\ni t \mapsto \widetilde{\varphi}^{t - \tau_{i}^{(i)}}\left(\gamma_{i}^{(i)}(\tau_{i}^{(i)})\right)$$ and $\gamma_{i+1}^{(i)}$ with the degenerate arc $\left( \{\tau_{i+1}^{(i)} + \mu'(\gamma_{i}^{(i)}(\tau_{i+1}^{(i)}))\}\ni t \mapsto r'\circ \rho'(\gamma_{i+1}^{(i)}(\tau_{i+1}^{(i)}))\right)$.
	The upper bound $\mu'(\slot) < T$ and Equations \eqref{eq:distance-Z-01-I-minus-U'}, \eqref{eq:H'-retract-reset-continuity} can be used to show that, after redefining the sequences $\eta^{(i)}$ and $\tau^{(i)}$ accordingly, the result is a chain $\chi^{(i+1)}= (N,\tau^{(i+1)},\eta^{(i+1)},\gamma^{(i+1)})\in {\widehat{\conley}^{\delta, T+1}_{H'}(\iota(x), \iota(y))}$ such that
	(a) the sub-chain obtained by throwing away the first $(i+2)$ arcs of $\chi^{(i+1)}$ is either empty (if $i = N-1$), a single arc (if $i = N-2$), or a $(\zeta, T+1)$-chain and (b) for all $j\in \{1,\ldots, i+1\}$: $\gamma_{j-1}^{(i+1)}(\tau_{j}^{(i+1)}) \not \in \pi_0(\gd\times [0,1))$ and $\gamma_{j}^{(i+1)}(\tau_{j}^{(i+1)}) \not \in \pi_0(\gd\times (0,1])$.
	
	Hence by induction we obtain a chain $\chi \in \widehat{\conley}^{\delta, T+1}_{H'}(\iota(x), \iota(y))$ satisfying $\gamma_i(\tau_{i+1}) \not \in \pi_0(\gd \times [0,1))$ and $\gamma_i(\tau_i)\not \in \pi_0(\gd\times (0,1])$ for every arc of $\chi$. 
	This implies that, if any arc of $\chi$ meets $\pi_0(\gd\times [0,1])$, it must pass through $\pi_0(\gd\times \{0\})$ and terminate at $\pi_0(\gd\times \{1\})$. 
	Deleting the portions of the arcs that pass through $\pi_0(\gd\times [0,1])$, composing the resulting arcs with $\iota^{-1}\colon \iota(\st) \to \st$, and using \eqref{eq:H-H'-metric-compare},  we finally obtain an $(\epsilon,T)$-chain from $x$ to $y$ for $H$.
	This completes the proof. 
\end{proof}

\Colonius*
\begin{proof}
	It suffices to show that, for every $\epsilon > 0$, there are $(\epsilon, 2T)$-chains from (i) $x$ to $y$ and (ii) $y$ to $x$.
	By the compactness of $X$, the map $\Phi$ is uniformly continuous on $X \times [0, 4 T]$. 
	Hence there exists $\delta \in (0, \epsilon/2)$ such that for all $a,b \in X$ and $t \in [0,4T]$ we have $\dist(\Phi^t(a), \Phi^t(b)) < \epsilon/2$ whenever $\dist(a,b) < \delta$.
	
	Let $\chi = (N, \tau, \eta, \gamma) \in \conley^{\delta,T}(x,y)$.  
	Without loss of generality, we may assume that $\tau_{i+1} - \tau_{i} \in [T, 2T]$ for all $0\leq i \leq N$.  
	By concatenating $\chi$ with a $(\delta,T)$-chain from $y$ to $x$ followed by a $(\delta,T)$-chain from $x$ to $y$, we may assume that $N \ge 3$ (so that $\chi$ contains at least four arcs). 
	Thus, there exist integers $N' \ge 1$ and $r \in \{1, 2\}$ with $N = 2N' + r$.
	
	Let $\tau' = (\tau_i')_{i=0}^{N'+1}$ be the sequence given by $\tau_i' \coloneqq \tau_{2i}$ for $0 \leq i \leq N'$ and $\tau'_{N'+1} \coloneqq \tau_{N+1}$.  
	We (uniquely) define the corresponding arcs $\gamma_0',\ldots,\gamma'_{N'}$ by their starting points $\gamma_i'(\tau_i') \coloneqq \gamma_i(\tau_i')$ for $0\leq i \leq N'$.  
	Then by the triangle inequality the chain $(N', \tau',\eta', \gamma')$ (where $\eta' = (0,1,\ldots,N')$) defines an $(\epsilon, 2T)$-chain from $x$ to $y$.
	
	Repeating the above argument with the roles of $x$ and $y$ reversed also yields an $(\epsilon,2T)$-chain from $y$ to $x$.
	This completes the proof.
\end{proof}

\end{document}